\documentclass[10pt,reqno]{amsart}
\usepackage{amsmath,amssymb,amsthm,graphicx,epstopdf,mathrsfs,url}
\usepackage[usenames,dvipsnames]{color}
\usepackage{dsfont}   
\definecolor{darkred}{rgb}{0.7,0.1,0.1}
\definecolor{darkblue}{rgb}{0.1,0.1,0.4}
\definecolor{darkgrey}{rgb}{0.5,0.5,0.5}
\usepackage[colorlinks=true,linkcolor=darkred,citecolor=Blue]{hyperref}

%
\setlength{\parskip}{0.1cm}
\numberwithin{equation}{section}
\theoremstyle{plain}
\newtheorem{thm}{Theorem}[section]
\newtheorem{lem}[thm]{Lemma}

\newtheorem{prop}[thm]{Proposition}
\newtheorem{cor}[thm]{Corollary}

\newtheorem{definition}[thm]{Definition}
\theoremstyle{remark}
\newtheorem{remark}[thm]{Remark}

\theoremstyle{plain}


\newcommand{\hyp}[1]{$C^{2}$-hypersurface as in Definition~\ref{definition_hypersurface}}


%

\DeclareMathOperator\ran{ran}


\newcommand{\dom}{\mathrm{dom}\,}

\begin{document}
\title[]{Self-adjoint {D}irac operators on domains in $\mathbb{R}^3$}
\author[]{}

\author[J. Behrndt]{Jussi Behrndt}
\address{Institut f\"{u}r Angewandte Mathematik\\
Technische Universit\"{a}t Graz\\
 Steyrergasse 30, 8010 Graz, Austria\\
E-mail: {\tt behrndt@tugraz.at}}

\author[M. Holzmann]{Markus Holzmann}
\address{Institut f\"{u}r Angewandte Mathematik\\
Technische Universit\"{a}t Graz\\
 Steyrergasse 30, 8010 Graz, Austria\\
E-mail: {\tt holzmann@math.tugraz.at}}

\author[A.~Mas]{Albert Mas}
\address{
Departament de Matem\`atiques\\
Universitat Polit\`ecnica de Catalunya\\
Campus Diagonal Bes\`os, Edifici A (EEBE), Av. Eduard Maristany 16, 08019
Barcelona, Spain \\
E-mail: {\tt albert.mas.blesa@upc.edu}}

\begin{abstract}
In this paper the spectral and scattering properties of a family of self-adjoint Dirac operators in $L^2(\Omega; \mathbb{C}^4)$, where $\Omega \subset \mathbb{R}^3$ is either a bounded or an unbounded domain 
with a compact $C^2$-smooth boundary, are studied in a systematic way. These operators can be viewed as the natural relativistic counterpart of Laplacians with 
Robin boundary conditions.
Among the Dirac operators treated here is also the so-called MIT bag operator, which has been used by physicists and more recently was discussed in the mathematical literature.
Our approach is based on abstract boundary triple techniques from extension theory of symmetric operators and a thorough study of certain classes of (boundary) integral operators,
that appear in a Krein-type resolvent formula. The analysis of the perturbation term in this formula leads to a description of the spectrum and a Birman-Schwinger principle,
a qualitative understanding of the scattering properties in the case that $\Omega$ is unbounded, and corresponding trace formulas.
\end{abstract}

\keywords{Dirac operator, boundary conditions, self-adjoint extensions, spectral theory, scattering theory, boundary triple, Weyl function, resolvent formula}

\subjclass[2010]{Primary 81Q10; Secondary 35Q40} 
\maketitle

\section{Introduction} \label{section_introduction}

In recent years the mathematical study of Dirac operators acting on domains $\Omega \subset \mathbb{R}^d$  with special boundary conditions 
that make them self-adjoint gained a lot of attention. 
The motivation for this arises from several aspects: from the mathematical point of view 
they can be seen as the relativistic counterpart of Laplacians
with Robin type boundary conditions. 
From the physical point of view Dirac operators with special boundary conditions are used in 
relativistic quantum mechanics to describe particles that are confined to a predefined area or box. 
One important model in 3D (dimension three) is the MIT bag model suggested in the 1970s 
to study confinement of quarks, see \cite{C75, CJJT74, CJJTW74, DJJK75, J75}.
In the 2D (dimension two) case Dirac operators with special boundary conditions similar to the MIT bag model are used
in the description of graphene; cf. \cite{BFSB17_1, BFSB17_2, CL19}.

To set the stage, let $\Omega \subset \mathbb{R}^3$ be either a bounded or unbounded domain with a compact $C^2$-smooth boundary 
and let $\nu$ be the unit normal vector field at $\partial \Omega$ which points outwards of $\Omega$. Choose units such that the Planck constant $\hbar$ and the speed of light are both equal to one. Moreover, assume that 
$\vartheta: \partial \Omega \rightarrow \mathbb{R}$ is a H\"older continuous function of order $a>\frac{1}{2}$ and consider in $L^2(\Omega; \mathbb{C}^4)$
the operator
\begin{equation} \label{def_op_domain_intro}
  \begin{split}
    A_\vartheta f  &= -i \alpha \cdot \nabla f + m \beta f= -i \sum_{j=1}^3 \alpha_j \cdot \partial_j f + m \beta f, \\
    \dom A_\vartheta 
        &= \big\{ f \in H^1(\Omega; \mathbb{C}^4):
        \vartheta \big(I_4 + i \beta (\alpha \cdot \nu)\big) f|_{\partial \Omega} 
      = \big(I_4 + i \beta (\alpha \cdot \nu)\big) \beta f|_{\partial \Omega} \big\},
  \end{split}
\end{equation}
where $\alpha = (\alpha_1, \alpha_2, \alpha_3)$ and $\beta$ are the $\mathbb{C}^{4 \times 4}$ Dirac matrices defined 
in~\eqref{def_Dirac_matrices} below and $\alpha \cdot x = \alpha_1 x_1 + \alpha_2 x_2 + \alpha_3 x_3$ for $x = (x_1, x_2, x_3)^\top \in \mathbb{R}^3$.
The time-dependent equation with the Hamiltonian given by $A_\vartheta$ models the propagation of a relativistic particle subject to the boundary conditions in $\dom A_\vartheta$
with mass $m > 0$ contained in $\Omega$.

The mathematical literature on such types of Dirac operators contains different approaches. 
In differential geometry there are several articles dealing with self-adjoint Dirac operators on smooth manifolds,
see for instance \cite{BB12, BB16, R06}. In \cite{S95} it was shown that the 2D Dirac operator 
with so-called zigzag boundary conditions (in the massless case) is self-adjoint 
and zero is an eigenvalue of infinite multiplicity, see also \cite{FS14}. More recent related publications 
in the 2D case are \cite{BFSB17_1, BFSB17_2}, where the self-adjointness of Dirac operators in bounded $C^2$-domains $\Omega \subset \mathbb{R}^2$ for a wide class of 
boundary conditions describing quantum dots was shown. Many considerations in \cite{BFSB17_1, BFSB17_2, S95} are based on complex analysis techniques, which are not available in 
the 3D situation. We also refer to \cite{CL19, LTO18, LO18, PB19} for self-adjointness and spectral problems of 2D Dirac operators on different types of domains 
with special boundary conditions. In contrast to the 2D setting the 3D case is less investigated in the mathematical literature, only the MIT bag operator is well studied.
We emphasize \cite{ALTR17, OV17} for the analysis of general properties of the MIT bag operator in 3D and \cite{ALTMR19,BCLS19,B19,MOP18}, where it is shown that the MIT bag boundary conditions can be interpreted as infinite mass boundary conditions (i.e. $\Omega$ is surrounded by a medium with infinite mass).

The main objective of this paper is to develop a systematic approach to the spectral analysis and scattering theory for self-adjoint Dirac operators in the 3D case. 
Here we are particularly interested in boundary conditions as in \eqref{def_op_domain_intro}, since these are the 3D analogue of the 2D boundary conditions in
\cite{BFSB17_1} which can be used to describe graphene quantum dots (cf. Remark~\ref{rem_bc_klar}), and the corresponding Dirac operators can be viewed 
as the relativistic counterpart of Laplace operators with 
Robin boundary conditions. We also note that operators of the form \eqref{def_op_domain_intro} appear in the treatment of Dirac operators 
\begin{equation}\label{bet}
B_{\eta, \tau}  = -i \alpha \cdot \nabla + m \beta + (\eta I_4 + \tau \beta) \delta_{\partial \Omega}
\end{equation}
in $\mathbb R^3$ 
with singular $\delta$-shell potentials supported on $\partial\Omega$ in the confinement (or decoupling) case. In fact, it is well known that for $\eta^2 - \tau^2 = -4$ the operator $B_{\eta, \tau}$ can be written as the orthogonal sum of operators acting in $L^2(\Omega; \mathbb{C}^4)$ and $L^2(\mathbb{R}^3 \setminus \overline{\Omega}; \mathbb{C}^4)$, respectively, and it turns out that these operators are exactly of the form~\eqref{def_op_domain_intro}, see 
Section~\ref{section_confinement} for more details. We refer to the recent contributions 
\cite{AMV14, AMV15, AMV16, BEHL18, BEHL19_1, BH17, BHOP19, HOP17, M17, MP18a, MP18b, OP19, OV17} for a comprehensive study of Dirac operators with singular $\delta$-shell potentials.
Hence, $A_\vartheta$
can be used to describe a relativistic particle actually living in~$\mathbb{R}^3$, but which is confined to $\Omega$ for all time, see \cite[Section 5]{AMV15}.
This is of interest in particle physics.

The mathematical treatment of the operators $A_\vartheta$ in \eqref{def_op_domain_intro} is based on the application of a suitable so-called quasi boundary triple. Quasi boundary triples and their Weyl functions are an 
abstract concept from extension and spectral theory for symmetric and self-adjoint operators which were originally introduced to investigate boundary value problems for 
elliptic partial differential operators in \cite{BL07}, but proved to be useful in many other situations, see, e.g., \cite{BEHL19_2,BLL13_2,BR15_2}.
Quasi boundary triples were also applied more recently in \cite{BEHL18, BH17} to Dirac operators with singular potentials
as in \eqref{bet}.
Once a quasi boundary triple and Weyl function in the present situation are available,
they allow to deduce in an efficient way the spectral properties of $A_\vartheta$ from the properties of certain (boundary) integral operators 
which are induced by the Green's function of the free Dirac operator in $\mathbb{R}^3$. The more demanding issue here is to establish 
the proper mapping properties of these integral operators and, in fact, this analysis covers a great part of the present paper. We would like to point out that this approach is independent of the space dimension.

One of the key features in the quasi boundary triple approach is a Krein-type resolvent formula that relates the resolvent 
of $A_\vartheta$ via a perturbation term to the resolvent of a reference operator, which is in our model the MIT bag operator~$T_{\textup{MIT}}$.  
Using this correspondence we first conclude that $A_\vartheta$ in \eqref{def_op_domain_intro} is self-adjoint in 
$L^2(\Omega; \mathbb{C}^4)$ and we show several spectral and scattering properties of $A_\vartheta$, which are different for bounded and for unbounded domains~$\Omega$. If
$\Omega$ is an unbounded domain with a compact $C^2$-boundary, then the essential spectrum of $A_\vartheta$ is given by $(-\infty, -m] \cup [m, \infty)$, and there are at 
most finitely many discrete eigenvalues in the gap $(-m,m)$ that
can be characterized by a Birman-Schwinger principle.
It also follows that $(A_\vartheta - \lambda)^{-3} 
        - ( T_{\textup{MIT}} - \lambda )^{-3}$ is a trace class operator for any 
        $\lambda \in \mathbb{C} \setminus \mathbb{R}$, which implies the existence and completeness of the wave operators for the scattering pair $\{A_\vartheta,T_{\textup{MIT}}\}$.
If $\Omega$ is a bounded domain with a compact $C^2$-boundary, then the spectrum of $A_\vartheta$ is purely discrete and all eigenvalues of $A_\vartheta$ can be characterized by a modified Birman-Schwinger principle;
cf. Section~\ref{section_def_op} for more details.
The abovementioned properties are proved under the assumption $\vartheta(x)^2 \not= 1$ for all $x\in\partial\Omega$, which we refer to as the {\it non-critical case}.
We expect that in the {\it critical case} $\vartheta(x)^2 = 1$ for some  $x\in\partial\Omega$ the spectral properties of $A_\vartheta$ may significantly differ from the 
non-critical case, e.g.,
essential spectrum may arise also for bounded domains or in the gap $(-m,m)$.  Similar difficulties and effects 
were observed in the 2D situation in \cite{BFSB17_1} and also in the analysis of Dirac operators with singular interactions in \cite{AMV14, BEHL18, BH17, BHOP19, OV17}.

We mention that for some models it is more convenient to consider Dirac operators $A_{[\omega]}$ in $L^2(\Omega; \mathbb{C}^4)$ with boundary conditions of the form
\begin{equation} \label{boundary_condition_B_intro}
  \big(I_4 + i \beta (\alpha \cdot \nu)\big) f|_{\partial \Omega} 
      = \omega \big(I_4 + i \beta (\alpha \cdot \nu)\big) \beta f|_{\partial \Omega},
\end{equation}
where $\omega: \partial \Omega \rightarrow \mathbb{R}$ is a H\"older continuous function. Comparing with the boundary conditions in 
\eqref{def_op_domain_intro} one formally has $\omega=\vartheta^{-1}$. Note that the particularly interesting MIT bag model corresponds to $\omega \equiv 0$. 
Using the abstract quasi boundary triple approach the spectral and scattering properties of $A_{[\omega]}$ can be studied in the same way as those of $A_\vartheta$, and similar results as sketched
above for $A_\vartheta$ follow; cf. Section~\ref{ss bdy MIT}.

Let us give a short overview on the structure of the paper. In Section~\ref{section_boundary_triples} we review the definitions of 
quasi boundary triples and their associated Weyl functions.  Then, in Section~\ref{section_preliminary_ops} we recall some knowledge on 
a minimal and a maximal realization of the Dirac operator in $\Omega$, the MIT bag model, and the properties of several families of integral 
operators associated to the resolvent of the free Dirac operator. Next, in Section~\ref{section_boundary_triples_domain} we introduce and study a 
quasi boundary triple which is suitable to investigate Dirac operators in $\Omega$ with boundary conditions. Section~\ref{section_Dirac_domain} contains 
the main results of the present paper. 
After defining $A_\vartheta$ and proving its self-adjointness in Section~\ref{section_def_op} we use the quasi boundary triple from 
Section~\ref{section_boundary_triples_domain} to conclude various spectral properties. 
Section~\ref{ss bdy MIT} is then devoted to the study of the operator $A_{[\omega]}$ with the boundary conditions~\eqref{boundary_condition_B_intro}, 
while in Section~\ref{section_confinement} we discuss the earlier mentioned connection of $A_\vartheta$ and $A_{[\omega]}$ to the operator 
$B_{\eta, \tau}$ formally given in~\eqref{bet}. Finally, in an appendix we collect some material on integral operators and their mapping properties 
in Sobolev spaces on the boundary $\partial \Omega$, which is applied in the proofs of the main results of this paper.

\subsection*{Notations}
Throughout this paper $m$ is always a positive constant that stands for the mass of a particle.
The Dirac matrices $\alpha_1, \alpha_2, \alpha_3, \beta \in \mathbb{C}^{4 \times 4}$ are defined by
\begin{equation} \label{def_Dirac_matrices}
   \alpha_j := \begin{pmatrix} 0 & \sigma_j \\ \sigma_j & 0 \end{pmatrix}
   \quad \text{and} \quad \beta := \begin{pmatrix} I_2 & 0 \\ 0 & -I_2 
\end{pmatrix},
\end{equation}
where $I_n$ is the $n \times n$-identity matrix and $\sigma_j$, $j \in\{ 1, 2, 3 \}$, are the Pauli spin matrices
\begin{equation} \label{def_Pauli_matrices}
   \sigma_1 := \begin{pmatrix} 0 & 1 \\ 1 & 0 \end{pmatrix}, \qquad
   \sigma_2 := \begin{pmatrix} 0 & -i \\ i & 0 \end{pmatrix}, \qquad
   \sigma_3 := \begin{pmatrix} 1 & 0 \\ 0 & -1 \end{pmatrix}.
\end{equation}
The Dirac matrices satisfy the anti-commutation relations
\begin{equation}\label{eq_commutation}
	\alpha_j\alpha_k + \alpha_k\alpha_j = 2\delta_{jk}I_4,\quad \text{and} \quad \alpha_j \beta + \beta \alpha_j = 0, \qquad
	j,k\in\{1,2,3\}.
\end{equation}
For vectors $x = (x_1, x_2, x_3)^{\top}\in\mathbb{R}^3$ we often use the notation
$\alpha \cdot x := \sum_{j=1}^3 \alpha_j x_j$. 

The upper/lower complex half plane is denoted by $\mathbb{C}_\pm$.
The square root $\sqrt{\cdot}$ is fixed by $\mathrm{Im}\sqrt{\lambda}>0$ for $\lambda\in\mathbb C\setminus [0,\infty)$ and $\sqrt{\lambda}\geq 0$
for $\lambda\geq 0$.
The open ball of radius $r > 0$ centered at $x \in \mathbb{R}^3$ is denoted by $B(x, r)$.
For a $C^2$-domain $\Omega \subset \mathbb{R}^3$ we write ${\partial \Omega}$ for its boundary
and $\sigma$ is the 2-dimensional Hausdorff measure on ${\partial \Omega}$. We shall mostly work with the $L^2$-spaces 
$L^2(\Omega; \mathbb{C}^n)$ and $L^2({\partial \Omega}; \mathbb{C}^n)$
of $\mathbb C^n$-valued square integrable functions,
the corresponding inner products being denoted by $(\cdot, \cdot)_\Omega$ and 
$(\cdot, \cdot)_{\partial \Omega}$, respectively.
We write 
$C_0^\infty(\Omega; \mathbb{C}^n)$ for the space of 
$\mathbb C^n$-valued smooth functions with compact support in $\Omega$ and we set
\begin{equation*}
  C^\infty(\overline{\Omega}; \mathbb{C}^n) := \big\{ f \upharpoonright \Omega: f \in C_0^\infty(\mathbb{R}^3; \mathbb{C}^n) \big\}.
\end{equation*}
We write $H^k(\mathbb R^3; \mathbb{C}^n)$ for the usual 
$L^2(\mathbb R^3; \mathbb{C}^n)$-based Sobolev space of $k$-times 
weakly differentiable 
functions, and similarly $H^k(\Omega; \mathbb{C}^n)$. In addition, $H^1_0(\Omega; \mathbb{C}^n)$ denotes the closure of
$C_0^\infty(\Omega; \mathbb{C}^n)$
in $H^1(\Omega; \mathbb{C}^n)$. Sobolev spaces on $C^l$-surfaces ${\partial \Omega}$, $l \in \mathbb{N}$, are denoted by
$H^s({\partial \Omega}; \mathbb{C}^n)$, $s \in (0, l)$, and the symbol $H^{-s}(\partial \Omega; \mathbb{C}^n)$ is used for their duals. The corresponding norm for $s \in (0, 1)$ is
\begin{equation} \label{def_Sobolev_Slobodeckii}
\|f\|_s^2:=\int_{{\partial \Omega}}|f(x)|^2 \text{d} \sigma(x)
+\int_{{\partial \Omega}}\int_{{\partial \Omega}}
\frac{|f(x)-f(y)|^2}{|x-y|^{2+2s}} \text{d} \sigma(y) \text{d} \sigma(x).
\end{equation}
The trace of a function $f \in H^1(\Omega; \mathbb{C}^n)$, which belongs by the trace theorem
to $H^{1/2}(\partial \Omega; \mathbb{C}^n)$, is denoted by $f|_{\partial \Omega}$. 
Eventually, given $0<a\leq1$ we denote the H\"older continuous functions on 
$\partial\Omega$ of order $a$ by 
\begin{equation*}
\text{Lip}_a({\partial \Omega}):=
\bigl\{f:{\partial \Omega}\to\mathbb{C}:\,|f(x)-f(y)|\leq C|x-y|^a\text{ for all }x,\,y\in{\partial \Omega}\bigr\}.
\end{equation*}

For two Hilbert spaces $\mathcal{G}$ and $\mathcal{H}$ the space $\mathcal B(\mathcal{G}, \mathcal{H})$
is the set of all bounded and everywhere defined operators from $\mathcal{G}$ to $\mathcal{H}$. 
If $\mathcal{G} = \mathcal{H}$, then 
we simply write $\mathcal B(\mathcal{H})$.
We write $\mathfrak{S}_{p, \infty}(\mathcal{G}, \mathcal{H})$ for the weak 
Schatten--von Neumann ideal of order $p>0$;
this is the set of all compact operators $K: \mathcal{G} \rightarrow \mathcal{H}$ for which
there exists a constant $\kappa$ such that the singular values $s_k(K)$ 
of $K$ fulfill $s_k(K) \leq \kappa k^{-1/p}$ for all $k \in \mathbb{N}$, see \cite{GK69} or \cite[Section 2.1]{BLL13_2}. Again we use $\mathfrak{S}_{p, \infty}(\mathcal{G})$
if $\mathcal{G} = \mathcal{H}$ and sometimes we suppress the spaces and just write $\mathfrak{S}_{p, \infty}$.

For a linear operator $T:\mathcal{G}\rightarrow \mathcal{H}$ we denote the domain, range, and kernel by $\dom T$, $\ran T$, 
and $\ker T$, respectively. If $T$ is a self-adjoint operator in $\mathcal{H}$ then its resolvent
set, spectrum, essential spectrum, discrete, and point spectrum are denoted by 
$\rho(T)$, $\sigma(T)$, $\sigma_{\text{ess}}(T)$, $\sigma_{\text{disc}}(T)$,
and $\sigma_{\rm p}(T)$, respectively.
Next, for a Banach space $X$ we use the notation $(\cdot, \cdot)_{X'\times X}$ for the duality product in $X' \times X$ which is linear in its first and anti-linear 
in the second entry. Moreover, for $T \in \mathcal{B}(X, Y)$ we denote by $T' \in \mathcal{B}(Y', X')$ the anti-dual operator, which is uniquely determined by the relation
\begin{equation*}
  (y, T x)_{Y' \times Y} = (T' y, x)_{X' \times X}
\end{equation*}
for all $x \in X$ and $y \in Y'$.

Finally, we call a closed symmetric operator $S$ in a Hilbert space $\mathcal{H}$ simple, if for any orthogonal decomposition
$\mathcal{H} = \mathcal{H}_1 \oplus \mathcal{H}_2$ such that $\mathcal{H}_1$ and $\mathcal{H}_2$ are invariant under $S$
and $S_1 := S \upharpoonright \mathcal{H}_1$ is self-adjoint in $\mathcal{H}_1$ it follows $\mathcal{H}_1 = \{ 0 \}$. 
It was observed in \cite{K49} that a closed symmetric operator $S$ is simple, if and only if 
\begin{equation} \label{equation_check_simple}
  \overline{\text{span} \big\{ \ker(S^* - \lambda) : \lambda \in \mathbb{C} \setminus \mathbb{R} \big\} }
  = \mathcal{H}.
\end{equation}

\section{Quasi boundary triples and their Weyl functions} \label{section_boundary_triples}

This section is devoted to a short introduction to quasi boundary triples and their  
Weyl functions; the presentation is chosen such that the results can be applied directly
in the main part of this paper. For a more detailed exposition and proofs in a general scenario we refer 
to \cite{BL07,BL12}.
Throughout this abstract section $\mathcal{H}$ is always a complex Hilbert space with inner product $(\cdot, \cdot)_\mathcal{H}$;
if no confusion arises, we skip the index in the inner product. 

\begin{definition} \label{definition_boundary_triples}
  Let $S$ be a densely defined, closed, symmetric operator in $\mathcal{H}$ and assume that $T$ is a linear operator in 
  $\mathcal{H}$ such that $\overline{T} = S^*$. Moreover, let $\mathcal{G}$ be a complex Hilbert space and let
  $\Gamma_0, \Gamma_1: \dom T \rightarrow \mathcal{G}$ be linear mappings. Then $\{ \mathcal{G}, \Gamma_0, \Gamma_1 \}$
  is called a \emph{quasi boundary triple} for $T \subset S^*$ if the following conditions are fulfilled:
  \begin{itemize}
    \item[$\textup{(i)}$] For all $f, g \in \dom T$ the abstract Green's identity
    \begin{equation*} 
      (T f, g)_\mathcal{H} - (f, T g)_\mathcal{H} 
         = (\Gamma_1 f, \Gamma_0 g)_\mathcal{G} - (\Gamma_0 f, \Gamma_1 g)_\mathcal{G}
    \end{equation*}
    holds.
    \item[$\textup{(ii)}$] $(\Gamma_0, \Gamma_1)^\top: \dom T \rightarrow \mathcal{G} \times \mathcal{G}$
    has dense range.
    \item[$\textup{(iii)}$] The operator $A_0 := T \upharpoonright \ker \Gamma_0$ is self-adjoint in $\mathcal{H}$.
  \end{itemize}
\end{definition}

The concept of quasi boundary triples is a generalization of ordinary and generalized boundary triples; cf. \cite{BHS19,BGP08, DM91, DM95}.
We note that the operator $T$ in the above definition
is not unique if the dimension of $\mathcal{G}$ is infinite. Moreover, we remark that 
a quasi boundary triple exists if and only if $\dim \ker (S^* - i) = \dim \ker (S^* + i)$,
that is, if and only if $S$ admits self-adjoint extensions in $\mathcal{H}$.

Next, we introduce the $\gamma$-field and the Weyl function associated to a given quasi boundary triple.
Let $\{ \mathcal{G}, \Gamma_0, \Gamma_1 \}$ be a quasi boundary triple for $T \subset S^*$ and let 
$A_0 := T \upharpoonright \ker \Gamma_0$. The definition of the $\gamma$-field and the Weyl function is based on the 
direct sum decomposition
\begin{equation} \label{equation_dom_T_decomposition}
  \dom T = \dom A_0 \dot{+} \ker (T - \lambda) = \ker \Gamma_0 \dot{+} \ker (T - \lambda), \quad \lambda \in \rho(A_0),
\end{equation}
and is formally the same as in the case of ordinary boundary triples, see~\cite{BHS19,DM91}.
Note that~\eqref{equation_dom_T_decomposition} implies, in particular, that 
$\Gamma_0 \upharpoonright \ker (T-\lambda)$ is injective for $\lambda \in \rho(A_0)$.

\begin{definition} \label{definition_gamma_field_Weyl_function_abstract}
  Let $S$ be a densely defined, closed, symmetric operator in $\mathcal{H}$,
  let $T$ be a linear operator such that $\overline{T} = S^*$, and let $\{ \mathcal{G}, \Gamma_0, \Gamma_1 \}$
  be a quasi boundary triple for $T \subset S^*$.
  \begin{itemize}
    \item[$\textup{(i)}$] The {\em $\gamma$-field} associated to $\{ \mathcal{G}, \Gamma_0, \Gamma_1 \}$ is the mapping
    \begin{equation*}
      \rho(A_0) \ni \lambda \mapsto \gamma(\lambda) := \bigl(\Gamma_0 \upharpoonright \ker (T-\lambda)\bigr)^{-1}. 
    \end{equation*}
    \item[$\textup{(ii)}$] The {\em Weyl function} associated to $\{ \mathcal{G}, \Gamma_0, \Gamma_1 \}$ is the mapping
    \begin{equation*}
      \rho(A_0) \ni \lambda \mapsto M(\lambda) := \Gamma_1 \bigl(\Gamma_0 \upharpoonright \ker (T-\lambda)\bigr)^{-1}
          = \Gamma_1 \gamma(\lambda). 
    \end{equation*}
  \end{itemize}
\end{definition}

Let us now assume that $\{ \mathcal{G}, \Gamma_0, \Gamma_1 \}$ is a quasi boundary triple for 
$T \subset S^*$ and set $A_0 := T \upharpoonright \ker \Gamma_0$. In the following we collect several useful properties of the associated $\gamma$-field $\gamma$ and Weyl function $M$; for the proofs see for instance \cite[Proposition~2.6]{BL07} and \cite[Propositions~6.13 and~6.14]{BL12}.
First, for any $\lambda \in \rho(A_0)$ the mapping $\gamma(\lambda)$ is densely defined and
bounded from~$\mathcal{G}$ into $\mathcal{H}$ with $\dom \gamma(\lambda) = \ran \Gamma_0$. 
Using the abstract Green's identity it is not difficult to see that the adjoint $\gamma(\lambda)^*: \mathcal{H} \rightarrow \mathcal{G}$ is given by 
$\gamma(\lambda)^* = \Gamma_1 (A_0 - \overline{\lambda})^{-1}$. This implies, 
in particular, $\gamma(\lambda)^* \in \mathcal{B}(\mathcal{H}, \mathcal{G})$.
In a similar manner we have for any $\lambda \in \rho(A_0)$ that the mapping $M(\lambda)$ is densely defined in $\mathcal{G}$ with $\dom M(\lambda) = \ran \Gamma_0$  
and $\ran M(\lambda) \subset \ran \Gamma_1$. 
By definition we have $M(\lambda) \Gamma_0 f_\lambda = \Gamma_1 f_\lambda$ for $\lambda \in \rho(A_0)$ and $f_\lambda \in \ker (T-\lambda)$. 
Next, for any $\lambda, \mu \in \rho(A_0)$ and $\varphi \in \ran \Gamma_0$
the identity
\begin{equation*} 
  M(\lambda) \varphi = M(\mu)^* \varphi + (\lambda - \overline{\mu}) \gamma(\mu)^* \gamma(\lambda) \varphi
\end{equation*}
holds.
In particular, the operator $M(\lambda)$ is closable, $M(\lambda) \subset M(\overline{\lambda})^*$, and
$M(\lambda)$ is symmetric for $\lambda \in \rho(A_0) \cap \mathbb{R}$.

In the main part of this paper we will use quasi boundary triples to introduce special extensions of a symmetric operator $S$.
Let $\{ \mathcal{G}, \Gamma_0, \Gamma_1 \}$ be a quasi boundary triple for $T \subset S^*$ and let 
$\Theta$ be a symmetric operator in $\mathcal{G}$. Then we define the operator $A_\Theta$ acting in $\mathcal{H}$ by
\begin{equation} \label{def_A_theta_abstract}
  A_\Theta := T \upharpoonright \ker (\Gamma_1 - \Theta \Gamma_0).
\end{equation}
In other words, a vector $f \in \dom T$ belongs to $\dom A_\Theta$ if $\Gamma_0 f \in \dom \Theta$ and if it satisfies the abstract boundary condition
$\Gamma_1 f = \Theta \Gamma_0 f$. It follows immediately from the abstract Green's identity
that $A_\Theta$ is symmetric. Of course, one is typically interested in the self-adjointness
of $A_\Theta$. However, in general, for quasi boundary triples the self-adjointness of 
$\Theta$ in $\mathcal{G}$ does not necessarily imply that 
$A_\Theta$ is self-adjoint in $\mathcal{H}$.
Nevertheless the next theorem provides an explicit Krein-type resolvent formula which allows to deduce several properties of $A_\Theta$ from $\Theta$; for a proof of this result see for instance 
\cite[Theorem~2.8]{BL07}.

\begin{thm} \label{theorem_krein_abstract}
  Let $S$ be a densely defined, closed, symmetric operator in $\mathcal{H}$, 
  let $\{ \mathcal{G}, \Gamma_0, \Gamma_1 \}$ be a quasi boundary triple for $T \subset S^*$,
  set $A_0 := T \upharpoonright \ker \Gamma_0$,
  and let $\gamma$ and $M$ be the associated $\gamma$-field and Weyl function, respectively.
  Moreover, let $\Theta$ be a symmetric operator in $\mathcal{G}$ and let the associated operator 
  $A_\Theta$ be defined by~\eqref{def_A_theta_abstract}. Then the following statements hold
  for $\lambda \in \rho(A_0)$:
  \begin{itemize}
    \item[$\textup{(i)}$] $\lambda \in \sigma_{\textup{p}}(A_\Theta)$ if and only if 
    $0 \in \sigma_{\textup{p}} (\Theta - M(\lambda))$. Furthermore, one has
    \begin{equation*}
      \ker(A_\Theta - \lambda) = \bigl\{ \gamma(\lambda) \varphi: \varphi \in \ker (\Theta - M(\lambda)) \bigr\}.
    \end{equation*}
    \item[$\textup{(ii)}$] If $\lambda \notin \sigma_{\textup{p}}(A_\Theta)$ and $\gamma(\overline{\lambda})^* f \in \ran (\Theta- M(\lambda))$, then $f \in \ran (A_\Theta - \lambda)$.
    \item[$\textup{(iii)}$] If $\lambda \notin \sigma_{\textup{p}}(A_\Theta)$, then 
    \begin{equation*}
      (A_\Theta - \lambda)^{-1} f = (A_0 - \lambda)^{-1} f 
          + \gamma(\lambda) (\Theta - M(\lambda))^{-1} \gamma(\overline{\lambda})^* f
    \end{equation*}
    for all $f \in \ran (A_\Theta - \lambda)$.
  \end{itemize}
\end{thm}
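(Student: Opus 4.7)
The plan is to exploit the direct sum decomposition \eqref{equation_dom_T_decomposition} together with the identity $\gamma(\overline{\lambda})^* = \Gamma_1(A_0 - \lambda)^{-1}$ recalled just before the theorem. The three statements are really facets of a single computation: decompose an element of $\dom T$ into a ``regular'' part in $\dom A_0$ and a ``singular'' part in $\ker(T-\lambda)$, and then translate the abstract boundary condition $\Gamma_1 f = \Theta \Gamma_0 f$ into an equation for the coefficient of the singular part.

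For part (i), I would start with an eigenfunction $f \in \dom A_\Theta$ with $(T-\lambda)f = 0$. Then $f \in \ker(T-\lambda)$ already, so by the definition of the $\gamma$-field $f = \gamma(\lambda)\varphi$ with $\varphi := \Gamma_0 f \in \ran\Gamma_0$. Applying $\Gamma_1$ gives $\Gamma_1 f = M(\lambda)\varphi$, and the boundary condition $\Gamma_1 f = \Theta \Gamma_0 f = \Theta \varphi$ is then exactly $(\Theta - M(\lambda))\varphi = 0$. Moreover $\varphi \ne 0$ whenever $f \ne 0$, because $\Gamma_0 \upharpoonright \ker(T-\lambda)$ is injective by \eqref{equation_dom_T_decomposition}. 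For the converse, given $\varphi \in \ker(\Theta - M(\lambda))$, set $f := \gamma(\lambda)\varphi$ and verify $\Gamma_0 f = \varphi \in \dom\Theta$ and $\Gamma_1 f = M(\lambda)\varphi = \Theta\varphi$, so $f \in \dom A_\Theta$ and $(A_\Theta - \lambda)f = 0$. This also yields the displayed description of $\ker(A_\Theta - \lambda)$.

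For part (iii), the main step is to show that \emph{every} $g \in \dom A_\Theta$ admits a representation
\begin{equation*}
  g = (A_0 - \lambda)^{-1} h + \gamma(\lambda)\varphi, \qquad h \in \mathcal{H},\ \varphi \in \ran\Gamma_0,
\end{equation*}
which is just \eqref{equation_dom_T_decomposition} rewritten using that $\dom A_0 = \ran((A_0-\lambda)^{-1})$ and $\ker(T-\lambda) = \ran \gamma(\lambda)$. Applying $T-\lambda$ gives $h = (A_\Theta - \lambda)g =: f$. Applying $\Gamma_0$ gives $\Gamma_0 g = \varphi$ (the first term lies in $\ker\Gamma_0$), and applying $\Gamma_1$ gives $\Gamma_1 g = \Gamma_1(A_0-\lambda)^{-1} f + M(\lambda)\varphi = \gamma(\overline{\lambda})^* f + M(\lambda)\varphi$. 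The boundary condition $\Gamma_1 g = \Theta \varphi$ then rearranges to $(\Theta - M(\lambda))\varphi = \gamma(\overline{\lambda})^* f$. Since $\lambda \notin \sigma_{\textup{p}}(A_\Theta)$, part (i) tells us $\Theta - M(\lambda)$ is injective, so $\varphi = (\Theta - M(\lambda))^{-1} \gamma(\overline{\lambda})^* f$, which substituted back gives the claimed resolvent formula. Part (ii) is then the converse direction: given $f$ with $\gamma(\overline{\lambda})^* f \in \ran(\Theta - M(\lambda))$, define $\varphi$ and $g$ by the same formulas and run the computation backwards to verify $g \in \dom A_\Theta$ and $(A_\Theta - \lambda)g = f$.

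The proof is essentially algebraic once one has \eqref{equation_dom_T_decomposition} and the adjoint identity $\gamma(\overline{\lambda})^* = \Gamma_1(A_0-\lambda)^{-1}$ in hand, so no serious obstacle arises. The only subtlety worth flagging is the well-definedness of all the terms: one has to check that $\varphi \in \ran\Gamma_0 = \dom\gamma(\lambda) \cap \dom M(\lambda)$ and that $\varphi \in \dom\Theta$ whenever the boundary condition is invoked. These domain questions are handled by observing that $\Gamma_0 g \in \dom\Theta$ is built into the definition \eqref{def_A_theta_abstract} of $A_\Theta$, and that in (ii) the hypothesis $\gamma(\overline{\lambda})^* f \in \ran(\Theta - M(\lambda))$ forces $\varphi \in \dom\Theta \cap \dom M(\lambda) \subset \ran\Gamma_0$.
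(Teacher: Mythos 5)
Your argument is correct and complete: the decomposition \eqref{equation_dom_T_decomposition}, the identity $\gamma(\overline{\lambda})^* = \Gamma_1(A_0-\lambda)^{-1}$, and the translation of the boundary condition $\Gamma_1 f = \Theta\Gamma_0 f$ into $(\Theta - M(\lambda))\varphi = \gamma(\overline{\lambda})^* f$ are exactly the ingredients of the standard proof, and you handle the domain issues ($\varphi \in \ran\Gamma_0 = \dom\gamma(\lambda) = \dom M(\lambda)$, $\varphi \in \dom\Theta$) correctly. The paper itself does not prove this theorem but refers to \cite[Theorem~2.8]{BL07}, whose proof proceeds along the same lines as yours, so there is nothing substantive to contrast.
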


We point out that assertion~(ii) in Theorem~\ref{theorem_krein_abstract} gives an efficient tool
to check the self-adjointness of $A_\Theta$. Since $A_\Theta$ is symmetric by Green's identity,
it suffices to show that $\ran (A_\Theta - \lambda_\pm) = \mathcal{H}$ for some $\lambda_\pm \in \mathbb{C}_\pm$.
According to Theorem~\ref{theorem_krein_abstract}~(ii) this is true 
if $\ran \gamma(\overline{\lambda_\pm})^* \subset \ran(\Theta - M(\lambda_\pm))$. Furthermore, if $\lambda \in \rho(A_\Theta)\not=\emptyset$ then the resolvent formula
in Theorem~\ref{theorem_krein_abstract}~(iii) holds for all $f\in\mathcal H$, that is, for $\lambda\in\rho(A_\Theta)\cap\rho(A_0)$ we have
 \begin{equation*}
      (A_\Theta - \lambda)^{-1}  = (A_0 - \lambda)^{-1}  
          + \gamma(\lambda) (\Theta - M(\lambda))^{-1} \gamma(\overline{\lambda})^*.
    \end{equation*}

Note that, if $\{ \mathcal{G}, \Gamma_0, \Gamma_1 \}$ is a quasi boundary triple for $T \subset S^*$, then Theorem~\ref{theorem_krein_abstract}
shows how the eigenvalues of self-adjoint extensions of $S$, that are contained in $\rho(A_0)$, can be characterized
by the Weyl function $M$. If the symmetric operator $S$ is simple, then {\em all} 
eigenvalues can be characterized with the help of $M$, in particular, also those that are embedded in $\sigma(A_0)$, compare~\cite[Corollary~3.4]{BR15_2}.
Note that there are also similar characterizations for the other types of the spectrum available in \cite{BR15_2},
but in our applications we restrict ourselves to find the eigenvalues.

\begin{prop} \label{proposition_Birman_Schwinger_simple}
  Let $S$ be a densely defined, closed, symmetric operator in $\mathcal{H}$, 
  let $\{ \mathcal{G}, \Gamma_0, \Gamma_1 \}$ be a quasi boundary triple for $T \subset S^*$,
  and let $M$ be the associated Weyl function.
  Moreover, let $\Theta$ be a bounded and self-adjoint operator in $\mathcal{G}$ and assume that the associated operator 
  $A_\Theta$ defined by ~\eqref{def_A_theta_abstract} is self-adjoint. Assume, in addition, that $S$ is simple.
  Then $\ran(M(\lambda) - \Theta)$ is independent of $\lambda \in \mathbb{C} \setminus \mathbb{R}$, and $\lambda \in \mathbb{R}$ is an eigenvalue of $A_\Theta$ if and only if there exists $\varphi \in \ran (M(\lambda+i \varepsilon) - \Theta)$
  such that
  \begin{equation*}
    \lim_{\varepsilon \searrow 0} i \varepsilon \big( M(\lambda + i \varepsilon) - \Theta \big)^{-1} \varphi \neq 0.
  \end{equation*}
\end{prop}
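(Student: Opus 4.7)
The first assertion, that $\ran(M(\lambda)-\Theta)$ is independent of $\lambda\in\mathbb{C}\setminus\mathbb{R}$, I would prove by combining two resolvent-type identities. From Green's identity together with $M(\bar\mu)\subset M(\mu)^*$ one obtains $M(\lambda)\psi - M(\mu)\psi = (\lambda-\mu)\gamma(\bar\mu)^*\gamma(\lambda)\psi$ for $\psi\in\ran\Gamma_0$, and from $\gamma(\bar z)^* = \Gamma_1(A_0-z)^{-1}$ and the first resolvent identity for $A_0$ one has $\gamma(\bar\mu)^* = \gamma(\bar\lambda)^* + (\mu-\lambda)\gamma(\bar\lambda)^*(A_0-\mu)^{-1}$, so in particular $\ran\gamma(\bar\mu)^* = \ran\gamma(\bar\lambda)^*$. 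Since $A_\Theta$ is self-adjoint and $\lambda,\mu\in\mathbb{C}\setminus\mathbb{R}\subset\rho(A_\Theta)\cap\rho(A_0)$, Theorem~\ref{theorem_krein_abstract}\,(iii) applies on all of $\mathcal{H}$, forcing $\ran\gamma(\bar\lambda)^*\subset\ran(\Theta-M(\lambda))$. Writing $(\Theta-M(\mu))\psi = (\Theta-M(\lambda))\psi + (\lambda-\mu)\gamma(\bar\mu)^*\gamma(\lambda)\psi$ and observing that the second summand lies in $\ran\gamma(\bar\mu)^* = \ran\gamma(\bar\lambda)^* \subset \ran(\Theta-M(\lambda))$ yields $\ran(\Theta-M(\mu))\subset\ran(\Theta-M(\lambda))$; symmetry gives equality.

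For the eigenvalue characterization I would combine Stone's formula $s\text{-}\lim_{\varepsilon\searrow 0}(-i\varepsilon)(A-\lambda-i\varepsilon)^{-1} = E_A(\{\lambda\})$ (applied to both $A_\Theta$ and $A_0$) with the Krein formula at spectral parameter $\lambda+i\varepsilon$ to obtain, up to a sign,
\begin{equation*}
E_{A_\Theta}(\{\lambda\})f - E_{A_0}(\{\lambda\})f = s\text{-}\lim_{\varepsilon\searrow 0}\gamma(\lambda+i\varepsilon)\bigl[i\varepsilon(M(\lambda+i\varepsilon)-\Theta)^{-1}\gamma(\lambda-i\varepsilon)^*f\bigr].
\end{equation*}
Since $S$ is simple, \eqref{equation_check_simple} makes $\{\gamma(\mu)\psi : \mu\in\mathbb{C}\setminus\mathbb{R},\,\psi\in\ran\Gamma_0\}$ total in $\mathcal{H}$, so it suffices to test on $f=\gamma(\mu)\psi$. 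The $M$-identity yields $\gamma(\lambda-i\varepsilon)^*\gamma(\mu)\psi = (\mu-\lambda-i\varepsilon)^{-1}[M(\mu)\psi - M(\lambda+i\varepsilon)\psi]$, and the decomposition $M(\mu)\psi - M(\lambda+i\varepsilon)\psi = -\varphi + (\Theta-M(\lambda+i\varepsilon))\psi$ with $\varphi := (M(\mu)-\Theta)\psi$ extracts an $\varepsilon$-independent $\varphi$ which, by the first assertion, lies in the common range $\ran(M(\lambda+i\varepsilon)-\Theta)$. Using $(-i\varepsilon)\gamma(\lambda+i\varepsilon)\psi \to (\lambda-\lambda_0)E_{A_0}(\{\lambda\})\gamma(\lambda_0)\psi$ (for any $\lambda_0\in\rho(A_0)$, via $\gamma(\lambda+i\varepsilon)\psi = \gamma(\lambda_0)\psi + (\lambda+i\varepsilon-\lambda_0)(A_0-\lambda-i\varepsilon)^{-1}\gamma(\lambda_0)\psi$), the contribution of $(\Theta-M(\lambda+i\varepsilon))\psi$ exactly cancels the $E_{A_0}(\{\lambda\})$ term on the left, leaving
\begin{equation*}
E_{A_\Theta}(\{\lambda\})\gamma(\mu)\psi = \frac{1}{\mu-\lambda}\lim_{\varepsilon\searrow 0}\gamma(\lambda+i\varepsilon)\bigl[i\varepsilon(M(\lambda+i\varepsilon)-\Theta)^{-1}\varphi\bigr].
\end{equation*}
Since $\lambda$ is an eigenvalue of $A_\Theta$ iff $E_{A_\Theta}(\{\lambda\})\neq 0$, totality of $\{\gamma(\mu)\psi\}$ reduces this to nonvanishing of the right-hand side for some $\mu,\psi$, which is precisely the stated condition on $\varphi$.

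The main obstacle is the passage between the $\mathcal{H}$-valued limit $\lim\gamma(\lambda+i\varepsilon)[i\varepsilon(M(\lambda+i\varepsilon)-\Theta)^{-1}\varphi]$ and the $\mathcal{G}$-valued limit $\lim i\varepsilon(M(\lambda+i\varepsilon)-\Theta)^{-1}\varphi$: one implication is immediate, but for the other I would exploit $\gamma(\lambda+i\varepsilon) = \gamma(\lambda_0) + (\lambda+i\varepsilon-\lambda_0)(A_0-\lambda-i\varepsilon)^{-1}\gamma(\lambda_0)$ together with the injectivity of $\gamma(\lambda_0)$ on $\ran\Gamma_0$ and the strong convergence $(-i\varepsilon)(A_0-\lambda-i\varepsilon)^{-1}\to E_{A_0}(\{\lambda\})$ to reconstruct the $\mathcal{G}$-level information from the $\mathcal{H}$-level one, relying on the fact that the singular part of the resolvent of $A_0$ near $\lambda$ is concentrated on the $A_0$-eigenspace, which intersects the eigenspace of $A_\Theta$ trivially by simplicity of $S$.
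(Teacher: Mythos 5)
Your first assertion and your intermediate identity are essentially correct, and they take a genuinely different route from the paper: the paper proves the $\lambda$-independence of $\ran(M(\lambda)-\Theta)$ by passing to the transformed quasi boundary triple $\{\mathcal{G},\Gamma_0^\Theta,\Gamma_1^\Theta\}$ with $\Gamma_0^\Theta=\Gamma_1-\Theta\Gamma_0$, $\Gamma_1^\Theta=-\Gamma_0$, identifying $\ran(M(\lambda)-\Theta)=\ran\Gamma_0^\Theta$, whereas your direct computation with $M(\lambda)\psi-M(\mu)\psi=(\lambda-\mu)\gamma(\overline{\mu})^*\gamma(\lambda)\psi$ and $\ran\gamma(\overline{\mu})^*=\ran\gamma(\overline{\lambda})^*\subset\ran(\Theta-M(\lambda))$ is a valid alternative. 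Likewise, your formula $E_{A_\Theta}(\{\lambda\})\gamma(\mu)\psi=\frac{1}{\mu-\lambda}\lim_{\varepsilon\searrow0}\gamma(\lambda+i\varepsilon)\bigl[i\varepsilon(M(\lambda+i\varepsilon)-\Theta)^{-1}\varphi\bigr]$ with $\varphi=(M(\mu)-\Theta)\psi$ checks out (note the sign slip: $M(\mu)\psi-M(\lambda+i\varepsilon)\psi=\varphi+(\Theta-M(\lambda+i\varepsilon))\psi$, not $-\varphi+\cdots$), and together with simplicity it characterizes $\sigma_{\rm p}(A_\Theta)$ in terms of an $\mathcal{H}$-valued limit. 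The paper, by contrast, only computes $M_\Theta(\lambda)=-(M(\lambda)-\Theta)^{-1}$ for the transformed triple and then invokes \cite[Corollary~3.4]{BR15_2}, which is exactly the abstract statement that boundary limits of the Weyl function of a triple with simple $S$ detect the eigenvalues of its distinguished self-adjoint extension.

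The genuine gap is the step you yourself flag as "the main obstacle": passing between your $\mathcal{H}$-valued limit and the $\mathcal{G}$-valued condition in the statement, and your sketch does not close it. Neither direction is immediate: if $\chi_\varepsilon:=i\varepsilon(M(\lambda+i\varepsilon)-\Theta)^{-1}\varphi\to0$ in $\mathcal{G}$ you cannot conclude $\gamma(\lambda+i\varepsilon)\chi_\varepsilon\to0$, because $\|\gamma(\lambda+i\varepsilon)\|$ may blow up as $\varepsilon\searrow0$ when $\lambda\in\sigma(A_0)$; and conversely, nonvanishing of the $\mathcal{H}$-limit gives no information on whether the $\mathcal{G}$-limit even exists, which the "only if" direction of the proposition requires. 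The tools you propose do not repair this: the relation $\gamma(\lambda+i\varepsilon)=\bigl(I+(\lambda+i\varepsilon-\lambda_0)(A_0-\lambda-i\varepsilon)^{-1}\bigr)\gamma(\lambda_0)$ lets you show at best that a hypothetical limit $\chi=\lim\chi_\varepsilon$ satisfies $\overline{\gamma(\lambda_0)}\chi\in\ran(A_0-\lambda_0)\cdot 0$-type relations forcing $\overline{\gamma(\lambda_0)}\chi=0$, but $\chi$ need not lie in $\ran\Gamma_0$, so what you would need is injectivity of the \emph{closure} $\overline{\gamma(\lambda_0)}$, i.e.\ density of $\Gamma_1(\ker\Gamma_0)$ in $\mathcal{G}$ — a property that is neither assumed nor automatic for quasi boundary triples (it holds for the concrete triple of Theorem~\ref{theorem_quasi_triple_domain}, but Proposition~\ref{proposition_Birman_Schwinger_simple} is abstract). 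The appeal to "the singular part of the resolvent of $A_0$" and to the triviality of $\ker(A_0-\lambda)\cap\ker(A_\Theta-\lambda)$ (which, incidentally, needs the uncited fact $S=T\upharpoonright(\ker\Gamma_0\cap\ker\Gamma_1)$) does not supply a mechanism controlling $i\varepsilon(M(\lambda+i\varepsilon)-\Theta)^{-1}\varphi$ itself. This is precisely what the paper's transformation buys: for the triple $\{\mathcal{G},\Gamma_0^\Theta,\Gamma_1^\Theta\}$ the quantity $-(M(\lambda+i\varepsilon)-\Theta)^{-1}\varphi$ \emph{is} the Weyl function applied to $\varphi$, its $\gamma$-field is expressed through $(A_\Theta-z)^{-1}$ rather than $(A_0-z)^{-1}$, and the delicate boundary-limit analysis is then exactly the content of \cite[Corollary~3.4]{BR15_2}. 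To make your route rigorous you would either have to reprove that result in your setting or carry out the transformation anyway.
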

\begin{proof}
  Define the boundary mappings $\Gamma_0^\Theta, \Gamma_1^\Theta: \dom T \rightarrow \mathcal{G}$ by
  \begin{equation*} 
    \Gamma_0^\Theta f := \Gamma_1 f - \Theta \Gamma_0 f \quad \text{and} \quad \Gamma_1^\Theta f = - \Gamma_0 f,
    \quad f \in \dom T.
  \end{equation*}
  We claim that $\{ \mathcal{G}, \Gamma_0^\Theta, \Gamma_1^\Theta \}$ is a quasi boundary triple for $T \subset S^*$
  with the additional property $T \upharpoonright \ker \Gamma_0^\Theta = A_\Theta$.
  In fact, using that $\Theta$ is bounded and self-adjoint we deduce from the abstract Green's identity for 
  $\{ \mathcal{G}, \Gamma_0, \Gamma_1 \}$ and for $f, g \in \dom T$ that
  \begin{equation*} 
    \begin{split}
      (T f, g)_{\mathcal{H}} - (f, T g)_{\mathcal{H}}
        &= (\Gamma_1 f, \Gamma_0 g)_{\mathcal{G}} - (\Gamma_0 f, \Gamma_1 g)_{\mathcal{G}}
        - (\Theta \Gamma_0 f, \Gamma_0 g)_{\mathcal{G}} + (\Gamma_0 f, \Theta \Gamma_0 g)_{\mathcal{G}} \\
      &= \big(-\Gamma_0 f, (\Gamma_1 - \Theta \Gamma_0) g\big)_{\mathcal{G}}
          - \big((\Gamma_1 - \Theta \Gamma_0) f, -\Gamma_0 g\big)_{\mathcal{G}} \\
      &= \big(\Gamma_1^\Theta f, \Gamma_0^\Theta g\big)_{\mathcal{G}} 
          - \big(\Gamma_0^\Theta f, \Gamma_1^\Theta g\big)_{\mathcal{G}},
    \end{split}
  \end{equation*}
  and hence the abstract Green's identity holds also for the triple $\{ \mathcal{G}, \Gamma_0^\Theta, \Gamma_1^\Theta \}$.
  
  Next, the definition of $\Gamma_0^\Theta, \Gamma_1^\Theta$ can be written equivalently as 
  \begin{equation*}
    \begin{pmatrix} \Gamma_0^\Theta \\ \Gamma_1^\Theta \end{pmatrix} 
      = B \begin{pmatrix} \Gamma_0 \\ \Gamma_1 \end{pmatrix}, \qquad
      B := \begin{pmatrix} -\Theta & 1 \\ -1 & 0 \end{pmatrix}.
  \end{equation*}
  Since $\Theta$ is bounded, it follows that $B$ is boundedly invertible with
  \begin{equation*}
    B^{-1} = \begin{pmatrix} 0 & -1 \\ 1 & -\Theta \end{pmatrix}.
  \end{equation*}
  Since $\ran (\Gamma_0, \Gamma_1)$ is dense in 
  $\mathcal{G} \times \mathcal{G}$, also 
  $\ran(\Gamma_0^\Theta, \Gamma_1^\Theta)$ is dense. Finally, the restriction
  $T \upharpoonright \ker(\Gamma_0^\Theta) = T \upharpoonright \ker (\Gamma_1 - \Theta \Gamma_0) = A_\Theta$
  is self-adjoint by assumption. Therefore $\{ \mathcal{G}, \Gamma_0^\Theta, \Gamma_1^\Theta \}$
  is a quasi boundary triple for $S^*$.
  
  Next, we compute on $\mathbb{C} \setminus \mathbb{R}$ the Weyl function $M_\Theta$ corresponding to the triple 
  $\{ \mathcal{G}, \Gamma_0^\Theta, \Gamma_1^\Theta \}$. For a fixed $\lambda \in \mathbb{C} \setminus \mathbb{R}$
  this is the mapping which is determined uniquely by the relation
  $M_\Theta(\lambda) \Gamma_0^\Theta f_\lambda = \Gamma_1^\Theta f_\lambda$ 
  for $f_\lambda \in \ker (T - \lambda)$. For such an $f_\lambda$ we compute
  \begin{equation*} 
    \begin{split}
      \Gamma_0^\Theta f_\lambda &= ( \Gamma_1 - \Theta \Gamma_0 ) f_\lambda
         = \big( M(\lambda) - \Theta \big) \Gamma_0  f_\lambda
         = - \big( M(\lambda) - \Theta \big) \Gamma_1^\Theta  f_\lambda.
    \end{split}
  \end{equation*}
  Note that $M(\lambda) - \Theta$ is invertible by Theorem~\ref{theorem_krein_abstract},
  as otherwise the self-adjoint operator $A_\Theta$ would have the non-real eigenvalue $\lambda$.
  Thus, we conclude
  \begin{equation*}
    M_\Theta(\lambda) = -(M(\lambda) - \Theta )^{-1}.
  \end{equation*}
  In particular, this implies that $\dom M_\Theta(\lambda) = \ran (M(\lambda) - \Theta) = \ran \Gamma_0^\Theta$ is independent of $\lambda \in \rho(A_\Theta)$.
  
  After all these preparations the claim of the proposition follows from \cite[Corollary~3.4]{BR15_2}
  applied to the quasi boundary triple $\{ \mathcal{G}, \Gamma_0^\Theta, \Gamma_1^\Theta \}$, as $S$ is simple.
\end{proof}

Let $\{ \mathcal{G}, \Gamma_0, \Gamma_1 \}$ be a quasi boundary triple for $T \subset S^*$ and let $B$ be a symmetric operator in $\mathcal{G}$. In some applications it is more convenient to consider 
\begin{equation} \label{def_A_B_abstract}
  A_{[B]} := T \upharpoonright \ker (B \Gamma_1 - \Gamma_0)
\end{equation}
instead of~\eqref{def_A_theta_abstract}. Similarly as above, a symmetric operator $B$ leads to a symmetric extension 
$A_{[B]}$ of $S$, but $B$ being self-adjoint does not imply that also $A_{[B]}$ is self-adjoint. But we have the 
following counterpart of Theorem~\ref{theorem_krein_abstract}, which gives in item~(ii) similarly as above an efficient tool to check the self-adjointness of
$A_{[B]}$; cf. \cite[Theorem~2.6]{BLL13_2}.

\begin{thm} \label{theorem_krein_abstract_B}
  Let $S$ be a densely defined, closed, symmetric operator in $\mathcal{H}$, 
  let $\{ \mathcal{G}, \Gamma_0, \Gamma_1 \}$ be a quasi boundary triple for $T \subset S^*$,
  set $A_0 := T \upharpoonright \ker \Gamma_0$,
  and let $\gamma$ and $M$ be the associated $\gamma$-field and Weyl function, respectively.
  Moreover, let $B$ be a symmetric operator in $\mathcal{G}$ and let the associated operator 
  $A_{[B]}$ be defined by~\eqref{def_A_B_abstract}. Then the following statements hold
  for $\lambda \in \rho(A_0)$:
  \begin{itemize}
    \item[$\textup{(i)}$] $\lambda \in \sigma_{\textup{p}}(A_{[B]})$ if and only if 
    $1 \in \sigma_{\textup{p}} (B M(\lambda))$. Furthermore, one has
    \begin{equation*}
      \ker(A_{[B]} - \lambda) = \bigl\{ \gamma(\lambda) \varphi: \varphi \in \ker (I - B M(\lambda)) \bigr\}.
    \end{equation*}
    \item[$\textup{(ii)}$] If $\lambda \notin \sigma_{\textup{p}}(A_{[B]})$ and $B \gamma(\overline{\lambda})^* f \in \ran ( I - B M(\lambda))$, then $f \in \ran (A_{[B]} - \lambda)$.
    \item[$\textup{(iii)}$] If $\lambda \notin \sigma_{\textup{p}}(A_{[B]})$, then 
    \begin{equation*}
      (A_{[B]} - \lambda)^{-1} f = (A_0 - \lambda)^{-1} f 
          + \gamma(\lambda) (I - B M(\lambda))^{-1} B \gamma(\overline{\lambda})^* f
    \end{equation*}
    for all $f \in \ran (A_{[B]} - \lambda)$.
  \end{itemize}
\end{thm}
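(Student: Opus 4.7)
The plan is to adapt the proof of Theorem~\ref{theorem_krein_abstract}, using the standing decomposition
\[
 \dom T = \dom A_0 \,\dot{+}\, \ker(T-\lambda), \qquad \lambda \in \rho(A_0),
\]
together with the identities $\Gamma_1\gamma(\lambda) = M(\lambda)$ on $\ran\Gamma_0$ and $\Gamma_1(A_0-\lambda)^{-1} = \gamma(\overline{\lambda})^*$ on $\mathcal{H}$, and the fact that $\Gamma_0$ restricted to $\ker(T-\lambda)$ is a bijection onto $\ran\Gamma_0$ with inverse $\gamma(\lambda)$. A tempting shortcut would be to rewrite the boundary condition $B\Gamma_1 = \Gamma_0$ as a condition of the form $\widetilde\Gamma_1 = \Theta\widetilde\Gamma_0$ for a new quasi boundary triple and apply Theorem~\ref{theorem_krein_abstract} directly, but since $B$ is only symmetric and potentially unbounded, such a reduction is not immediate; a direct argument is cleaner.

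For item~(i), I would observe that if $f_\lambda\in\ker(A_{[B]}-\lambda)\setminus\{0\}$ then $f_\lambda \in \ker(T-\lambda)$, so $f_\lambda = \gamma(\lambda)\varphi$ for a unique $\varphi = \Gamma_0 f_\lambda \in \ran\Gamma_0$ with $\varphi\neq 0$. The boundary condition $\Gamma_0 f_\lambda = B\Gamma_1 f_\lambda$ then reads $\varphi = BM(\lambda)\varphi$, i.e.\ $\varphi\in\ker(I-BM(\lambda))$. Conversely, any such $\varphi$ lies in $\dom M(\lambda) = \ran\Gamma_0$, so $f_\lambda := \gamma(\lambda)\varphi \in \ker(T-\lambda)$ satisfies $\Gamma_0 f_\lambda = \varphi = BM(\lambda)\varphi = B\Gamma_1 f_\lambda$ and belongs to $\ker(A_{[B]}-\lambda)$.

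The proofs of (ii) and (iii) both hinge on one algebraic identity. Decomposing an arbitrary $g\in\dom A_{[B]}$ as
\[
 g = (A_0-\lambda)^{-1}(A_{[B]}-\lambda)g + \gamma(\lambda)\Gamma_0 g,
\]
applying $\Gamma_1$, and plugging into $\Gamma_0 g = B\Gamma_1 g$ yields
\[
 (I - BM(\lambda))\Gamma_0 g = B\gamma(\overline{\lambda})^*(A_{[B]}-\lambda)g.
\]
For (iii), assuming $\lambda\notin\sigma_{\rm p}(A_{[B]})$ and setting $f := (A_{[B]}-\lambda)g$, item~(i) tells us $I - BM(\lambda)$ is injective, so $\Gamma_0 g = (I - BM(\lambda))^{-1}B\gamma(\overline{\lambda})^* f$; substituting back into the decomposition of $g$ produces the Krein resolvent formula. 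For (ii), given $f$ with $B\gamma(\overline{\lambda})^* f = (I-BM(\lambda))\psi$ for some $\psi\in\dom M(\lambda) = \ran\Gamma_0$, I would define $g := (A_0-\lambda)^{-1}f + \gamma(\lambda)\psi$ and verify directly that $\Gamma_0 g = \psi$, $\Gamma_1 g = \gamma(\overline{\lambda})^* f + M(\lambda)\psi$, the boundary condition $B\Gamma_1 g = \Gamma_0 g$ holds, and $(A_{[B]}-\lambda)g = f$.

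The main subtlety, as usual with quasi boundary triples, is respecting the non-closed domains $\ran\Gamma_0 = \dom\gamma(\lambda) = \dom M(\lambda)$: the hypothesis in~(ii) that $B\gamma(\overline{\lambda})^* f\in\ran(I-BM(\lambda))$ is tailored precisely to produce a $\psi$ in this correct domain, so that the candidate $g$ genuinely lies in $\dom T$. Beyond this bookkeeping the argument is entirely algebraic and parallels the proof of Theorem~\ref{theorem_krein_abstract} step by step.
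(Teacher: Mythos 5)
The paper never proves Theorem~\ref{theorem_krein_abstract_B}: it is imported from the literature via the citation \cite[Theorem~2.6]{BLL13_2}, exactly as Theorem~\ref{theorem_krein_abstract} is imported from \cite[Theorem~2.8]{BL07}, so there is no in-paper proof to compare against. Measured against the standard quasi-boundary-triple argument (which is what the cited reference does), your proposal is correct and follows precisely that route: item (i) via the bijection $\gamma(\lambda)=(\Gamma_0\upharpoonright\ker(T-\lambda))^{-1}$, and items (ii)--(iii) via the decomposition $g=(A_0-\lambda)^{-1}(A_{[B]}-\lambda)g+\gamma(\lambda)\Gamma_0 g$, the identity $\gamma(\overline{\lambda})^{*}=\Gamma_1(A_0-\lambda)^{-1}$, and the resulting relation $(I-BM(\lambda))\Gamma_0 g=B\gamma(\overline{\lambda})^{*}(A_{[B]}-\lambda)g$. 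Your choice not to reduce to Theorem~\ref{theorem_krein_abstract} by transforming the boundary condition is also reasonable; the paper does perform such a transformation in Proposition~\ref{proposition_Birman_Schwinger_simple_B}, but only under the additional hypothesis that $B$ is bounded.

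One point you should make explicit, since the theorem assumes only that $B$ is symmetric (possibly unbounded): in the derivation of the key identity for (iii) you split $B\Gamma_1 g=B\gamma(\overline{\lambda})^{*}f+BM(\lambda)\Gamma_0 g$, which needs the two summands of $\Gamma_1 g$ to lie in $\dom B$ individually, not merely their sum. This is automatic when $B$ is bounded (the only case the paper uses, $B=\omega$), and in the unbounded case it is exactly the implicit requirement that the right-hand side of (iii) be well defined, namely $\gamma(\overline{\lambda})^{*}f\in\dom B$; once that is granted, $M(\lambda)\Gamma_0 g=\Gamma_1 g-\gamma(\overline{\lambda})^{*}f\in\dom B$ because $\dom B$ is a linear subspace, and the splitting is legitimate. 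In (ii) no such issue arises, since the hypothesis $B\gamma(\overline{\lambda})^{*}f\in\ran(I-BM(\lambda))$ already encodes the needed domain membership, and your verifications of (i) and (ii) are complete as written.
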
 

Note that for $\lambda\in\rho(A_{[B]})\cap\rho(A_0)$ the resolvent formula in Theorem~\ref{theorem_krein_abstract_B}~(iii) reads as
\begin{equation*}
      (A_{[B]} - \lambda)^{-1}  = (A_0 - \lambda)^{-1}  
          + \gamma(\lambda) (I - B M(\lambda))^{-1} B \gamma(\overline{\lambda})^*.
    \end{equation*}

Finally, we state the counterpart of Proposition~\ref{proposition_Birman_Schwinger_simple} for extensions 
$A_{[B]}$ given by~\eqref{def_A_B_abstract} to detect all eigenvalues of $A_{[B]}$.

\begin{prop} \label{proposition_Birman_Schwinger_simple_B}
  Let $S$ be a densely defined, closed, symmetric operator in $\mathcal{H}$, 
  let $\{ \mathcal{G}, \Gamma_0, \Gamma_1 \}$ be a quasi boundary triple for $T \subset S^*$,
  and let $M$ be the associated Weyl function.
  Moreover, let $B$ be a bounded and self-adjoint operator in $\mathcal{G}$ and assume that the associated operator 
  $A_{[B]}$ defined by ~\eqref{def_A_B_abstract} is self-adjoint. Assume, in addition, that $S$ is simple.
  Then $\ran (I - B M(\lambda))$ is independent of $\lambda \in \mathbb{C} \setminus \mathbb{R}$, and $\lambda \in \mathbb{R}$ is an eigenvalue of $A_{[B]}$ if and only if there exists $\varphi \in \ran (I - B M(\lambda + i \varepsilon))$
  such that
  \begin{equation*}
    \lim_{\varepsilon \searrow 0} i \varepsilon M(\lambda + i \varepsilon) \big( I - B M(\lambda + i \varepsilon) \big)^{-1} \varphi \neq 0.
  \end{equation*}
\end{prop}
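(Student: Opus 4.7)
The plan is to mirror the proof of Proposition~\ref{proposition_Birman_Schwinger_simple}: I would construct a new quasi boundary triple $\{\mathcal{G}, \Gamma_0^B, \Gamma_1^B\}$ for $T \subset S^*$ whose distinguished self-adjoint extension $T \upharpoonright \ker \Gamma_0^B$ coincides with $A_{[B]}$, compute the associated Weyl function $M_B$ explicitly in terms of $M$ and $B$, and then apply \cite[Corollary~3.4]{BR15_2}, whose use is legitimate because the underlying simple symmetric operator $S$ is unchanged by such a transformation of boundary mappings.

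Since $A_{[B]} = T \upharpoonright \ker(B \Gamma_1 - \Gamma_0)$, the natural choice is
\begin{equation*}
  \Gamma_0^B := B \Gamma_1 - \Gamma_0, \qquad \Gamma_1^B := -\Gamma_1.
\end{equation*}
Self-adjointness of $B$ makes the cross terms $(\Gamma_1 f, B\Gamma_1 g)_\mathcal{G} - (B\Gamma_1 f, \Gamma_1 g)_\mathcal{G}$ cancel, so Green's identity for $\{\mathcal{G}, \Gamma_0, \Gamma_1\}$ immediately yields
\begin{equation*}
  (\Gamma_1^B f, \Gamma_0^B g)_\mathcal{G} - (\Gamma_0^B f, \Gamma_1^B g)_\mathcal{G}
   = (\Gamma_1 f, \Gamma_0 g)_\mathcal{G} - (\Gamma_0 f, \Gamma_1 g)_\mathcal{G}
   = (Tf,g)_\mathcal{H} - (f,Tg)_\mathcal{H}.
\end{equation*}
The transformation $(\Gamma_0,\Gamma_1)^\top \mapsto (\Gamma_0^B, \Gamma_1^B)^\top$ is implemented by the matrix $\bigl(\begin{smallmatrix} -1 & B \\ 0 & -1 \end{smallmatrix}\bigr)$, which is boundedly invertible since $B$ is bounded; hence the density of $\ran(\Gamma_0^B, \Gamma_1^B)^\top$ in $\mathcal{G}\times \mathcal{G}$ follows from the corresponding property of $(\Gamma_0,\Gamma_1)^\top$. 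Combined with $T \upharpoonright \ker \Gamma_0^B = A_{[B]}$ being self-adjoint by hypothesis, this verifies all three quasi boundary triple axioms.

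Next I would compute $M_B$. For $\lambda \in \rho(A_{[B]}) \cap \rho(A_0)$ and $f_\lambda \in \ker(T-\lambda)$, the relations $\Gamma_1 f_\lambda = M(\lambda)\Gamma_0 f_\lambda$ give
\begin{equation*}
  \Gamma_0^B f_\lambda = \bigl(B M(\lambda) - I\bigr)\Gamma_0 f_\lambda, \qquad \Gamma_1^B f_\lambda = -M(\lambda)\Gamma_0 f_\lambda.
\end{equation*}
Theorem~\ref{theorem_krein_abstract_B}(i) combined with the self-adjointness of $A_{[B]}$ forces $I - BM(\lambda)$ to be injective for $\lambda \in \mathbb{C} \setminus \mathbb{R}$ (otherwise $A_{[B]}$ would have a non-real eigenvalue), so solving for $\Gamma_0 f_\lambda$ and substituting yields
\begin{equation*}
  M_B(\lambda) = M(\lambda)\bigl(I - B M(\lambda)\bigr)^{-1},
\end{equation*}
with $\dom M_B(\lambda) = \ran \Gamma_0^B = \ran (I - BM(\lambda))$. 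In particular this range is independent of $\lambda \in \mathbb{C}\setminus \mathbb{R}$ by the general theory of quasi boundary triples.

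Finally, since $S$ is simple, \cite[Corollary~3.4]{BR15_2} applied to the triple $\{\mathcal{G}, \Gamma_0^B, \Gamma_1^B\}$ gives that $\lambda \in \mathbb{R}$ is an eigenvalue of $T \upharpoonright \ker \Gamma_0^B = A_{[B]}$ if and only if there exists $\varphi \in \dom M_B(\lambda+i\varepsilon) = \ran(I - BM(\lambda+i\varepsilon))$ with $\lim_{\varepsilon \searrow 0} i\varepsilon M_B(\lambda+i\varepsilon)\varphi \neq 0$, which is exactly the asserted condition after inserting the formula for $M_B$. I do not anticipate a real obstacle here: the only piece requiring care is the bookkeeping around the range and invertibility of $I - BM(\lambda)$ on the correct subspace, and this is handled by Theorem~\ref{theorem_krein_abstract_B}(i) exactly as the corresponding point was handled in the proof of Proposition~\ref{proposition_Birman_Schwinger_simple}.
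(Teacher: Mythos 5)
Your proposal is correct and takes essentially the same approach as the paper: you pass to a transformed quasi boundary triple whose $\Gamma_0$-kernel extension is $A_{[B]}$, compute its Weyl function $M_B(\lambda)=M(\lambda)\bigl(I-BM(\lambda)\bigr)^{-1}$, and apply \cite[Corollary~3.4]{BR15_2} using the simplicity of $S$. The only (harmless) deviations are an overall sign in the new boundary maps and that you solve for $\Gamma_0 f_\lambda$ directly via the injectivity of $I-BM(\lambda)$, whereas the paper chooses $\Gamma_1^{[B]}=\Gamma_1$ and additionally invokes the invertibility of $M(\lambda)$ (via Theorem~\ref{theorem_krein_abstract}) in its computation.
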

\begin{proof}
  The proof is very similar as the one of Proposition~\ref{proposition_Birman_Schwinger_simple}, so we only sketch the main differences here.
  Define the boundary mappings $\Gamma_0^{[B]}, \Gamma_1^{[B]}: \dom T \rightarrow \mathcal{G}$ by
  \begin{equation*} 
    \Gamma_0^{[B]} f := \Gamma_0 f - B \Gamma_1 f \quad \text{and} \quad \Gamma_1^{[B]} f = \Gamma_1 f,
    \quad f \in \dom T.
  \end{equation*}
  Then one verifies with the same arguments as in the proof of Proposition~\ref{proposition_Birman_Schwinger_simple} that $\{ \mathcal{G}, \Gamma_0^{[B]}, \Gamma_1^{[B]} \}$ is a quasi boundary triple for $T \subset S^*$ with $T \upharpoonright \Gamma_0^{[B]} = A_{[B]}$.
  
  If we are able to compute the Weyl function $M_{[B]}(\lambda)$ corresponding to the triple $\{ \mathcal{G}, \Gamma_0^{[B]}, \Gamma_1^{[B]} \}$ for $\lambda \in \mathbb{C} \setminus \mathbb{R}$, then we can apply again \cite[Corollary~3.4]{BR15_2} to characterize all eigenvalues of $A_{[B]}$. Let $\lambda \in \mathbb{C} \setminus \mathbb{R}$ and $f_\lambda \in \ker (T - \lambda)$ be fixed. Note that $M(\lambda)$ is invertible, as otherwise the symmetric operator $T \upharpoonright \ker \Gamma_1$ would have the non-real eigenvalue $\lambda$, cf. Theorem~\ref{theorem_krein_abstract}~(i).  
  Hence $M(\lambda) \Gamma_0 f_\lambda = \Gamma_1 f_\lambda$ implies $\Gamma_0 f_\lambda = M(\lambda)^{-1} \Gamma_1 f_\lambda$, which yields
  \begin{equation*} 
    \begin{split}
      \Gamma_0^{[B]} f_\lambda &= ( \Gamma_0 - B \Gamma_1 ) f_\lambda
         = ( I - B M(\lambda) ) M(\lambda)^{-1} \Gamma_1^{[B]} f_\lambda.
    \end{split}
  \end{equation*}
  Note that $I - B M(\lambda)$ is invertible by Theorem~\ref{theorem_krein_abstract_B},
  as otherwise the self-adjoint operator $A_{[B]}$ would have the non-real eigenvalue $\lambda$.
  Thus, we conclude
  \begin{equation*}
    M_{[B]}(\lambda) = M(\lambda) (I - B M(\lambda) )^{-1}.
  \end{equation*}
  This implies, in particular, that $\dom M_{[B]}(\lambda) = \ran (I - B M(\lambda)) = \ran \Gamma_0^{[B]}$ is independent of $\lambda \in \rho(A_{[B]})$.
  
  After all these preparations the claim of the proposition follows from \cite[Corollary~3.4]{BR15_2}
  applied to the quasi boundary triple $\{ \mathcal{G}, \Gamma_0^{[B]}, \Gamma_1^{[B]} \}$, as $S$ is simple.
\end{proof}

\section{The minimal, the maximal, the MIT bag, and some associated integral operators} \label{section_preliminary_ops}

In this section we provide some facts on Dirac operators and associated integral operators. First, we collect some properties of 
the minimal and the maximal realization of the Dirac operator on a domain $\Omega \subset \mathbb{R}^3$. 
Then we introduce and discuss the MIT bag operator, which is a distinguished self-adjoint realization of the Dirac operator 
in $\Omega$, and which serves as a reference operator later. Finally, we introduce several families of integral operators which will play 
a crucial role in Section~\ref{section_boundary_triples_domain} and Section~\ref{section_Dirac_domain} in the proofs of the main results of this paper.
Throughout this section let $\Omega$ be a $C^2$-domain in $\mathbb{R}^3$ with compact boundary, 
that is, $\Omega$ is either a bounded $C^2$-domain
or the complement of the closure of such a set. The unit normal vector field at $\partial \Omega$ pointing outwards $\Omega$ is denoted by $\nu$.

\subsection{The minimal and the maximal Dirac operator}

We are going to study the following two operators
acting in $L^2(\Omega; \mathbb{C}^4)$:
The {\it maximal Dirac operator}
\begin{equation} \label{def_maximal_op}
  \begin{split}
    T_{\textup{max}} f &= -i \alpha \cdot \nabla f + m \beta f, \\
    \quad \dom T_{\textup{max}} &= \big\{ f \in L^2(\Omega; \mathbb{C}^4): 
    \alpha \cdot \nabla f \in L^2(\Omega; \mathbb{C}^4) \big\},
  \end{split}
\end{equation}
where the derivatives are understood in the distributional sense,
and the {\it minimal Dirac operator} $T_{\textup{min}} = T_{\textup{max}} \upharpoonright H^1_0(\Omega; \mathbb{C}^4)$,
which is given in a  more explicit form by
\begin{equation} \label{def_minimal_op}
  T_{\textup{min}} f = -i \alpha \cdot \nabla f + m \beta f, \quad \dom T_{\textup{min}} = H^1_0(\Omega; \mathbb{R}^3).
\end{equation}
Some basic and well-known properties of $T_{\textup{min}}$ and $T_{\textup{max}}$ are collected in the following lemma; cf. 
\cite[Proposition~3.1]{BH17}, \cite[Lemma~2.1]{BFSB17_1}, and \cite[Proposition~2.10 and Proposition~2.12]{OV17}.

\begin{lem} \label{lemma_minimal_maximal_op}
  Let $\Omega \subset \mathbb{R}^3$ be a $C^2$-domain with compact boundary and
  let $T_{\textup{max}}$ and $T_{\textup{min}}$ be defined by \eqref{def_maximal_op} and 
  \eqref{def_minimal_op}, respectively. Then $T_{\textup{min}}$ is a densely defined, closed, symmetric operator 
  in $L^2(\Omega; \mathbb{C}^4)$ and we have
  $$T_{\textup{min}}^* = T_{\textup{max}}\qquad\text{and}\qquad T_{\textup{min}} = T_{\textup{max}}^*.$$
  Moreover, $C^\infty(\overline{\Omega}; \mathbb{C}^4)$ is dense in 
  $\dom T_{\textup{max}}$ with respect to the graph norm.
\end{lem}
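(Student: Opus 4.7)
The plan is to proceed in the standard order: first show that $T_{\min}$ is densely defined, symmetric, and closed; then identify $T_{\min}^* = T_{\max}$ by a distributional duality argument, from which $T_{\min} = T_{\max}^*$ follows because $T_{\min}$ is closed; and finally establish the density of $C^\infty(\overline{\Omega};\mathbb{C}^4)$ in $\dom T_{\max}$, which is the main technical point.

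First, density of $\dom T_{\min}$ is immediate from $C_0^\infty(\Omega;\mathbb{C}^4) \subset H^1_0(\Omega;\mathbb{C}^4) \subset L^2(\Omega;\mathbb{C}^4)$, and symmetry follows from integration by parts on $H^1_0$, using that $\alpha_j$ and $\beta$ are Hermitian and that boundary traces of $H^1_0$-functions vanish. For closedness I would derive, for $f \in H^1_0(\Omega;\mathbb{C}^4)$, the identity
\begin{equation*}
  \|T_{\min} f\|_{L^2}^2 = \|\nabla f\|_{L^2}^2 + m^2 \|f\|_{L^2}^2,
\end{equation*}
which reflects the formal relation $T_{\min}^2 = -\Delta + m^2 I_4$ granted by the anti-commutation relations~\eqref{eq_commutation}; one verifies it first for $f \in C_0^\infty(\Omega;\mathbb{C}^4)$ by integration by parts and then extends it to $H^1_0$ by density. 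Hence the graph norm of $T_{\min}$ is equivalent to the $H^1$-norm on $H^1_0(\Omega;\mathbb{C}^4)$, which is complete, so $T_{\min}$ is closed.

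For the adjoint, if $f \in C_0^\infty(\Omega;\mathbb{C}^4)$ and $g \in \dom T_{\max}$, then the distributional identity $(\alpha\cdot\nabla g, f)_\Omega = -(g, \alpha\cdot\nabla f)_\Omega$, together with the Hermitianity of $\alpha_j$ and $\beta$, yields $(T_{\max} g, f)_\Omega = (g, T_{\min} f)_\Omega$; by density of $C_0^\infty$ in $H^1_0$ this extends to all $f \in \dom T_{\min}$, giving $T_{\max} \subseteq T_{\min}^*$. Conversely, if $g \in \dom T_{\min}^*$ with $T_{\min}^* g = h$, then the identity $(T_{\min} f, g)_\Omega = (f, h)_\Omega$ tested against $f \in C_0^\infty(\Omega;\mathbb{C}^4)$ says exactly that $-i\alpha\cdot\nabla g + m\beta g = h$ in the distributional sense, whence $g \in \dom T_{\max}$ and $T_{\max} g = h$. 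Thus $T_{\min}^* = T_{\max}$, and taking adjoints gives $T_{\max}^* = T_{\min}^{**} = T_{\min}$ since $T_{\min}$ is closed.

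The hard part is the density of $C^\infty(\overline{\Omega};\mathbb{C}^4)$ in $\dom T_{\max}$ with respect to the graph norm. I would proceed by localization and regularization: fix a finite open cover of $\overline{\Omega}$ consisting of an interior set $U_0 \Subset \Omega$ and boundary sets $U_1,\dots,U_N$ each carrying a $C^2$-chart flattening $\partial\Omega$ into a piece of $\{x_3=0\}$, together with a subordinate partition of unity $\{\chi_i\}$. Given $f \in \dom T_{\max}$, the interior piece $\chi_0 f$ has compact support in $\Omega$ and is approximated in the graph norm by standard Friedrichs mollifiers, the error being controlled by the bounded multiplier $[T_{\max}, \chi_0]$. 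Near the boundary, after transporting $\chi_i f$ to the half-space, I would translate it by a small $\varepsilon > 0$ in the inward normal direction so that its support lies strictly inside the open half-space, and then convolve with a mollifier of scale less than $\varepsilon$. Since $T_{\max}$ has constant coefficients it commutes with translation and mollification, and in the pullback chart the discrepancy coming from the Jacobian is a bounded lower-order perturbation vanishing in $L^2$ as $\varepsilon \to 0$. Summing the pieces and pulling back yields an element of $C^\infty(\overline{\Omega};\mathbb{C}^4)$ approximating $f$ in the graph norm. The main subtlety lies in the interplay between the chart pullbacks and the first-order Dirac operator, for which the $C^2$-regularity of $\partial\Omega$ is essential to ensure the transformed coefficients are $C^1$ and the commutator estimates are uniform in $\varepsilon$.
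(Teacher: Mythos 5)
Your first three steps are correct and standard: the identity $\|T_{\textup{min}}f\|_\Omega^2=\|\nabla f\|_\Omega^2+m^2\|f\|_\Omega^2$ on $H^1_0(\Omega;\mathbb{C}^4)$ does give closedness, and the distributional duality argument correctly yields $T_{\textup{min}}^*=T_{\textup{max}}$ and then $T_{\textup{max}}^*=T_{\textup{min}}$. Note that the paper itself does not prove this lemma but quotes it from \cite{BH17}, \cite{BFSB17_1}, and \cite{OV17}, so the only part that needs real work is the one you call the hard part, and that is where your argument has a genuine gap.

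In the boundary step you flatten with a $C^2$-chart \emph{first} and then translate and mollify, asserting that ``the discrepancy coming from the Jacobian is a bounded lower-order perturbation vanishing in $L^2$.'' That is not correct as stated: after the chart pullback the principal part becomes $-i\sum_j A_j(y)\partial_j$ with variable (only $C^1$) matrix coefficients, so translation and convolution no longer commute with the operator, and the discrepancy is the \emph{first-order} term $\sum_j\bigl(A_j(y)-A_j(y+\varepsilon e)\bigr)(\partial_j\widetilde{u})(y+\varepsilon e)$, not a zeroth-order one. For $f\in\dom T_{\textup{max}}$ the individual derivatives $\partial_j\widetilde{u}$ are not in $L^2$ — only the combination $\alpha\cdot\nabla f$ is — so the $O(\varepsilon)$ sup-norm bound on the coefficient difference buys you nothing, and the step fails as written. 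Two standard repairs: (a) do not flatten at all; since $-i\alpha\cdot\nabla+m\beta$ has constant coefficients, choose for each boundary patch a fixed direction $e$ pointing uniformly into $\Omega$, translate $\chi_i f$ by $\varepsilon e$ in the \emph{original} coordinates (this commutes exactly with $T_{\textup{max}}$), and then mollify at scale $\delta<c\varepsilon$; the only commutator that appears is $[T_{\textup{max}},\chi_i]=-i(\alpha\cdot\nabla\chi_i)$, a bounded multiplication — this is essentially the argument in the sources the paper cites; or (b) keep the flattening, but then you must invoke a Friedrichs-type commutator lemma for $[A_j,\rho_\delta*\tau_{\varepsilon e}]\partial_j$, whose operator norm on $L^2$ is of order $\mathrm{Lip}(A_j)\,(1+\varepsilon/\delta)$, so you must couple $\delta\sim\varepsilon$ and use strong convergence of the commutators to zero; a blanket ``bounded lower-order perturbation'' claim does not suffice. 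Finally, a minor omission: your cover with $U_0\Subset\Omega$ implicitly assumes $\Omega$ bounded, whereas the lemma also covers exterior domains; there you additionally need cutoffs $\chi_R$ at infinity, which is harmless since $\|[T_{\textup{max}},\chi_R]f\|_\Omega=\|(\alpha\cdot\nabla\chi_R)f\|_\Omega\to0$ as $R\to\infty$, but it should be said (recall that $C^\infty(\overline{\Omega};\mathbb{C}^4)$ is defined in the paper via restrictions of compactly supported functions).
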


Observe that for 
$f,g\in H^1(\Omega; \mathbb{C}^4)$ the integration by parts formula 
\begin{equation*}
\big( \alpha \cdot \nabla  f, g \big)_{\Omega}
          + \big( f,  \alpha \cdot \nabla  g \big)_{\Omega} 
      = \big( (\alpha \cdot \nu) f|_{\partial \Omega}, g|_{\partial \Omega}\big)_{\partial \Omega}
\end{equation*}
implies the identity
\begin{equation} \label{integration_by_parts}
 \big( (-i \alpha \cdot \nabla + m \beta) f, g \big)_\Omega - \big( f, (-i \alpha \cdot \nabla + m \beta)  g \big)_\Omega
 = \big( - i (\alpha \cdot \nu) f|_{\partial \Omega}, g|_{\partial \Omega}\big)_{\partial \Omega}.
\end{equation}

In the next proposition we verify that $T_\textup{min}$ is a simple symmetric operator, that is, there exists no nontrivial invariant 
subspace for $T_\textup{min}$ in $L^2(\Omega; \mathbb{C}^4)$ on which $T_\textup{min}$ reduces to a self-adjoint operator. 
The simplicity of $T_\textup{min}$ is essential in Proposition~\ref{proposition_basic_spectral_properties_bounded_domain} 
and Proposition~\ref{proposition_basic_spectral_properties_bounded_domain_A_omega}
for the characterization of eigenvalues of self-adjoint extensions of $T_\textup{min}$ which are embedded in the spectrum of 
the MIT bag operator.

\begin{prop} \label{proposition_T_min_simple}
  The operator $T_\textup{min}$ in \eqref{def_minimal_op} is simple.
\end{prop}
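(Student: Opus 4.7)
The plan is to verify the density criterion \eqref{equation_check_simple}: suppose $f \in L^2(\Omega;\mathbb{C}^4)$ is orthogonal to $\ker(T_{\max}-\lambda)$ for every $\lambda \in \mathbb{C}\setminus\mathbb{R}$; I aim to conclude $f = 0$. Let $A_0$ be the self-adjoint realisation of the free Dirac operator on $L^2(\mathbb{R}^3;\mathbb{C}^4)$ with domain $H^1(\mathbb{R}^3;\mathbb{C}^4)$, set $\Omega_e := \mathbb{R}^3\setminus\overline{\Omega}$, and write $\iota f \in L^2(\mathbb{R}^3;\mathbb{C}^4)$ for the extension of $f$ by zero. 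The first task is to exhibit sufficiently many explicit elements of $\ker(T_{\max}-\lambda)$: for each $y_0 \in \Omega_e$ and $\psi \in \mathbb{C}^4$, the function $x \mapsto G_\lambda^{A_0}(x - y_0)\psi$, where $G_\lambda^{A_0}$ is the integral kernel of $(A_0-\lambda)^{-1}$, is smooth on $\Omega$, belongs to $H^1(\Omega;\mathbb{C}^4) \subset \dom T_{\max}$, and is annihilated by $T_{\max}-\lambda$ on $\Omega$ since its only singularity $y_0$ lies outside $\overline{\Omega}$. Testing the orthogonality against these functions and using self-adjointness of $A_0$, so that $((A_0-\lambda)^{-1})^* = (A_0-\overline{\lambda})^{-1}$, the hypothesis on $f$ translates into
\[
  \bigl((A_0 - \mu)^{-1}\iota f\bigr)(y) = 0 \quad \text{for a.e.\ } y \in \Omega_e \text{ and every } \mu \in \mathbb{C}\setminus\mathbb{R}.
\]

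The core idea is to pass from $A_0$ to its square $L := A_0^2 = -\Delta + m^2$, which acts componentwise and has the connected resolvent set $\rho(L) = \mathbb{C}\setminus[m^2,\infty)$. Since $-\mu \in \mathbb{C}\setminus\mathbb{R}$ whenever $\mu$ is, both $(A_0\mp\mu)^{-1}\iota f$ vanish on $\Omega_e$; subtracting and using the algebraic identity
\[
  (A_0 - \mu)^{-1} - (A_0 + \mu)^{-1} = 2\mu\,(L - \mu^2)^{-1},
\]
followed by cancellation of the nonzero scalar $2\mu$, I deduce that $(L - z)^{-1}\iota f = 0$ on $\Omega_e$ for every $z = \mu^2 \in \mathbb{C}\setminus[0,\infty)$. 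Analytic continuation of the resolvent in $z$ along the connected set $\rho(L)$ then yields the same vanishing for every $z \in \rho(L)$.

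The final step converts this information into a Fourier statement about $\iota f$ itself. Pairing $(L-z)^{-1}\iota f$ against $\phi \in C_0^\infty(\Omega_e;\mathbb{C}^4)$ and invoking Stone's formula (together with the fact that $L$ has purely absolutely continuous spectrum on $[m^2,\infty)$), the density of the spectral measure of $L$ applied to $\iota f$ must vanish on $\Omega_e$ for almost every $\nu > m^2$. Writing $r := \sqrt{\nu - m^2}$, this density equals, up to a nonzero factor depending only on $r$, the function
\[
  h_r(x) := \int_{S^2} \widehat{\iota f}(r\omega)\,e^{i r \omega \cdot x}\,d\omega, \qquad x \in \mathbb{R}^3,
\]
which is entire in $x$. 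Since $h_r$ vanishes on the nonempty open set $\Omega_e$, analyticity forces $h_r \equiv 0$ on $\mathbb{R}^3$, and the injectivity of the spherical Herglotz transform (equivalently, the totality of the plane waves $\{e^{ir\omega\cdot x}: x \in \mathbb{R}^3\}$ in $L^2(S^2;\mathbb{C}^4)$) then gives $\widehat{\iota f}(r\omega) = 0$ for a.e.\ $r > 0$ and $\omega \in S^2$, hence $f = 0$. The step I expect to be most delicate is the Stone-formula passage from the resolvent identity on $\rho(L)$ to the spectral density, which has to be justified in a form compatible with local testing on $\Omega_e$ and uniformly in the bounded and unbounded cases of $\Omega$; this is, however, standard material for the free Helmholtz operator and presents no genuine obstacle.
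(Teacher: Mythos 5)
Your proposal is correct, but it follows a genuinely different route from the paper. The paper does not verify \eqref{equation_check_simple} for $T_{\textup{min}}$ directly: it assumes a decomposition $T_{\textup{min}} = T_1 \oplus T_2$ with $T_1$ self-adjoint, passes to the square $\overline{(T_{\textup{min}})^2}$, shows that $A^\Omega = -\Delta + m^2$ on $H^2(\Omega;\mathbb{C}^4)$ is contained in $\bigl((T_{\textup{min}})^2\bigr)^*$, and then imports the density of $\operatorname{span}\{\ker(A^\Omega-\lambda):\lambda\in\mathbb{C}\setminus\mathbb{R}\}$ from known results (unique continuation for bounded $\Omega$, \cite{BR15} for exterior domains) to conclude that the square is simple, whence $\mathcal{H}_1=\{0\}$. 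You instead check \eqref{equation_check_simple} for $S=T_{\textup{min}}$, $S^*=T_{\textup{max}}$ (Lemma~\ref{lemma_minimal_maximal_op}) head-on, by testing against the exterior point-source solutions $x\mapsto G_\lambda(x-y_0)\psi$, $y_0\notin\overline{\Omega}$, which indeed lie in $\ker(T_{\textup{max}}-\lambda)$ thanks to the exponential decay of \eqref{def_G_lambda} for non-real $\lambda$; the resolvent identity $(A_0-\mu)^{-1}-(A_0+\mu)^{-1}=2\mu(-\Delta+m^2-\mu^2)^{-1}$ then transfers the vanishing on $\mathbb{R}^3\setminus\overline{\Omega}$ to the Helmholtz resolvent, and your Stone-formula/Herglotz argument finishes. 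So both proofs ultimately pass through $-\Delta+m^2$, but the paper does so by squaring the operator and quoting the Laplacian density results as a black box, while you do so at the resolvent level and in effect reprove the analogue of \cite[Proposition~2.2]{BR15} from scratch. What your route buys: it is self-contained, treats bounded and exterior $\Omega$ uniformly (in both cases $\mathbb{R}^3\setminus\overline{\Omega}$ is a nonempty open set), avoids the orthogonal-decomposition and closure-of-the-square bookkeeping, and yields the slightly stronger statement that already the exterior point-source solutions span a dense set. What it costs: the limiting-absorption details you flag — the a.e.-in-$\nu$ identification of the spectral density and the interchange ``for a.e. $\nu$, for all test functions $\phi$'' (which needs a countable dense family of $\phi\in C_0^\infty$ supported in $\mathbb{R}^3\setminus\overline{\Omega}$), plus the a.e. restriction of $\widehat{\iota f}$ to spheres — must be written out, whereas the paper's argument is shorter because those analytic facts are absorbed into the cited references.
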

\begin{proof}
  Assume that $T_{\textup{min}} = T_1 \oplus T_2$, where 
  $T_j$ acts in an invariant subspace $\mathcal{H}_j \subset L^2(\Omega; \mathbb{C}^4)$ for $T_{\textup{min}}$, 
  $j \in \{ 1, 2\}$, and that $T_1 = T_1^*$. 
  We prove that $\mathcal{H}_1=\{0\}$. For that, note $(T_{\textup{min}})^2 = T_1^2 \oplus T_2^2$ and $T_1^2 = (T_1^2)^*$
  in $\mathcal{H}_1$ by the spectral theorem.
  Since $T_1^2$ is closed, we have $\overline{(T_{\textup{min}})^2} = T_1^2 \oplus \overline{T_2^2}$. 
  Let us show that 
  $\overline{(T_{\textup{min}})^2}$ is simple. We define the operator
  \begin{equation*}
    A^\Omega f = (-i \alpha \cdot \nabla + m \beta)^2 f = (-\Delta + m^2) f, \quad \dom A^\Omega = H^2(\Omega; \mathbb{C}^4),
  \end{equation*}
  and we claim that $A^\Omega \subset ( (T_{\textup{min}})^2 )^*$. In fact, consider arbitrary $f \in \dom A^\Omega$ and 
   let $g \in \dom (T_{\textup{min}})^2$. Then $g, (-i \alpha \cdot \nabla + m \beta) g \in H^1_0(\Omega; \mathbb{C}^4)$ and the identity~\eqref{integration_by_parts}
   shows
  \begin{equation*}
    \begin{split}
      \big( A^\Omega f, g \big)_\Omega &= \big( (-i \alpha \cdot \nabla + m \beta)^2 f, g \big)_\Omega \\
      &= \big( (-i \alpha \cdot \nabla + m \beta) f, (-i \alpha \cdot \nabla + m \beta) g \big)_\Omega  \\
      &= \big( f, (-i \alpha \cdot \nabla + m \beta)^2 g \big)_\Omega
          \\ &= \big( f, (T_{\textup{min}})^2 g \big)_\Omega,
    \end{split}
  \end{equation*}
  which implies $A^\Omega \subset ( (T_{\textup{min}})^2 )^*$.
  Now we use that  
  \begin{equation*} 
  L^2(\Omega; \mathbb{C}^4)=\overline{\text{span} \big\{ \ker(A^\Omega - \lambda) : \lambda \in \mathbb{C} \setminus \mathbb{R} \big\} };
\end{equation*}
  if $\Omega$ is bounded this is essentially a consequence of unique continuation (for details see \cite[Section~8.3]{BHS19})
  and if $\Omega$ is unbounded this fact can be found in \cite[Proposition~2.2]{BR15}.  As $A^\Omega \subset ( (T_{\textup{min}})^2 )^*$
  and $( (T_{\textup{min}})^2 )^*=(\overline{(T_{\textup{min}})^2})^*$ we have 
  $$\ker(A^\Omega - \lambda) \subset \ker\bigl(\bigl(\,\overline{(T_{\textup{min}})^2}\,\bigr)^*-\lambda\bigr),\quad \lambda \in \mathbb{C} \setminus \mathbb{R},
  $$ 
  and we conclude
  \begin{equation*} 
  L^2(\Omega; \mathbb{C}^4)=\overline{\text{span} \big\{ \ker\bigl(\bigl(\,\overline{(T_{\textup{min}})^2}\,\bigr)^* - \lambda\bigr) : 
  \lambda \in \mathbb{C} \setminus \mathbb{R} \big\} }.
\end{equation*}
  This implies that $\overline{(T_{\textup{min}})^2}$ is simple (see~\eqref{equation_check_simple}).
  Therefore, $\mathcal{H}_1=\{0\}$ and hence $T_{\textup{min}}$ is simple.
\end{proof}

\subsection{The MIT bag operator} \label{section_MIT}

In this subsection we discuss the MIT bag Dirac operator in $\Omega$ which will 
often play the role of a self-adjoint reference operator in this paper. 
The MIT bag operator is the partial differential operator in $L^2(\Omega; \mathbb{C}^4)$ defined by
\begin{equation} \label{def_MIT_op}
  \begin{split}
    T_{\textup{MIT}} f &= (-i \alpha \cdot \nabla + m \beta) f, \\
    \dom T_{\textup{MIT}} &= \bigl\{ f \in H^1(\Omega; \mathbb{C}^4): 
        f|_{\partial \Omega} = -i \beta (\alpha \cdot \nu) f|_{\partial \Omega} \bigr\}.
  \end{split}
\end{equation}
In the following proposition we summarize the basic properties of $T_{\textup{MIT}}$.
For some further results on $T_\text{MIT}$, as, e.g., 
symmetry relations of the spectrum or asymptotics of 
eigenvalues for large masses~$m$ we refer to \cite{ALTR17}. 
Moreover, we note that the orthogonal sum of the MIT bag operator in $\Omega$ and $\mathbb R^3\setminus\overline\Omega$ 
is a Dirac operator with a Lorentz scalar $\delta$-shell interaction, 
see Proposition~\ref{proposition_domain_confinement_A_omega}. Using this, one can show even 
some further properties of $T_\text{MIT}$; cf. \eqref{mitformel}.

\begin{prop} \label{proposition_MIT_bag_operator}
  The operator $T_{\textup{MIT}}$ defined by~\eqref{def_MIT_op}  is self-adjoint in $L^2(\Omega; \mathbb{C}^4)$ 
  and the following statements hold:
  \begin{itemize}
    \item[$\textup{(i)}$] $(-m, m) \subset \rho(T_{\textup{MIT}})$.
    \item[$\textup{(ii)}$] If $\Omega$ is bounded, then $\sigma(T_{\textup{MIT}}) = \sigma_{\textup{disc}}(T_{\textup{MIT}})=\sigma_{\rm p}(T_{\textup{MIT}})$.
    \item[$\textup{(iii)}$] If $\Omega$ is unbounded, 
    then $\sigma(T_{\textup{MIT}}) = \sigma_{\textup{ess}}(T_{\textup{MIT}}) = (-\infty, - m] \cup [m, \infty)$.
  \end{itemize}
\end{prop}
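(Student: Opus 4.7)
The plan is to work directly (the quasi boundary triple machinery of Section~\ref{section_boundary_triples_domain} is not yet in place) and to base everything on integration by parts together with a quadratic form identity for $\|T_{\textup{MIT}}f\|^2$. Write $\Psi:=-i\beta(\alpha\cdot\nu)$. Using~\eqref{eq_commutation} one checks that $\Psi$ is pointwise self-adjoint on $\partial\Omega$ with $\Psi^2=I_4$, so the MIT boundary condition amounts to $\Psi f|_{\partial\Omega}=f|_{\partial\Omega}$, i.e.\ selecting the $+1$-eigenspace of $\Psi$. The same computation yields $\Psi\bigl(-i(\alpha\cdot\nu)\bigr)\Psi=i(\alpha\cdot\nu)$, so that whenever $\Psi f=f$ and $\Psi g=g$ on $\partial\Omega$ the boundary form on the right-hand side of~\eqref{integration_by_parts} satisfies
\begin{equation*}
\bigl(-i(\alpha\cdot\nu)f,g\bigr)_{\partial\Omega}=\bigl(\Psi(-i(\alpha\cdot\nu))\Psi f,g\bigr)_{\partial\Omega}=\bigl(i(\alpha\cdot\nu)f,g\bigr)_{\partial\Omega}=-\bigl(-i(\alpha\cdot\nu)f,g\bigr)_{\partial\Omega},
\end{equation*}
which forces it to vanish and proves that $T_{\textup{MIT}}$ is symmetric.

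The main step is the quadratic form identity
\begin{equation*}
\|T_{\textup{MIT}} f\|^2_\Omega = \|\alpha\cdot\nabla f\|_\Omega^2 + m^2\|f\|_\Omega^2 + m\|f|_{\partial\Omega}\|^2_{\partial\Omega},\qquad f\in\dom T_{\textup{MIT}},
\end{equation*}
which I would derive by expanding $\|(-i\alpha\cdot\nabla+m\beta)f\|^2$, applying~\eqref{integration_by_parts} (without the $m\beta$ part) to handle the cross term $2m\,\textup{Re}(-i\alpha\cdot\nabla f,\beta f)_\Omega$, and using the consequence $\beta f|_{\partial\Omega}=-i(\alpha\cdot\nu)f|_{\partial\Omega}$ of the boundary condition (multiply $f=\Psi f$ by $\beta$ on the left). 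Because $\|\alpha\cdot\nabla f\|^2\geq 0$ this gives the a priori bound $\|T_{\textup{MIT}}f\|\geq m\|f\|$, from which $(-m,m)\subset\rho(T_{\textup{MIT}})$ follows once self-adjointness is known; for self-adjointness itself I would invoke \cite{ALTR17,OV17}, where it is established for $C^2$-boundaries via exactly the above kind of coercive estimate combined with elliptic regularity (alternatively, one can realize $T_{\textup{MIT}}$ as half of a decoupled operator with a Lorentz scalar $\delta$-shell interaction, as indicated in Proposition~\ref{proposition_domain_confinement_A_omega}, and transfer the self-adjointness from there).

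Once self-adjointness and (i) are in hand, (ii) and (iii) are standard. For (ii): $\dom T_{\textup{MIT}}\subset H^1(\Omega;\mathbb{C}^4)$ by definition, and on a bounded $C^2$-domain Rellich--Kondrachov gives a compact embedding $H^1(\Omega;\mathbb{C}^4)\hookrightarrow L^2(\Omega;\mathbb{C}^4)$, so the resolvent is compact and the spectrum is purely discrete. For (iii) the inclusion $\sigma_{\textup{ess}}(T_{\textup{MIT}})\subset(-\infty,-m]\cup[m,\infty)$ is immediate from (i); for the reverse I would construct Weyl singular sequences by choosing $k\in\mathbb{R}^3$ with $|k|^2=\lambda^2-m^2$ and a constant spinor $\chi\in\mathbb{C}^4\setminus\{0\}$ solving $(\alpha\cdot k+m\beta)\chi=\lambda\chi$, then cutting off the plane wave $e^{ik\cdot x}\chi$ by smooth bumps supported in balls $B(x_n,R_n)$ with $x_n\to\infty$ inside $\Omega$ (possible since $\overline{\Omega}^c$ or $\Omega^c$ is compact when $\Omega$ is unbounded with compact boundary). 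The boundary condition plays no role for such sequences since their support avoids $\partial\Omega$ from a certain $n$ on, and a standard estimate gives $\|(T_{\textup{MIT}}-\lambda)u_n\|\to 0$ while the $u_n$ form an orthonormal system after normalization.

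The step I expect to be the genuine obstacle is the upgrade from symmetry to self-adjointness: the form identity is clean, but verifying that $\textup{ran}(T_{\textup{MIT}}\pm i\mu)=L^2(\Omega;\mathbb{C}^4)$ for some $\mu>0$ requires either a nontrivial elliptic-boundary-value argument (Shapiro--Lopatinskii type verification that the MIT boundary condition is coercive, plus $H^1$-regularity of weak solutions) or an appeal to the existing references. In the interest of a streamlined exposition I would opt for the latter, citing \cite{ALTR17,OV17}, while noting that the quadratic form identity above already delivers (i) and is independently useful in later sections.
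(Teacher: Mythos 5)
Your proposal is correct and follows essentially the same route as the paper: the quadratic form identity $\|T_{\textup{MIT}}f\|_\Omega^2=\|\alpha\cdot\nabla f\|_\Omega^2+m^2\|f\|_\Omega^2+m\|f|_{\partial\Omega}\|_{\partial\Omega}^2$ for item (i) (with self-adjointness delegated to \cite{OV17}, as in the paper), the compact embedding of $H^1$ for (ii), and cut-off plane-wave singular sequences escaping to infinity inside $\Omega$ for (iii). The only point to make explicit is that the cut-off radii $R_n$ must grow (while the centers escape faster), exactly as in the paper's choice $x_n=(R+n^2,0,0)^\top$ with radius $n$ and normalization $n^{-3/2}$, so that $\|(T_{\textup{MIT}}-\lambda)\psi_n^\lambda\|\to 0$.
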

\begin{proof}
  First, the self-adjointness of $T_{\textup{MIT}}$ is shown in \cite[Theorem~3.2]{OV17}. 
  The proof of assertion~(i) follows similar considerations in \cite[Theorem~1.5]{ALTR17} for $C^3$-domains, but the 
  arguments are basically independent of the smoothness of $\partial \Omega$. Indeed, one can show for $f \in \dom T_{\textup{MIT}}$ 
  with the help of~\eqref{integration_by_parts} and~\eqref{eq_commutation} that
  \begin{equation*}
    \begin{split}
      \| T_{\textup{MIT}} f \|^2_\Omega 
          &= \big( (-i \alpha \cdot \nabla + m \beta) f, (-i \alpha \cdot \nabla + m \beta) f \big)_\Omega \\
      &= \| \alpha \cdot \nabla f \|^2_\Omega + m^2 \| f \|_\Omega^2 + ( -i \alpha \cdot \nabla f, m \beta f )_\Omega + (m \beta f, -i \alpha \cdot \nabla f)_\Omega \\
      &= \| \alpha \cdot \nabla f \|^2_\Omega + m^2 \| f \|_\Omega^2 + m \big( -i \beta (\alpha \cdot \nu) f|_{\partial \Omega}, f|_{\partial \Omega} \big)_{\partial \Omega} \\
      &=\| \alpha \cdot \nabla f \|_\Omega^2 + m^2 \| f \|_\Omega^2 + m \| f |_{\partial \Omega} \|_{\partial \Omega}^2
    \end{split}
  \end{equation*}
  holds, where the boundary condition for $f \in \dom T_\text{MIT}$ was used in the last step. Hence we have 
  $\| T_{\textup{MIT}} f \|_\Omega \geq m \| f \|_\Omega$ for all $f \in \dom T_{\textup{MIT}}$, which shows that  $\sigma(T_{\textup{MIT}}) \cap (-m, m) = \emptyset$. 
  
  To verify item~(ii) we note that $\dom T_{\textup{MIT}} \subset H^1(\Omega; \mathbb{C}^4)$ is compactly embedded 
  in $L^2(\Omega; \mathbb{C}^4)$, as $\Omega$ is a bounded $C^2$-domain. Hence $\sigma(T_{\textup{MIT}})$ is purely discrete.
  
  It remains to show point~(iii). By~(i) we have
  $\sigma(T_{\textup{MIT}}) \subset (-\infty, -m] \cup [m, \infty)$.
  To prove the other inclusion, fix some $\lambda \in (-\infty, -m] \cup [m, \infty)$,
  a number $R>0$ such that $\mathbb{R}^3 \setminus B(0, R) \subset \Omega$,
  a vector $\zeta \in \mathbb{C}^4$ such that $\left( \sqrt{\lambda^2 - m^2} \alpha_1 + m \beta + \lambda I_4 \right) \zeta \neq 0$, a cutoff-function $\chi \in C^\infty_0(\mathbb{R})$ with $\chi(r) = 1$ for $r < \frac{1}{2}$ and $\chi(r) = 0$ for $r > 1$ and set $x_n := (R + n^2, 0, 0)^\top$, $n \in \mathbb{N}$. Then we define the function $\psi_n^\lambda$ by
  \begin{equation*} 
    \psi_n^\lambda(x) := \frac{1}{n^{3/2}} \chi\left( \frac{1}{n} |x - x_n| \right) e^{i \sqrt{\lambda^2 - m^2} x\cdot e_1}
    \left( \sqrt{\lambda^2 - m^2} \alpha_1 + m \beta + \lambda I_4 \right)  \zeta.
  \end{equation*}
  Then one verifies in the same way as in \cite[Theorem~5.7]{BH17} that $\psi_n^\lambda \in \dom T_{\textup{MIT}}$, 
  that $\psi_n^\lambda$ converge weakly to zero, that
  \begin{equation*}
    \| \psi_n^\lambda \|_{\Omega} = \text{const.} > 0 \quad \text{and} \quad
    (T_{\textup{MIT}}^{\Omega} - \lambda) \psi_n^\lambda \rightarrow 0, 
        \quad \text{as } n \rightarrow \infty.
  \end{equation*}
  Thus $(\psi_n^\lambda)_n$ is a singular sequence for $T_{\textup{MIT}}$ and 
  $\lambda$,
  which shows $\lambda \in \sigma_{\textup{ess}}(T_{\textup{MIT}})$. This finishes the proof of this proposition.
\end{proof}

In a similar fashion as for the MIT bag model we also state some basic properties of another distinguished 
self-adjoint realization of the Dirac operator on $\Omega$. 
This operator has similar boundary conditions as $T_{\textup{MIT}}$,
but with opposite sign, and is given by
\begin{equation} \label{def_A_infty}
  \begin{split}
      T_{-\textup{MIT}} f &= (-i \alpha \cdot \nabla + m \beta) f, \\
      \dom T_{-\textup{MIT}} &= \bigl\{ f \in H^1(\Omega; \mathbb{C}^4): 
          f|_{\partial \Omega} = i \beta (\alpha \cdot \nu) f|_{\partial \Omega} \bigr\}.
    \end{split}
\end{equation}
The next proposition is the counterpart of Proposition~\ref{proposition_MIT_bag_operator} for $T_{-\textup{MIT}}$.
However, in contrast to Proposition~\ref{proposition_MIT_bag_operator}~(i) the interval $(-m, m)$ may also contain
spectrum; cf. Theorem~\ref{theorem_basic_spectral_properties_unbounded_domain} and Proposition~\ref{proposition_basic_spectral_properties_bounded_domain}.

\begin{prop} \label{prop_A_infty}
  The operator $T_{-\textup{MIT}}$ defined by~\eqref{def_A_infty}  is self-adjoint in $L^2(\Omega; \mathbb{C}^4)$ and the following statements hold:
  \begin{itemize}
    \item[$\textup{(i)}$] If $\Omega$ is bounded, then $\sigma(T_{-\textup{MIT}}) = \sigma_{\textup{disc}}(T_{-\textup{MIT}})=\sigma_{\rm p}(T_{-\textup{MIT}})$.
    \item[$\textup{(ii)}$] If $\Omega$ is unbounded, 
    then $(-\infty, - m] \cup [m, \infty) \subset \sigma_{\textup{ess}}(T_{-\textup{MIT}})$.
  \end{itemize}
\end{prop}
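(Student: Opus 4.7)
The proof will closely parallel Proposition~\ref{proposition_MIT_bag_operator}, with only the opposite sign in the boundary condition to track.

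For self-adjointness, the approach of \cite[Theorem~3.2]{OV17} adapts directly. Set $P_\pm := \tfrac{1}{2}(I_4 \pm i\beta(\alpha\cdot\nu))$; using~\eqref{eq_commutation} and $|\nu|=1$ one verifies that $P_\pm$ are pointwise complementary orthogonal projections in $\mathbb{C}^4$ satisfying $\beta P_\pm = P_\mp \beta$, and the boundary condition in~\eqref{def_A_infty} reads $P_- f|_{\partial \Omega}=0$. For $f,g\in\dom T_{-\textup{MIT}}$ one then has $f|_{\partial\Omega}\in\ran P_+$ and $\beta g|_{\partial\Omega}=\beta P_+ g|_{\partial\Omega}=P_-\beta g|_{\partial\Omega}\in\ran P_-$, so $\langle f(x),\beta g(x)\rangle_{\mathbb{C}^4}=0$ a.e.\ on $\partial\Omega$. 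The pointwise relation $(\alpha\cdot\nu)f|_{\partial\Omega}=-i\beta f|_{\partial\Omega}$ forced by the boundary condition then turns the boundary term in~\eqref{integration_by_parts} into $-\int_{\partial\Omega}\langle \beta f,g\rangle\,d\sigma = 0$, proving symmetry. The surjectivity $\ran(T_{-\textup{MIT}}\mp i)=L^2(\Omega;\mathbb{C}^4)$ is then obtained by repeating the argument of \cite[Theorem~3.2]{OV17} with $P_-$ replaced by $P_+$ throughout; the structural steps are identical.

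For assertion~(i), once self-adjointness is in hand the argument is exactly as in Proposition~\ref{proposition_MIT_bag_operator}(ii): since $\Omega$ is bounded, $\dom T_{-\textup{MIT}}\subset H^1(\Omega;\mathbb{C}^4)$ embeds compactly into $L^2(\Omega;\mathbb{C}^4)$ by Rellich--Kondrachov, so $T_{-\textup{MIT}}$ has compact resolvent and hence purely discrete spectrum.

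For assertion~(ii) I would reuse verbatim the singular sequence $\psi_n^\lambda$ from the proof of Proposition~\ref{proposition_MIT_bag_operator}(iii). The crucial observation is that $\partial\Omega\subset B(0,R)$, so for $n\geq 2$ the support $\overline{B(x_n,n)}$ with $x_n=(R+n^2,0,0)^\top$ is disjoint from $\partial\Omega$; hence $\psi_n^\lambda\in H_0^1(\Omega;\mathbb{C}^4)\subset\dom T_{-\textup{MIT}}$, since its trace vanishes and the boundary condition in~\eqref{def_A_infty} is automatically satisfied. The three verifications $\|\psi_n^\lambda\|_\Omega=\mathrm{const}>0$, $\psi_n^\lambda\rightharpoonup 0$, and $(T_{-\textup{MIT}}-\lambda)\psi_n^\lambda\to 0$ depend only on the bulk action $-i\alpha\cdot\nabla+m\beta$ and thus pass through unchanged, yielding $\lambda\in\sigma_{\textup{ess}}(T_{-\textup{MIT}})$.

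The only genuinely non-routine step is the range computation behind self-adjointness; the remainder is bookkeeping on the proof of Proposition~\ref{proposition_MIT_bag_operator}. I expect this to be the main obstacle, but it is purely a sign tracking exercise. Note finally that, in contrast to Proposition~\ref{proposition_MIT_bag_operator}(i), no coercivity estimate of the form $\|T_{-\textup{MIT}}f\|_\Omega\geq m\|f\|_\Omega$ is available --- the sign of the boundary contribution in the analogous computation flips --- which is precisely why $(-m,m)$ is not excluded from the spectrum and (ii) asserts only the inclusion $(-\infty,-m]\cup[m,\infty)\subset\sigma_{\textup{ess}}(T_{-\textup{MIT}})$.
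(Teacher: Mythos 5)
Your handling of (i) and (ii) is exactly the paper's: (i) is the compact embedding of $H^1(\Omega;\mathbb{C}^4)$ into $L^2(\Omega;\mathbb{C}^4)$, and for (ii) the paper reuses the singular sequence of Proposition~\ref{proposition_MIT_bag_operator}\,(iii) verbatim; your observation that $\psi_n^\lambda$ is supported away from $\partial\Omega$, so that the boundary condition in \eqref{def_A_infty} holds trivially, is precisely why this works. Your symmetry computation via \eqref{integration_by_parts}, \eqref{eq_commutation} and \eqref{beta_P_pm} is correct, and your closing remark about the lost coercivity estimate is accurate.

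The gap is in the step you yourself single out as the only non-routine one. For surjectivity you assert that \cite[Theorem~3.2]{OV17} can be repeated ``with $P_-$ replaced by $P_+$ throughout'' and that this is pure sign tracking. But the two signs are not structurally symmetric, for exactly the reason you note at the end: the MIT sign yields $\|T_{\textup{MIT}}f\|_\Omega^2=\|\alpha\cdot\nabla f\|_\Omega^2+m^2\|f\|_\Omega^2+m\|f|_{\partial\Omega}\|_{\partial\Omega}^2$, hence no spectrum in $(-m,m)$, and this is what makes the boundary operator $\tfrac12\beta+\mathcal{C}_\lambda$ invertible for every $\lambda\notin(-\infty,-m]\cup[m,\infty)$ (Proposition~\ref{proposition_C_lambda_inverse}\,(i)); for the reversed sign the relevant operator $-\tfrac12\beta+\mathcal{C}_\lambda$ may fail to be injective at real $\lambda\in(-m,m)$, because $T_{-\textup{MIT}}$ can have eigenvalues there (which is why Proposition~\ref{proposition_C_lambda_inverse}\,(ii) must exclude the point spectra). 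So any part of the argument in \cite{OV17} carried out at a real spectral parameter in the gap, or using the gap estimate, does not transfer by relabelling; at the very least the range argument has to be run at non-real $\lambda$, where injectivity of $-\tfrac12\beta+\mathcal{C}_\lambda$ follows from symmetry, and one must check that nothing else in \cite{OV17} uses the sign. As written, the only substantive point is deferred without verification. The paper sidesteps all of this with a two-line reduction: with $\gamma_5=\left(\begin{smallmatrix}0 & I_2\\ I_2 & 0\end{smallmatrix}\right)$ one has $\gamma_5\beta=-\beta\gamma_5$ and $\gamma_5(\alpha\cdot x)=(\alpha\cdot x)\gamma_5$, so $T_{\textup{MIT}}=\gamma_5\widetilde{T}\gamma_5$ for $\widetilde{T}=-i\alpha\cdot\nabla-m\beta$ equipped with the boundary condition of \eqref{def_A_infty}; hence $\widetilde{T}$ is self-adjoint by Proposition~\ref{proposition_MIT_bag_operator}, and $T_{-\textup{MIT}}=\widetilde{T}+2m\beta$ is self-adjoint as a bounded symmetric perturbation. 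Replacing your appeal to \cite{OV17} by this reduction closes the gap.
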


\begin{remark}
  We will show later in Theorem~\ref{theorem_basic_spectral_properties_unbounded_domain}~(i) that the inclusion in item~(ii) of the above proposition is in fact an equality, i.e.
  \begin{equation*}
    \sigma_{\textup{ess}}(T_{-\textup{MIT}}) = (-\infty, - m] \cup [m, \infty).
  \end{equation*}
  This holds, as the operator $T_{-\text{MIT}}$ corresponds to $ T \upharpoonright \ker \Gamma_1$ defined as in~\eqref{def_A_tau_domain} with the parameter $\vartheta = 0$. But for our next considerations the above inclusion is sufficient.
\end{remark}

\begin{proof}[Proof of Proposition~\ref{prop_A_infty}]
  Define the auxiliary operator
  \begin{equation*} 
  \begin{split}
      \widetilde{T} f &= (-i \alpha \cdot \nabla - m \beta) f, \quad
      \dom \widetilde{T} = \bigl\{ f \in H^1(\Omega; \mathbb{C}^4): 
          f|_{\partial \Omega} = i \beta (\alpha \cdot \nu) f|_{\partial \Omega} \bigr\},
    \end{split}
  \end{equation*}
  and consider the unitary and self-adjoint matrix
  \begin{equation*}
    \gamma_5 = \begin{pmatrix} 0 & I_2 \\ I_2 & 0 \end{pmatrix}.
  \end{equation*}
  We claim that
  \begin{equation} \label{equation_unitary_equivalence}
    T_\text{MIT} = \gamma_5 \widetilde{T} \gamma_5.
  \end{equation}
  To see this we note that  $\gamma_5 \beta = -\beta \gamma_5$ and $(\alpha \cdot x) \gamma_5 = \gamma_5 (\alpha \cdot x)$ for all $x \in \mathbb{R}^3$. This implies 
  \begin{equation*}
    (-i \alpha \cdot \nabla + m \beta)  f = \gamma_5 (-i \alpha \cdot \nabla - m \beta)\gamma_5 f,\quad f \in H^1(\Omega; \mathbb{C}^4).
  \end{equation*}
  Furthermore, we have $f \in \dom T_\text{MIT}$ if and only if $\gamma_5 f \in H^1(\Omega; \mathbb{C}^4)$ and
  \begin{equation*}
    i \beta (\alpha \cdot \nu) \gamma_5 f|_{\partial \Omega} = -\gamma_5 i \beta (\alpha \cdot \nu) f|_{\partial \Omega} = 
    \gamma_5  f|_{\partial \Omega},
  \end{equation*}
  so that $ f \in \dom T_\text{MIT}$ if and only if $\gamma_5 f\in \dom \widetilde{T}$.
  Hence, we have shown~\eqref{equation_unitary_equivalence}. In particular, this implies together with Proposition~\ref{proposition_MIT_bag_operator} 
  that $\widetilde{T}$ is self-adjoint. Since multiplication by $m \beta$ is a bounded and self-adjoint operator in $L^2(\Omega; \mathbb{C}^4)$ we 
  conclude that also $T_{-\text{MIT}} = \widetilde{T} + 2 m \beta$ is self-adjoint.
  
  Eventually, assertions~(i) and~(ii) can be shown in exactly the same way as Proposition~\ref{proposition_MIT_bag_operator}~(ii) and~(iii); 
  in particular, for the proof of item~(ii) the same singular sequence as in Proposition~\ref{proposition_MIT_bag_operator}~(iii) can be used.
\end{proof}

\subsection{Integral operators} \label{section_integral_op}

In this section we introduce several families of integral operators 
that will play an important role in the analysis 
of Dirac operators on domains. We also summarize some of their well-known properties.
Define for $\lambda \in \mathbb{C} \setminus ( (-\infty, -m] \cup [m, \infty) )$ the function
\begin{equation} \label{def_G_lambda}
  \begin{split}
    G_\lambda(x) = \left( \lambda I_4 + m \beta 
                 + \left( 1 - i \sqrt{\lambda^2 - m^2} |x| \right) \frac{i}{|x|^2} (\alpha \cdot x) \right)
                 \cdot \frac{e^{i \sqrt{\lambda^2 - m^2} |x|}}{4 \pi |x|}&.
  \end{split}
\end{equation}
Recall that $G_\lambda$ is the integral kernel
of the resolvent of the free Dirac operator in~$\mathbb{R}^3$, see \cite[Section~1.E]{T92}.
We introduce the operators
$\Phi_\lambda: L^2({\partial \Omega}; \mathbb{C}^4) \rightarrow L^2(\Omega; \mathbb{C}^4)$,
\begin{equation} \label{def_Phi_lambda}
  \Phi_\lambda \varphi(x) = \int_{{\partial \Omega}} G_\lambda(x - y) \varphi(y) \mathrm{d} \sigma(y), 
  \quad x \in \Omega,\, \varphi \in L^2({\partial \Omega}; \mathbb{C}^4),
\end{equation}
and $\mathcal{C}_\lambda: L^2({\partial \Omega}; \mathbb{C}^4) \rightarrow L^2({\partial \Omega}; \mathbb{C}^4)$
\begin{equation} \label{def_C_lambda}
  \mathcal{C}_\lambda \varphi(x) = \lim_{\varepsilon \searrow 0 }\int_{{\partial \Omega} \setminus B(x, \varepsilon)} 
      G_\lambda(x - y) \varphi(y) \mathrm{d} \sigma(y), 
  \quad x \in {\partial \Omega},\, \varphi \in L^2({\partial \Omega}; \mathbb{C}^4).
\end{equation}
It is well-known that $\Phi_\lambda$ and $\mathcal{C}_\lambda$ are bounded and everywhere defined and that
\begin{equation} \label{adjoint_C_lambda}
  \mathcal{C}_\lambda^* = \mathcal{C}_{\overline{\lambda}}
\end{equation}
holds;
cf. \cite[Lemmas~2.1 and~3.3]{AMV14} or \cite[Proposition~3.4]{BEHL18}. Furthermore, the adjoint of $\Phi_\lambda$ is given by
    $\Phi_\lambda^*: L^2(\Omega; \mathbb{C}^4) \rightarrow L^2({\partial \Omega}; \mathbb{C}^4),$
    \begin{equation} \label{def_Phi_lambda_adjoint}
      \begin{split}
        \Phi_\lambda^* f(x) &= \int_{\Omega} G_{\overline{\lambda}}(x-y) f(y) \mathrm{d} y,
        \quad x \in {\partial \Omega},\, f \in L^2(\Omega; \mathbb{C}^4),
      \end{split}
    \end{equation}
    and this operator is also bounded when viewed as an operator from $L^2(\Omega; \mathbb{C}^4)$ to $H^{1/2}(\partial \Omega; \mathbb{C}^4)$;
    cf. \cite[equation~(2.12) and the discussion below]{BEHL19_1}.
    Hence, we can define the anti-dual of $\Phi_\lambda^*$
    \begin{equation} \label{def_Phi_lambda_minus}
      \Phi_{\lambda, -1/2} := (\Phi_\lambda^*)': H^{-1/2}(\partial \Omega; \mathbb{C}^4) \rightarrow L^2(\Omega; \mathbb{C}^4).
    \end{equation}
    Since we have for $\varphi \in L^2(\partial \Omega; \mathbb{C}^4)$ and $f \in L^2(\Omega; \mathbb{C}^4)$
    \begin{equation*}
      \begin{split}
        (\Phi_{\lambda, -1/2} \varphi, f)_\Omega &= (\varphi, \Phi_\lambda^* f)_{H^{-1/2}(\partial \Omega; \mathbb{C}^4) \times H^{1/2}(\partial \Omega; \mathbb{C}^4)} \\
        &= (\varphi, \Phi_\lambda^* f)_{\partial \Omega} = (\Phi_{\lambda} \varphi, f)_\Omega,
      \end{split}
    \end{equation*}
    $\Phi_{\lambda, -1/2}$ is an extension of $\Phi_\lambda$.    
Some further properties of~$\Phi_\lambda$ are summarized in the following proposition. 

\begin{prop} \label{proposition_Phi_lambda}
  Let $\lambda \in \mathbb{C} \setminus ( (-\infty, -m] \cup [m, \infty))$ and let
  $\Phi_\lambda$  be the operator
  in~\eqref{def_Phi_lambda}. 
  Then the following statements hold:
  \begin{itemize}
    \item[$\textup{(i)}$] For any $s \in [-\frac{1}{2}, \frac{1}{2} ]$ the operator  $\Phi_\lambda$ gives rise 
    to a bounded and everywhere defined operator 
    $\Phi_{\lambda, s}: H^s(\partial \Omega; \mathbb{C}^4) \rightarrow H^{s+1/2}(\Omega; \mathbb{C}^4)$. 
    \item[$\textup{(ii)}$] $\ran \Phi_{\lambda, -1/2} = \ker (T_\textup{max} - \lambda)$.
  \end{itemize}
\end{prop}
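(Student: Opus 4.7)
The plan is to treat (i) and (ii) separately: for (i) I would factorize $\Phi_\lambda$ through the scalar Helmholtz single layer potential and invoke its classical mapping properties, and for (ii) I would combine the fundamental solution property of $G_\lambda$ with an extended Green's identity to represent every element of $\ker(T_{\textup{max}} - \lambda)$ as a potential of its trace.

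For (i) the starting point is the algebraic identity $(-i\alpha\cdot\nabla + m\beta - \lambda)(-i\alpha\cdot\nabla + m\beta + \lambda) = -\Delta + (m^2 - \lambda^2) I_4$, which, combined with the fact that $\frac{e^{i\sqrt{\lambda^2-m^2}|x|}}{4\pi|x|}$ is a fundamental solution of $-\Delta + (m^2 - \lambda^2)$, yields the factorization $\Phi_\lambda = (-i\alpha\cdot\nabla + m\beta + \lambda)\,\mathcal{S}_\lambda$, where $\mathcal{S}_\lambda$ denotes the Helmholtz single layer potential acting componentwise on $\mathbb{C}^4$-valued densities. Classical layer potential theory for $C^2$-boundaries gives that $\mathcal{S}_\lambda$ is bounded from $H^s(\partial\Omega;\mathbb{C}^4)$ into $H^{s+3/2}(\Omega;\mathbb{C}^4)$ for $s = \pm 1/2$, so applying the first-order operator $-i\alpha\cdot\nabla + m\beta + \lambda$ produces boundedness of $\Phi_{\lambda,s}: H^s(\partial\Omega;\mathbb{C}^4) \to H^{s+1/2}(\Omega;\mathbb{C}^4)$ at the endpoints; the intermediate values $s \in (-1/2, 1/2)$ then follow by interpolation. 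One also has to check that the resulting operator at $s = -1/2$ coincides with the anti-dual $(\Phi_\lambda^*)'$ defined in~\eqref{def_Phi_lambda_minus}, which reduces to verifying the duality pairing on the dense subspace $L^2(\partial\Omega;\mathbb{C}^4) \subset H^{-1/2}(\partial\Omega;\mathbb{C}^4)$.

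For (ii) the inclusion $\ran \Phi_{\lambda,-1/2} \subset \ker(T_{\textup{max}} - \lambda)$ is direct: for $\varphi \in L^2(\partial\Omega;\mathbb{C}^4)$ the fundamental solution property of $G_\lambda$ gives $(T_{\textup{max}} - \lambda)\Phi_\lambda\varphi = 0$ in $\Omega$, and the statement extends to $\varphi \in H^{-1/2}(\partial\Omega;\mathbb{C}^4)$ by density together with the boundedness of $\Phi_{\lambda,-1/2}$ into $L^2(\Omega;\mathbb{C}^4)$ and the closedness of $T_{\textup{max}}$. The reverse inclusion is the main obstacle. Given $f \in \ker(T_{\textup{max}} - \lambda)$, I would first extend~\eqref{integration_by_parts} to a Green's identity on $\dom T_{\textup{max}} \times H^1(\Omega;\mathbb{C}^4)$, using the graph-norm density of $C^\infty(\overline{\Omega};\mathbb{C}^4)$ from Lemma~\ref{lemma_minimal_maximal_op}, to define the trace $f|_{\partial\Omega} \in H^{-1/2}(\partial\Omega;\mathbb{C}^4)$. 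Extending $f$ by zero to $\tilde f \in L^2(\mathbb{R}^3;\mathbb{C}^4)$ and testing $(-i\alpha\cdot\nabla + m\beta - \lambda)\tilde f$ against arbitrary $\varphi \in C_0^\infty(\mathbb{R}^3;\mathbb{C}^4)$ via this extended Green's identity produces the distributional identity $(-i\alpha\cdot\nabla + m\beta - \lambda)\tilde f = -i(\alpha\cdot\nu) f|_{\partial\Omega} \otimes \delta_{\partial\Omega}$ in $\mathbb{R}^3$. Convolving both sides with $G_\lambda$ and restricting to $\Omega$ yields $f = \Phi_{\lambda,-1/2}\bigl[-i(\alpha\cdot\nu) f|_{\partial\Omega}\bigr]$. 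The most delicate step I expect is the rigorous justification of this last convolution identity at the $H^{-1/2}$-level of regularity, for which the anti-duality framework underlying the definition of $\Phi_{\lambda,-1/2}$ in~\eqref{def_Phi_lambda_minus} is precisely the right setting.
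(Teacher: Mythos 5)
Your proposal is correct, but it follows a different route than the paper, which is essentially citation-based: for (i) the paper observes that the case $s=-\tfrac{1}{2}$ is built into the definition \eqref{def_Phi_lambda_minus}, quotes \cite[Proposition~4.2]{BH17} for $s=\tfrac{1}{2}$ (where $\Phi_{\lambda,1/2}$ coincides with the $\gamma$-field studied there for $\mathbb{R}^3\setminus\partial\Omega$), restricts to $\Omega$, and interpolates; for (ii) it invokes \cite[Propositions~4.4 and~2.6]{BH17}, i.e.\ the abstract fact that the extended $\gamma$-field $\widetilde{\gamma}(\lambda)$ has range $\ker(T_{\textup{max}}-\lambda)$. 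Your factorization $\Phi_\lambda=(-i\alpha\cdot\nabla+m\beta+\lambda)\,\mathcal{S}_\lambda$ through the Helmholtz single layer potential is precisely what powers the cited endpoint result, so your (i) is the internal argument of the reference written out (note that the $H^{1/2}(\partial\Omega)\to H^{2}(\Omega)$ bound for $\mathcal{S}_\lambda$ is where the $C^2$-smoothness enters, and that for unbounded $\Omega$ the global, not merely local, bound uses $\mathrm{Im}\sqrt{\lambda^2-m^2}>0$). Your (ii) is genuinely different and more elementary: defining the $H^{-1/2}$-trace on $\dom T_{\textup{max}}$ via the Green identity \eqref{integration_by_parts} (well defined because $T_{\textup{min}}^*=T_{\textup{max}}$ and $\alpha\cdot\nu$ multiplies $H^{1/2}$ boundedly), extending $f$ by zero and reading off a surface distribution, and inverting the free Dirac operator. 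The step you flag as delicate does close: $h\mapsto h\,\delta_{\partial\Omega}$ is continuous from $H^{-1/2}(\partial\Omega;\mathbb{C}^4)$ into $H^{-1}(\mathbb{R}^3;\mathbb{C}^4)$, the free resolvent maps $H^{-1}$ boundedly into $L^2$, and the composition agrees with $\Phi_{\lambda,-1/2}$ on the dense set $L^2(\partial\Omega;\mathbb{C}^4)$; alternatively one can pair $f$ against $g\in L^2(\Omega;\mathbb{C}^4)$, write $g=(T-\overline{\lambda})u$ with $u$ the free resolvent of the zero-extension of $g$, and use the anti-duality defining $\Phi_{\lambda,-1/2}$, since $\Phi_\lambda^* g=u|_{\partial\Omega}$. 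One small slip: with the sign convention of \eqref{integration_by_parts} and the outward normal, the resulting density is $+i(\alpha\cdot\nu)f|_{\partial\Omega}$ rather than $-i(\alpha\cdot\nu)f|_{\partial\Omega}$ (the $-i$ choice is inconsistent with Lemma~\ref{lem_Phi_lambda} and \eqref{guteformel}); this is immaterial for the range statement because $\alpha\cdot\nu$ is boundedly invertible on $H^{-1/2}(\partial\Omega;\mathbb{C}^4)$. In exchange for being longer, your argument is self-contained and makes explicit where the regularity of $\partial\Omega$ and the condition $\lambda\notin(-\infty,-m]\cup[m,\infty)$ are used, whereas the paper's proof leans on the previously established machinery for $\delta$-shell Dirac operators.
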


\begin{remark} \label{remark_Phi_lambda}
  For $s \in (-\frac{1}{2}, \frac{1}{2}]$ the map $\Phi_{\lambda, s}$ is the restriction of $\Phi_{\lambda, -1/2}$ onto $H^s(\partial \Omega; \mathbb{C}^4)$, i.e. $\Phi_{\lambda, s} \varphi$ is for $\varphi \in H^s(\partial \Omega; \mathbb{C}^4)$ the uniquely determined $L^2$-function $\Phi_{\lambda, -1/2} \varphi$.
\end{remark}

\begin{proof}[Proof of Proposition~\ref{proposition_Phi_lambda}]
  The claim of statement~(i) for $s=-\frac{1}{2}$ follows from the definition of $\Phi_{\lambda, -1/2}$ in \eqref{def_Phi_lambda_minus} and it is contained for $s = \frac{1}{2} $ and $\Omega = \mathbb{R}^3 \setminus \Sigma$ for a $C^2$-surface 
  $\Sigma$ in \cite[Proposition~4.2]{BH17} (see also Remark~\ref{remark_Phi_lambda}); for that one has to note that the map $\gamma(\lambda)$ in \cite{BH17} coincides with $\Phi_{\lambda, 1/2}$. The claim for $\Omega$ follows from this by restriction  and
  for intermediate $s \in ( -\frac{1}{2}, \frac{1}{2})$ by an interpolation argument.
  Assertion~(ii) follows immediately from \cite[Propositions~4.4 and~2.6]{BH17} by noting that $\Phi_{\lambda, -1/2}$ coincides with $\widetilde{\gamma}(\lambda)$ in \cite{BH17}.
\end{proof}

In the next proposition we collect some additional properties of 
$\mathcal{C}_\lambda$ that will be useful in the sequel.

\begin{prop} \label{proposition_C_lambda}
Let $\lambda \in \mathbb{C} \setminus ( (-\infty, -m] \cup [m, \infty) )$ and let
  $\mathcal{C}_\lambda$ be the operator
  in~\eqref{def_C_lambda}. Then the following statements hold:
  \begin{itemize}
    \item[$\textup{(i)}$] For any $s \in [-\frac{1}{2}, \frac{1}{2} ]$ the operator 
    $\mathcal{C}_\lambda$ gives rise 
    to a bounded and everywhere defined operator 
    $\mathcal{C}_{\lambda, s}: H^s(\partial \Omega; \mathbb{C}^4) \rightarrow H^s(\partial \Omega; \mathbb{C}^4)$ and for the anti-dual of $\mathcal{C}_{\lambda, s}$ 
    one has $\mathcal{C}_{\lambda, s}' = \mathcal{C}_{\overline{\lambda}, -s}$.
    \item[$\textup{(ii)}$]  If $\lambda \in (-m, m)$ and $s \in [-\frac{1}{2}, \frac{1}{2} ]$, then the operator $\mathcal{C}_{\lambda, s}$ 
    is invertible and 
    \begin{equation}\label{guteformel}
    -4 \bigl(\mathcal{C}_{\lambda, s} (\alpha \cdot \nu)\bigr)^2 
    = -4 \bigl((\alpha \cdot \nu) \mathcal{C}_{\lambda, s}\bigr)^2 = I_4.
    \end{equation}
  \end{itemize}
\end{prop}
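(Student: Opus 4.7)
The plan is to handle parts~(i) and~(ii) in turn, building on the machinery already developed for the single layer potential $\Phi_\lambda$.

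For part~(i) I would establish boundedness of $\mathcal{C}_{\lambda,s}$ at the three anchor exponents $s\in\{-\tfrac12,0,\tfrac12\}$ and then interpolate. The case $s=0$ is the classical Calder\'on--Zygmund $L^2$-bound for the principal value singular integral $\mathcal{C}_\lambda$ on a $C^2$-surface (already recalled after~\eqref{def_C_lambda}), combined with the self-duality~\eqref{adjoint_C_lambda}. For $s=\tfrac12$ I would combine Proposition~\ref{proposition_Phi_lambda}~(i), which supplies $\Phi_{\lambda,1/2}\in\mathcal{B}(H^{1/2}(\partial\Omega;\mathbb{C}^4),H^1(\Omega;\mathbb{C}^4))$, with the standard interior Plemelj-type jump formula
\[
  (\Phi_{\lambda,1/2}\varphi)|_{\partial\Omega} \;=\; \mathcal{C}_\lambda\varphi + \tfrac{i}{2}(\alpha\cdot\nu)\varphi,
\]
and the bounded trace $H^1(\Omega;\mathbb{C}^4)\to H^{1/2}(\partial\Omega;\mathbb{C}^4)$, yielding $\mathcal{C}_{\lambda,1/2}\in\mathcal{B}(H^{1/2}(\partial\Omega;\mathbb{C}^4))$. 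The case $s=-\tfrac12$ follows by duality: \eqref{adjoint_C_lambda} together with the density of $L^2$ in $H^{-1/2}$ identifies $\mathcal{C}_{\lambda,-1/2}$ with the anti-dual $(\mathcal{C}_{\overline\lambda,1/2})'$. The intermediate exponents are covered by operator interpolation, and the identity $\mathcal{C}_{\lambda,s}'=\mathcal{C}_{\overline\lambda,-s}$ is read off from the same argument tested on an $L^2$-dense subset.

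For part~(ii) the crucial algebraic fact is the Plemelj--Privalov-type identity $-4(\mathcal{C}_\lambda(\alpha\cdot\nu))^2=I$ on $L^2(\partial\Omega;\mathbb{C}^4)$, which is well documented in the literature on Dirac operators with $\delta$-shell potentials; see, e.g., \cite[Lemma~2.2]{AMV14} or \cite[Lemma~3.3]{BEHL18}. The conceptual content is that
\[
  P_{\pm} \;:=\; \tfrac{1}{2}I \pm i(\alpha\cdot\nu)\mathcal{C}_\lambda
\]
are the mutually complementary projections of $L^2(\partial\Omega;\mathbb{C}^4)$ onto the Dirichlet traces of $L^2$-solutions of $(T_{\textup{max}}-\lambda)u=0$ coming from the interior, respectively the exterior, of $\Omega$; this is verified from the interior/exterior jump relations of $\Phi_\lambda$ and the reproducing formula for $\ker(T_{\textup{max}}-\lambda)$. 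The hypothesis $\lambda\in(-m,m)$ enters precisely here: it guarantees the exponential decay of $G_\lambda$, so that $\Phi_\lambda\varphi$ is square-integrable on both sides of $\partial\Omega$ even when $\Omega$ is unbounded. Expanding $P_\pm^2=P_\pm$ and invoking $(\alpha\cdot\nu)^2=I_4$ (a direct consequence of~\eqref{eq_commutation}) then yields the identity, and the second form $-4((\alpha\cdot\nu)\mathcal{C}_\lambda)^2=I$ follows by conjugating with $\alpha\cdot\nu$. The extension from $L^2$ to every $H^s$ with $s\in[-\tfrac12,\tfrac12]$ is a density/continuity argument based on part~(i).

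Invertibility of $\mathcal{C}_{\lambda,s}$ is then immediate from~\eqref{guteformel}: it shows that $-4\,\mathcal{C}_{\lambda,s}(\alpha\cdot\nu)$ is a two-sided inverse of $\mathcal{C}_{\lambda,s}(\alpha\cdot\nu)$, while multiplication by $(\alpha\cdot\nu)$ is a bounded isomorphism of $H^s(\partial\Omega;\mathbb{C}^4)$ with inverse itself (since $(\alpha\cdot\nu)^2=I_4$ and $\nu\in C^1(\partial\Omega)$ preserves $H^s$ for $|s|\le 1$), so composition gives the claim. I expect the main obstacle to be the projector identity on $L^2$: it encodes a delicate interplay between the interior/exterior jump relations of $\Phi_\lambda$ on a $C^2$-surface, the reproducing property for $\ker(T_{\textup{max}}-\lambda)$, and the anticommutation relations \eqref{eq_commutation}; fortunately this hard work has already been carried out in the cited $\delta$-shell literature, so here it only remains to transport the identity along the $H^s$-scale.
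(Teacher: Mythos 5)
Your proof is correct and takes essentially the same route as the paper: boundedness at the endpoints $s=\pm\tfrac12$ (for $s=\tfrac12$ via the mapping properties of $\Phi_\lambda$ and the jump relation, which is exactly the content of the results of \cite{BH17} the paper cites, and for $s=-\tfrac12$ by anti-duality), interpolation for intermediate $s$, and for \eqref{guteformel} an appeal to the $L^2$-identity from \cite{AMV14} transported along the $H^s$-scale by restriction/continuity, with invertibility of $\mathcal{C}_{\lambda,s}$ read off from \eqref{guteformel} together with $(\alpha\cdot\nu)^2=I_4$, just as in the paper. Only cosmetic discrepancies: your jump formula carries the opposite sign to Lemma~\ref{lem_Phi_lambda} (immaterial for the boundedness argument), the relevant reference is \cite[Lemma~3.3]{AMV14} rather than Lemma~2.2, and the exponential decay of $G_\lambda$ you attribute to $\lambda\in(-m,m)$ in fact holds for all $\lambda\in\mathbb{C}\setminus((-\infty,-m]\cup[m,\infty))$, so it is not by itself what singles out the real gap in item~(ii).
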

\begin{proof}
  First, by \cite[Proposition~4.2]{BH17} the restriction $\mathcal{C}_{\lambda,1/2} := \mathcal{C}_\lambda \upharpoonright H^{1/2}(\partial \Omega; \mathbb{C}^4)$ is bounded in $H^{1/2}(\partial \Omega; \mathbb{C}^4)$.
  Moreover, by \cite[Proposition~4.4]{BH17} the mapping $\mathcal{C}_{\lambda, 1/2}$ can be extended by continuity to a bounded operator $\mathcal{C}_{\lambda, -1/2}$ in $H^{-1/2}(\partial \Omega; \mathbb{C}^4)$
  and it is also shown there that
  \begin{equation*} 
    (\mathcal{C}_{\lambda, 1/2} \varphi, \psi)_{H^{1/2}(\partial \Omega; \mathbb{C}^4) \times H^{-1/2}(\partial \Omega; \mathbb{C}^4)} = (\varphi, \mathcal{C}_{\overline{\lambda}, -1/2} \psi)_{H^{1/2}(\partial \Omega; \mathbb{C}^4) \times H^{-1/2}(\partial \Omega; \mathbb{C}^4)}
  \end{equation*}
  holds for all $\varphi \in H^{1/2}(\partial \Omega; \mathbb{C}^4)$ and $\psi \in H^{-1/2}(\partial \Omega; \mathbb{C}^4)$;
  for this one just has to note that the operators $M(\lambda)$ and $\widetilde{M}(\lambda)$ in \cite{BH17} coincide with 
  $\mathcal{C}_{\lambda, 1/2}$ and $\mathcal{C}_{\lambda, -1/2}$, respectively. Hence, assertion~(i) holds for $s=\pm\frac{1}{2}$.  
  Now the continuity claim in item~(i) for the restriction $\mathcal{C}_{\lambda, s} := \mathcal{C}_{\lambda, -1/2} \upharpoonright H^s(\partial \Omega; \mathbb{C}^4)$ for intermediate $s \in ( -\frac{1}{2}, \frac{1}{2} )$ follows via interpolation. 
  Moreover, for $s \in (-\frac{1}{2}, 0)$ the map $\mathcal{C}_{\lambda, s}$ is the anti-dual of $\mathcal{C}_{\overline{\lambda}, -s}$. 
  To see the last claim, we note that for $\varphi \in L^2(\partial \Omega; \mathbb{C}^4)$ and $\psi \in H^{-s}(\partial \Omega; \mathbb{C}^4)$ one has
  \begin{equation*}
    \begin{split}
      (\mathcal{C}_{\lambda, s} \varphi, \psi)_{H^{s}(\partial \Omega; \mathbb{C}^4) \times H^{-s}(\partial \Omega; \mathbb{C}^4)} 
      &= (\mathcal{C}_{\lambda} \varphi, \psi)_{\partial \Omega}
      = (\varphi, \mathcal{C}_{\overline{\lambda}} \psi)_{\partial \Omega}\\
      &= (\varphi, \mathcal{C}_{\overline{\lambda}, -s} \psi)_{H^{s}(\partial \Omega; \mathbb{C}^4) \times H^{-s}(\partial \Omega; \mathbb{C}^4)},
    \end{split}
  \end{equation*}
  where~\eqref{adjoint_C_lambda} was used. By density this can be extended for all $\varphi \in H^s(\partial \Omega; \mathbb{C}^4)$, which implies that indeed $\mathcal{C}_{\lambda, s}' = \mathcal{C}_{\overline{\lambda}, -s}$.

  The identity \eqref{guteformel} in (ii) is shown for $\lambda = s = 0$ in \cite[Lemma~3.3]{AMV14} and can be shown for $\lambda \neq 0$ and $s=0$
  in a similar way. Clearly, this also implies the invertibility of $\mathcal{C}_{\lambda}$. For $s \in (0, \frac{1}{2}]$ the claim follows from 
  $\mathcal{C}_{\lambda, s} = \mathcal{C}_\lambda \upharpoonright H^s(\partial \Omega; \mathbb{C}^4)$. This and $\mathcal{C}_{\lambda, s}' = \mathcal{C}_{\overline{\lambda}, -s}$
  imply~\eqref{guteformel} also for $s \in [-\frac{1}{2}, 0)$. 
\end{proof}

For $\varphi \in H^{1/2}({\partial \Omega}; \mathbb{C}^4)$ 
the trace of $\Phi_{\lambda,1/2} \varphi$ and the function $\mathcal{C}_{\lambda,1/2} \varphi$ are closely related. The formula 
in the next lemma will be useful in the next sections.

\begin{lem} \label{lem_Phi_lambda}
  Let $\lambda \in \mathbb{C} \setminus ( (-\infty, -m] \cup [m, \infty))$ and let
  $\Phi_{\lambda,1/2}$ and $\mathcal{C}_{\lambda,1/2}$ be the operators 
  in~Proposition~\ref{proposition_Phi_lambda} and Proposition~\ref{proposition_C_lambda}, respectively. 
  Then one has 
    \begin{equation*}
      (\Phi_{\lambda,1/2} \varphi)\vert_{\partial\Omega} = \mathcal{C}_{\lambda,1/2} \varphi - \frac{i}{2} (\alpha \cdot \nu) \varphi,
      \qquad \varphi \in H^{1/2}({\partial \Omega}; \mathbb{C}^4).
    \end{equation*}
\end{lem}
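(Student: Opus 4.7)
The identity is the classical Plemelj--Sokhotski jump relation for the Dirac single-layer potential $\Phi_\lambda$. My plan is to prove it on the dense subspace $C^\infty(\partial\Omega;\mathbb{C}^4)\subset H^{1/2}(\partial\Omega;\mathbb{C}^4)$ by a direct singular-integral argument, and then to extend it by continuity using the mapping properties in Propositions~\ref{proposition_Phi_lambda} and~\ref{proposition_C_lambda}.

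The starting point is the decomposition, visible from~\eqref{def_G_lambda} after expanding the exponential,
\begin{equation*}
G_\lambda(x-y) = \frac{i}{4\pi}\,\frac{\alpha\cdot(x-y)}{|x-y|^3} + R_\lambda(x-y),
\end{equation*}
where $R_\lambda(z) = O(|z|^{-1})$ as $z\to 0$. Thus convolution of $R_\lambda$ with a smooth density is continuous across $\partial\Omega$ and contributes only to $\mathcal{C}_\lambda\varphi$; the jump is produced entirely by the Riesz-type leading term. For $\varphi\in C^\infty(\partial\Omega;\mathbb{C}^4)$ and a fixed $x_0\in\partial\Omega$, I would compute $\lim_{\varepsilon\to 0^+}\Phi_\lambda\varphi(x_0-\varepsilon\nu(x_0))$ by the standard local procedure: flatten $\partial\Omega$ near $x_0$ (possible since $\partial\Omega$ is $C^2$), freeze $\varphi$ at $x_0$, and evaluate the frozen planar integral; the tangential components vanish by odd symmetry, while the normal component produces the jump via $\int_{\mathbb{R}^2}\varepsilon(|y'|^2+\varepsilon^2)^{-3/2}\,dy' = 2\pi$, yielding exactly $-\tfrac{i}{2}(\alpha\cdot\nu(x_0))\varphi(x_0)$. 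The errors from freezing $\varphi$ and from the curvature of $\partial\Omega$ are absolutely integrable on $\partial\Omega$ and, together with the contribution of $R_\lambda$, pass to the limit producing the principal-value integral $\mathcal{C}_\lambda\varphi(x_0)$. As an alternative, the whole argument can be reduced, via the factorisation $G_\lambda = (-i\alpha\cdot\nabla + m\beta + \lambda)G_\lambda^{\mathrm{Helm}}$ with $G_\lambda^{\mathrm{Helm}}(z) = e^{i\sqrt{\lambda^2-m^2}|z|}/(4\pi|z|)$, to the classical jump formula for the normal derivative of the Helmholtz single-layer potential.

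To finish, both sides of the identity are bounded operators from $H^{1/2}(\partial\Omega;\mathbb{C}^4)$ into itself: the left-hand side factors as $\Phi_{\lambda,1/2}\colon H^{1/2}(\partial\Omega;\mathbb{C}^4)\to H^1(\Omega;\mathbb{C}^4)$ from Proposition~\ref{proposition_Phi_lambda}(i) followed by the $H^1$-trace; the right-hand side combines $\mathcal{C}_{\lambda,1/2}$ from Proposition~\ref{proposition_C_lambda}(i) with $-\tfrac{i}{2}$ times multiplication by the $C^1$ matrix-valued function $\alpha\cdot\nu$, which preserves $H^{1/2}(\partial\Omega;\mathbb{C}^4)$ since $\partial\Omega$ is $C^2$. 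Density of $C^\infty(\partial\Omega;\mathbb{C}^4)$ in $H^{1/2}(\partial\Omega;\mathbb{C}^4)$ then yields the identity on the whole space. The main obstacle is the jump computation itself on smooth $\varphi$, but this is a well-known Plemelj--Sokhotski-type calculation and the result can be quoted from \cite{AMV14}.
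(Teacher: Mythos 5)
Your argument is correct, but it is organised differently from the paper's proof, which is purely by citation: the paper quotes \cite[Lemma~3.3]{AMV14}, where this Plemelj--Sokhotski jump relation is established for $\lambda=0$ in terms of the \emph{non-tangential} limit of $\Phi_{\lambda,1/2}\varphi$, then invokes \cite[Lemma~3.1]{BGM20} to identify the non-tangential limit with the $H^1$-trace, and finally remarks that the case of general $\lambda\in\mathbb{C}\setminus((-\infty,-m]\cup[m,\infty))$ follows in the same way. You instead redo the underlying computation from scratch: the decomposition $G_\lambda(z)=\frac{i}{4\pi}\frac{\alpha\cdot z}{|z|^3}+O(|z|^{-1})$ correctly isolates the jump-producing part (the weakly singular remainder contributes only to $\mathcal{C}_\lambda\varphi$), the frozen planar integral indeed yields $-\frac{i}{2}(\alpha\cdot\nu)\varphi$ with the right sign, and your density step is sound because both sides are bounded from $H^{1/2}(\partial\Omega;\mathbb{C}^4)$ into itself -- the left-hand side via $\Phi_{\lambda,1/2}\colon H^{1/2}(\partial\Omega;\mathbb{C}^4)\to H^1(\Omega;\mathbb{C}^4)$ and the trace map, the right-hand side via Proposition~\ref{proposition_C_lambda}~(i) and Lemma~\ref{lemma_mult_op} applied to the Lipschitz function $\alpha\cdot\nu$. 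What your route buys is that the delicate identification of non-tangential limits with Sobolev traces is only needed for \emph{smooth} densities, where it is elementary: for $\varphi\in C^\infty(\partial\Omega;\mathbb{C}^4)$ your estimates show $\Phi_\lambda\varphi\in H^1(\Omega;\mathbb{C}^4)\cap C(\overline{\Omega};\mathbb{C}^4)$, so the continuous boundary values coincide with the trace, and the general case follows by continuity; this is exactly the point the paper instead settles for all $\varphi\in H^{1/2}$ by quoting \cite{BGM20}. You should state this identification for smooth $\varphi$ explicitly rather than leave it implicit, and note that the freezing/curvature error estimates use the $C^2$-regularity of $\partial\Omega$ together with the H\"older continuity of the smooth density; with these remarks your self-contained proof is a valid, slightly more elementary alternative to the paper's citation-based one, and your closing observation that the jump formula can simply be quoted from \cite{AMV14} is precisely what the paper does.
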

\begin{proof}
 The formula is shown for $\lambda = 0$ in \cite[Lemma~3.3]{AMV14} in terms of the non-tangential limit of $\Phi_{\lambda,1/2} \varphi\in H^{1}(\Omega; \mathbb{C}^4)$.
 Since the non-tangential limit and the trace coincide (see, e.g. \cite[Lemma 3.1]{BGM20}) the claim follows for $\lambda = 0$; the claim for general $\lambda$
  can be deduced in the same way.
\end{proof}

In the following proposition we show that the commutator of the singular integral operator $\mathcal{C}_{\lambda}$
and a H\"older continuous function of order $a> 0$ is bounded from $L^2(\partial \Omega; \mathbb{C}^4)$ to $H^s(\partial \Omega; \mathbb{C}^4)$, $s \in [0, a)$, and hence 
compact in $H^s(\partial \Omega; \mathbb{C}^4)$ for $a>s\geq0$.
This has important consequences for the analysis of self-adjoint Dirac operators on domains 
and will be used in the proofs of many of the main results in this paper.
The proof of the next result relies on the properties of integral operators established in Appendix~\ref{appendix_commutator}.

\begin{prop} \label{proposition_commutator_C_lambda}
  Let $\lambda \in \mathbb{C} \setminus ( (-\infty, - m] \cup [m, \infty) )$, let $\mathcal{C}_\lambda$ be the operator
  in~\eqref{def_C_lambda}, and assume that  
  $\vartheta \in \textup{Lip}_a({\partial \Omega})$
  for some $a\in (0,1]$. Then the commutator 
  $$\varphi\mapsto\mathcal{C}_{\lambda}(\vartheta\varphi)
  -\vartheta\,\mathcal{C}_{\lambda}\varphi$$ 
  is bounded from $L^2(\partial \Omega; \mathbb{C}^4)$ to
  $H^{s}(\partial \Omega; \mathbb{C}^4)$ for any $s \in [0, a)$.
  In particular, the restriction $\mathcal{C}_{\lambda, s} \vartheta
  -\vartheta\,\mathcal{C}_{\lambda, s}$ is compact in $H^{s}(\partial \Omega; \mathbb{C}^4)$ for every $s \in [0, a)$.
\end{prop}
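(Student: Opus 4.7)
The plan is to analyze the integral kernel of the commutator directly and then invoke the general mapping properties collected in the appendix. By definition,
\begin{equation*}
\bigl(\mathcal{C}_\lambda(\vartheta\varphi) - \vartheta\,\mathcal{C}_\lambda\varphi\bigr)(x) = \lim_{\varepsilon\searrow 0}\int_{\partial\Omega\setminus B(x,\varepsilon)}\bigl(\vartheta(y) - \vartheta(x)\bigr) G_\lambda(x-y)\varphi(y)\, d\sigma(y).
\end{equation*}
Since $\vartheta\in\mathrm{Lip}_a(\partial\Omega)$ we have $|\vartheta(y)-\vartheta(x)|\leq C|x-y|^a$, while from~\eqref{def_G_lambda} the worst singularity of $G_\lambda$ is of order $|x-y|^{-2}$ (coming from the Riesz-type term $(\alpha\cdot(x-y))/|x-y|^3$). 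Hence the new kernel $\widetilde K(x,y):=(\vartheta(y)-\vartheta(x))G_\lambda(x-y)$ satisfies $|\widetilde K(x,y)|\leq C_\lambda|x-y|^{a-2}$, which is integrable on the two-dimensional surface $\partial\Omega$; in particular, the principal value collapses to an absolutely convergent integral, and $L^2$-boundedness already follows from Schur's test.

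Having reduced the problem to a weakly singular integral operator whose kernel also inherits Hölder regularity of order $a$ from $\vartheta$ (and the smoothness of $G_\lambda$ away from the diagonal), I would then apply the technology from Appendix~\ref{appendix_commutator} to conclude boundedness from $L^2(\partial\Omega;\mathbb{C}^4)$ into $H^{s}(\partial\Omega;\mathbb{C}^4)$ for every $s\in[0,a)$. The essential point is to estimate the Slobodeckii double integral in~\eqref{def_Sobolev_Slobodeckii} applied to $K\varphi$, where $K$ denotes the commutator. This is done by splitting the $y$-integral, for each pair $(x_1,x_2)\in\partial\Omega\times\partial\Omega$, into the near region $\{|x_1-y|\leq 2|x_1-x_2|\}$ and its complement. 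On the near region one uses the pointwise bound $|\widetilde K(x_j,y)|\leq C|x_j-y|^{a-2}$ for $j=1,2$ directly; on the far region one Taylor-expands $\widetilde K$ in the first variable, extracting a factor $|x_1-x_2|^a$ from the combined Hölder regularity of $\vartheta$ and the smoothness of $G_\lambda$ at the price of a slightly stronger decay in $|x_j-y|$. The resulting contributions, combined with the weight $|x_1-x_2|^{-2-2s}$, yield a finite double integral precisely when $s<a$.

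Compactness in $H^s(\partial\Omega;\mathbb{C}^4)$ for $s\in[0,a)$ then follows by a soft argument: fix any $s'\in(s,a)$, so that the commutator is bounded as a map $L^2(\partial\Omega;\mathbb{C}^4)\to H^{s'}(\partial\Omega;\mathbb{C}^4)$. Its restriction to $H^s(\partial\Omega;\mathbb{C}^4)\subset L^2(\partial\Omega;\mathbb{C}^4)$ is therefore bounded into $H^{s'}(\partial\Omega;\mathbb{C}^4)$, and composition with the Rellich–Kondrachov compact embedding $H^{s'}(\partial\Omega;\mathbb{C}^4)\hookrightarrow H^s(\partial\Omega;\mathbb{C}^4)$, which is compact because $\partial\Omega$ is a compact manifold, produces the claimed compactness.

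The main obstacle is the Slobodeckii double-integral estimate in the near/far decomposition: one must extract essentially the full Hölder regularity $a$ in the far-region analysis while avoiding logarithmic losses at the endpoint $s=a$, and this requires an accurate quantitative control of the Hölder smoothness of $G_\lambda$ uniformly in the spectral parameter $\lambda$ on compact sets. This is the delicate technical ingredient that the appendix is designed to supply.
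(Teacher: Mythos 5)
Your proposal is correct and follows essentially the same route as the paper: one writes the commutator as an integral operator with kernel $(\vartheta(y)-\vartheta(x))G_\lambda(x-y)$, verifies the weak-singularity bound $C|x-y|^{a-2}$ and the first-variable H\"older-type estimate of~\eqref{kernel growth} (the latter via the H\"older continuity of $\vartheta$ together with the gradient bound $|\nabla G_\lambda(x)|\leq C|x|^{-3}$), and then invokes Theorem~\ref{thm bounded} from the appendix, whose proof is precisely the Slobodeckii double-integral estimate with the near/far splitting you describe. Your compactness argument via a compact Sobolev embedding (choosing $s'\in(s,a)$) is a harmless variant of the paper's factorization through the embeddings $\iota_s$, $\iota_r$.
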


\begin{proof}
  To prove the claimed mapping properties we show that each component $k_{j l}$, $j, l\in\{1,\dots,4\}$, 
  of the matrix-valued integral kernel
  \begin{equation*}
    K(x, y) := G_\lambda(x-y) (\vartheta(y) - \vartheta(x))
  \end{equation*}
  of $\mathcal{C}_{\lambda} \vartheta - \vartheta \mathcal{C}_{\lambda}$ satisfies the estimates in~\eqref{kernel growth}. 
  As $a>s$ the claim of this proposition follows from Theorem~\ref{thm bounded} applied to the scalar integral operators with integral kernels $k_{j l}$, $j, l\in\{1,\dots,4\}$. 
  Since the embedding $\iota_s: H^s(\partial \Omega; \mathbb{C}^4) \rightarrow L^2(\partial \Omega; \mathbb{C}^4)$ is compact for $s\in (0,a)$, this implies then also the compactness of 
  $\mathcal{C}_{\lambda, s} \vartheta - \vartheta \mathcal{C}_{\lambda, s}$ in $H^s(\partial \Omega; \mathbb{C}^4)$. Indeed, for $s > 0$ it follows that $\mathcal{C}_{\lambda, s} \vartheta - \vartheta \mathcal{C}_{\lambda, s} = (\mathcal{C}_{\lambda} \vartheta - \vartheta \mathcal{C}_{\lambda}) \iota_s$ is compact in $H^s(\partial \Omega; \mathbb{C}^4)$, and for $s=0$ one can choose $r \in (0,a)$ and sees with the result of this proposition that $\mathcal{C}_{\lambda} \vartheta - \vartheta \mathcal{C}_{\lambda}: L^2(\partial \Omega; \mathbb{C}^4) \rightarrow H^r(\partial \Omega; \mathbb{C}^4)$ is bounded, and hence $\mathcal{C}_{\lambda} \vartheta - \vartheta \mathcal{C}_{\lambda} = \iota_r (\mathcal{C}_{\lambda} \vartheta - \vartheta \mathcal{C}_{\lambda})$ is compact in $L^2(\partial \Omega; \mathbb{C}^4)$.
  
  In the sequel we use the matrix norm 
  \begin{equation*}
    |M| := \max_{1 \leq k, l \leq 4} |m_{k l}|,\qquad M=(m_{kl})_{k,l=1}^4 \in \mathbb{C}^{4 \times 4}.
  \end{equation*}
  Throughout the proof of this proposition $C$ is a generic constant with different values on different places. First, due to the H\"older continuity of $\vartheta$ we conclude immediately from the definition of $G_\lambda$ in~\eqref{def_G_lambda} that
  \begin{equation*}
    |K(x, y)| = |G_\lambda(x-y)| \cdot |\vartheta(y) - \vartheta(x)| \leq C \frac{|y-x|^a }{|x - y|^2} = \frac{C}{|x - y|^{2-a}}
  \end{equation*}
  holds for all $x,y\in\partial\Omega$, $x \neq y$. Hence, the first estimate in \eqref{kernel growth} is satisfied. To show the second one, we take $x, y, z \in \partial \Omega$ with $|x - y| \leq \frac{1}{4} |x - z|$, write 
  \begin{equation} \label{equation_split_commutator}
    \begin{split}
      K(x, \,&z) - K(y, z) \\
      &= G_\lambda(x - z) (\vartheta(y) - \vartheta(x)) + \big( G_\lambda(x-z) - G_\lambda(y-z) \big) (\vartheta(z) - \vartheta(y)),
    \end{split}
  \end{equation}
  and show that each of the two terms on the right hand side of~\eqref{equation_split_commutator} fulfills this growth condition. Clearly, using the H\"older continuity of $\vartheta$ and the definition of $G_\lambda$ from~\eqref{def_G_lambda} we find first that
  \begin{equation} \label{equation_split_commutator1}
    \big| G_\lambda(x - z) (\vartheta(y) - \vartheta(x)) \big| \leq C \frac{|x - y|^a}{|x - z|^2}.
  \end{equation}
  To get an estimate for the second term in~\eqref{equation_split_commutator} we note first that the H\"older continuity of $\vartheta$, the triangle inequality, and $|x - y| \leq \frac{1}{4} |x - z|$ yield
  \begin{equation} \label{equation_split_commutator2}
    |\vartheta(z) - \vartheta(y)| \leq C |y - z|^a \leq C \big( |x - y| + |x -z| \big)^a \leq C |x - z|^a.
  \end{equation}
  In the next calculation we use for $\xi, \zeta \in \mathbb{R}^3 \setminus \{ 0 \}$ the notation 
  \begin{equation*}
    \nabla G_\lambda(\xi) \cdot \zeta := \partial_1 G_\lambda(\xi) \cdot \zeta_1 + \partial_2 G_\lambda(\xi) \cdot \zeta_2 + \partial_3 G_\lambda(\xi) \cdot \zeta_3.
  \end{equation*}
  Then we deduce with the main theorem of calculus applied to each entry of the matrix $G_\lambda(x-z) - G_\lambda(y-z)$
  \begin{equation} \label{equation_split_commutator3}
    \begin{split}
      \big| G_\lambda(x-z) - G_\lambda(y-z) \big| &= \left| \int_0^1 \frac{\text{d}}{\text{d} t} G_\lambda(t (x - y) + y - z) \text{d} t \right| \\
      &\leq \int_0^1 \big| \nabla G_\lambda(t (x - y) + y - z) \cdot (x - y) \big| \text{d} t.  \\
    \end{split}
  \end{equation}
  Using~\eqref{def_G_lambda} we find
  \begin{equation*}
    \begin{split}
      \partial_j G_\lambda(x) &= \Bigg[\!\left( \! \lambda I_4 + m \beta 
        + \left( 1 - i \sqrt{\lambda^2 - m^2} |x| \right) \frac{i(\alpha \cdot x )}{|x|^2} \!\right)\!
        \left( i \sqrt{\lambda^2 - m^2} - \frac{1}{|x|} \right) \partial_j |x| \\
      &\qquad +\left( 1 - i \sqrt{\lambda^2 - m^2} |x| \right) \frac{i}{|x|^2} \left( \alpha_j
        - \frac{2 \alpha\cdot x}{|x|} \partial_j |x| \right) \\
      &\qquad + \sqrt{\lambda^2 - m^2} \, \frac{\alpha \cdot x}{|x|^2}\, \partial_j |x| \Bigg]
        \frac{e^{i \sqrt{\lambda^2 - m^2} |x|}}{4 \pi |x|},
    \end{split}
  \end{equation*}
  which implies $|\partial_j G_\lambda(x)| \leq C |x|^{-3}$. Substituting this in~\eqref{equation_split_commutator3} yields
  \begin{equation} \label{equation_split_commutator4}
    \big| G_\lambda(x-z) - G_\lambda(y-z) \big| \leq C \int_0^1 |t (x - y) + y - z|^{-3} \text{d} t \cdot |x - y|. 
  \end{equation}
  Since $|x - y| \leq \frac{1}{4} |x - z|$, we can estimate for $t \in [0, 1]$
  \begin{equation*}
    |t (x - y) + y - z| \geq |y - z| - t |x - y| \geq |x - z| - (1 + t) |x - y| \geq \frac{1}{2} |x - z|
  \end{equation*}
  and 
  \begin{equation*}
    |x - y| = |x - y|^a \cdot |x - y|^{1-a} \leq C |x - y|^a \cdot |x - z|^{1-a}.
  \end{equation*}
  Using these two observations in~\eqref{equation_split_commutator4} we conclude
  \begin{equation*} 
    \big| G_\lambda(x-z) - G_\lambda(y-z) \big| \leq C \frac{|x - y|^a \cdot |x - z|^{1-a}}{|x - z|^3} = C \frac{|x - y|^a}{|x - z|^{2 + a}}. 
  \end{equation*}
  Together with~\eqref{equation_split_commutator2} this leads to
  \begin{equation*}
    \big| \big(G_\lambda(x-z) - G_\lambda(y-z) \big) (\vartheta(z) - \vartheta(y)) \big|
        \leq C \frac{|x - y|^a}{|x - z|^2}.
  \end{equation*}
  It follows now easily from \eqref{equation_split_commutator}, the triangle inequality, \eqref{equation_split_commutator1}, and the last estimate that the components 
  of $K$ also satisfy the second condition in~\eqref{kernel growth}. This completes the proof of this proposition.
\end{proof}

We now provide some useful anti-commutator properties of $\mathcal{C}_\lambda$ and the Dirac matrices.
These facts are also main ingredients to prove the self-adjointness of Dirac operators on domains later.

\begin{prop} \label{proposition_commutator}
  Let $\lambda \in \mathbb{C} \setminus ( (-\infty, - m] \cup [m, \infty) )$ and let $\mathcal{C}_\lambda$ be 
  the operator in~\eqref{def_C_lambda}.
  Then the following statements hold:
  \begin{itemize}
    \item[$\textup{(i)}$] The mapping 
    $\mathcal{A}_\lambda := \mathcal{C}_\lambda^2 - \frac{1}{4} I_4$
    can be extended to a bounded operator
    \begin{equation*}
      \widetilde{\mathcal{A}}_\lambda: H^{-1/2}({\partial \Omega}; \mathbb{C}^4) \rightarrow H^{1/2}({\partial \Omega}; \mathbb{C}^4).
    \end{equation*}
    \item[$\textup{(ii)}$] The anti-commutator 
    $\mathcal{B}_\lambda := \mathcal{C}_\lambda \beta + \beta \mathcal{C}_\lambda$
    can be extended to a bounded operator
    \begin{equation*}
      \widetilde{\mathcal{B}}_\lambda: H^{-1/2}({\partial \Omega}; \mathbb{C}^4) \rightarrow H^{1/2}({\partial \Omega}; \mathbb{C}^4).
    \end{equation*}
  \end{itemize}
  In particular, the restrictions 
  $$\mathcal{A}_{\lambda,1/2}=\mathcal{C}_{\lambda,1/2}^2 - \frac{1}{4} I_4\quad\text{and}\quad\mathcal{B}_{\lambda,1/2}=\mathcal{C}_{\lambda,1/2} \beta + \beta \mathcal{C}_{\lambda,1/2}$$ 
  onto $H^{1/2}(\partial \Omega; \mathbb{C}^4)$
  are both compact operators in $H^{1/2}(\partial \Omega; \mathbb{C}^4)$.
\end{prop}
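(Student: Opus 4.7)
The plan is to rewrite both $\mathcal{A}_\lambda$ and $\mathcal{B}_\lambda$ in a form that exposes a cancellation of the strongly singular part of their integral kernels, leaving weakly singular single-layer-type kernels of order $|x-y|^{-1}$ on the $C^2$ surface $\partial\Omega$. Once this is achieved, the claimed mapping $H^{-1/2}(\partial\Omega;\mathbb{C}^4)\to H^{1/2}(\partial\Omega;\mathbb{C}^4)$ follows from the kernel estimates developed in Appendix~\ref{appendix_commutator} and used in Proposition~\ref{proposition_commutator_C_lambda}, possibly combined with a duality argument. The compactness of the restrictions to $H^{1/2}(\partial\Omega;\mathbb{C}^4)$ is then immediate from the compactness of the embedding $H^{1/2}(\partial\Omega;\mathbb{C}^4)\hookrightarrow H^{-1/2}(\partial\Omega;\mathbb{C}^4)$, which holds because $\partial\Omega$ is compact.

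For part $\textup{(ii)}$ the cancellation is purely algebraic. The anti-commutation $\alpha_j\beta+\beta\alpha_j=0$ in~\eqref{eq_commutation} gives $(\alpha\cdot z)\beta+\beta(\alpha\cdot z)=0$ for every $z\in\mathbb R^3$, so that the strongly singular contribution $\tfrac{i(\alpha\cdot(x-y))}{4\pi|x-y|^3}$ in $G_\lambda(x-y)\beta+\beta G_\lambda(x-y)$ vanishes. A direct computation using~\eqref{def_G_lambda} reduces the kernel of $\mathcal{B}_\lambda$ to
\begin{equation*}
G_\lambda(x-y)\beta+\beta G_\lambda(x-y)=\bigl(2\lambda\beta+2m I_4\bigr)\,\frac{e^{i\sqrt{\lambda^2-m^2}\,|x-y|}}{4\pi|x-y|},
\end{equation*}
which is the matrix-valued kernel of a classical Helmholtz single-layer potential, known to map $H^{-1/2}(\partial\Omega;\mathbb{C}^4)$ into $H^{1/2}(\partial\Omega;\mathbb{C}^4)$.

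For part $\textup{(i)}$ the starting point is the algebraic identity derived from~\eqref{guteformel}. Using $(\alpha\cdot\nu)^2=I_4$ I insert $I_4=(\alpha\cdot\nu)^2$ in the middle of $\mathcal{C}_\lambda^2$ and then split the resulting product into an anti-commutator plus a square, obtaining
\begin{equation*}
\mathcal{C}_\lambda^2=\mathcal{C}_\lambda(\alpha\cdot\nu)\bigl\{\alpha\cdot\nu,\mathcal{C}_\lambda\bigr\}-\bigl(\mathcal{C}_\lambda(\alpha\cdot\nu)\bigr)^2=\mathcal{C}_\lambda(\alpha\cdot\nu)\bigl\{\alpha\cdot\nu,\mathcal{C}_\lambda\bigr\}+\tfrac{1}{4}I_4,
\end{equation*}
so that $\mathcal{A}_\lambda=\mathcal{C}_\lambda(\alpha\cdot\nu)\,\mathcal{D}_\lambda$ with $\mathcal{D}_\lambda:=\{\alpha\cdot\nu,\mathcal{C}_\lambda\}$. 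The kernel of $\mathcal{D}_\lambda$ is $(\alpha\cdot\nu(x))G_\lambda(x-y)+G_\lambda(x-y)(\alpha\cdot\nu(y))$, and the Clifford identity $(\alpha\cdot a)(\alpha\cdot b)+(\alpha\cdot b)(\alpha\cdot a)=2(a\cdot b) I_4$ turns its strongly singular part into
\begin{equation*}
\frac{i}{4\pi|x-y|^3}\Bigl[2\bigl(\nu(x)\cdot(x-y)\bigr)I_4+\bigl(\alpha\cdot(x-y)\bigr)\bigl(\alpha\cdot(\nu(y)-\nu(x))\bigr)\Bigr].
\end{equation*}
Because $\partial\Omega$ is $C^2$ one has $\nu(x)\cdot(x-y)=O(|x-y|^2)$ and $|\nu(x)-\nu(y)|=O(|x-y|)$, so both summands are of order $|x-y|^{-1}$; the remaining less singular contributions of $G_\lambda$ are even milder. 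Hence $\mathcal{D}_\lambda$ is again a single-layer type operator with integrable kernel and therefore extends to a bounded map $H^{-1/2}(\partial\Omega;\mathbb{C}^4)\to H^{1/2}(\partial\Omega;\mathbb{C}^4)$. Since $\nu\in C^1(\partial\Omega)$, multiplication by $\alpha\cdot\nu$ is bounded on $H^{1/2}(\partial\Omega;\mathbb{C}^4)$, and $\mathcal{C}_{\lambda,1/2}$ is bounded on the same space by Proposition~\ref{proposition_C_lambda}; composing these three factors yields the desired extension of $\mathcal{A}_\lambda$.

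I expect the main technical obstacle to be the rigorous justification of the $H^{-1/2}\to H^{1/2}$ mapping property from the pointwise kernel bound $|K(x,y)|\lesssim|x-y|^{-1}$, which is a full order of smoothing. Writing $\widetilde{\mathcal{A}}_\lambda$ or $\widetilde{\mathcal{B}}_\lambda$ as the anti-dual of its adjoint (whose kernel satisfies the same estimate up to the involution $\lambda\leftrightarrow\overline\lambda$) reduces the claim to an $L^2\to H^1$ bound, which in turn can be obtained by controlling tangential difference quotients on $\partial\Omega$ via the kernel estimates in Appendix~\ref{appendix_commutator} (in the spirit of the proof of Proposition~\ref{proposition_commutator_C_lambda}), combined with interpolation to cover fractional orders. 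Once the extension claim is secured, the compactness of $\mathcal{A}_{\lambda,1/2}$ and $\mathcal{B}_{\lambda,1/2}$ in $H^{1/2}(\partial\Omega;\mathbb{C}^4)$ follows by factoring them through the compact embedding $H^{1/2}(\partial\Omega;\mathbb{C}^4)\hookrightarrow H^{-1/2}(\partial\Omega;\mathbb{C}^4)$.
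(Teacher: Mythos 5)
Your treatment of part (ii) and of the final compactness claim coincides with the paper's: the anti-commutation relation \eqref{eq_commutation} kills the strongly singular part of the kernel, the remaining kernel is exactly $2(\lambda\beta+mI_4)$ times the Helmholtz single-layer kernel, and the $H^{-1/2}\to H^{1/2}$ bound is the known mapping property of $\mathrm{SL}_{\lambda^2-m^2}$; compactness of the restrictions then follows from the compact embedding $H^{1/2}(\partial\Omega;\mathbb{C}^4)\hookrightarrow H^{-1/2}(\partial\Omega;\mathbb{C}^4)$. For part (i), however, the paper simply cites \cite[Proposition~4.4]{BH17} and \cite[Proposition~2.8]{OV17}, whereas you attempt a self-contained argument, and this is where your proposal has a genuine gap.

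Two issues arise in your factorization $\mathcal{A}_\lambda=\mathcal{C}_\lambda(\alpha\cdot\nu)\{\alpha\cdot\nu,\mathcal{C}_\lambda\}$. The minor one: identity \eqref{guteformel} is stated in Proposition~\ref{proposition_C_lambda}~(ii) only for $\lambda\in(-m,m)$, while the proposition you are proving concerns all $\lambda\in\mathbb{C}\setminus((-\infty,-m]\cup[m,\infty))$; you would need to extend it, e.g.\ via holomorphy of $\lambda\mapsto\mathcal{C}_\lambda$ (Lemma~\ref{lemma_Schatten_von_Neumann}) and the identity theorem, or by redoing the computation of \cite{AMV14} for non-real $\lambda$. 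The major one is the step ``kernel of order $|x-y|^{-1}$, hence single-layer type, hence bounded $H^{-1/2}\to H^{1/2}$.'' A pointwise bound $|K(x,y)|\lesssim|x-y|^{-1}$ does not by itself give a gain of one full derivative; the single layer's smoothing in part (ii) comes from its identification with a concrete boundary integral operator of an elliptic second-order problem, not from the size of its kernel, and the kernel of $\{\alpha\cdot\nu,\mathcal{C}_\lambda\}$ (a double-layer-type kernel $\nu(x)\cdot(x-y)/|x-y|^3$ plus a commutator term) is not of that exact form. Your fallback through Appendix~\ref{appendix_commutator} cannot close this either: Theorem~\ref{thm bounded} gives $L^2\to H^s$ only for $s<a\le 1$, so even with $a=1$ you obtain, after duality and interpolation, at best $H^{-1/2+\epsilon}\to H^{1/2-\epsilon}$ and never the endpoint $H^{-1/2}\to H^{1/2}$ required here; moreover, an honest $L^2\to H^1$ bound would require controlling the tangentially differentiated kernel, which is of Calder\'on--Zygmund size $|x-y|^{-2}$ and thus outside the scope of the appendix estimates. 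This endpoint gain on a merely $C^2$ surface is precisely the nontrivial content of the results the paper invokes, so as written your part (i) is not proved.
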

\begin{proof}
  The proof of item~(i) can be  found in \cite[Proposition~4.4]{BH17} and \cite[Proposition~2.8]{OV17}.
  It remains to show statement~(ii). Using the anti-commutation relation~\eqref{eq_commutation}
  we see that $\mathcal{B}_\lambda$ is an integral operator with kernel
  \begin{equation*}
    b_\lambda(x-y) = 2 \left( \lambda \beta + m I_4 \right) \frac{e^{i\sqrt{\lambda^2 - m^2}|x-y|}}{4 \pi |x-y|}
  \end{equation*}
  and thus $\mathcal{B}_\lambda = 2 \big( \lambda \beta + m I_4 \big) \mathrm{SL}_{\lambda^2 - m^2}$,
  where $\mathrm{SL}_\mu$ denotes the single layer boundary integral operator for $-\Delta -\mu$, see, e.g., \cite[equation~(9.15)]{M00}.
  It is well known that $\mathrm{SL}_{\lambda^2 - m^2}$ gives rise to a bounded operator from 
  $H^{-1/2}({\partial \Omega}; \mathbb{C})$ to $H^{1/2}({\partial \Omega}; \mathbb{C})$, see for instance \cite[Theorem~6.11]{M00}. This yields assertion (ii).
\end{proof}

Next, we state a result on the invertibility of $\pm \frac{1}{2} \beta + \mathcal{C}_{\lambda, s}$ which will be used 
in the construction of the $\gamma$-field and the Weyl function associated to a quasi boundary triple for a
Dirac operator. To formulate the result we recall the definitions of $T_\text{MIT}$ and $T_{-\text{MIT}}$ from~\eqref{def_MIT_op} and~\eqref{def_A_infty}, 
respectively, and denote by $T_\text{MIT}^{\Omega^c}$ and $T_{-\textup{MIT}}^{\Omega^c}$ the Dirac operator in 
$L^2(\mathbb R^3\setminus \overline{\Omega}; \mathbb{C}^4)$ with the same boundary conditions on $\partial \Omega$ as $T_\text{MIT}$ and $T_{-\text{MIT}}$, respectively.

\begin{prop} \label{proposition_C_lambda_inverse}
  Let $s \in [-\frac{1}{2}, \frac{1}{2} ]$ and let $\mathcal{C}_{\lambda,s}$ be the operator in Proposition~\ref{proposition_C_lambda}.
  Then the following statements hold:
  \begin{itemize}
    \item[$\textup{(i)}$] For all $\lambda \not\in (-\infty, -m] \cup [m, \infty)$ 
    the operator $\frac{1}{2} \beta + \mathcal{C}_{\lambda, s}$ admits a bounded and everywhere defined inverse  in $H^s(\partial \Omega; \mathbb{C}^4)$.
    \item[$\textup{(ii)}$] For all $\lambda \notin (-\infty, -m] \cup[m, \infty) \cup \sigma_{\textup{p}}(T_{-\textup{MIT}}) 
    \cup \sigma_{\textup{p}}(T_{-\textup{MIT}}^{\Omega^c})$
    the
    operator $-\frac{1}{2} \beta + \mathcal{C}_{\lambda, s}$ admits a bounded and everywhere defined inverse 
    in $H^s(\partial \Omega; \mathbb{C}^4)$.
    \end{itemize}
\end{prop}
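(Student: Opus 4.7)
The plan is to establish, for every admissible $\lambda$ and every $s\in[-\tfrac{1}{2},\tfrac{1}{2}]$, that $\pm\tfrac{1}{2}\beta+\mathcal{C}_{\lambda,s}$ is Fredholm of index zero on $H^s(\partial\Omega;\mathbb{C}^4)$ and injective; bijectivity with continuous inverse then follows from the open mapping theorem.

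The first step is a squaring trick. Using $\beta^{2}=I_{4}$ one obtains
\begin{equation*}
\bigl(\pm\tfrac{1}{2}\beta+\mathcal{C}_{\lambda,s}\bigr)^{2}=\tfrac{1}{2}I_{4}+\mathcal{A}_{\lambda,s}\pm\tfrac{1}{2}\mathcal{B}_{\lambda,s},
\end{equation*}
with $\mathcal{A}_{\lambda,s}$ and $\mathcal{B}_{\lambda,s}$ as in Proposition~\ref{proposition_commutator}. Since these extend to bounded operators from $H^{-1/2}(\partial\Omega;\mathbb{C}^{4})$ into $H^{1/2}(\partial\Omega;\mathbb{C}^{4})$, composition with the compact embedding $H^{1/2}\hookrightarrow H^{s}$ for $s<\tfrac{1}{2}$, together with the $s=\tfrac{1}{2}$ compactness assertion of the same proposition, makes the second summand compact in every $H^{s}(\partial\Omega;\mathbb{C}^{4})$. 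Hence the square is Fredholm of index zero, and so is $\pm\tfrac{1}{2}\beta+\mathcal{C}_{\lambda,s}$ itself.

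For injectivity I will first bootstrap any kernel element up to $H^{1/2}$-regularity and then reduce the question to absence of eigenvalues of MIT-bag-type operators on $\Omega$ and on $\Omega^{c}$. The same squaring identity forces $\varphi=-2(\mathcal{A}_{\lambda,s}\pm\tfrac{1}{2}\mathcal{B}_{\lambda,s})\varphi\in H^{1/2}(\partial\Omega;\mathbb{C}^{4})$ by the smoothing property above. I then set $f_{\textup{in}}:=\Phi_{\lambda,1/2}\varphi\in H^{1}(\Omega;\mathbb{C}^{4})$ together with its exterior counterpart $f_{\textup{ex}}:=\Phi_{\lambda,1/2}^{\Omega^{c}}\varphi\in H^{1}(\mathbb{R}^{3}\setminus\overline{\Omega};\mathbb{C}^{4})$, defined by the same integral kernel evaluated at $x\in\Omega^{c}$; by Proposition~\ref{proposition_Phi_lambda}(ii) and its obvious exterior analogue both functions lie in the kernel of $T_{\textup{max}}-\lambda$, and Lemma~\ref{lem_Phi_lambda} with its exterior counterpart (in which the sign of the jump term reverses) gives
\begin{equation*}
f_{\textup{in}}|_{\partial\Omega}=\mp\tfrac{1}{2}\beta\varphi-\tfrac{i}{2}(\alpha\cdot\nu)\varphi,\qquad f_{\textup{ex}}|_{\partial\Omega}=\mp\tfrac{1}{2}\beta\varphi+\tfrac{i}{2}(\alpha\cdot\nu)\varphi.
\end{equation*}
A short algebraic check using $\beta(\alpha\cdot\nu)+(\alpha\cdot\nu)\beta=0$ and $(\alpha\cdot\nu)^{2}=I_{4}$ identifies these traces: in case~(i) they satisfy the boundary conditions of $T_{\textup{MIT}}$ on $\Omega$ and of $T_{\textup{MIT}}^{\Omega^{c}}$ on $\Omega^{c}$ (the latter read with the outward normal $-\nu$ of $\Omega^{c}$), while in case~(ii) they satisfy the corresponding $T_{-\textup{MIT}}$ and $T_{-\textup{MIT}}^{\Omega^{c}}$ boundary conditions. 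Hence $\lambda$ would be an eigenvalue of the two self-adjoint operators involved; in case~(i) this is excluded by Proposition~\ref{proposition_MIT_bag_operator}(i) applied to both $\Omega$ and $\Omega^{c}$ combined with self-adjointness for non-real $\lambda$, and in case~(ii) by the standing hypothesis. Thus $f_{\textup{in}}=f_{\textup{ex}}=0$, and subtracting the two trace identities yields $i(\alpha\cdot\nu)\varphi=0$, so $\varphi=0$.

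The main obstacle is the sign bookkeeping in the identification of boundary conditions, in particular for $f_{\textup{ex}}|_{\partial\Omega}$, where the outward normal of $\Omega^{c}$ is $-\nu$, so that the MIT bag condition for $T_{\textup{MIT}}^{\Omega^{c}}$ reads $i\beta(\alpha\cdot\nu)f=f$ rather than $-i\beta(\alpha\cdot\nu)f=f$. The bootstrap is the other delicate point: without it the eigenfunction argument would only cover $s=\tfrac{1}{2}$, since it is precisely at that level that $f_{\textup{in}}$ and $f_{\textup{ex}}$ have the $H^{1}$-regularity required to enter the domains of the MIT-bag-type operators.
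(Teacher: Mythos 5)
Your argument is correct, and its two pillars are the same as in the paper's proof: the squaring identity turning $(\pm\tfrac{1}{2}\beta+\mathcal{C}_{\lambda})^2$ into $\tfrac{1}{2}I_4$ plus the smoothing operators of Proposition~\ref{proposition_commutator}, and the exclusion of kernel elements by building the interior and exterior functions $\Phi_\lambda\varphi$, whose traces (with the correctly reversed jump term and boundary condition on $\Omega^c$, where the outward normal is $-\nu$) would make $\lambda$ an eigenvalue of $T_{\textup{MIT}}$ and $T_{\textup{MIT}}^{\Omega^c}$ in case (i), respectively $T_{-\textup{MIT}}$ and $T_{-\textup{MIT}}^{\Omega^c}$ in case (ii). Where you genuinely differ is in the treatment of the parameter $s$: the paper establishes bijectivity only at $s=\tfrac{1}{2}$ (injectivity by the eigenvalue argument, surjectivity by Fredholm's alternative applied to the square), then obtains $s=-\tfrac{1}{2}$ by anti-duality via $\mathcal{C}_{\lambda,s}'=\mathcal{C}_{\overline{\lambda},-s}$ and intermediate $s$ by interpolation, whereas you stay in each $H^s(\partial\Omega;\mathbb{C}^4)$ and exploit the $H^{-1/2}\to H^{1/2}$ boundedness of $\widetilde{\mathcal{A}}_\lambda$, $\widetilde{\mathcal{B}}_\lambda$ twice: once to make the perturbation compact in every $H^s$, and once to bootstrap a kernel element from $H^s$ into $H^{1/2}$ so that the eigenvalue argument applies; this removes duality and interpolation from the proof. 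Two small points should be made explicit to be airtight: for $s<\tfrac{1}{2}$ (in particular $s=-\tfrac{1}{2}$) the identity $(\pm\tfrac{1}{2}\beta+\mathcal{C}_{\lambda,s})^2=\tfrac{1}{2}I_4+\widetilde{\mathcal{A}}_\lambda\pm\tfrac{1}{2}\widetilde{\mathcal{B}}_\lambda$ on $H^s$ needs the one-line remark that both sides are continuous on $H^s$ (viewed as maps into $H^{-1/2}$) and coincide on the dense subspace $H^{1/2}$; and your assertion that index-zero Fredholmness passes from the square to $\pm\tfrac{1}{2}\beta+\mathcal{C}_{\lambda,s}$ itself, while true (since $\ker T\subset\ker T^2$, $\ran T\supset\ran T^2$, and $\mathrm{ind}\,T^2=2\,\mathrm{ind}\,T$), is left unproved -- you can bypass it entirely, as the paper does, because injectivity of the operator yields injectivity of its square, Fredholm's alternative then gives surjectivity of the square, and hence surjectivity of the operator.
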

\begin{proof}
  (i) We prove the statement for $s=\frac{1}{2}$ and all $\lambda \in \mathbb{C} \setminus ( (-\infty, -m] \cup [m, \infty))$. For this it suffices to verify that 
   $\frac{1}{2} \beta + \mathcal{C}_{\lambda, 1/2}$ is bijective in $H^{1/2}(\partial \Omega; \mathbb{C}^4)$.
  Then the claim for $s=-\frac{1}{2}$ follows by duality, as $\mathcal{C}_{\overline{\lambda}, -1/2} = \mathcal{C}_{\lambda, 1/2}'$, and for 
  $s \in ( -\frac{1}{2}, \frac{1}{2})$ by interpolation.
  
Let us verify that  $\frac{1}{2} \beta + \mathcal{C}_{\lambda, 1/2}$ is injective. Indeed, assume that $\big(\frac{1}{2} \beta + \mathcal{C}_{\lambda, 1/2}\big) \varphi = 0$ for 
  some
  $\varphi \in H^{1/2}(\partial \Omega; \mathbb{C}^4)$, $\varphi\not=0$. 
  We claim that then $\lambda$ is an eigenvalue of the MIT bag operator in $\Omega$ or in $\Omega^c$, which is not possible by 
  Proposition~\ref{proposition_MIT_bag_operator}~(i). Define the functions 
  \begin{equation} \label{def_f_1}
    f_\Omega(x) := \Phi_{\lambda, 1/2} \varphi(x) = \int_{\partial \Omega} G_\lambda(x-y) \varphi(y) \textup{d} \sigma(y), \qquad x \in \Omega,
  \end{equation}
  and 
  \begin{equation} \label{def_f_2}
    f_{\Omega^c}(x) := \int_{\partial \Omega} G_\lambda(x-y) \varphi(y) \textup{d} \sigma(y), \qquad x \in \mathbb R^3\setminus\overline\Omega. 
  \end{equation}
  We claim first that $f_\Omega \in \dom T_\textup{MIT}$ and $f_{\Omega^c} \in \dom T_\textup{MIT}^{\Omega^c}$. This is shown for $f_\Omega$, 
  the proof for $f_{\Omega^c}$ is similar using that the unit normal vector field for $\Omega^c$ is $-\nu$. First, 
  since $\varphi \in H^{1/2}(\partial \Omega; \mathbb{C}^4)$, one has $f_\Omega \in H^1(\Omega; \mathbb{C}^4)$ by Proposition~\ref{proposition_Phi_lambda}~(i). 
  Moreover, according to  Lemma~\ref{lem_Phi_lambda} the trace of $f_\Omega$ is 
  \begin{equation*}
    f_\Omega|_{\partial \Omega} = -\frac{i}{2} (\alpha \cdot \nu) \varphi + \mathcal{C}_{\lambda, 1/2} \varphi,
  \end{equation*}
   and, because of $(\alpha \cdot \nu)^2  = I_4$ (which follows from~\eqref{eq_commutation}), it satisfies
  \begin{equation*}
    \begin{split}
      \big(I_4 + i \beta (\alpha \cdot \nu)\big) f_\Omega|_{\partial \Omega} 
        &= \big(I_4 + i \beta (\alpha \cdot \nu)\big) \left( -\frac{i}{2} (\alpha \cdot \nu) \varphi + \mathcal{C}_{\lambda, 1/2} \varphi \right) \\
        &= \left( \frac{1}{2} \beta + \mathcal{C}_{\lambda, 1/2} \right) \varphi + i \beta (\alpha \cdot \nu) \left( \frac{1}{2} \beta + \mathcal{C}_{\lambda, 1/2} \right) \varphi\\
        &= 0,
    \end{split}
  \end{equation*}
  i.e. $f_\Omega \in \dom T_\textup{MIT}$.
  Since the unit normal vector field of $\Omega^c$ is $-\nu$, one concludes from Proposition~\ref{proposition_Phi_lambda}~(iv) that the trace of $f_{\Omega^c}$ is
  \begin{equation*}
    f_{\Omega^c}|_{\partial \Omega} = \frac{i}{2} (\alpha \cdot \nu) \varphi + \mathcal{C}_{\lambda, 1/2} \varphi,
  \end{equation*}
  and hence,
  \begin{equation*}
    f_\Omega|_{\partial \Omega} - f_{\Omega^c}|_{\partial \Omega} = -i (\alpha \cdot \nu) \varphi \neq 0,
  \end{equation*}
  which shows that at least one of the functions $f_\Omega$ and $f_{\Omega^c}$ is not trivial. From Proposition~\ref{proposition_Phi_lambda}~(ii) 
  we get $(T_\textup{MIT} - \lambda) f_\Omega = 0$ and in the same way one concludes 
  $(T_\textup{MIT}^{\Omega^c} - \lambda) f_{\Omega^c} = 0$, i.e. $\lambda \in \mathbb{C} \setminus ( (-\infty, -m] \cup [m, \infty) )$ 
  is an eigenvalue of $T_\textup{MIT}$ or $T_\textup{MIT}^{\Omega^c}$, which leads to a contradiction. 
  Therefore, $\frac{1}{2} \beta + \mathcal{C}_{\lambda, 1/2}$ is injective.
  
  It remains to prove that $\frac{1}{2} \beta + \mathcal{C}_{\lambda, 1/2}$ is surjective. For that we note first that
   \begin{equation*} 
      \ran \big( \tfrac{1}{2} \beta + \mathcal{C}_{\lambda, 1/2} \big) \supset \ran \big( \tfrac{1}{2} \beta + \mathcal{C}_{\lambda, 1/2} \big)^2 
  \end{equation*}
  and from Proposition~\ref{proposition_commutator} we obtain
  \begin{equation*}\begin{split}
  \left( \frac{1}{2} \beta + \mathcal{C}_{\lambda, 1/2} \right)^2&=
  \frac{1}{2} I_4 + \frac{1}{2} \big(  \mathcal{C}_{\lambda, 1/2} \beta + \beta \mathcal{C}_{\lambda, 1/2}  \big) + (\mathcal{C}_{\lambda, 1/2})^2-\frac{1}{4} I_4\\
  &=\frac{1}{2} I_4 + \frac{1}{2}\mathcal{B}_{\lambda,1/2}+ \mathcal{A}_{\lambda,1/2},
  \end{split}\end{equation*}
  where $\frac{1}{2}\mathcal{B}_{\lambda,1/2}+ \mathcal{A}_{\lambda,1/2}$ is a compact operator in $H^{1/2}(\partial \Omega; \mathbb{C}^4)$. Therefore, 
  as the operator $\big( \tfrac{1}{2} \beta + \mathcal{C}_{\lambda, 1/2} \big)^2$ is injective (since $\tfrac{1}{2} \beta + \mathcal{C}_{\lambda, 1/2}$ is injective)
  Fredholm's alternative shows that  $\big( \tfrac{1}{2} \beta + \mathcal{C}_{\lambda, 1/2} \big)^2$, and hence also $\tfrac{1}{2} \beta + \mathcal{C}_{\lambda, 1/2}$, is surjective. This finishes the proof of item~(i).

  The proof of the item (ii) follows the lines of assertion (i), one just has to note that for $\varphi \in \ker \big( -\frac{1}{2} \beta + \mathcal{C}_{\lambda, 1/2} \big)$ the functions $f_\Omega$ and $f_{\Omega^c}$ defined by~\eqref{def_f_1} and~\eqref{def_f_2} belong to $\dom T_{-\textup{MIT}}$ and $\dom T_{-\textup{MIT}}^{\Omega^c}$, respectively.
\end{proof}

We now discuss that the derivatives of the integral operators $\Phi_\lambda$ and $\mathcal{C}_\lambda$
belong to certain (weak) Schatten-von Neumann ideals. For that we
use the following result on operators with range in the Sobolev space $H^s({\partial \Omega}; \mathbb{C})$. 
Its proof follows word-by-word the one of \cite[Proposition~2.4]{BEHL19_2}, hence we omit it here.

\begin{prop} \label{proposition_singular_values}
  Let $l \in \mathbb{N}$, let ${\partial \Omega} \subset \mathbb{R}^3$ be the boundary of a compact $C^l$-smooth domain, 
  and let $k \in \{ 1, \dots, 2 l - 1\}$. Let $\mathcal{H}$ be a separable Hilbert space and assume that 
  $A: \mathcal{H} \rightarrow L^2({\partial \Omega}; \mathbb{C}^4)$ is continuous with $\ran A \subset H^{k/2}({\partial \Omega}; \mathbb{C}^4)$.
  Then $A \in \mathfrak{S}_{4/k, \infty}\big(\mathcal{H}, L^2({\partial \Omega}; \mathbb{C}^4) \big)$.
\end{prop}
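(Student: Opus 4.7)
The plan is to reduce the statement to the singular-value asymptotics of the natural embedding
$\iota: H^{k/2}(\partial\Omega;\mathbb{C}^4) \hookrightarrow L^2(\partial\Omega;\mathbb{C}^4)$
and then factorize $A$ through $\iota$ and use the ideal property of the weak Schatten--von Neumann classes.

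First, I would regard $A$ as a map $\widetilde{A}: \mathcal{H} \to H^{k/2}(\partial\Omega;\mathbb{C}^4)$ and verify that it is bounded by a closed graph argument: if $h_n \to h$ in $\mathcal{H}$ and $\widetilde{A} h_n \to g$ in $H^{k/2}(\partial\Omega;\mathbb{C}^4)$, then the continuity of the embedding $H^{k/2}\hookrightarrow L^2$ forces $A h_n \to g$ in $L^2(\partial\Omega;\mathbb{C}^4)$, while by hypothesis $A h_n \to A h$ in $L^2$, so $g = Ah$. Thus $A = \iota \circ \widetilde{A}$ with $\widetilde{A}$ bounded, and since $\mathfrak{S}_{4/k,\infty}$ is stable under composition with bounded operators from the right, it suffices to establish $\iota \in \mathfrak{S}_{4/k, \infty}(H^{k/2}(\partial\Omega;\mathbb{C}^4), L^2(\partial\Omega;\mathbb{C}^4))$.

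Second, to obtain the bound $s_j(\iota) \leq C j^{-k/4}$, I would invoke the standard Weyl-type spectral asymptotic for Sobolev embeddings on compact manifolds. Since $\partial\Omega$ is a compact two-dimensional $C^l$-surface and $k/2 \leq l - \tfrac12 < l$, the Sobolev scale $H^s(\partial\Omega;\mathbb{C}^4)$ is well defined for $s\in[0,l]$ via local charts and a subordinate partition of unity, so that $H^{k/2}(\partial\Omega;\mathbb{C}^4)$ lies strictly inside this scale. On a two-dimensional smooth compact manifold, a positive elliptic self-adjoint reference operator of order $k/2$ (for instance $(1-\Delta_{\partial\Omega})^{k/4}$ on a smooth model surface) has eigenvalues growing like $c j^{k/4}$ by Weyl's law in dimension two, which translates into singular values of $\iota$ of order $j^{-k/4}$, that is, $\iota\in\mathfrak{S}_{4/k,\infty}$. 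Combining this with the first step yields $A=\iota\circ\widetilde{A}\in\mathfrak{S}_{4/k,\infty}(\mathcal{H},L^2(\partial\Omega;\mathbb{C}^4))$.

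The main technical obstacle is the transfer of the smooth Weyl-type asymptotic to the merely $C^l$-regular boundary $\partial\Omega$: one has to compare the Sobolev norms of order $k/2$ on $\partial\Omega$ with those on a nearby $C^\infty$-model surface through a $C^l$-diffeomorphism, and control of the chain rule requires the order of differentiation to stay strictly below $l$. This is precisely where the restriction $k\leq 2l-1$, equivalent to $k/2 < l$, enters. The argument is the same as in \cite[Proposition~2.4]{BEHL19_2}.
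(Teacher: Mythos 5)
Your argument is correct and is essentially the argument the paper has in mind: the paper omits the proof and refers to \cite[Proposition~2.4]{BEHL19_2}, whose proof proceeds exactly as you do, namely by the closed graph theorem to get boundedness of $A$ into $H^{k/2}(\partial\Omega;\mathbb{C}^4)$, followed by the factorization $A=\iota\circ\widetilde A$ and the bound $s_j(\iota)\leq C j^{-k/4}$ for the embedding of $H^{k/2}$ into $L^2$ on the two-dimensional compact boundary, combined with the ideal property of $\mathfrak{S}_{4/k,\infty}$. Your closing remark is also the right one: only the upper bound on the singular values is needed, and the restriction $k\leq 2l-1$, i.e. $k/2<l$, is precisely what keeps $H^{k/2}(\partial\Omega;\mathbb{C}^4)$ inside the Sobolev scale available on a $C^l$-smooth boundary.
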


With the help of Proposition~\ref{proposition_singular_values} one can show in exactly the same way as in
\cite[Lemma~4.5]{BEHL18} the following result; note that the operators $\gamma(\lambda)$ and $M(\lambda)$ in \cite{BEHL18} coincide 
with $\Phi_\lambda$ and $\mathcal{C}_\lambda$, respectively.

\begin{lem} \label{lemma_Schatten_von_Neumann}
  Let $l \in \mathbb{N}$ with $l \geq 2$ and let ${\partial \Omega} \subset \mathbb{R}^3$ be the boundary of a compact $C^l$-smooth domain.
  Moreover, let $\lambda \in \mathbb{C} \setminus ( (-\infty, -m] \cup [m, \infty))$ and let $\Phi_\lambda$ and $\mathcal{C}_\lambda$ be the operators 
  in \eqref{def_Phi_lambda} and
  \eqref{def_C_lambda}, respectively. 
  Then the following statements hold:
  \begin{itemize}
    \item[$\textup{(i)}$] The operator-valued functions $\lambda \mapsto \Phi_\lambda$ and $\lambda \mapsto \Phi_{\overline{\lambda}}^*$ 
    are holomorphic and, for any $k \in \{ 0, 1, \dots, l-1 \}$, 
    \begin{equation*}
      \frac{\textup{d}^k}{\textup{d} \lambda^k} \Phi_\lambda \in \mathfrak{S}_{4/(2k+1), \infty}\big( L^2(\partial \Omega; \mathbb{C}^4), L^2(\Omega; \mathbb{C}^4) \big) 
    \end{equation*}
    and
    \begin{equation*}
      \frac{\textup{d}^k}{\textup{d} \lambda^k} \Phi_{\overline{\lambda}}^* \in \mathfrak{S}_{4/(2k+1), \infty}\big( L^2(\Omega; \mathbb{C}^4), L^2(\partial \Omega; \mathbb{C}^4) \big).
    \end{equation*}
    In particular, $\Phi_\lambda$ and $\Phi_\lambda^*$ are compact.
    \item[$\textup{(ii)}$] The mapping $\lambda \mapsto \mathcal{C}_\lambda$  
    is holomorphic, $\frac{\textup{d}}{\textup{d} \lambda} \mathcal{C}_\lambda = \Phi_{\overline{\lambda}}^* \Phi_\lambda$
    and, for any integer $k \in \{ 1, \dots, l\}$,
    \begin{equation*}
      \frac{\textup{d}^k}{\textup{d} \lambda^k} \mathcal{C}_\lambda \in \mathfrak{S}_{2/k, \infty}\big( L^2(\partial \Omega; \mathbb{C}^4) \big).
    \end{equation*}
  \end{itemize}
\end{lem}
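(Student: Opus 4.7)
The plan is to follow the strategy of \cite[Lemma~4.5]{BEHL18} step by step, using Proposition~\ref{proposition_singular_values} as the key Schatten class input. First I would establish the holomorphy in~(i): from the explicit formula~\eqref{def_G_lambda} the kernel $G_\lambda(x-y)$ is holomorphic in $\lambda$ on $\mathbb C\setminus((-\infty,-m]\cup[m,\infty))$ for each fixed $x\ne y$, with a singularity at $x=y$ that is independent of $\lambda$ and is only weakened by $\lambda$-differentiation. Standard dominated convergence then gives that $\lambda\mapsto\Phi_\lambda$ and $\lambda\mapsto\Phi_{\overline\lambda}^\ast$ are holomorphic, with $\frac{\textup d^k}{\textup d\lambda^k}\Phi_\lambda$ and $\frac{\textup d^k}{\textup d\lambda^k}\Phi_{\overline\lambda}^\ast$ given by integrating $\partial_\lambda^k G_\lambda$ against $\varphi$ and $f$, respectively.

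The main point of~(i) is the Schatten bound. Since $(\Phi_\lambda)^\ast=\Phi_{\overline\lambda}^\ast$ and a compact operator lies in $\mathfrak S_{p,\infty}$ iff its adjoint does, it suffices to treat $\frac{\textup d^k}{\textup d\lambda^k}\Phi_{\overline\lambda}^\ast\colon L^2(\Omega;\mathbb C^4)\to L^2(\partial\Omega;\mathbb C^4)$. The operator $\Phi_{\overline\lambda}^\ast f$ is the trace on $\partial\Omega$ of the free-space convolution $(A_0-\lambda)^{-1}(f\chi_\Omega)$, where $A_0$ is the free Dirac operator in $\mathbb R^3$; $\lambda$-derivatives produce higher powers of the resolvent. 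By elliptic regularity of $A_0$ (equivalently, by the explicit symbol) the operator $(A_0-\lambda)^{-(k+1)}$ maps $L^2(\mathbb R^3;\mathbb C^4)$ continuously into $H^{k+1}(\mathbb R^3;\mathbb C^4)$, so by the trace theorem $\frac{\textup d^k}{\textup d\lambda^k}\Phi_{\overline\lambda}^\ast$ maps $L^2(\Omega;\mathbb C^4)$ continuously into $H^{k+1/2}(\partial\Omega;\mathbb C^4)=H^{(2k+1)/2}(\partial\Omega;\mathbb C^4)$. Proposition~\ref{proposition_singular_values} (applied with the parameter $2k+1$, which is allowed since $k\le l-1$ gives $2k+1\le 2l-1$) yields $\frac{\textup d^k}{\textup d\lambda^k}\Phi_{\overline\lambda}^\ast\in\mathfrak S_{4/(2k+1),\infty}$, and by passing to the adjoint we obtain the same bound for $\frac{\textup d^k}{\textup d\lambda^k}\Phi_\lambda$.

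For~(ii), holomorphy of $\lambda\mapsto\mathcal C_\lambda$ follows as in~(i). To obtain $\frac{\textup d}{\textup d\lambda}\mathcal C_\lambda=\Phi_{\overline\lambda}^\ast\Phi_\lambda$ I would use the resolvent identity $G_\lambda-G_\mu=(\lambda-\mu)\,G_\lambda\ast G_\mu$ for the free Dirac kernel, integrated against boundary data on $\partial\Omega$: the convolution structure gives $\mathcal C_\lambda-\mathcal C_\mu=(\lambda-\mu)\Phi_{\overline\mu}^\ast\Phi_\lambda$, and letting $\mu\to\lambda$ yields the claimed derivative. For $k\ge 1$ I would then differentiate this identity $k-1$ further times using the Leibniz rule,
\begin{equation*}
\frac{\textup d^k}{\textup d\lambda^k}\mathcal C_\lambda=\sum_{j=0}^{k-1}\binom{k-1}{j}\left(\frac{\textup d^j}{\textup d\lambda^j}\Phi_{\overline\lambda}^\ast\right)\left(\frac{\textup d^{k-1-j}}{\textup d\lambda^{k-1-j}}\Phi_\lambda\right),
\end{equation*}
and apply~(i) to each factor together with the standard composition rule $\mathfrak S_{p,\infty}\cdot\mathfrak S_{q,\infty}\subset\mathfrak S_{r,\infty}$ with $\tfrac1r=\tfrac1p+\tfrac1q$. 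With the exponents from~(i) this gives $\tfrac{2j+1}{4}+\tfrac{2(k-1-j)+1}{4}=\tfrac{k}{2}$ for every admissible $j$, hence $\frac{\textup d^k}{\textup d\lambda^k}\mathcal C_\lambda\in\mathfrak S_{2/k,\infty}$; the constraint $k\le l$ is exactly what is needed so that each $j$ and $k-1-j$ is at most $l-1$.

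The only delicate point is the Sobolev mapping step in~(i); once this is in hand, everything else is assembling the pieces. In particular I would be careful to treat $\Phi_{\overline\lambda}^\ast$ as a holomorphic function of $\lambda$ (not $\overline\lambda$), and to verify that the differentiation under the integral sign and the trace operation commute, which is routine because $\partial_\lambda^k G_\lambda(x-y)$ is locally integrable in $y\in\partial\Omega$ uniformly in $x\in\overline\Omega$ for each finite~$k$.
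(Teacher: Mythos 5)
Your plan follows essentially the same route as the paper, which simply defers to \cite[Lemma~4.5]{BEHL18} combined with Proposition~\ref{proposition_singular_values}: for (i) one writes $\Phi_{\overline{\lambda}}^*$ as the trace on $\partial\Omega$ of the free resolvent applied to the zero-extension, notes that $\lambda$-derivatives produce resolvent powers mapping $L^2$ into $H^{k+1}(\mathbb{R}^3;\mathbb{C}^4)$, applies the trace theorem and Proposition~\ref{proposition_singular_values}, and passes to adjoints; for (ii) one uses the first-derivative identity together with the Leibniz rule and the composition rule for weak Schatten ideals. All of this, including the bookkeeping of exponents $\tfrac{2j+1}{4}+\tfrac{2(k-1-j)+1}{4}=\tfrac{k}{2}$ and the role of the smoothness restriction $k\le l-1$ (so that $2k+1\le 2l-1$ in Proposition~\ref{proposition_singular_values}), is exactly the intended argument.

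One step in (ii) needs repair as written. The free resolvent identity gives, at kernel level, $G_\lambda(x-y)-G_\mu(x-y)=(\lambda-\mu)\int_{\mathbb{R}^3}G_\lambda(x-z)G_\mu(z-y)\,\mathrm{d}z$, with the $z$-integration over all of $\mathbb{R}^3$. The operators $\Phi_\lambda$ and $\Phi_{\overline{\mu}}^*$ of \eqref{def_Phi_lambda}--\eqref{def_Phi_lambda_adjoint}, however, only involve integration over $\Omega$, so what actually follows from your argument is $\mathcal{C}_\lambda-\mathcal{C}_\mu=(\lambda-\mu)\bigl(\Phi_{\overline{\mu}}^{\Omega,*}\Phi_\lambda^{\Omega}+\Phi_{\overline{\mu}}^{\Omega^c,*}\Phi_\lambda^{\Omega^c}\bigr)$, i.e.\ the sum of the contributions from $\Omega$ and from $\Omega^c=\mathbb{R}^3\setminus\overline{\Omega}$ (equivalently, the composition of the full-space layer potential with its adjoint, which is how the identity appears in \cite{BEHL18}, where $\gamma(\lambda)$ maps into $L^2(\mathbb{R}^3;\mathbb{C}^4)$). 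The claimed identity with the $\Omega$-restricted operators alone does not follow from, and is not implied by, the convolution identity you invoke. This does not endanger the conclusion you are after: both summands are treated by exactly the same trace-theorem argument as in (i) (the adjoint factor maps $L^2$ of the respective domain into $H^{k+1/2}(\partial\Omega;\mathbb{C}^4)$), so the Leibniz expansion and the exponent count go through unchanged and yield $\tfrac{\mathrm{d}^k}{\mathrm{d}\lambda^k}\mathcal{C}_\lambda\in\mathfrak{S}_{2/k,\infty}$. But in your write-up you should either prove the derivative formula with the full-space potential and note that the $\Omega$- and $\Omega^c$-pieces satisfy identical Schatten bounds, or justify separately in which sense the displayed identity with the operators of \eqref{def_Phi_lambda} is meant.
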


\section{A quasi boundary triple for Dirac operators on domains} \label{section_boundary_triples_domain}

In this section we introduce a quasi boundary triple which is useful to define self-adjoint Dirac operators on domains 
via suitable boundary conditions on $\partial \Omega$.
Throughout this section let $\Omega$ be a bounded or unbounded domain in $\mathbb{R}^3$ with a compact $C^2$-smooth boundary. 
As before,
we denote the normal vector field at $\partial \Omega$ pointing outwards of $\Omega$ by $\nu$. In the following the operators
\begin{equation}\label{def_P_pm}
 P_\pm: L^2(\partial \Omega; \mathbb{C}^4)\rightarrow L^2(\partial \Omega; \mathbb{C}^4),\quad \varphi\mapsto \frac{1}{2} \big( I_4 \pm i \beta (\alpha \cdot \nu)\big)\varphi,
\end{equation}
will play an important role. The relation $P_- = I - P_+$ is clear. Furthermore, using the anti-commutation relation~\eqref{eq_commutation} 
it is easy to see that $P_\pm^2=P_\pm$ and
$P_+ P_- = P_- P_+ = 0$, that is, $P_\pm$ are orthogonal projections in $L^2(\partial \Omega; \mathbb{C}^4)$. We also note that~\eqref{eq_commutation} implies
 \begin{equation} \label{beta_P_pm}
   P_+\beta=\beta P_-.
\end{equation}
We shall make use of the spaces
\begin{equation} \label{def_G_Omega}
  \mathcal{G}_\Omega^s := P_+\bigl(H^s(\partial \Omega; \mathbb{C}^4)\bigr),\qquad s \in \bigl[0, \tfrac{1}{2} \bigr].
\end{equation}
For convenience of notation, we simply write $\mathcal{G}_\Omega := \mathcal{G}_\Omega^0$.
As $P_+$ is an orthogonal projection in $L^2(\partial \Omega; \mathbb{C}^4)$ the space $\mathcal{G}_\Omega$ is a Hilbert space.
Moreover, since $\partial \Omega$ is $C^2$-smooth the normal vector field $\nu$ is Lipschitz continuous and hence it follows from Lemma~\ref{lemma_mult_op} 
that $\mathcal{G}_\Omega^s \subset H^s(\partial \Omega; \mathbb{C}^4)$ for $s\in [0, \frac{1}{2}]$. Furthermore, 
it is not difficult to check that $\mathcal{G}_\Omega^s$ is a closed subspace of $H^s(\partial \Omega; \mathbb{C}^4)$ for $s\in [0, \frac{1}{2}]$.
In the sequel the spaces $\mathcal{G}_\Omega^s$ are always equipped with the norm of $H^s(\partial \Omega; \mathbb{C}^4)$.

Next, we define the operator $T$ in $L^2(\Omega; \mathbb{C}^4)$ by
\begin{equation} \label{def_T_domain}
  \begin{split}
    T f = (-i \alpha \cdot \nabla + m \beta) f, \qquad
    \dom T = H^1(\Omega; \mathbb{C}^4),
  \end{split}
\end{equation}
and the mappings $\Gamma_0, \Gamma_1: \dom T \rightarrow \mathcal{G}_\Omega$ acting as 
\begin{equation} \label{def_Gamma_0_Gamma_1_domain}
  \Gamma_0 f = P_+ f|_{\partial \Omega}  \quad \text{and} \quad
  \Gamma_1 f = P_+ \beta f|_{\partial \Omega}, \quad f \in \dom T.
\end{equation}
In the following theorem we show that $\{ \mathcal{G}_\Omega, \Gamma_0, \Gamma_1 \}$
is a quasi boundary triple and that $\overline{T}$ coincides with the maximal Dirac operator
$T_{\textup{max}}$ from \eqref{def_maximal_op}.
Moreover, it turns out that the reference operator $T \upharpoonright \ker \Gamma_0$
is the MIT bag operator studied in Section~\ref{section_MIT}.

\begin{thm} \label{theorem_quasi_triple_domain}
  Let $T_{\textup{min}}$ be the closed symmetric operator from~\eqref{def_minimal_op}, 
  let $\mathcal{G}_\Omega$ be given by~\eqref{def_G_Omega}, and
  let $T,\, \Gamma_0$, and $\Gamma_1$ be as in~\eqref{def_T_domain}
  and~\eqref{def_Gamma_0_Gamma_1_domain}, respectively.
  Then 
  $\overline{T} = T_{\textup{min}}^*= T_{\textup{max}}$
  and $\{ \mathcal{G}_\Omega, \Gamma_0, \Gamma_1 \}$ is a quasi boundary triple for 
  $T \subset T_{\textup{max}}$ such that 
  \begin{equation}\label{aberjadoch}
  T_{\textup{MIT}}=T \upharpoonright \ker \Gamma_0\quad\text{and}\quad T_{-\textup{MIT}}=T \upharpoonright \ker \Gamma_1.
  \end{equation}
  Moreover, 
  \begin{equation} \label{ran_Gamma_domain}
    \ran (\Gamma_0 \upharpoonright \ker \Gamma_1) 
        = \ran (\Gamma_1 \upharpoonright \ker \Gamma_0) 
        = \mathcal{G}_\Omega^{1/2},
  \end{equation}
  and hence, in particular, $\ran (\Gamma_0, \Gamma_1)^\top 
      = \mathcal{G}_\Omega^{1/2} \times \mathcal{G}_\Omega^{1/2}$.
\end{thm}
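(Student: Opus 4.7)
The plan is to verify the five assertions in the order stated, leveraging Lemma~\ref{lemma_minimal_maximal_op}, the identities \eqref{eq_commutation}--\eqref{integration_by_parts}, and Proposition~\ref{proposition_MIT_bag_operator}.

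First, $\overline{T}=T_{\textup{max}}$ is immediate from Lemma~\ref{lemma_minimal_maximal_op}: $T_{\textup{max}}$ is closed and contains $T$, so $\overline{T}\subset T_{\textup{max}}$, while the density of $C^\infty(\overline\Omega;\mathbb{C}^4)\subset\dom T$ in $\dom T_{\textup{max}}$ with respect to the graph norm forces the reverse inclusion. For Green's identity (axiom (i) of Definition~\ref{definition_boundary_triples}) I would first derive the closed form
\begin{equation*}
P_+\beta=\tfrac{1}{2}\bigl(I_4+i\beta(\alpha\cdot\nu)\bigr)\beta=\tfrac{1}{2}\bigl(\beta-i(\alpha\cdot\nu)\bigr),
\end{equation*}
using $\beta(\alpha\cdot\nu)\beta=-(\alpha\cdot\nu)$ from \eqref{eq_commutation}. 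Since $P_+$ is an orthogonal projection one has $(P_+u,P_+v)_{\partial\Omega}=(P_+u,v)_{\partial\Omega}=(u,P_+v)_{\partial\Omega}$, and as $\beta$ and $\alpha\cdot\nu$ are Hermitian, expanding $(\Gamma_1 f,\Gamma_0 g)_{\partial\Omega}-(\Gamma_0 f,\Gamma_1 g)_{\partial\Omega}$ causes the $\beta$-contributions to cancel and leaves exactly $(-i(\alpha\cdot\nu)f|_{\partial\Omega},g|_{\partial\Omega})_{\partial\Omega}$, which matches $(Tf,g)_\Omega-(f,Tg)_\Omega$ by \eqref{integration_by_parts}.

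For the identifications in \eqref{aberjadoch}, the condition $\Gamma_0 f=0$ reads $P_+f|_{\partial\Omega}=0$, equivalently $f|_{\partial\Omega}=-i\beta(\alpha\cdot\nu)f|_{\partial\Omega}$, so $\ker\Gamma_0=\dom T_{\textup{MIT}}$. The relation $P_+\beta=\beta P_-$ from \eqref{beta_P_pm} reduces $\Gamma_1 f=0$ to $P_-f|_{\partial\Omega}=0$, equivalently $f|_{\partial\Omega}=i\beta(\alpha\cdot\nu)f|_{\partial\Omega}$, which is $\dom T_{-\textup{MIT}}$. Axiom (iii) of Definition~\ref{definition_boundary_triples} is then immediate from Proposition~\ref{proposition_MIT_bag_operator}.

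The range identity \eqref{ran_Gamma_domain} I would establish by direct construction. The inclusion $\subset$ follows from the trace theorem. For $\mathcal{G}_\Omega^{1/2}\subset\ran(\Gamma_0\upharpoonright\ker\Gamma_1)$, pick $\psi\in\mathcal{G}_\Omega^{1/2}$ and use the right inverse of the trace to choose $f\in H^1(\Omega;\mathbb{C}^4)$ with $f|_{\partial\Omega}=\psi$; since $\psi=P_+\psi$ yields $P_-\psi=0$, one obtains $\Gamma_0 f=\psi$ and $\Gamma_1 f=P_+\beta\psi=\beta P_-\psi=0$. For $\mathcal{G}_\Omega^{1/2}\subset\ran(\Gamma_1\upharpoonright\ker\Gamma_0)$ the analogous argument with $f|_{\partial\Omega}=\beta\psi$ gives $\Gamma_0 f=P_+\beta\psi=0$ and $\Gamma_1 f=P_+\beta^2\psi=\psi$. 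Axiom (ii) of Definition~\ref{definition_boundary_triples} and the product identity $\ran(\Gamma_0,\Gamma_1)^\top=\mathcal{G}_\Omega^{1/2}\times\mathcal{G}_\Omega^{1/2}$ follow from these two surjectivity results together with the density of $\mathcal{G}_\Omega^{1/2}$ in $\mathcal{G}_\Omega$. No single step is a genuine obstacle; the delicate bookkeeping sits in the Green's identity computation, which the closed form for $P_+\beta$ largely defuses.
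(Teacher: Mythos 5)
Your proposal is correct and follows essentially the same route as the paper: the same density argument from Lemma~\ref{lemma_minimal_maximal_op} for $\overline{T}=T_{\textup{max}}$, the same integration-by-parts verification of Green's identity (your closed form $P_+\beta=\tfrac12(\beta-i(\alpha\cdot\nu))$ just reorganizes the paper's splitting of the boundary term), the identical extension-of-trace construction for \eqref{ran_Gamma_domain}, and the same identification of $\ker\Gamma_0$, $\ker\Gamma_1$ with $\dom T_{\textup{MIT}}$, $\dom T_{-\textup{MIT}}$ plus Proposition~\ref{proposition_MIT_bag_operator} for axiom (iii). The only points you leave implicit, namely $\mathcal{G}_\Omega^{1/2}\subset H^{1/2}(\partial\Omega;\mathbb{C}^4)$ (via Lemma~\ref{lemma_mult_op}, as the paper notes before the theorem) and the density of $\mathcal{G}_\Omega^{1/2}$ in $\mathcal{G}_\Omega$, are routine and do not affect correctness.
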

\begin{proof}
  First, we have   
  $T_{\textup{min}}^* = T_{\textup{max}}$ by Lemma~\ref{lemma_minimal_maximal_op}. Furthermore, Lemma~\ref{lemma_minimal_maximal_op} also implies that the closure of $T$ coincides with
  $T_{\textup{max}}$ , as
  $C^\infty(\overline{\Omega};\mathbb{C}^4) \subset  \dom T$ is dense in $\dom T_{\textup{max}}$ equipped with the graph norm.
  
  Now we verify that the abstract Green's identity is valid. For this consider
  $f, g \in \dom T = H^1(\Omega; \mathbb{C}^4)$.
  Then \eqref{integration_by_parts} and the self-adjointness of $\alpha \cdot \nu$ yield
  \begin{equation*}
    \begin{split}
       ( T f, g )_{\Omega} - (f, T g)_\Omega
          &= \big( (-i \alpha \cdot \nabla + m \beta) f, g \big)_{\Omega}
          - \big( f, (-i \alpha \cdot \nabla + m \beta) g \big)_{\Omega} \\
      &= \big( - i (\alpha \cdot \nu) f|_{\partial \Omega}, g|_{\partial \Omega}\big)_{\partial \Omega} \\
      &= \frac{1}{2} \big( -i (\alpha \cdot \nu) f|_{\partial \Omega}, g|_{\partial \Omega}\big)_{\partial \Omega}
        - \frac{1}{2} \big( f|_{\partial \Omega}, -i (\alpha \cdot \nu) g|_{\partial \Omega}\big)_{\partial \Omega}.
    \end{split}
  \end{equation*}
  Using that $\beta$ is unitary and self-adjoint 
  we see that the last expression is equal to
  \begin{equation*}
    \begin{split}
      \frac{1}{2} \big(& -i \beta (\alpha \cdot \nu) f|_{\partial \Omega}, 
          \beta g|_{\partial \Omega}\big)_{\partial \Omega}
          - \frac{1}{2} \big( \beta f|_{\partial \Omega}, 
          -i \beta (\alpha \cdot \nu) g|_{\partial \Omega}\big)_{\partial \Omega} \\
      &= \frac{1}{2} \big(  \beta f|_{\partial \Omega}, 
          g|_{\partial \Omega} + i \beta (\alpha \cdot \nu) g|_{\partial \Omega}\big)_{\partial \Omega} 
       - \frac{1}{2} \big( f|_{\partial \Omega} + i \beta (\alpha \cdot \nu) f|_{\partial \Omega}, 
          \beta g|_{\partial \Omega}\big)_{\partial \Omega} \\
      &= ( \beta f|_{\partial \Omega}, P_+ g|_{\partial \Omega} )_{\partial \Omega}
          - ( P_+ f|_{\partial \Omega}, \beta g|_{\partial \Omega})_{\partial \Omega}.
    \end{split}
  \end{equation*}
  Since $P_+$ is an orthogonal projection in $L^2(\partial \Omega; \mathbb{C}^4)$ we conclude
  \begin{equation*}
    \begin{split}
      (T f, g)_\Omega - (f, T g)_\Omega 
          &= (P_+ \beta f|_{\partial \Omega}, P_+ g|_{\partial \Omega} )_{\partial \Omega}
          - (P_+ f|_{\partial \Omega}, P_+ \beta g|_{\partial \Omega})_{\partial \Omega} \\
      &=  ( \Gamma_1 f, \Gamma_0 g )_{\partial \Omega}
          - ( \Gamma_0 f, \Gamma_1 g)_{\partial \Omega},
    \end{split}
  \end{equation*}
  which is the abstract Green's identity.

  Next, we check the range property~\eqref{ran_Gamma_domain}. 
  Clearly, by the definition of $\Gamma_0$ and $\Gamma_1$, $\dom \Gamma_0 = \dom \Gamma_1 = H^1(\Omega; \mathbb{C}^4)$, and by
  standard properties of the trace map and Lemma~\ref{lemma_mult_op} 
  one has $\ran \Gamma_0\subset \mathcal{G}_\Omega^{1/2}$ and $\ran \Gamma_1 \subset \mathcal{G}_\Omega^{1/2}$, and hence also
  \begin{equation}\label{erstmaldas}
    \ran (\Gamma_0 \upharpoonright \ker \Gamma_1)\subset \mathcal{G}_\Omega^{1/2}\quad\text{and}\quad
         \ran (\Gamma_1 \upharpoonright \ker \Gamma_0) \subset \mathcal{G}_\Omega^{1/2}.
  \end{equation}
  To prove $\mathcal{G}_\Omega^{1/2} \subset \ran(\Gamma_0 \upharpoonright \ker \Gamma_1)$
  let $\varphi \in \mathcal{G}_\Omega^{1/2}$ and choose 
  $f \in H^1(\Omega; \mathbb{C}^4)$ such that $f|_{\partial \Omega} = \varphi$.
  Since $\varphi \in \mathcal{G}_\Omega$ we have $P_+ \varphi=\varphi$ and $P_-\varphi=0$. 
  Hence,  
  \begin{equation*}
    \Gamma_0 f = P_+ f|_{\partial \Omega} = P_+ \varphi = \varphi
  \end{equation*}
  and using \eqref{beta_P_pm} we obtain
  \begin{equation*}
    \Gamma_1 f = P_+ \beta f|_{\partial \Omega} = P_+ \beta \varphi
    =   \beta P_- \varphi = 0,
  \end{equation*}
  that is, $\varphi \in \ran(\Gamma_0 \upharpoonright \ker \Gamma_1)$.
  To prove $\mathcal{G}_\Omega^{1/2} \subset \ran(\Gamma_1 \upharpoonright \ker \Gamma_0)$
  let $\psi \in \mathcal{G}_\Omega^{1/2}$ and choose $g \in H^1(\Omega; \mathbb{C}^4)$
  such that $g|_{\partial \Omega} = \beta \psi$. Since $\psi \in \mathcal{G}_\Omega$ we have $P_+ \psi=\psi$ and $P_- \psi=0$.
  Hence,  using \eqref{beta_P_pm} we obtain
  \begin{equation*}
    \Gamma_0 g =  P_+ g|_{\partial \Omega} = P_+ \beta \psi = \beta P_- \psi = 0
  \end{equation*}
   and 
  \begin{equation*}
    \Gamma_1 g = P_+ \beta g|_{\partial \Omega} = P_+ \beta^2 \psi = \psi,
  \end{equation*}
  that is, $\psi \in \ran(\Gamma_1 \upharpoonright \ker \Gamma_0)$.
  Together with \eqref{erstmaldas} we conclude \eqref{ran_Gamma_domain}.

  Finally, observe that
  \begin{equation*}
  \begin{split}
    \ker \Gamma_0 
      &= \{ f \in H^1(\Omega; \mathbb{C}^4): f|_{\partial \Omega} = -i \beta (\alpha \cdot \nu) f|_{\partial \Omega} \}
      = \dom T_{\textup{MIT}},\\
      \ker \Gamma_1 
      &= \{ f \in H^1(\Omega; \mathbb{C}^4): f|_{\partial \Omega} = i \beta (\alpha \cdot \nu) f|_{\partial \Omega} \}
      = \dom T_{-\textup{MIT}}.\\
  \end{split}
  \end{equation*}
  Therefore 
  $T \upharpoonright \ker \Gamma_0$ coincides with the MIT bag Dirac operator $T_{\textup{MIT}}$
  and $T \upharpoonright \ker \Gamma_1$ coincides with $T_{-\textup{MIT}}$, which shows \eqref{aberjadoch}.
  Note that both operators are self-adjoint; cf. Proposition~\ref{proposition_MIT_bag_operator} and Proposition~\ref{prop_A_infty}.
  Thus $\{ \mathcal{G}_\Omega, \Gamma_0, \Gamma_1 \}$ is a quasi boundary triple for 
  $T \subset T_{\textup{max}}$.
\end{proof}

Now we compute the $\gamma$-field and the Weyl function associated to the quasi boundary triple 
in Theorem~\ref{theorem_quasi_triple_domain}. It turns out that these operators are closely related to
the integral operators $\Phi_\lambda$ and $\mathcal{C}_\lambda$ defined in Section~\ref{section_integral_op}.
For the next proposition recall that $\frac{1}{2} \beta + \mathcal{C}_{\lambda, 1/2}$ admits a bounded and everywhere defined inverse 
in $H^{1/2}({\partial \Omega}; \mathbb{C}^4)$
for $\lambda \in \mathbb{C} \setminus ( (-\infty, -m] \cup [m, \infty))$, 
see Proposition~\ref{proposition_C_lambda_inverse}.

\begin{prop} \label{proposition_gamma_Weyl_quasi_domain}
  Let $\{ \mathcal{G}_\Omega, \Gamma_0, \Gamma_1 \}$ be the quasi boundary triple 
  from Theorem~\ref{theorem_quasi_triple_domain}, let 
  $\lambda \in \mathbb{C} \setminus ( (-\infty, -m] \cup [m, \infty) ) \subset \rho(T_{\textup{MIT}})$,
  and let $\Phi_{\lambda, 1/2}$ and $\mathcal{C}_{\lambda, 1/2}$ be the operators in Proposition~\ref{proposition_Phi_lambda} 
  and Proposition~\ref{proposition_C_lambda}, respectively.
  Then the following statements hold:
  \begin{itemize}
    \item[$\textup{(i)}$] The value of the $\gamma$-field corresponding to $\{ \mathcal{G}_\Omega, \Gamma_0, \Gamma_1 \}$ is given by
    \begin{equation}\label{gammaformel}
      \gamma(\lambda) 
          =  \Phi_{\lambda, 1/2} \left( \frac{1}{2} \beta + \mathcal{C}_{\lambda, 1/2} \right)^{-1},\quad \dom \gamma(\lambda) = \mathcal{G}_\Omega^{1/2}.
    \end{equation}
    Each $\gamma(\lambda)$ is a densely defined bounded operator from $\mathcal{G}_\Omega$
    to $L^2(\Omega; \mathbb{C}^4)$, and a bounded and everywhere defined operator from 
    $\mathcal{G}_\Omega^{1/2}$ to $H^1(\Omega; \mathbb{C}^4)$.
    \item[$\textup{(ii)}$] The adjoint $\gamma(\lambda)^*$ of the $\gamma$-field corresponding to $\{ \mathcal{G}_\Omega, \Gamma_0, \Gamma_1 \}$ is given by
    \begin{equation*}
      \gamma(\lambda)^*
          =  P_+ \left( \frac{1}{2} \beta + \mathcal{C}_{\overline{\lambda}, 1/2} \right)^{-1}  \Phi_{\lambda}^*.
    \end{equation*}
    Each $\gamma(\lambda)^*$ is bounded from $L^2(\Omega; \mathbb{C}^4)$
    to $\mathcal{G}_\Omega^{1/2}$, and, in particular, compact from $L^2(\Omega; \mathbb{C}^4)$ to $\mathcal{G}_\Omega$.
    \item[$\textup{(iii)}$] The value of the Weyl function corresponding to $\{ \mathcal{G}_\Omega, \Gamma_0, \Gamma_1 \}$ is given by
    \begin{equation}\label{mformel}
      M(\lambda) = - P_+ \left( \frac{1}{2} \beta + \mathcal{C}_{\lambda, 1/2} \right)^{-1} P_+,\quad \dom M(\lambda) = \mathcal{G}_\Omega^{1/2}.
    \end{equation}
    Each $M(\lambda)$ is a densely defined and bounded operator in $\mathcal{G}_\Omega$, and 
    a bounded and everywhere defined operator in $\mathcal{G}_\Omega^{1/2}$.    
  \end{itemize}
\end{prop}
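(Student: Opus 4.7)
\medskip

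\noindent\textbf{Proof plan.}
The strategy is to guess and verify the formula for $\gamma(\lambda)$ in part (i), then derive the formulas in (ii) and (iii) by taking the $L^2$-adjoint and by applying $\Gamma_1$ to $\gamma(\lambda)$, respectively. The key algebraic facts, which I would isolate at the start, are the identities
\begin{equation*}
  P_+ \beta = \beta P_-, \qquad (\alpha\cdot\nu) P_\pm = P_\mp (\alpha\cdot\nu), \qquad \beta P_\pm = \pm i (\alpha\cdot\nu) P_\pm,
\end{equation*}
which follow directly from $\beta(\alpha\cdot\nu) = -(\alpha\cdot\nu)\beta$ and $(\alpha\cdot\nu)^2 = I_4$ (see \eqref{eq_commutation}) together with the definition \eqref{def_P_pm} of $P_\pm$. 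Together with the trace formula in Lemma~\ref{lem_Phi_lambda} and Proposition~\ref{proposition_C_lambda_inverse}(i), these are the only ingredients beyond standard manipulations.

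For part (i), given $\varphi \in \mathcal{G}_\Omega^{1/2}$ I would set $\psi := \bigl(\frac{1}{2}\beta + \mathcal{C}_{\lambda,1/2}\bigr)^{-1}\varphi \in H^{1/2}(\partial\Omega;\mathbb{C}^4)$ and $f_\lambda := \Phi_{\lambda,1/2}\psi$. By Proposition~\ref{proposition_Phi_lambda}(i)--(ii) one has $f_\lambda \in H^1(\Omega;\mathbb{C}^4) \cap \ker(T_{\textup{max}}-\lambda) = \ker(T-\lambda)$. Using Lemma~\ref{lem_Phi_lambda} I would compute
\begin{equation*}
  \Gamma_0 f_\lambda = P_+\Bigl(\mathcal{C}_{\lambda,1/2}\psi - \tfrac{i}{2}(\alpha\cdot\nu)\psi\Bigr) = P_+\mathcal{C}_{\lambda,1/2}\psi + \tfrac{1}{2}\beta P_-\psi = P_+\bigl(\tfrac{1}{2}\beta + \mathcal{C}_{\lambda,1/2}\bigr)\psi = P_+\varphi = \varphi,
\end{equation*}
where the second equality uses $-i(\alpha\cdot\nu)P_- = \beta P_-$ and $(\alpha\cdot\nu)P_- = P_+(\alpha\cdot\nu)$, and the last uses $\varphi \in \mathcal{G}_\Omega^{1/2}$. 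Since $\Gamma_0\upharpoonright\ker(T-\lambda)$ is injective (see \eqref{equation_dom_T_decomposition}), this identifies $\gamma(\lambda)\varphi = f_\lambda$, yielding \eqref{gammaformel}. Boundedness $\mathcal{G}_\Omega^{1/2} \to H^1(\Omega;\mathbb{C}^4)$ follows from Propositions~\ref{proposition_Phi_lambda}(i) and~\ref{proposition_C_lambda_inverse}(i); boundedness $\mathcal{G}_\Omega \to L^2(\Omega;\mathbb{C}^4)$ is automatic from the abstract quasi boundary triple theory of Section~\ref{section_boundary_triples}.

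For part (iii), with $\psi$ as above I would compute $\Gamma_1 f_\lambda = P_+\beta(\mathcal{C}_{\lambda,1/2}\psi - \tfrac{i}{2}(\alpha\cdot\nu)\psi)$, substitute $\mathcal{C}_{\lambda,1/2}\psi = \varphi - \tfrac{1}{2}\beta\psi$, and exploit $P_+\beta\varphi = \beta P_-P_+\varphi = 0$ for $\varphi \in \mathcal{G}_\Omega^{1/2}$ together with $\beta P_+ = i(\alpha\cdot\nu)P_+$ to collapse the remaining two terms to $-P_+\psi$. Since $\varphi = P_+\varphi$, this gives exactly \eqref{mformel}. For part (ii), I would take $L^2$-adjoints in \eqref{gammaformel}: using $\mathcal{C}_\lambda^* = \mathcal{C}_{\overline{\lambda}}$ from \eqref{adjoint_C_lambda} (hence $(\frac{1}{2}\beta + \mathcal{C}_{\lambda,1/2})^{-*} = (\frac{1}{2}\beta + \mathcal{C}_{\overline{\lambda},1/2})^{-1}$, upon a careful interpretation), the definition \eqref{def_Phi_lambda_adjoint} of $\Phi_\lambda^*$, and the fact that the inner product on $\mathcal{G}_\Omega$ is inherited from $L^2(\partial\Omega;\mathbb{C}^4)$ via $\varphi = P_+\varphi$, the identity $(\gamma(\lambda)\varphi, f)_\Omega = (\varphi, P_+(\tfrac{1}{2}\beta + \mathcal{C}_{\overline{\lambda},1/2})^{-1}\Phi_\lambda^* f)_{\mathcal{G}_\Omega}$ drops out. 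The bound $L^2(\Omega;\mathbb{C}^4) \to \mathcal{G}_\Omega^{1/2}$ then follows from the bound of $\Phi_\lambda^*$ into $H^{1/2}(\partial\Omega;\mathbb{C}^4)$, and compactness into $\mathcal{G}_\Omega$ follows from the compactness of the embedding $H^{1/2}\hookrightarrow L^2$ on $\partial\Omega$.

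The main obstacle will be the careful bookkeeping of the $P_\pm,\beta,(\alpha\cdot\nu)$ algebra to simplify $\Gamma_0 f_\lambda$ and $\Gamma_1 f_\lambda$ into the clean right-hand sides of \eqref{gammaformel}--\eqref{mformel}; a secondary technical point is justifying that $(\frac{1}{2}\beta + \mathcal{C}_{\lambda,1/2})^{-1}$ passes to the adjoint to give $(\frac{1}{2}\beta + \mathcal{C}_{\overline{\lambda},1/2})^{-1}$, which I would handle via Proposition~\ref{proposition_C_lambda}(i) by dualizing $\frac{1}{2}\beta + \mathcal{C}_{\lambda,1/2}$ on $H^{1/2}$ against $H^{-1/2}$ and then restricting to $L^2$.
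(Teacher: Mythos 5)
Your proposal is correct and follows essentially the same route as the paper's proof: it verifies that $f_\lambda=\Phi_{\lambda,1/2}\bigl(\tfrac{1}{2}\beta+\mathcal{C}_{\lambda,1/2}\bigr)^{-1}\varphi$ solves the boundary value problem for the $\gamma$-field using Lemma~\ref{lem_Phi_lambda} and the $P_\pm$-algebra, obtains $M(\lambda)$ by applying $\Gamma_1$, and gets $\gamma(\lambda)^*$ by dualizing with $\mathcal{C}_\lambda^*=\mathcal{C}_{\overline{\lambda}}$ and Proposition~\ref{proposition_C_lambda_inverse}. The only deviations are cosmetic (you use the identities $\beta P_\pm=\pm i(\alpha\cdot\nu)P_\pm$, $(\alpha\cdot\nu)P_\pm=P_\mp(\alpha\cdot\nu)$ in place of the paper's add-and-subtract $\tfrac{1}{2}\beta$ manipulation, and justify the adjoint of the inverse via $H^{\pm 1/2}$ duality rather than at $s=0$), and all intermediate identities you invoke check out.
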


\begin{proof}
  In the following let $\lambda \in \mathbb{C} \setminus ( (-\infty, -m] \cup [m, \infty))$ be fixed. From \eqref{ran_Gamma_domain}  and 
  the definition of the $\gamma$-field and Weyl function it follows that
  $$\dom \gamma(\lambda) = \dom M(\lambda) = \ran \Gamma_0 = \mathcal{G}_\Omega^{1/2}.$$
  
  For the proof of item~(i) consider $\varphi \in \ran \Gamma_0 = \mathcal{G}_\Omega^{1/2}$ and recall that $\gamma(\lambda) \varphi$
  is the unique solution of the boundary value problem
  \begin{equation} \label{equation_gamma_field_bvp_domain}
    (T - \lambda) f = 0 \quad \text{and} \quad \Gamma_0 f = \varphi.
  \end{equation}
  We set 
  \begin{equation*}
    f_\lambda =  \Phi_{\lambda, 1/2} \left( \frac{1}{2} \beta + \mathcal{C}_{\lambda, 1/2} \right)^{-1} \varphi.
  \end{equation*}
  Then, due to the mapping properties of $\Phi_{\lambda, 1/2}$ and $\big( \frac{1}{2} \beta + \mathcal{C}_{\lambda, 1/2} \big)^{-1}$,
  see Propositions~\ref{proposition_Phi_lambda} and Proposition~\ref{proposition_C_lambda_inverse},
  we have $f_\lambda \in H^1(\Omega; \mathbb{C}^4) = \dom T$. For \eqref{gammaformel} it suffices to check that $f_\lambda$
  solves the boundary value problem~\eqref{equation_gamma_field_bvp_domain}.
  In fact, by Proposition~\ref{proposition_Phi_lambda}~(ii) we have
  $(T - \lambda) f_\lambda = 0$ and using Lemma~\ref{lem_Phi_lambda} we get
  \begin{equation*}
    \begin{split}
      \Gamma_0 f_\lambda &= P_+ f_\lambda|_{\partial \Omega}
          = P_+ \left( -\frac{i}{2} (\alpha \cdot \nu) + \mathcal{C}_{\lambda,1/2} \right) 
          \left( \frac{1}{2} \beta + \mathcal{C}_{\lambda, 1/2} \right)^{-1} \varphi\\
      &= P_+ \left( -\frac{i}{2} (\alpha \cdot \nu) - \frac{1}{2} \beta + \frac{1}{2} \beta+ \mathcal{C}_{\lambda,1/2} \right)
          \left( \frac{1}{2} \beta + \mathcal{C}_{\lambda, 1/2} \right)^{-1} \varphi \\
      &= P_+ \left( -\frac{i}{2} (\alpha \cdot \nu) \beta - \frac{1}{2} I_4 \right) \beta 
          \left( \frac{1}{2} \beta + \mathcal{C}_{\lambda, 1/2} \right)^{-1} \varphi + P_+ \varphi.
    \end{split}
  \end{equation*}
  Using that $\varphi \in \mathcal{G}_\Omega$, \eqref{eq_commutation}, and $P_+P_-=0$
  we obtain then
  \begin{equation*}
    \begin{split}
      \Gamma_0 f_\lambda
          &= - P_+ \frac{1}{2}\bigl( I_4 - i \beta (\alpha \cdot \nu)   \bigr) \beta \left( \frac{1}{2} \beta 
          + \mathcal{C}_{\lambda, 1/2} \right)^{-1} \varphi +\varphi \\
      &= - P_+ P_- \beta \left( \frac{1}{2} \beta + \mathcal{C}_{\lambda, 1/2} \right)^{-1} \varphi + \varphi
        = \varphi.
    \end{split}
  \end{equation*}
  Hence, $f_\lambda$ is the unique solution of the boundary value problem~\eqref{equation_gamma_field_bvp_domain}. This implies $\gamma(\lambda) \varphi = f_\lambda$
  and leads to the representation \eqref{gammaformel}.

  It remains to check the mapping properties of $\gamma(\lambda)$ in (i). 
  From the definition of the $\gamma$-field it is clear that $\gamma(\lambda)$ is a densely defined bounded operator 
  from $\mathcal{G}_\Omega$ to $L^2(\Omega; \mathbb{C}^4)$.
  Moreover, from Proposition~\ref{proposition_Phi_lambda}~(i)
  and Proposition~\ref{proposition_C_lambda_inverse}~(i) it also follows that
  $\gamma(\lambda)$ is a bounded and everywhere defined operator from 
    $\mathcal{G}_\Omega^{1/2}$ to $H^1(\Omega; \mathbb{C}^4)$.

  Next we prove item (ii). Let $f \in L^2(\Omega; \mathbb{C}^4)$ and $\varphi \in \mathcal{G}_\Omega^{1/2} = \dom \gamma(\lambda)$ be fixed. Then, using~\eqref{adjoint_C_lambda} we find that
  \begin{equation*}
    \begin{split}
      \big(\varphi, \gamma(\lambda)^* f \big)_{\partial \Omega} 
          &= \big(\gamma(\lambda) \varphi, f \big)_{\Omega}
          = \left( \Phi_{\lambda, 1/2} \left( \frac{1}{2} \beta + \mathcal{C}_{\lambda, 1/2} \right)^{-1} \varphi, f \right)_{\Omega} \\
      &= \left( \left( \frac{1}{2} \beta + \mathcal{C}_{\lambda} \right)^{-1} \varphi, \Phi_\lambda^* f \right)_{\partial \Omega}
      = \left( \varphi, \left( \frac{1}{2} \beta + \mathcal{C}_{\overline{\lambda}} \right)^{-1} \Phi_\lambda^* f \right)_{\partial \Omega}.
    \end{split}
  \end{equation*}
  Since this holds for all $\varphi \in \mathcal{G}_\Omega^{1/2}$ and $\Phi_\lambda^*: L^2(\Omega; \mathbb{C}^4) \rightarrow H^{1/2}(\partial \Omega; \mathbb{C}^4)$ 
  by~\eqref{def_Phi_lambda_adjoint}, we find the claimed representation for $\gamma(\lambda)^*$. Moreover, since $(\frac{1}{2} \beta + \mathcal{C}_{\overline\lambda, 1/2})^{-1}$ is bounded in $H^{1/2}(\partial \Omega; \mathbb{C}^4)$ by Proposition~\ref{proposition_C_lambda_inverse}, we get that $\gamma(\lambda)^*$ is bounded from $L^2(\Omega; \mathbb{C}^4)$ to $\mathcal{G}_\Omega^{1/2}$ and compact from $L^2(\Omega; \mathbb{C}^4)$ to $\mathcal{G}_\Omega$, as $\mathcal{G}_\Omega^{1/2} = P_+ (H^{1/2}(\partial \Omega; \mathbb{C}^4))$ is compactly embedded in $\mathcal{G}_\Omega = P_+ (L^2(\partial \Omega; \mathbb{C}^4))$.

  Finally, we show assertion~(iii). For this we use \eqref{gammaformel}, Lemma~\ref{lem_Phi_lambda}, and compute for $\varphi \in \mathcal{G}_\Omega^{1/2}$
  \begin{equation*}
    \begin{split}
      M(\lambda) \varphi &= \Gamma_1 \gamma(\lambda) \varphi 
        = P_+ \beta \left( \Phi_{\lambda, 1/2}
        \left( \frac{1}{2} \beta + \mathcal{C}_{\lambda, 1/2} \right)^{-1} \varphi \right) \Bigg|_{\partial \Omega} \\
      &= P_+ \beta \left( -\frac{i}{2} (\alpha \cdot \nu) - \frac{1}{2} \beta + \frac{1}{2} \beta + \mathcal{C}_{\lambda,1/2} \right)
         \left( \frac{1}{2} \beta + \mathcal{C}_{\lambda, 1/2} \right)^{-1} \varphi \\
      &=  P_+ \left( -\frac{i}{2} \beta (\alpha \cdot \nu) - \frac{1}{2} I_4 \right) 
          \left( \frac{1}{2} \beta + \mathcal{C}_{\lambda, 1/2} \right)^{-1} \varphi 
          + P_+ \beta \varphi\\
      &= - P_+^2 \left( \frac{1}{2} \beta + \mathcal{C}_{\lambda, 1/2} \right)^{-1} \varphi + P_+ \beta \varphi. 
    \end{split}
  \end{equation*}
  Since $P_+^2 = P_+$ and $P_+ \beta \varphi=\beta P_-\varphi=0$ (see \eqref{beta_P_pm}) for $\varphi \in \mathcal{G}_\Omega^{1/2}$, the representation \eqref{mformel}
  for the Weyl function follows. 
  
  It is a consequence of mapping properties of $( \frac{1}{2} \beta + \mathcal{C}_{\lambda, 1/2})^{-1}$ from
  Proposition~\ref{proposition_C_lambda_inverse}~(i) that
  each $M(\lambda)$ is a densely defined and bounded operator in $\mathcal{G}_\Omega$, and 
  a bounded and everywhere defined operator in $\mathcal{G}_\Omega^{1/2}$.  
\end{proof}

In the next proposition we derive a useful formula for the inverse of $M(\lambda)$. 
Let $T_{-\text{MIT}}$ be given by~\eqref{def_A_infty} and let $T_{-\text{MIT}}^{\Omega^c}$ be the Dirac operator acting 
in $L^2(\mathbb R^3\setminus\overline\Omega; \mathbb{C}^4)$ with the same boundary conditions as $T_{-\text{MIT}}$.
Recall that by Proposition~\ref{prop_A_infty} we have 
$$\bigl(\rho(T_{-\textup{MIT}}) \cap \rho(T_{-\textup{MIT}}^{\Omega^c})\bigr) \subset \mathbb{C} \setminus \big( (-\infty, -m] \cup [m, \infty) \big),$$
that the latter set is contained in $\rho(T_{\textup{MIT}}) \cap \rho(T_{\textup{MIT}}^{\Omega^c})$,
and that by Proposition~\ref{proposition_C_lambda_inverse}~(ii) the operator $-\frac{1}{2} \beta + \mathcal{C}_{\lambda, 1/2}$ is boundedly 
invertible in $H^{1/2}(\partial \Omega; \mathbb{C}^4)$ for any number $\lambda \in \rho(T_{-\textup{MIT}}) \cap \rho(T_{-\textup{MIT}}^{\Omega^c})$.

\begin{prop} \label{proposition_M_inv_domain}
 Let $M$ be the Weyl function corresponding to the quasi boundary triple $\{ \mathcal{G}_\Omega, \Gamma_0, \Gamma_1 \}$,
  assume that $\lambda \in \rho(T_{-\textup{MIT}}) \cap \rho(T_{-\textup{MIT}}^{\Omega^c})$, and
  let $\mathcal{C}_{\lambda, 1/2}$ be the operator from Proposition~\ref{proposition_C_lambda}.
  Then $M(\lambda)$ admits a bounded and everywhere defined inverse in $\mathcal{G}_\Omega^{1/2}$
  which is given by
  \begin{equation}\label{inverse M(lambda)} 
    M(\lambda)^{-1} =  P_+ \beta \left( -\frac{1}{2} \beta + \mathcal{C}_{\lambda, 1/2} \right)^{-1} \beta P_+.
  \end{equation}
\end{prop}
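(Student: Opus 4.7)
The strategy is a direct verification that the operator
$N(\lambda) := P_+\beta\bigl(-\tfrac{1}{2}\beta + \mathcal{C}_{\lambda,1/2}\bigr)^{-1}\beta P_+$
is the two-sided inverse of $M(\lambda)$ on $\mathcal{G}_\Omega^{1/2}$. Under the stated hypothesis $\lambda \in \rho(T_{-\textup{MIT}}) \cap \rho(T_{-\textup{MIT}}^{\Omega^c})$, Proposition~\ref{proposition_C_lambda_inverse}(ii) ensures that $\bigl(-\tfrac{1}{2}\beta + \mathcal{C}_{\lambda,1/2}\bigr)^{-1}$ is a bounded and everywhere defined operator on $H^{1/2}(\partial\Omega;\mathbb{C}^4)$, so $N(\lambda)$ is a bounded and everywhere defined operator on $\mathcal{G}_\Omega^{1/2}$.

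For the right-inverse identity $M(\lambda) N(\lambda) = I$, I exploit the defining relation $M(\lambda)\Gamma_0 f = \Gamma_1 f$ for $f \in \ker(T-\lambda)$. Given $\varphi \in \mathcal{G}_\Omega^{1/2}$ (so $P_+\varphi = \varphi$, $P_-\varphi = 0$), set $\psi := \bigl(-\tfrac{1}{2}\beta + \mathcal{C}_{\lambda,1/2}\bigr)^{-1}\beta\varphi \in H^{1/2}(\partial\Omega;\mathbb{C}^4)$ and consider the candidate $f_\lambda := \Phi_{\lambda,1/2}\psi$. By Proposition~\ref{proposition_Phi_lambda} one has $f_\lambda \in H^1(\Omega;\mathbb{C}^4) \cap \ker(T_\textup{max}-\lambda) = \dom T \cap \ker(T-\lambda)$, and by Lemma~\ref{lem_Phi_lambda} the trace is $f_\lambda|_{\partial\Omega} = -\tfrac{i}{2}(\alpha\cdot\nu)\psi + \mathcal{C}_{\lambda,1/2}\psi$. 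Using the identities $i\beta(\alpha\cdot\nu) = P_+ - P_-$ and $P_+\beta = \beta P_-$, a short calculation (of the same flavour as the one carried out for $M(\lambda)$ in the proof of Proposition~\ref{proposition_gamma_Weyl_quasi_domain}) yields
\[
\Gamma_0 f_\lambda = P_+\bigl(\tfrac{1}{2}\beta + \mathcal{C}_{\lambda,1/2}\bigr)\psi \quad \text{and} \quad \Gamma_1 f_\lambda = P_+\beta\bigl(-\tfrac{1}{2}\beta + \mathcal{C}_{\lambda,1/2}\bigr)\psi.
\]
Substituting the defining relation $\bigl(-\tfrac{1}{2}\beta + \mathcal{C}_{\lambda,1/2}\bigr)\psi = \beta\varphi$ gives $\Gamma_1 f_\lambda = P_+\beta^2\varphi = \varphi$, while the manipulation $\bigl(\tfrac{1}{2}\beta + \mathcal{C}_{\lambda,1/2}\bigr)\psi = \bigl(-\tfrac{1}{2}\beta + \mathcal{C}_{\lambda,1/2}\bigr)\psi + \beta\psi = \beta(\varphi+\psi)$, combined with $P_+\beta\varphi = \beta P_-\varphi = 0$, gives $\Gamma_0 f_\lambda = P_+\beta\psi = N(\lambda)\varphi$. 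Therefore $M(\lambda) N(\lambda)\varphi = M(\lambda)\Gamma_0 f_\lambda = \Gamma_1 f_\lambda = \varphi$.

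To upgrade this to the claimed bijectivity I verify injectivity of $M(\lambda)$ on $\mathcal{G}_\Omega^{1/2}$: if $M(\lambda)\varphi = 0$, then $\gamma(\lambda)\varphi \in \dom T$ satisfies $\Gamma_1\gamma(\lambda)\varphi = M(\lambda)\varphi = 0$, so $\gamma(\lambda)\varphi \in \dom T_{-\textup{MIT}}$ and $(T_{-\textup{MIT}} - \lambda)\gamma(\lambda)\varphi = 0$; since $\lambda \in \rho(T_{-\textup{MIT}})$, this forces $\gamma(\lambda)\varphi = 0$ and hence $\varphi = \Gamma_0\gamma(\lambda)\varphi = 0$. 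Together with the surjectivity from $M(\lambda)N(\lambda) = I$, this shows that $M(\lambda)$ is bijective on $\mathcal{G}_\Omega^{1/2}$ with inverse $N(\lambda)$, proving \eqref{inverse M(lambda)}. The only real technical care needed is in the projection algebra involving $P_\pm$, $\beta$, and $\alpha\cdot\nu$ when evaluating $\Gamma_0 f_\lambda$ and $\Gamma_1 f_\lambda$; beyond that the argument reduces to the anticommutation relations already recorded in~\eqref{eq_commutation} and the availability of $\bigl(-\tfrac{1}{2}\beta + \mathcal{C}_{\lambda,1/2}\bigr)^{-1}$ from Proposition~\ref{proposition_C_lambda_inverse}(ii).
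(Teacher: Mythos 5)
Your proposal is correct, and its computational core coincides with the paper's, but the framing is genuinely different. The paper introduces the swapped quasi boundary triple $\{\mathcal{G}_\Omega,\widehat{\Gamma}_0,\widehat{\Gamma}_1\}$ with $\widehat{\Gamma}_0=\Gamma_1$ and $\widehat{\Gamma}_1=-\Gamma_0$, observes that its distinguished extension $T\upharpoonright\ker\widehat{\Gamma}_0$ is $T_{-\textup{MIT}}$ and that its Weyl function is $\widehat{M}(\lambda)=-M(\lambda)^{-1}$, and then obtains the claimed formula for $M(\lambda)^{-1}$ by computing $\widehat{\gamma}(\lambda)$ and $\widehat{M}(\lambda)$ explicitly; the invertibility of $M(\lambda)$ and the domain statement come for free from the triple formalism at $\lambda\in\rho(T_{-\textup{MIT}})$. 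You bypass the auxiliary triple and instead verify by hand that $N(\lambda)$ is a right inverse, using exactly the same defect element $f_\lambda=\Phi_{\lambda,1/2}\bigl(-\tfrac{1}{2}\beta+\mathcal{C}_{\lambda,1/2}\bigr)^{-1}\beta\varphi$, the trace formula of Lemma~\ref{lem_Phi_lambda}, and the same $P_\pm$--$\beta$ algebra that the paper uses for $\widehat{\gamma}$ and $\widehat{M}$, and you then supply injectivity of $M(\lambda)$ separately via $\ker\Gamma_1=\dom T_{-\textup{MIT}}$ and $\lambda\in\rho(T_{-\textup{MIT}})$; both steps are sound, and right inverse plus injectivity indeed yields the two-sided inverse. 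Your route is more elementary and makes explicit where the spectral hypothesis on $\lambda$ enters (the injectivity step and Proposition~\ref{proposition_C_lambda_inverse}~(ii)), whereas the paper's route has the structural bonus of exhibiting $-M(\lambda)^{-1}$ itself as the Weyl function of a quasi boundary triple. The only small point worth making explicit in your write-up is that $\rho(T_{-\textup{MIT}})\cap\rho(T_{-\textup{MIT}}^{\Omega^c})\subset\mathbb{C}\setminus\bigl((-\infty,-m]\cup[m,\infty)\bigr)\subset\rho(T_{\textup{MIT}})$, as the paper records just before the proposition, so that $M(\lambda)$, the relation $M(\lambda)\Gamma_0 f_\lambda=\Gamma_1 f_\lambda$, and Proposition~\ref{proposition_C_lambda_inverse}~(ii) are all legitimately available.
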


\begin{proof}
Observe first that $\{ \mathcal{G}_\Omega, \widehat{\Gamma}_0, \widehat{\Gamma}_1 \}$, where 
  \begin{equation}\label{neuemaps}
    \widehat{\Gamma}_0 = \Gamma_1 \quad \text{and} \quad \widehat{\Gamma}_1 = -\Gamma_0,
  \end{equation}
  is a quasi boundary triple for $T\subset T_{\textup{max}}$ such that $T \upharpoonright \ker \widehat\Gamma_0 = T_{-\textup{MIT}}$.
  In fact, using that $\{ \mathcal{G}_\Omega, \Gamma_0, \Gamma_1 \}$ is quasi boundary triple it follows 
  that the abstract Green identity is satisfied by the boundary mappings in \eqref{neuemaps} and that the range of $(\widehat\Gamma_0,\widehat\Gamma_1)^\top$
  is dense. Moreover, $T_{-\textup{MIT}}= T \upharpoonright \ker \widehat\Gamma_0$ is a self-adjoint operator by Proposition~\ref{prop_A_infty}.
  Note that Weyl function $\widehat M$ corresponding to the quasi boundary triple $\{ \mathcal{G}_\Omega, \widehat{\Gamma}_0, \widehat{\Gamma}_1 \}$ is given for our choice of $\lambda \in \rho(T_{-\textup{MIT}}) \cap \rho(T_{-\textup{MIT}}^{\Omega^c}) \subset \rho(T_{\textup{MIT}}) \cap \rho(T_{-\textup{MIT}})$ by
  \begin{equation*}
    \widehat{M}(\lambda) 
        = \widehat{\Gamma}_1 \big( \widehat{\Gamma}_0 \upharpoonright \ker (T - \lambda) \big)^{-1} 
        = -(M(\lambda))^{-1}.
  \end{equation*} 
  
  Thus it remains to compute the value of the Weyl function $\widehat M(\lambda)$. 
  We first show the explicit formula 
  \begin{equation}\label{gammaformelhat}
      \widehat\gamma(\lambda) 
          =  \Phi_{\lambda, 1/2} \left( -\frac{1}{2} \beta + \mathcal{C}_{\lambda, 1/2} \right)^{-1}\beta ,\quad \dom \widehat\gamma(\lambda) = \mathcal{G}_\Omega^{1/2},
    \end{equation}
  for the $\gamma$-field corresponding to the quasi boundary triple $\{ \mathcal{G}_\Omega, \widehat{\Gamma}_0, \widehat{\Gamma}_1 \}$ using a similar argument 
  as in the proof of Proposition~\ref{proposition_gamma_Weyl_quasi_domain}. In fact, it is clear that $\dom \widehat\gamma(\lambda) =\ran\widehat\Gamma_0= \mathcal{G}_\Omega^{1/2}$. Next,  consider
  $\varphi \in \dom \widehat{\gamma}(\lambda)$  and recall that $\widehat\gamma(\lambda) \varphi$
  is the unique solution of the boundary value problem
  \begin{equation*}
    (T - \lambda) = 0 \quad \text{and} \quad \widehat{\Gamma}_0 f_\lambda = \varphi.
  \end{equation*}
  We set 
  \begin{equation*}
    f_\lambda = \Phi_{\lambda, 1/2} \left( -\frac{1}{2} \beta + \mathcal{C}_{\lambda, 1/2} \right)^{-1} \beta \varphi.
  \end{equation*}
  Then we have $(T-\lambda)f_\lambda=0$ and using $P_+P_-=0$ and $\varphi \in \mathcal{G}_\Omega$ 
  we obtain in a similar way as in the proof of  Proposition~\ref{proposition_gamma_Weyl_quasi_domain} that
  \begin{equation*}
    \begin{split}
      \widehat{\Gamma}_0 f_\lambda &= \Gamma_1 f_\lambda
        = P_+ \beta \left( \Phi_{\lambda, 1/2} 
        \left( -\frac{1}{2} \beta + \mathcal{C}_{\lambda, 1/2} \right)^{-1} \beta \varphi \right) \Bigg|_{\partial \Omega} \\
      &= P_+ \beta \left( -\frac{i}{2} (\alpha \cdot \nu) + \frac{1}{2} \beta 
        - \frac{1}{2} \beta + \mathcal{C}_{\lambda,1/2} \right)
        \left( -\frac{1}{2} \beta + \mathcal{C}_{\lambda, 1/2} \right)^{-1} \beta \varphi \\
      &=  P_+ P_- \left( -\frac{1}{2} \beta + \mathcal{C}_{\lambda, 1/2} \right)^{-1} \beta \varphi
        + P_+ \beta^2 \varphi\\
        &=\varphi,
    \end{split}
  \end{equation*}
  which leads to \eqref{gammaformelhat}.

  Let us now compute the Weyl function
  $\widehat{M}$. Using \eqref{gammaformelhat} and Lemma~\ref{lem_Phi_lambda} we find
  \begin{equation*}
    \begin{split}
      \widehat{M}(\lambda) \varphi &= \widehat{\Gamma}_1 \widehat{\gamma}(\lambda) \varphi
        = - \Gamma_0 \widehat{\gamma}(\lambda) \varphi \\
      &= -P_+ \left( \Phi_{\lambda, 1/2} 
        \left( -\frac{1}{2} \beta + \mathcal{C}_{\lambda, 1/2} \right)^{-1} \beta \varphi \right) \Bigg|_{\partial \Omega} \\
      &= -P_+ \left( -\frac{i}{2} (\alpha \cdot \nu) + \frac{1}{2} \beta 
        - \frac{1}{2} \beta + \mathcal{C}_{\lambda,1/2} \right)
        \left( -\frac{1}{2} \beta + \mathcal{C}_{\lambda, 1/2} \right)^{-1} \beta \varphi \\
      &= - P_+ \beta P_- \left( -\frac{1}{2} \beta + \mathcal{C}_{\lambda, 1/2} \right)^{-1} \beta \varphi
        - P_+ \beta \varphi.
    \end{split}
  \end{equation*}
  Thanks to \eqref{beta_P_pm}, $P_+^2=P_+$, and that $\varphi \in \mathcal{G}_\Omega$ we finally obtain
  \begin{equation*}
    \begin{split}
      \widehat{M}(\lambda) \varphi &
        = - P_+ \beta \left( -\frac{1}{2} \beta + \mathcal{C}_{\lambda, 1/2} \right)^{-1} \beta P_+ \varphi
        - \beta P_-\varphi \\
      &= - P_+ \beta \left( -\frac{1}{2} \beta + \mathcal{C}_{\lambda, 1/2} \right)^{-1} \beta P_+ \varphi,
    \end{split}
  \end{equation*}
  which gives \eqref{inverse M(lambda)}.
\end{proof}

Finally, we state a lemma on the invertibility of $\vartheta - M(\lambda)$ for a H\"older continuous 
function $\vartheta$. This result will be needed in the proofs of several results of this paper. Recall that the closure $\overline{M(\lambda)}$ of the Weyl function corresponding to the triple $\{ \mathcal{G}_\Omega, \Gamma_0, \Gamma_1 \}$ is bounded in $\mathcal{G}_\Omega$, see Proposition~\ref{proposition_gamma_Weyl_quasi_domain}.

\begin{lem} \label{lemma_M_inv}
Let $M$ be the Weyl function corresponding to the quasi boundary triple $\{ \mathcal{G}_\Omega, \Gamma_0, \Gamma_1 \}$ 
and let $\vartheta \in \textup{Lip}_a({\partial \Omega})$ for some $a \in(\frac{1}{2}, 1]$ be a real-valued function such that 
$|\vartheta(x)| \neq 1$ for all $x \in \partial \Omega$. 
Then the following statements hold:
\begin{itemize}
 \item [$\textup{(i)}$] For all $\lambda \in \mathbb{C} \setminus \mathbb{R}$ the operator $\vartheta - M(\lambda)$ has a bounded and everywhere defined inverse in $\mathcal{G}_\Omega^{1/2}$.
 \item [$\textup{(ii)}$] For all $\lambda \in \mathbb{C} \setminus \mathbb{R}$ the operator $\vartheta - \overline{M(\lambda)}$ has a bounded and everywhere defined inverse in $\mathcal{G}_\Omega$.
\end{itemize}
\end{lem}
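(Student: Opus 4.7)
The plan is to prove (ii) first via a Fredholm-plus-injectivity argument, and then deduce (i) by an elliptic-regularity bootstrap; fix $\lambda\in\mathbb{C}\setminus\mathbb{R}$ throughout. For injectivity of $\vartheta-\overline{M(\lambda)}$ in $\mathcal{G}_\Omega$, I use the standard Weyl-function identity $M(\lambda)-M(\bar\lambda)=(\lambda-\bar\lambda)\gamma(\bar\lambda)^{*}\gamma(\lambda)$, extended by density to the closures. For $\varphi\in\ker(\vartheta-\overline{M(\lambda)})$, the reality of $\vartheta$ gives $\mathrm{Im}(\overline{M(\lambda)}\varphi,\varphi)_{\mathcal{G}_\Omega}=0$, hence $(\mathrm{Im}\lambda)\|\overline{\gamma(\lambda)}\varphi\|^{2}=0$; since $\overline{\gamma(\lambda)}$ is injective (its range equals $\ker(T_\textup{max}-\lambda)$ by Proposition~\ref{proposition_Phi_lambda} and it is the inverse of the closure of $\Gamma_0\!\upharpoonright\!\ker(T-\lambda)$), this forces $\varphi=0$. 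Replacing $\lambda$ by $\bar\lambda$ and using $\mathcal{C}_\lambda^{*}=\mathcal{C}_{\bar\lambda}$ (Proposition~\ref{proposition_C_lambda}) gives injectivity of the Hilbert-space adjoint $(\vartheta-\overline{M(\lambda)})^{*}=\vartheta-\overline{M(\bar\lambda)}$.

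The algebraic heart of the Fredholm step is the compactness of $\overline{M(\lambda)}^{2}-I$ on $\mathcal{G}_\Omega$. Writing $L_\lambda=(\tfrac{1}{2}\beta+\mathcal{C}_\lambda)^{-1}$ and $\tilde L_\lambda=(-\tfrac{1}{2}\beta+\mathcal{C}_\lambda)^{-1}$, the key identity is
\begin{equation*}
\mathcal{C}_\lambda+\beta\mathcal{C}_\lambda\beta=\beta\mathcal{B}_\lambda,
\end{equation*}
whose right-hand side is compact on $L^{2}(\partial\Omega;\mathbb{C}^{4})$ by Proposition~\ref{proposition_commutator}. Conjugating $\tilde L_\lambda^{-1}$ by $\beta$ gives $\beta\tilde L_\lambda^{-1}\beta=-L_\lambda^{-1}+\beta\mathcal{B}_\lambda$; inverting this compact perturbation of $-L_\lambda^{-1}$ via a short Neumann-series argument yields $\beta\tilde L_\lambda\beta=-L_\lambda+K_0$ with $K_0$ compact. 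Together with the formula for $M(\lambda)^{-1}$ from Proposition~\ref{proposition_M_inv_domain}, this gives $\overline{M(\lambda)}-\overline{M(\lambda)}^{-1}=-P_{+}(L_\lambda+\beta\tilde L_\lambda\beta)P_{+}$ compact, and hence $\overline{M(\lambda)}^{2}-I=\overline{M(\lambda)}\bigl(\overline{M(\lambda)}-\overline{M(\lambda)}^{-1}\bigr)$ is compact. The commutator $[\overline{M(\lambda)},\vartheta]=P_{+}L_\lambda[\mathcal{C}_\lambda,\vartheta]L_\lambda P_{+}$ is compact on $\mathcal{G}_\Omega$ by Proposition~\ref{proposition_commutator_C_lambda} with $s=0$. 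Assembling these into
\begin{equation*}
(\vartheta-\overline{M(\lambda)})(\vartheta+\overline{M(\lambda)})=(\vartheta^{2}-I)-(\overline{M(\lambda)}^{2}-I)-[\overline{M(\lambda)},\vartheta],
\end{equation*}
one sees that this product equals $\vartheta^{2}-I$ modulo a compact operator, and the analogous statement holds for the reverse product. The assumption $|\vartheta|\neq 1$ on the compact set $\partial\Omega$ makes $\vartheta^{2}-1$ bounded away from zero, so $\vartheta^{2}-I$ is boundedly invertible as a multiplication operator. Hence both products are Fredholm of index $0$, so $\vartheta\pm\overline{M(\lambda)}$ are individually Fredholm with indices summing to zero; combined with the adjoint injectivity above, this yields $\mathrm{ind}(\vartheta-\overline{M(\lambda)})=0$, hence bijectivity, and the open mapping theorem supplies the bounded inverse required in (ii).

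For (i), given $\psi\in\mathcal{G}_\Omega^{1/2}$ let $\varphi\in\mathcal{G}_\Omega$ be the solution from (ii); I bootstrap $\varphi$ into $\mathcal{G}_\Omega^{1/2}$. Applying $\overline{M(\lambda)}^{-1}$ to $\overline{M(\lambda)}\varphi=\vartheta\varphi-\psi$ and commuting $\vartheta$ through one copy of $\overline{M(\lambda)}$, one exploits that both $\overline{M(\lambda)}^{-1}-\overline{M(\lambda)}$ and $[\overline{M(\lambda)},\vartheta]$ actually map $\mathcal{G}_\Omega$ into $\mathcal{G}_\Omega^{1/2}$ (the former via the smoothing property $\mathcal{B}_\lambda:H^{-1/2}\to H^{1/2}$ from Proposition~\ref{proposition_commutator}, the latter via Proposition~\ref{proposition_commutator_C_lambda} applied with $s\in(\tfrac{1}{2},a)$, which is nonempty thanks to $a>\tfrac{1}{2}$). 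A short rearrangement yields $(\vartheta^{2}-1)\varphi\in\mathcal{G}_\Omega^{1/2}$, and since $(\vartheta^{2}-1)^{-1}$ is H\"older of the same order as $\vartheta$ and therefore a bounded multiplier on $H^{1/2}(\partial\Omega)$, one concludes $\varphi\in\mathcal{G}_\Omega^{1/2}$. Injectivity of $\vartheta-M(\lambda)$ on $\mathcal{G}_\Omega^{1/2}$ is the first paragraph restricted to this subspace, and bounded invertibility then follows from the open mapping theorem. The main obstacle throughout is the compactness of $\overline{M(\lambda)}^{2}-I$: it rests on the somewhat delicate identity $\mathcal{C}_\lambda+\beta\mathcal{C}_\lambda\beta=\beta\mathcal{B}_\lambda$ combined with the smoothing property of $\mathcal{B}_\lambda$ from Proposition~\ref{proposition_commutator}.
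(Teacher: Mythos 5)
Your proposal is correct, but it is organized along a genuinely different route than the paper's proof. The paper proves (i) first: injectivity of $\vartheta-M(\lambda)$ and of $\vartheta+M(\lambda)^{-1}$ comes from the symmetry of the extensions $T\upharpoonright\ker(\Gamma_1-\vartheta\Gamma_0)$ and $T\upharpoonright\ker(\Gamma_0+\vartheta\Gamma_1)$ via Theorem~\ref{theorem_krein_abstract} and Theorem~\ref{theorem_krein_abstract_B}, surjectivity from the factorization $(\vartheta-M(\lambda))(\vartheta+M(\lambda)^{-1})=(\vartheta^2-1)\bigl[1+(\vartheta^2-1)^{-1}\mathcal{K}_{\lambda,1/2}\bigr]$ with $\mathcal{K}_\lambda=\vartheta\overline{M(\lambda)^{-1}}-\overline{M(\lambda)}\vartheta$ mapping $\mathcal{G}_\Omega$ into $\mathcal{G}_\Omega^{1/2}$, together with the Fredholm alternative; statement (ii) is then obtained by bootstrapping elements of $\ker(\vartheta-\overline{M(\lambda)})$ into $\mathcal{G}_\Omega^{1/2}$. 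You instead prove (ii) first, obtain injectivity from the sign structure $\mathrm{Im}(\overline{M(\lambda)}\varphi,\varphi)_{\mathcal{G}_\Omega}=\mathrm{Im}\lambda\,\|\overline{\gamma(\lambda)}\varphi\|^2$ of the Weyl function, obtain Fredholmness from compactness of $\overline{M(\lambda)}^{2}-I$ and of $[\overline{M(\lambda)},\vartheta]$, and finally bootstrap the solution (rather than the kernel) to get (i). The underlying analytic ingredients are the same (Propositions~\ref{proposition_commutator_C_lambda}, \ref{proposition_commutator}, \ref{proposition_C_lambda_inverse}, \ref{proposition_gamma_Weyl_quasi_domain}, \ref{proposition_M_inv_domain} and Lemma~\ref{lemma_mult_op}), so neither route is shorter; yours has the appealing feature that injectivity comes directly from the imaginary part of $M$ and treats $\vartheta\pm\overline{M(\lambda)}$ symmetrically, whereas the paper's choice of the second factor $\vartheta+M(\lambda)^{-1}$ sidesteps the issue of deducing Fredholmness of each factor from that of a product -- in your scheme you genuinely need both orderings $(\vartheta-\overline{M})(\vartheta+\overline{M})$ and $(\vartheta+\overline{M})(\vartheta-\overline{M})$, as you indicate, and then adjoint injectivity with $\overline{M(\lambda)}^{*}=\overline{M(\bar\lambda)}$ closes the argument.

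Two steps should be tightened. First, injectivity of $\overline{\gamma(\lambda)}$ is not immediate from its being ``the inverse of the closure of $\Gamma_0\upharpoonright\ker(T-\lambda)$'' -- that phrasing presupposes the closed graph is again the graph of an injective operator, which is exactly what is at stake. It is true, and the cleanest justification is $\ker\overline{\gamma(\lambda)}=\bigl(\ran\gamma(\lambda)^{*}\bigr)^{\perp}$ together with $\ran\gamma(\lambda)^{*}=\Gamma_1\dom T_{\textup{MIT}}=\mathcal{G}_\Omega^{1/2}$, which is dense in $\mathcal{G}_\Omega$ by \eqref{ran_Gamma_domain}. Second, the ``Neumann-series'' step is misnamed: $\beta\mathcal{B}_\lambda$ is compact, not small, so no Neumann series converges in general; what you actually need (and have) is that $\pm\tfrac12\beta+\mathcal{C}_\lambda$ are boundedly invertible for non-real $\lambda$ by Proposition~\ref{proposition_C_lambda_inverse}, after which the identity $X^{-1}-Y^{-1}=X^{-1}(Y-X)Y^{-1}$ gives $\beta\bigl(-\tfrac12\beta+\mathcal{C}_\lambda\bigr)^{-1}\beta=-\bigl(\tfrac12\beta+\mathcal{C}_\lambda\bigr)^{-1}+K_0$ with $K_0$ compact (indeed bounded from $L^2$ into $H^{1/2}$, which is what your bootstrap in (i) uses). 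Finally, your use of $\overline{M(\lambda)}^{-1}$ tacitly identifies it with $\overline{M(\lambda)^{-1}}$ from Proposition~\ref{proposition_M_inv_domain}; this is correct but deserves the one-line remark that $M(\lambda)M(\lambda)^{-1}=M(\lambda)^{-1}M(\lambda)=I$ on the dense set $\mathcal{G}_\Omega^{1/2}$ passes to the closures.
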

\begin{proof}
  To prove (i) and (ii) some preparations are needed. 
  First, due to the explicit form of the operators $M(\lambda)$ and $M(\lambda)^{-1}$ from Proposition~\ref{proposition_gamma_Weyl_quasi_domain} 
  and Proposition~\ref{proposition_M_inv_domain} it is easy to see with the help of Proposition~\ref{proposition_C_lambda_inverse} that 
  $M(\lambda)$ and $M(\lambda)^{-1}$ have bounded extensions 
  onto $\mathcal{G}_\Omega$ given by
  \begin{equation*}
    \overline{M(\lambda)} = - P_+ \left( \frac{1}{2} \beta + \mathcal{C}_{\lambda} \right)^{-1} P_+ 
  \end{equation*}
  and
  \begin{equation*}
   \overline{M(\lambda)^{-1}} = P_+ \beta \left( -\frac{1}{2} \beta + \mathcal{C}_{\lambda} \right)^{-1} \beta P_+=
    P_+  \left( -\frac{1}{2} \beta + \beta\mathcal{C}_{\lambda}\beta \right)^{-1}  P_+,
  \end{equation*}
 respectively.
  We claim that the operator 
  \begin{equation} \label{equation_mapping_property}
    \mathcal{K}_\lambda = \vartheta \overline{M(\lambda)^{-1}} - \overline{M(\lambda)} \vartheta: \mathcal{G}_\Omega \rightarrow \mathcal{G}_\Omega^{1/2}
  \end{equation}
  is bounded. In particular, since $\mathcal{G}_\Omega^{1/2} = P_+(H^{1/2}(\partial \Omega; \mathbb{C}^4))$ is compactly embedded in 
  $\mathcal{G}_\Omega = P_+( L^2(\partial \Omega; \mathbb{C}^4))$, this implies that $\mathcal{K}_\lambda$ is a compact operator in $\mathcal{G}_\Omega$ and that 
   $\mathcal{K}_{\lambda, 1/2} = \mathcal{K}_\lambda \upharpoonright \mathcal{G}_\Omega^{1/2}$ is a compact operator
  in $\mathcal{G}_\Omega^{1/2}$.
  
  To verify the boundedness of $\mathcal{K}_\lambda$ in \eqref{equation_mapping_property} we note first that
  \begin{equation} \label{equation_mapping_property1}
    \begin{split}
      \vartheta \overline{M(\lambda)} - \overline{M(\lambda)} \vartheta
          = -P_+ \vartheta \left( \frac{1}{2} \beta + \mathcal{C}_{\lambda} \right)^{-1} P_+ + 
          P_+ \left( \frac{1}{2} \beta + \mathcal{C}_{\lambda} \right)^{-1} \vartheta P_+~~~&  \\
      =  -P_+ \left( \frac{1}{2} \beta + \mathcal{C}_{\lambda, 1/2} \right)^{-1} 
          \left( \mathcal{C}_{\lambda} \vartheta - \vartheta \mathcal{C}_{\lambda}  \right)
          \left( \frac{1}{2} \beta + \mathcal{C}_{\lambda} \right)^{-1} P_+,&
    \end{split}
  \end{equation}
  which, by Proposition~\ref{proposition_commutator_C_lambda} and Proposition~\ref{proposition_C_lambda_inverse}, is
  a bounded operator from $\mathcal{G}_\Omega$ to~$\mathcal{G}_\Omega^{1/2}$. Next, we have
  \begin{equation} \label{equation_mapping_property2}
    \begin{split}
      \overline{M(\lambda)} - \overline{M(\lambda)^{-1}}
          = -P_+ \left( \frac{1}{2} \beta + \mathcal{C}_{\lambda} \right)^{-1} P_+
          - P_+ \left( -\frac{1}{2} \beta + \beta \mathcal{C}_{\lambda} \beta \right)^{-1} P_+& \\
      = -P_+ \left( -\frac{1}{2} \beta + \beta \mathcal{C}_{\lambda, 1/2} \beta \right)^{-1} 
          \left( \mathcal{C}_{\lambda} \beta + \beta \mathcal{C}_{\lambda} \right) \beta
          \left( \frac{1}{2} \beta + \mathcal{C}_{\lambda} \right)^{-1} P_+&,
    \end{split}
  \end{equation}
  which, 
  by Proposition~\ref{proposition_commutator} and Proposition~\ref{proposition_C_lambda_inverse}, is
  also a bounded operator from~$\mathcal{G}_\Omega$ to~$\mathcal{G}_\Omega^{1/2}$, as $L^2(\partial \Omega; \mathbb{C}^4)$ is continuously embedded in $H^{-1/2}(\partial \Omega; \mathbb{C}^4)$. 
  Combining~\eqref{equation_mapping_property1} with~\eqref{equation_mapping_property2} and Lemma~\ref{lemma_mult_op} we conclude that the operator
  \begin{equation*}
    \mathcal{K}_\lambda = \vartheta \overline{M(\lambda)^{-1}} - \overline{M(\lambda)} \vartheta = \vartheta \big( \overline{M(\lambda)^{-1}} - \overline{M(\lambda)} \big) + \vartheta \overline{M(\lambda)} - \overline{M(\lambda)} \vartheta
  \end{equation*}
  in \eqref{equation_mapping_property} is bounded.

  For what follows it is important to note that the assumptions $|\vartheta(x)| \neq 1$ for all $x\in\partial\Omega$ and $a > \frac{1}{2}$ ensure that the functions 
  $(\vartheta^2 - 1), (\vartheta^2 - 1)^{-1} \in \text{Lip}_a(\partial \Omega)$ 
  give rise to bounded and boundedly invertible multiplication operators in $\mathcal{G}_\Omega$ and $\mathcal{G}_\Omega^{1/2}$, see Lemma~\ref{lemma_mult_op}.

  Let us now prove $\textup{(i)}$, i.e. that $\vartheta - M(\lambda)$ has a bounded inverse in $\mathcal{G}_\Omega^{1/2}$. Since $\vartheta - M(\lambda)$ 
  is bounded in $\mathcal{G}_\Omega^{1/2}$ by Lemma~\ref{lemma_mult_op} and Proposition~\ref{proposition_gamma_Weyl_quasi_domain}, 
  it suffices to show that this operator is bijective in $\mathcal{G}_\Omega^{1/2}$. Note that the operator $T \upharpoonright \ker(\Gamma_1 - \vartheta \Gamma_0)$
  is symmetric since $\vartheta$ is a real-valued function (this is an immediate consequence of the abstract Green's identity). Hence,
  $\vartheta - M(\lambda)$ is injective as otherwise the symmetric operator $T \upharpoonright \ker(\Gamma_1 - \vartheta \Gamma_0)$ 
  would have the non-real eigenvalue $\lambda$ by Theorem~\ref{theorem_krein_abstract}. Moreover, 
  we have 
  \begin{equation} \label{range_condition}
    \begin{split}
      \ran(\vartheta - M(\lambda)) 
          &\supset \ran \big[ (\vartheta - M(\lambda)) \big( \vartheta + M(\lambda)^{-1} \big) \big] \\
      &= \ran \big[ \vartheta^2 - 1 + \vartheta M(\lambda)^{-1} - M(\lambda) \vartheta \big].
    \end{split}
  \end{equation}
  The operators $\vartheta - M(\lambda)$ and $\vartheta + M(\lambda)^{-1} = (I_4 + \vartheta M(\lambda)) M(\lambda)^{-1}$ are both injective
  as otherwise one of the symmetric operators $T \upharpoonright \ker(\Gamma_1 - \vartheta \Gamma_0)$ and $T \upharpoonright \ker(\Gamma_0 + \vartheta \Gamma_1)$ 
  would have the non-real eigenvalue $\lambda$ by Theorem~\ref{theorem_krein_abstract} and Theorem~\ref{theorem_krein_abstract_B}, respectively. 
  Therefore
  \begin{equation}\label{nagut23}
    (\vartheta - M(\lambda)) \big( \vartheta + M(\lambda)^{-1} \big)
      = (\vartheta^2 - 1) \left[ 1 + \frac{1}{\vartheta^2 - 1} \mathcal{K}_{\lambda, 1/2} \right]
  \end{equation}
  is injective, and since $\mathcal{K}_{\lambda, 1/2} = \mathcal{K}_\lambda \upharpoonright \mathcal{G}_\Omega^{1/2}$ is a compact operator in $\mathcal{G}_\Omega^{1/2}$,
  we conclude from Fredholm's alternative and the bijectivity of $\vartheta^2 - 1$ in $\mathcal{G}_\Omega^{1/2}$ that 
  the operator \eqref{nagut23} is bijective in $\mathcal{G}_\Omega^{1/2}$. From \eqref{range_condition} 
  we conclude $\mathcal{G}_\Omega^{1/2} \subset \ran(\vartheta - M(\lambda))$ and hence we have shown that 
  $\vartheta - M(\lambda)$ is bijective in~$\mathcal{G}_\Omega^{1/2}$.
  
  Let us now focus on $\textup{(ii)}$. The proof that $\vartheta - \overline{M(\lambda)}$ has a bounded inverse in $\mathcal{G}_\Omega$ follows the same lines as above. 
  The only difference is in the argument that $\vartheta - \overline{M(\lambda)}$ and $\vartheta + \overline{M(\lambda)^{-1}}$ are injective. 
  To see this for, e.g., $\vartheta - \overline{M(\lambda)}$, assume that $\varphi \in \mathcal{G}_\Omega$ is such that 
  $(\vartheta - \overline{M(\lambda)}) \varphi = 0$. Then
  \begin{equation*}
    0 = \big( \vartheta + \overline{M(\lambda)^{-1}} \big)\big(\vartheta - \overline{M(\lambda)}\big) \varphi 
      = (\vartheta^2 - 1) \left[ 1 + \frac{1}{\vartheta^2 - 1} \widetilde{\mathcal{K}}_\lambda \right] \varphi
  \end{equation*}
  with 
  \begin{equation*}
    \widetilde{\mathcal{K}}_\lambda := \overline{M(\lambda)^{-1}} \vartheta - \vartheta \overline{M(\lambda)} = \big(\overline{M(\lambda)^{-1}} - \overline{M(\lambda)} \big) \vartheta + \overline{M(\lambda)} \vartheta - \vartheta \overline{M(\lambda)}.
  \end{equation*}
  From the second equality in the last line we conclude in the same way as in the proof of~\eqref{equation_mapping_property} from~\eqref{equation_mapping_property1} and~\eqref{equation_mapping_property2} that $\widetilde{\mathcal{K}}_\lambda$ maps $\mathcal{G}_\Omega$ into $\mathcal{G}_{\Omega}^{1/2}$.
  Using this and the bijectivity of $\vartheta^2 - 1$ in $\mathcal{G}_\Omega^{1/2}$, we get
  \begin{equation*}
    \varphi = -\frac{1}{\vartheta^2 - 1} \widetilde{\mathcal{K}}_\lambda \varphi \in \mathcal{G}_\Omega^{1/2},
  \end{equation*}
  that is, $\varphi \in \ker (\vartheta - M(\lambda))$. By the above considerations this implies $\varphi = 0$, i.e. $\vartheta - \overline{M(\lambda)}$ is injective. Similarly, one shows that also $\vartheta + \overline{M(\lambda)^{-1}}$ is injective. To show that $\vartheta - \overline{M(\lambda)}$ is surjective, one can use a similar argument as in~\eqref{range_condition} with $M(\lambda)$ and $M(\lambda)^{-1}$ replaced by $\overline{M(\lambda)}$ and $\overline{M(\lambda)^{-1}}$, respectively. The details are left to the reader.
\end{proof}

\section{Dirac operators on domains -- definition and basic spectral properties}
\label{section_Dirac_domain}

This section contains the main results of this paper. First, in Section~\ref{section_def_op} we introduce Dirac operators $A_\vartheta$ on $\Omega$ with 
boundary conditions of the form
\begin{equation} \label{boundary_condition}
  \vartheta P_+ f|_{\partial \Omega} = P_+ \beta f|_{\partial \Omega}
\end{equation}
for a real-valued H\"older continuous function $\vartheta: \partial \Omega \rightarrow \mathbb{R}$ of order $a > \frac{1}{2}$
and $P_+$ given by~\eqref{def_P_pm}. Using 
the quasi boundary triple $\{ \mathcal{G}_\Omega, \Gamma_0, \Gamma_1 \}$ from Theorem~\ref{theorem_quasi_triple_domain} we show that 
$A_\vartheta$ is self-adjoint if $\vert\vartheta(x)\vert \neq 1$ for all $x\in \partial \Omega$. We also obtain a 
Krein type resolvent formula and some qualitative spectral properties of $A_\vartheta$. In Section~\ref{ss bdy MIT} we sketch 
how Dirac operators $A_{[\omega]}$ with boundary conditions of the form
\begin{equation*}
  P_+ f|_{\partial \Omega} = \omega P_+ \beta f|_{\partial \Omega}
\end{equation*}
for a real-valued H\"older continuous function $\omega: \partial \Omega \rightarrow \mathbb{R}$ of order $a > \frac{1}{2}$ can be handled with similar arguments. 
Finally, in Section~\ref{section_confinement} we relate the operators $A_\vartheta$ to 
Dirac operators $B_{\eta, \tau}$ with singular $\delta$-shell potentials of the form \eqref{bet}. 
This relation allows to translate results for $B_{\eta, \tau}$ to $A_\vartheta$, and vice versa.

Throughout this section let $\Omega$ be a bounded or unbounded domain in $\mathbb{R}^3$ with a compact $C^2$-smooth boundary, 
and denote by $\nu$ the normal vector field at $\partial \Omega$ pointing outwards of $\Omega$.

\subsection{Self-adjointness and spectral properties of $A_\vartheta$} \label{section_def_op}

We start with the rigorous mathematical definition of the Dirac operator $A_\vartheta$ with boundary conditions~\eqref{boundary_condition}. 
We shall use the quasi boundary triple $\{ \mathcal{G}_\Omega, \Gamma_0, \Gamma_1 \}$ from  Theorem~\ref{theorem_quasi_triple_domain} in the next definition.

\begin{definition} \label{definition_Dirac_op_domain}
  Let $a \in (\frac{1}{2}, 1 ]$ and let $\vartheta \in \textup{Lip}_a({\partial \Omega})$ be real-valued.
  We define
  $A_\vartheta = T \upharpoonright \ker (\Gamma_1 - \vartheta \Gamma_0)$, which in a more explicit form is given by
  \begin{equation} \label{def_A_tau_domain}
    \begin{split}
      A_\vartheta f &= (-i \alpha \cdot \nabla + m \beta) f, \\
      \dom A_\vartheta &= \bigl\{ f \in H^1(\Omega; \mathbb{C}^4): 
          \vartheta P_+ f|_{\partial \Omega} = P_+ \beta f|_{\partial \Omega} \bigr\}.
    \end{split}
  \end{equation}
\end{definition}

\begin{remark}\label{rem_bc_klar}
The boundary conditions in \eqref{boundary_condition} are the 3D analogue of the boundary conditions used in
\cite{BFSB17_1}. In fact, let $\Omega \subset \mathbb{R}^2$ be a bounded $C^2$-domain. In \cite{BFSB17_1} the boundary conditions
\begin{equation} \label{bc_benguria}
  \big[ I_2 + i \sigma_3 (\sigma \cdot \nu) \cos \eta - \sin \eta \sigma_3 \big] u|_{\partial \Omega} = 0
\end{equation}
for $C^1$-functions $\eta: \partial \Omega \rightarrow \mathbb{R}$ with $\cos[\eta(x)] \notin \{ 0, 1 \}$ for all $x \in \partial \Omega$ are treated.
Here $\sigma = (\sigma_1, \sigma_2)$ and $\sigma_3$ are the Pauli spin matrices in \eqref{def_Pauli_matrices},
$\nu = (\nu_1, \nu_2)$ is the normal vector field at $\partial \Omega$,
and $\sigma \cdot \nu = \sigma_1 \nu_1 + \sigma_2 \nu_2$. To see that~\eqref{bc_benguria} is equivalent to the boundary conditions in \cite{BFSB17_1}, one has to note that $\sigma \cdot \mathbf{t} = -i \sigma_3 (\sigma \cdot \nu)$, where $\mathbf{t} = (-\nu_2, \nu_1)$ is the tangential vector at $\partial \Omega$.
We use the splitting 
\begin{equation*}
  u|_{\partial \Omega} = Q_+ u|_{\partial \Omega} + Q_- u|_{\partial \Omega},\qquad Q_\pm=\frac{1}{2} (I_2 \pm  i \sigma_3 (\sigma \cdot \nu)) ,
\end{equation*}
and remark that $Q_\pm$ is the 2D-analogue of $P_\pm$ from~\eqref{def_P_pm}. Hence, we can rewrite~\eqref{bc_benguria} as
\begin{equation*}
  \big[ I_2 + i \sigma_3 (\sigma \cdot \nu) \cos \eta - \sin \eta \sigma_3 \big] Q_+ u|_{\partial \Omega} = -\big[ I_2 + i \sigma_3 (\sigma \cdot \nu) \cos \eta - \sin \eta \sigma_3 \big] Q_- u|_{\partial \Omega}.
\end{equation*}
With the help of the relations
$i \sigma_3 (\sigma \cdot \nu) Q_\pm = \pm Q_\pm$ and $Q_- = \sigma_3 Q_+ \sigma_3$, we find that~\eqref{bc_benguria} is equivalent to
\begin{equation*} 
  \begin{split}
    \big[ I_2 + \cos \eta I_2 - \sin \eta \sigma_3 \big] Q_+ u|_{\partial \Omega}
    &= \big[ I_2 + i \sigma_3 (\sigma \cdot \nu) \cos \eta - \sin \eta \sigma_3 \big] Q_+ u|_{\partial \Omega}  \\
    &= -\big[ I_2 + i \sigma_3 (\sigma \cdot \nu) \cos \eta - \sin \eta \sigma_3 \big] Q_- u|_{\partial \Omega} \\
    &= -\big[ I_2 - \cos \eta I_2 - \sin \eta \sigma_3 \big] Q_- u|_{\partial \Omega} \\
    &= -\big[ (1-\cos \eta) \sigma_3 - \sin \eta I_2 \big] Q_+ \sigma_3 u|_{\partial \Omega}.
  \end{split}
\end{equation*}
By multiplying this equation with 
\begin{equation*}
  -\big[ (1-\cos \eta) \sigma_3 - \sin \eta I_2 \big]^{-1} = \frac{1}{2\cos \eta (1-\cos \eta)} \big[ (1-\cos \eta) \sigma_3 + \sin \eta I_2 \big],
\end{equation*}
which exists since $\cos [\eta(x)] \notin \{ 0, 1 \}$ is assumed,
we see that~\eqref{bc_benguria} is equivalent to
\begin{equation*}
  \frac{\sin (2 \eta)}{2 \cos \eta (1 - \cos \eta)} Q_+ u|_{\partial \Omega} = Q_+ \sigma_3 u|_{\partial \Omega}, 
\end{equation*}
which is the 2D analogue of the boundary conditions in~\eqref{def_A_tau_domain} for $\vartheta = \frac{\sin (2 \eta)}{2 \cos \eta (1 - \cos \eta)}$.
\end{remark}

It follows immediately from the abstract Green's identity 
that $A_\vartheta$ is symmetric 
for any real-valued function $\vartheta$. 
In order to prove self-adjointness we shall use Theorem~\ref{theorem_krein_abstract},
which also leads to a resolvent formula in terms of the resolvent of the MIT bag operator $T_\text{MIT}$ in~\eqref{def_MIT_op}
and the $\gamma$-field and Weyl function.
We note that in \eqref{mitformel} an explicit formula for $(T_\text{MIT} - \lambda)^{-1}$ is shown.

\begin{thm} \label{theorem_self_adjoint_noncritical}
  Let $a \in (\frac{1}{2}, 1 ]$ and let $\vartheta \in \textup{Lip}_a({\partial \Omega})$ be a real-valued function such that
  $|\vartheta(x)| \neq 1$ for all $x \in \partial \Omega$.   
  Moreover, let $\gamma$ and $M$ be as in Proposition~\ref{proposition_gamma_Weyl_quasi_domain}.
  Then the operator $A_{\vartheta}$ in \eqref{def_A_tau_domain} is self-adjoint in $L^2(\Omega; \mathbb{C}^4)$ and the resolvent formula
  \begin{equation*}
    (A_\vartheta-\lambda)^{-1} = \big(T_{\textup{MIT}} - \lambda\big)^{-1} 
        + \gamma(\lambda) \big( \vartheta - M(\lambda) \big)^{-1} \gamma(\overline{\lambda})^*
  \end{equation*}
   holds for all  $\lambda \in \rho(A_{\vartheta})\cap\rho(T_{\textup{MIT}})$.
\end{thm}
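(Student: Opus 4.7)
The plan is to deduce the theorem directly from the abstract machinery of Theorem~\ref{theorem_krein_abstract} applied to the quasi boundary triple $\{\mathcal{G}_\Omega, \Gamma_0, \Gamma_1\}$ from Theorem~\ref{theorem_quasi_triple_domain}, using the invertibility result for $\vartheta - M(\lambda)$ established in Lemma~\ref{lemma_M_inv}. By Definition~\ref{definition_Dirac_op_domain}, $A_\vartheta = T \upharpoonright \ker(\Gamma_1 - \vartheta\Gamma_0)$, which fits the abstract framework of \eqref{def_A_theta_abstract} with $\Theta$ being multiplication by the real-valued function $\vartheta$ on $\mathcal{G}_\Omega^{1/2}$. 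Symmetry of $A_\vartheta$ follows at once from the abstract Green's identity, because $\vartheta$ is real-valued and $(\vartheta\Gamma_0 f, \Gamma_0 g)_{\partial\Omega} = (\Gamma_0 f, \vartheta\Gamma_0 g)_{\partial\Omega}$.

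For self-adjointness I would verify the range condition of Theorem~\ref{theorem_krein_abstract}(ii). Fix any $\lambda \in \mathbb{C}\setminus\mathbb{R}$, which automatically lies in $\rho(T_{\textup{MIT}})$ by Proposition~\ref{proposition_MIT_bag_operator}(i). By Proposition~\ref{proposition_gamma_Weyl_quasi_domain}(ii), the operator $\gamma(\overline{\lambda})^{*}$ maps $L^2(\Omega;\mathbb{C}^4)$ boundedly into $\mathcal{G}_\Omega^{1/2}$, so
\begin{equation*}
\ran \gamma(\overline{\lambda})^{*} \subset \mathcal{G}_\Omega^{1/2}.
\end{equation*}
On the other hand, Lemma~\ref{lemma_M_inv}(i) asserts that $\vartheta - M(\lambda)$ is a bijection of $\mathcal{G}_\Omega^{1/2}$ onto itself, so $\ran(\vartheta - M(\lambda)) = \mathcal{G}_\Omega^{1/2}$. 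Combining these two facts gives $\ran \gamma(\overline{\lambda})^{*} \subset \ran(\vartheta - M(\lambda))$, and hence Theorem~\ref{theorem_krein_abstract}(ii) yields $\ran(A_\vartheta - \lambda) = L^2(\Omega;\mathbb{C}^4)$ for every $\lambda \in \mathbb{C}\setminus\mathbb{R}$. Since $A_\vartheta$ is symmetric with full range on both half-planes, it is self-adjoint.

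For the resolvent formula, once self-adjointness is established, I would take any $\lambda \in \rho(A_\vartheta) \cap \rho(T_{\textup{MIT}})$. Then $\lambda \notin \sigma_{\textup{p}}(A_\vartheta)$, so Theorem~\ref{theorem_krein_abstract}(iii) applied to all $f \in \ran(A_\vartheta - \lambda) = L^2(\Omega;\mathbb{C}^4)$ gives
\begin{equation*}
(A_\vartheta - \lambda)^{-1} = (T_{\textup{MIT}} - \lambda)^{-1} + \gamma(\lambda)\bigl(\vartheta - M(\lambda)\bigr)^{-1}\gamma(\overline{\lambda})^{*},
\end{equation*}
which is precisely the claimed identity. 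The only subtle point is that a priori the inverse $(\vartheta - M(\lambda))^{-1}$ from Theorem~\ref{theorem_krein_abstract} acts on $\ran(\vartheta - M(\lambda))$, but by Lemma~\ref{lemma_M_inv}(i) this range coincides with $\mathcal{G}_\Omega^{1/2}$ and the inverse is bounded there, matching the formula.

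The essential difficulty of the whole argument has been hidden in Lemma~\ref{lemma_M_inv}, whose proof exploits the commutator compactness of $\mathcal{C}_\lambda$ with H\"older continuous multipliers (Proposition~\ref{proposition_commutator_C_lambda}), the anti-commutator properties with $\beta$ (Proposition~\ref{proposition_commutator}), and the non-criticality assumption $\vartheta^2 \neq 1$, which makes $\vartheta^2 - 1$ a bounded invertible multiplier on $\mathcal{G}_\Omega^{1/2}$ and allows a Fredholm argument. Once that lemma is in hand, the present theorem is a direct and essentially formal consequence of the quasi boundary triple framework.
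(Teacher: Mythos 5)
Your proposal is correct and follows essentially the same route as the paper: symmetry from the abstract Green's identity, the range condition of Theorem~\ref{theorem_krein_abstract}~(ii) verified via $\ran\gamma(\overline{\lambda})^*\subset\mathcal{G}_\Omega^{1/2}$ together with the bijectivity of $\vartheta-M(\lambda)$ in $\mathcal{G}_\Omega^{1/2}$ from Lemma~\ref{lemma_M_inv}~(i), and the Krein formula from Theorem~\ref{theorem_krein_abstract}~(iii). The only cosmetic difference is that the paper obtains the mapping property of $\gamma(\overline{\lambda})^*$ from the identity $\gamma(\overline{\lambda})^*=\Gamma_1(T_{\textup{MIT}}-\lambda)^{-1}$ and $\dom T_{\textup{MIT}}\subset H^1(\Omega;\mathbb{C}^4)$, while you cite Proposition~\ref{proposition_gamma_Weyl_quasi_domain}~(ii), which is the same content.
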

\begin{proof}
  As mentioned above it follows from the abstract
  Green's identity that the operator $A_\vartheta$ is symmetric. 
  Thus, for the self-adjointness it suffices to check that $\ran(A_\vartheta - \lambda) = L^2(\Omega; \mathbb{C}^4)$ 
  holds for some, and hence for all $\lambda \in \mathbb{C_\pm}$. 
  
  Let $f \in L^2(\Omega; \mathbb{C}^4)$ and $\lambda \in \mathbb{C} \setminus \mathbb{R}$. 
  According to Theorem~\ref{theorem_krein_abstract}~(ii) we would have $f \in \ran(A_\vartheta - \lambda)$ if we can show that 
  $\gamma(\overline{\lambda})^* f \in \ran(\vartheta - M(\lambda))$ holds.
  In fact, from $\gamma(\overline{\lambda})^* = \Gamma_1 \big(T_{\textup{MIT}} - \lambda\big)^{-1}$
  and $\dom T_{\textup{MIT}} \subset H^1(\Omega; \mathbb{C}^4)$ we obtain
  $\gamma(\overline{\lambda})^* f \in \mathcal{G}_\Omega^{1/2}$.
  Furthermore, by Lemma~\ref{lemma_M_inv}~(i) the operator $\vartheta - M(\lambda)$ is bijective in 
  $\mathcal{G}_\Omega^{1/2}$ 
  and hence $\gamma(\overline{\lambda})^* f \in \ran(\vartheta - M(\lambda))$, that is, $f \in \ran(A_\vartheta - \lambda)$.
  As $f$ was arbitrary we get  $\ran(A_\vartheta - \lambda) = L^2(\Omega; \mathbb{C}^4)$ for $\lambda \in \mathbb{C} \setminus \mathbb{R}$, so that 
  $A_\vartheta$ is self-adjoint in $L^2(\Omega; \mathbb{C}^4)$. 
  Finally, the formula for the resolvent of $A_\vartheta$ follows from Theorem~\ref{theorem_krein_abstract} and \eqref{aberjadoch}.
\end{proof}

Next, we discuss the basic spectral properties of the operator $A_\vartheta$.
Since these are of a very different nature whether $\Omega$ is bounded or unbounded the two cases are treated separately. 
Assume first that
$\Omega$ is an unbounded $C^2$-domain with compact boundary.
The proof of (ii) is based on the same argument as the proof of \cite[Proposition~3.9]{BHOP19}.

\begin{thm} \label{theorem_basic_spectral_properties_unbounded_domain}
  Let $\Omega$ be the complement of a bounded $C^2$-domain,
  let $a \in (\frac{1}{2}, 1]$, let $\vartheta \in \textup{Lip}_a({\partial \Omega})$ be a real-valued function such that
  $|\vartheta(x)| \neq 1$ for all $x \in \partial \Omega$, and let $A_\vartheta$ be defined by~\eqref{def_A_tau_domain}.
  Then the following statements hold:
  \begin{itemize}
    \item[$\textup{(i)}$] $\sigma_{\textup{ess}}(A_\vartheta) = (-\infty, - m] \cup [m, \infty)$.
    \item[$\textup{(ii)}$] The number of discrete eigenvalues of $A_\vartheta$ is finite.
    \item[$\textup{(iii)}$] $\lambda \in \sigma_{\textup{p}}(A_\vartheta) \cap (-m, m)$ if and only if $0 \in \sigma_\textup{p}(\vartheta - M(\lambda))$.
  \end{itemize}
\end{thm}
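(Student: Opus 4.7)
For statement (iii), the plan is to apply Theorem~\ref{theorem_krein_abstract}(i) directly to the quasi boundary triple $\{\mathcal{G}_\Omega,\Gamma_0,\Gamma_1\}$ of Theorem~\ref{theorem_quasi_triple_domain} with parameter $\Theta$ the multiplication operator by $\vartheta$ on $\mathcal{G}_\Omega$. Indeed, $A_\vartheta=T\upharpoonright\ker(\Gamma_1-\vartheta\,\Gamma_0)$ by Definition~\ref{definition_Dirac_op_domain}, and $(-m,m)\subset\rho(T_{\textup{MIT}})$ by Proposition~\ref{proposition_MIT_bag_operator}(i), so the hypothesis $\lambda\in\rho(A_0)=\rho(T_{\textup{MIT}})$ of Theorem~\ref{theorem_krein_abstract}(i) is satisfied on the whole gap.

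For (i), I would start from the Krein resolvent formula of Theorem~\ref{theorem_self_adjoint_noncritical},
\begin{equation*}
(A_\vartheta-\lambda)^{-1}-(T_{\textup{MIT}}-\lambda)^{-1}=\gamma(\lambda)\bigl(\vartheta-M(\lambda)\bigr)^{-1}\gamma(\overline{\lambda})^{*},
\end{equation*}
and show that this difference is compact for some $\lambda\in\mathbb{C}\setminus\mathbb{R}$. Factorising as
$L^2(\Omega;\mathbb{C}^4)\xrightarrow{\gamma(\overline{\lambda})^{*}}\mathcal{G}_\Omega\xrightarrow{(\vartheta-\overline{M(\lambda)})^{-1}}\mathcal{G}_\Omega\xrightarrow{\gamma(\lambda)}L^2(\Omega;\mathbb{C}^4)$, the first arrow is compact by Proposition~\ref{proposition_gamma_Weyl_quasi_domain}(ii), the middle arrow is bounded by Lemma~\ref{lemma_M_inv}(ii), and the last arrow is bounded by Proposition~\ref{proposition_gamma_Weyl_quasi_domain}(i), so the composition is compact. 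Weyl's theorem on the invariance of the essential spectrum under compact perturbations of resolvents, combined with Proposition~\ref{proposition_MIT_bag_operator}(iii), then delivers (i).

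For (ii), the starting point from (i) is that the spectrum of $A_\vartheta$ inside $(-m,m)$ consists of eigenvalues of finite multiplicity that can accumulate only at the band edges $\pm m$. My plan is then to apply the analytic Fredholm theorem to the operator family $\lambda\mapsto\vartheta-\overline{M(\lambda)}$ in $\mathcal{G}_\Omega$: analyticity on $\mathbb{C}\setminus((-\infty,-m]\cup[m,\infty))$ comes from Lemma~\ref{lemma_Schatten_von_Neumann}(ii), the Fredholm property is encoded in the factorisation
\begin{equation*}
\bigl(\vartheta-\overline{M(\lambda)}\bigr)\bigl(\vartheta+\overline{M(\lambda)^{-1}}\bigr)=(\vartheta^2-1)+\widetilde{\mathcal{K}}_\lambda
\end{equation*}
from the proof of Lemma~\ref{lemma_M_inv}(ii), where $\vartheta^2-1$ is boundedly invertible in $\mathcal{G}_\Omega$ and $\widetilde{\mathcal{K}}_\lambda$ is compact, and invertibility at a base point is supplied by Lemma~\ref{lemma_M_inv}(ii) on $\mathbb{C}\setminus\mathbb{R}$. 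Via the Birman--Schwinger characterisation in (iii), this yields discreteness of the eigenvalues of $A_\vartheta$ in $(-m,m)$.

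The hard part will be to exclude accumulation of eigenvalues at the band edges $\pm m$. Following the strategy of \cite[Proposition~3.9]{BHOP19}, I would exploit the fact that $\sqrt{\lambda^{2}-m^{2}}$ is continuous at $\lambda=\pm m$ from within the gap, so that the kernel $G_\lambda$ in~\eqref{def_G_lambda}, and therefore the operators $\mathcal{C}_\lambda$ and $M(\lambda)$, extend norm-continuously up to $\lambda=\pm m$. The Fredholm factorisation above then extends by continuity to the closed interval $[-m,m]$, and a compactness argument on $[-m,m]$ prevents the discrete, analytically determined set of non-invertible parameters from accumulating at the band edges. This uniform boundary analysis at $\pm m$, together with the preservation of the Fredholm index through the limit, is the technical core of (ii).
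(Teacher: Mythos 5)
Your arguments for (i) and (iii) are correct and coincide with the paper's own proof: (iii) is exactly Theorem~\ref{theorem_krein_abstract}\,(i) applied with $\Theta=\vartheta$, and (i) is the same factorisation of the Krein formula from Theorem~\ref{theorem_self_adjoint_noncritical}, using compactness of $\gamma(\overline{\lambda})^*$ from Proposition~\ref{proposition_gamma_Weyl_quasi_domain}\,(ii) and boundedness of $(\vartheta-\overline{M(\lambda)})^{-1}$ from Lemma~\ref{lemma_M_inv}.

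For (ii), however, your plan has a genuine gap precisely at the step you call the technical core. Analytic Fredholm theory yields discreteness of the set $\{\lambda:\ker(\vartheta-\overline{M(\lambda)})\neq\{0\}\}$ only in the open set $\mathbb{C}\setminus((-\infty,-m]\cup[m,\infty))$; at $\lambda=\pm m$ the function $\sqrt{\lambda^2-m^2}$ has a branch point, so $M(\lambda)$ is at best norm-continuous there, not analytic. Norm-continuity of $\vartheta-\overline{M(\lambda)}$ up to $[-m,m]$ together with the index-zero Fredholm property does \emph{not} exclude accumulation of the non-invertibility set at $\pm m$: that conclusion would follow from continuity only if $\vartheta-\overline{M(\pm m)}$ were known to be boundedly invertible, and nothing in the paper (Lemma~\ref{lemma_M_inv} covers only nonreal $\lambda$) or in your argument rules out a nontrivial kernel or threshold-resonance behaviour at $\lambda=\pm m$. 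A family which is analytic in the open gap and merely continuous at the endpoint can fail to be invertible along a sequence converging to that endpoint, so "a compactness argument on $[-m,m]$'' is not a proof; you would need genuinely new input at the thresholds (for instance a quantitative analysis of $\vartheta-\overline{M(\lambda)}$ as $\lambda\to\pm m$, or monotonicity of the Weyl function combined with suitable compactness), which is essentially as hard as statement (ii) itself.

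The paper circumvents the threshold issue entirely by a variational argument (the one of \cite[Proposition~3.9]{BHOP19}): the number of eigenvalues of $A_\vartheta$ in $(-m,m)$ equals the number of eigenvalues of $A_\vartheta^2$ below $m^2$, and $A_\vartheta^2$ is represented by the closed form $\mathfrak{a}[f]=\|A_\vartheta f\|_\Omega^2$. An IMS-type localisation with cutoffs satisfying $g_1^2+g_2^2\equiv 1$ bounds $\mathfrak{a}$ from below by a direct sum $\mathfrak{b}_1\oplus\mathfrak{b}_2$, where the operator associated with $\mathfrak{b}_1$ acts on the bounded set $\Omega\cap B(0,R)$ and has compact resolvent, while $\mathfrak{b}_2$ corresponds to $-\Delta^D+m^2-V$ on $\mathbb{R}^3\setminus\overline{B(0,r)}$ with compactly supported $V$, which has only finitely many eigenvalues below $m^2$; the min--max principle then gives finiteness. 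If you wish to keep your boundary-operator route, you must supply the missing band-edge analysis; otherwise the form-decomposition argument is the more economical path.
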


\begin{proof}
  We first deal with (i). Let $\gamma$ and $M$ be the $\gamma$-field and the Weyl function corresponding to the quasi boundary triple $\{ \mathcal{G}_\Omega, \Gamma_0, \Gamma_1 \}$, respectively,
  from Proposition~\ref{proposition_gamma_Weyl_quasi_domain}, and let 
  $\overline{\gamma(\lambda)} \in \mathcal{B}(\mathcal{G}_\Omega, L^2(\Omega; \mathbb{C}^4))$ and $\overline{M(\lambda)} \in \mathcal{B}(\mathcal{G}_\Omega)$ be the closures of $\gamma(\lambda)$ and $M(\lambda)$,
  $\lambda\in\rho(T_{\textup{MIT}})$. For $\lambda\in \rho(A_{\vartheta})\cap\rho(T_{\textup{MIT}})$ the resolvent formula in 
  Theorem~\ref{theorem_self_adjoint_noncritical} can be written in the form
  \begin{equation} \label{equation_resolvent_difference_domain1}
      (A_\vartheta-\lambda)^{-1} - \big(T_{\textup{MIT}} - \lambda\big)^{-1}    
      = \overline{\gamma(\lambda)} \big( \vartheta - \overline{M(\lambda)} \big)^{-1} \gamma(\overline{\lambda})^*.
  \end{equation}
   By Proposition~\ref{proposition_gamma_Weyl_quasi_domain}~(ii) the operator
   $\gamma(\overline\lambda)^*$ is compact from $L^2(\Omega; \mathbb{C}^4)$ to $\mathcal{G}_\Omega$.
   Furthermore, by Lemma~\ref{lemma_M_inv} the inverse $( \vartheta - \overline{M(\lambda)})^{-1}$ is bounded in $\mathcal{G}_\Omega$. 
  Since $\overline{\gamma(\lambda)}: \mathcal{G}_\Omega \rightarrow L^2(\Omega; \mathbb{C}^4)$ is bounded, we deduce that the right-hand side in~\eqref{equation_resolvent_difference_domain1}
  is compact in $L^2(\Omega; \mathbb{C}^4)$ and hence the same holds for the left-hand 
  side. Together with Proposition~\ref{proposition_MIT_bag_operator}~(iii) this implies
  \begin{equation*}
    \sigma_{\textup{ess}}(A_\vartheta) = \sigma_{\textup{ess}}(T_{\textup{MIT}}) = (-\infty, -m] \cup [m, \infty).
  \end{equation*}

  To verify assertion~(ii), consider the quadratic form
  \begin{equation*}
    \begin{split}
      \mathfrak{a}[f] &= \| A_\vartheta f \|_\Omega^2, \qquad \dom \mathfrak{a} = \dom A_\vartheta.
    \end{split}
  \end{equation*}
  Since $A_\vartheta$ is a self-adjoint operator it follows that $\mathfrak{a}$ is a closed, non-negative form and by \cite[Theorem~VI~2.1]{kato}
  the unique self-adjoint operator representing this form is $A_\vartheta^2$. Note that the number of eigenvalues (counted with multiplicities) of 
  $A_\vartheta$ in the gap of the essential spectrum $(-m, m)$ is equal to the number of 
  eigenvalues of $A_\vartheta^2$ below $m^2$ (counted with multiplicities). 
  
  To estimate the number of eigenvalues of $A_\vartheta^2$ with the help of the quadratic form $\mathfrak{a}$ let $0 < r < R$ 
  such that ${\partial \Omega} \subset B(0, r)$ and choose
  real-valued functions $g_1, g_2 \in C^\infty(\overline{\Omega}; \mathbb{C})$ with the properties
  \begin{equation*}
    0 \leq g_1, g_2 \leq 1,\,\, g_1 \!\upharpoonright\! (B(0, r) \cap \Omega) \equiv 1,\,\, g_2 \!\upharpoonright\! B(0, R)^c \equiv 1, \quad \text{and} \quad g_1^2 + g_2^2 \equiv 1.
  \end{equation*}
  Note that the properties of $g_1$ and $g_2$ imply that the mapping
  \begin{equation*}
    U: L^2(\Omega; \mathbb{C}^4) \rightarrow L^2\big( \Omega \cap B(0, R); \mathbb{C}^4 \big) \oplus L^2(\mathbb{R}^3 \setminus \overline{B(0, r)}; \mathbb{C}^4), \quad U f = g_1 f \oplus g_2 f,
  \end{equation*}
  is an isometry. Our next goal is to rewrite the form $\mathfrak{a}$ as a sesquilinear form in $L^2( \Omega \cap B(0, R); \mathbb{C}^4) \oplus L^2(\mathbb{R}^3 \setminus \overline{B(0, r)}; \mathbb{C}^4)$.
    For that we will often identify functions defined in $\Omega$ with their restrictions onto $\Omega \cap B(0,R)$ or onto $\mathbb{R}^3 \setminus \overline{B(0,r)}$ 
  and we also identify functions on $\Omega \cap B(0,R)$ or $\mathbb{R}^3 \setminus \overline{B(0,r)}$ with their extensions by zero onto~$\Omega$. 
  In both cases, we will use the same letters for the restrictions and the extended functions.
  
  Let $f \in \dom \mathfrak{a} = \dom A_\vartheta$ be fixed. Then also $g_1 f, g_2 f \in \dom \mathfrak{a}$. Using the relation
  \begin{equation*}
    A_\vartheta (g_j f) = g_j A_\vartheta f - i (\alpha \cdot \nabla g_j) f,\quad j=1,2,
  \end{equation*}
  we find that
  \begin{equation*}
    \begin{split}
      \mathfrak{a}[g_j f] &= \big( g_j A_\vartheta f -i (\alpha \cdot \nabla g_j) f, g_j A_\vartheta f -i (\alpha \cdot \nabla g_j) f \big)_\Omega \\
      &= \big( g_j^2 A_\vartheta f, A_\vartheta f \big)_\Omega + \big\| (\alpha \cdot \nabla g_j) f \big\|_\Omega^2 +  \text{Re}\,  \big( A_\vartheta f, (-i \alpha \cdot \nabla (g_j^2)) f \big)_\Omega.
    \end{split}
  \end{equation*}
  Note that~\eqref{eq_commutation} implies $(\alpha \cdot \nabla g_j)^2 = |\nabla g_j|^2 I_4$, which gives
  \begin{equation*}
    \big\| (\alpha \cdot \nabla g_j) f \big\|_\Omega^2 = \big( (\alpha \cdot \nabla g_j)^2 f, f \big)_\Omega = \big( |\nabla g_j|^2 f,f \big)_\Omega.
  \end{equation*}
  Moreover, since $g_1^2 + g_2^2 \equiv 1$ we have
  \begin{equation*}
    \big( A_\vartheta f, (\alpha \cdot \nabla (g_1^2)) f \big)_\Omega + \big( A_\vartheta f, (\alpha \cdot \nabla (g_2^2)) f \big)_\Omega = \big( A_\vartheta f, (\alpha \cdot \nabla (g_1^2 + g_2^2)) f \big)_\Omega = 0.
  \end{equation*}  
  We set $V=|\nabla g_1|^2 + |\nabla g_2|^2$ and 
  conclude 
  \begin{equation} \label{decomposition_form}
    \begin{split}
      \mathfrak{a}[f] &= \big( (g_1^2 + g_2^2) A_\vartheta f, A_\vartheta f \big)_\Omega \\
      &= \mathfrak{a}[g_1 f] - \big( |\nabla g_1|^2 f,f \big)_\Omega + \mathfrak{a}[g_2 f] - \big( |\nabla g_2|^2 f,f \big)_\Omega \\
      &\qquad \qquad - \text{Re}\, \big( A_\vartheta f, (\alpha \cdot \nabla (g_1^2 + g_2^2)) f \big)_\Omega \\
      &= \mathfrak{a}[g_1 f] - \big( V (g_1^2+g_2^2) f, f \big)_\Omega + \mathfrak{a}[g_2 f]  \\
      &= \mathfrak{a}[g_1 f] - \big( V g_1 f, g_1f \big)_\Omega + \mathfrak{a}[g_2 f] - \big( V g_2 f,g_2 f \big)_\Omega \\
      &=: \mathfrak{b}_1[g_1 f] + \mathfrak{b}_2[g_2 f],
    \end{split}
  \end{equation}
  where $\mathfrak{b}_1$ and $\mathfrak{b}_2$ are the semibounded sesquilinear forms in $L^2(\Omega \cap B(0, R); \mathbb{C}^4)$ 
  and $L^2(\mathbb{R}^3 \setminus \overline{B(0, r)}; \mathbb{C}^4)$ given by
  \begin{equation*}
    \begin{split}
      \mathfrak{b}_1[h] &= \mathfrak{a}[h] - \big( V h, h \big)_{\Omega \cap B(0, R)}, \quad
      \dom \mathfrak{b}_1 = \{ h \in \dom A_\vartheta: \text{supp}\, h \subset \overline{B(0, R)} \},
    \end{split}
  \end{equation*}
  and
  \begin{equation*}
    \begin{split}
      \mathfrak{b}_2[h] &= \mathfrak{a}[h] - \big( V h, h \big)_{\mathbb{R}^3 \setminus \overline{B(0,r)}}, \quad
      \dom \mathfrak{b}_2 = H^1_0(\mathbb{R}^3 \setminus \overline{B(0, r)}; \mathbb{C}^4),
    \end{split}
  \end{equation*}
  respectively.
  
  In the following let us have a closer look at $\mathfrak{b}_1$ and $\mathfrak{b}_2$.
  First we note that with the aid of~\eqref{integration_by_parts} and~\eqref{eq_commutation} 
  one has for $h \in C^\infty_0(\mathbb{R}^3 \setminus \overline{B(0, r)}; \mathbb{C}^4)$ that
  \begin{equation*}
    \begin{split}
      \mathfrak{b}_2[h] &= \| (-i \alpha \cdot \nabla + m \beta) h \|_{\mathbb{R}^3 \setminus \overline{B(0, r)}}^2 - ( V h, h )_{\mathbb{R}^3 \setminus \overline{B(0,r)}}\\
      &= \big( (-i \alpha \cdot \nabla + m \beta)^2 h, h \big)_{\mathbb{R}^3 \setminus \overline{B(0, r)}} - ( V h, h )_{\mathbb{R}^3 \setminus \overline{B(0,r)}}\\
      &= \big( (-\Delta + m^2) h, h \big)_{\mathbb{R}^3 \setminus \overline{B(0, r)}} - ( V h, h )_{\mathbb{R}^3 \setminus \overline{B(0,r)}}\\
      &= \| \nabla h \|_{\mathbb{R}^3 \setminus \overline{B(0, r)}} + m^2 \|h\|_{\mathbb{R}^3 \setminus \overline{B(0, r)}}^2 - ( V h, h)_{\mathbb{R}^3 \setminus \overline{B(0,r)}}. \\
    \end{split}
  \end{equation*}
  By density this extends to
  \begin{equation*}
    \mathfrak{b}_2[h] = \| \nabla h \|_{\mathbb{R}^3 \setminus \overline{B(0, r)}} + m^2 \|h\|_{\mathbb{R}^3 \setminus \overline{B(0, r)}}^2 - ( V h, h)_{\mathbb{R}^3 \setminus \overline{B(0,r)}}
  \end{equation*}
  for all $h \in H^1_0(\mathbb{R}^3 \setminus \overline{B(0, r)}; \mathbb{C}^4) = \dom \mathfrak{b}_2$,
  i.e. $\mathfrak{b}_2$ is the closed semibounded form associated to the self-adjoint operator 
  $B_2 := -\Delta^D + m^2 - V$, 
  where $-\Delta^D$ is the self-adjoint Dirichlet Laplacian in $\mathbb{R}^3 \setminus \overline{B(0, r)}$ and 
  $V = |\nabla g_1|^2 + |\nabla g_2|^2$ is compactly supported in $\overline{B(0, R)} \setminus B(0, r)$  
  due to the construction of $g_1$ and $g_2$. Thus, $B_2$ has only finitely many eigenvalues below $m^2$; for a proof see, e.g., \cite[Proof of Proposition~3.9]{BHOP19}.

  Next, we claim that $\mathfrak{b}_1$ is closed. In fact, let $(h_n)$ be a sequence in $\dom \mathfrak{b}_1$ and let 
  $h \in L^2(\Omega \cap B(0, R))$ such that
  \begin{equation*}
    \mathfrak{b}_1[h_n - h_m] \rightarrow 0 \quad \text{and} \quad \| h_n - h \|_{\Omega \cap B(0, R)} \rightarrow 0, \quad \text{as }m, n \rightarrow \infty.
  \end{equation*}
  By the definition of $\mathfrak{b}_1$ this implies that $\mathfrak{a}[h_n - h_m] \rightarrow 0$ and 
  $\| h_n - h \|_\Omega \rightarrow 0$, as $m, n \rightarrow \infty$. As $\mathfrak{a}$ is closed we have 
  $h \in \dom \mathfrak{a}= \dom A_\vartheta$ and $\mathfrak a[h-h_n]\rightarrow 0$ as $n\rightarrow\infty$. Moreover, it follows from $\| h_n - h \|_\Omega \rightarrow 0$ 
  that $\text{supp}\, h \subset  \overline{B(0, R)}$. Hence, $h \in \dom \mathfrak{b}_1$ and $\mathfrak b_1[h-h_n]\rightarrow 0$ as $n\rightarrow\infty$, thus, $\mathfrak{b}_1$ is closed.
  The semibounded self-adjoint operator $B_1$ associated to $\mathfrak{b}_1$ defined on 
  $\dom B_1 \subset \dom \mathfrak{b}_1 \subset H^1(\Omega \cap B(0, R); \mathbb{C}^4)$ has a compact resolvent 
  in $L^2(\Omega \cap B(0, R); \mathbb{C}^4)$, which implies that the spectrum of $B_1$ is purely discrete and accumulates only to~$\infty$.
  
  By combining the above considerations we are now prepared to show the claim of assertion~(ii). First, we have by~\eqref{decomposition_form} for $f \in \dom \mathfrak{a}$
  \begin{equation*}
    \frac{\mathfrak{a}[f]}{\| f \|_\Omega^2} = \frac{(\mathfrak{b}_1 \oplus \mathfrak{b}_2)[U f]}{\| U f \|_{L^2(\Omega \cap B(0, R); \mathbb{C}^4) \oplus L^2(\mathbb{R}^3 \setminus \overline{B(0, r)}; \mathbb{C}^4)}^2},
  \end{equation*}
  where it was used that $U$ is an isometry, and $U (\dom \mathfrak{a}) \subset \dom (\mathfrak{b}_1 \oplus \mathfrak{b}_2)$. Hence, 
  it follows from the min-max principle that the number of eigenvalues of $A_\vartheta^2$ below $m^2$ is less or equal to the number of 
  eigenvalues of the operator $B_1 \oplus B_2$ associated to $\mathfrak{b}_1 \oplus \mathfrak{b}_2$ below $m^2$. As we have seen above, 
  the number of eigenvalues of $B_1$ and $B_2$ below $m^2$ is finite. Hence, also the number of eigenvalues of $B_1 \oplus B_2$ below $m^2$ 
  is finite. This shows that the number of eigenvalues of $A_\vartheta^2$ below $m^2$ is finite, which yields the claimed result.

  Finally, item~(iii) is an immediate consequence of Theorem~\ref{theorem_krein_abstract}~(i).
\end{proof}

If $\Omega$ is a bounded $C^2$-domain, then $\dom A_\vartheta \subset H^1(\Omega; \mathbb{C}^4)$ is compactly embedded 
in $L^2(\Omega; \mathbb{C}^4)$ and hence the spectrum of
$A_\vartheta$ is purely discrete. It is clear that the Birman Schwinger principle from Theorem~\ref{theorem_krein_abstract} can be used
to detect discrete eigenvalues of $A_\vartheta$ that belong to $\rho(T_{\textup{MIT}})$. The next result, which is a direct consequence
of Proposition~\ref{proposition_Birman_Schwinger_simple} and Proposition~\ref{proposition_T_min_simple}, goes beyond the standard 
Birman Schwinger principle in two ways: First, it allows to detect eigenvalues of $A_\vartheta$ that may be eigenvalues of $T_{\textup{MIT}}$
at the same time, and, second, it enables to use the explicit expression for the values $M(\lambda)$ of the Weyl function 
in Proposition~\ref{proposition_gamma_Weyl_quasi_domain} in terms of integral operators (which we have available only for 
$\lambda \in \mathbb{C} \setminus ( (-\infty, -m] \cup [m, \infty) )$).

\begin{prop} \label{proposition_basic_spectral_properties_bounded_domain}
  Let $\Omega$ be a bounded $C^2$-smooth domain,
  let $a \in (\frac{1}{2}, 1 ]$, let $\vartheta \in \textup{Lip}_a({\partial \Omega})$ be a real-valued function such that
  $|\vartheta(x)| \neq 1$ for all $x \in \partial \Omega$, and let $A_\vartheta$ be defined by~\eqref{def_A_tau_domain}.
  Then $\sigma(A_\vartheta) = \sigma_{\textup{disc}}(A_\vartheta)$ and $\lambda$ is an eigenvalue of $A_\vartheta$
  if and only if there exists $\varphi \in \mathcal{G}_\Omega^{1/2}$ such that
  \begin{equation*}
    \lim_{\varepsilon \searrow 0} i \varepsilon \big( M(\lambda + i \varepsilon) - \vartheta \big)^{-1} \varphi \neq 0,
  \end{equation*}
  where $M$ is the Weyl function corresponding to the quasi boundary triple $\{ \mathcal{G}_\Omega, \Gamma_0, \Gamma_1 \}$ 
  from Proposition~\ref{proposition_gamma_Weyl_quasi_domain}.
\end{prop}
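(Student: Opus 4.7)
The plan is to derive the discreteness of the spectrum from Rellich's compactness theorem and then read the eigenvalue characterization directly off Proposition~\ref{proposition_Birman_Schwinger_simple} applied to the quasi boundary triple $\{\mathcal{G}_\Omega,\Gamma_0,\Gamma_1\}$ from Theorem~\ref{theorem_quasi_triple_domain} with the parameter $\Theta$ chosen to be multiplication by the scalar function $\vartheta$ on $\mathcal{G}_\Omega$.

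For the first claim, since $\Omega$ is a bounded $C^2$-domain, the inclusion $\dom A_\vartheta\subset H^1(\Omega;\mathbb{C}^4)\hookrightarrow L^2(\Omega;\mathbb{C}^4)$ is compact by Rellich's theorem. Combined with the self-adjointness of $A_\vartheta$ established in Theorem~\ref{theorem_self_adjoint_noncritical}, this forces $(A_\vartheta-\lambda)^{-1}$ to be compact for any $\lambda\in\rho(A_\vartheta)$, so that $\sigma(A_\vartheta)=\sigma_{\textup{disc}}(A_\vartheta)$. In particular every spectral point is an eigenvalue.

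For the Birman--Schwinger type characterization I would verify each hypothesis of Proposition~\ref{proposition_Birman_Schwinger_simple} in turn. The underlying symmetric operator $T_{\textup{min}}$ is densely defined and closed by Lemma~\ref{lemma_minimal_maximal_op} and simple by Proposition~\ref{proposition_T_min_simple}. The triple $\{\mathcal{G}_\Omega,\Gamma_0,\Gamma_1\}$ is a quasi boundary triple for $T\subset T_{\textup{max}}=T_{\textup{min}}^*$ by Theorem~\ref{theorem_quasi_triple_domain}. Since $\vartheta$ is real-valued and scalar, it commutes pointwise with the matrix-valued projection $P_+$, so multiplication by $\vartheta$ preserves $\mathcal{G}_\Omega$ and defines a bounded self-adjoint operator there. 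Finally $A_\vartheta=T\upharpoonright\ker(\Gamma_1-\vartheta\Gamma_0)$ is self-adjoint by Theorem~\ref{theorem_self_adjoint_noncritical}. Invoking Proposition~\ref{proposition_Birman_Schwinger_simple} then yields that $\ran(M(\lambda)-\vartheta)$ is independent of $\lambda\in\mathbb{C}\setminus\mathbb{R}$ and that $\lambda\in\mathbb{R}$ is an eigenvalue of $A_\vartheta$ exactly when some $\varphi$ in that range makes the stated limit nonzero.

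It remains to identify the range as $\mathcal{G}_\Omega^{1/2}$. Since $\vartheta\in\textup{Lip}_a(\partial\Omega)$ with $a>\tfrac12$, multiplication by $\vartheta$ is bounded on $H^{1/2}(\partial\Omega;\mathbb{C}^4)$ (this is the role played by Lemma~\ref{lemma_mult_op} from the appendix), and hence on $\mathcal{G}_\Omega^{1/2}$. Together with Proposition~\ref{proposition_gamma_Weyl_quasi_domain}\,(iii), the operator $M(\lambda)-\vartheta$ maps $\mathcal{G}_\Omega^{1/2}$ boundedly into itself, and by Lemma~\ref{lemma_M_inv}\,(i) it is a bijection there for every non-real $\lambda$. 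This pins down $\ran(M(\lambda)-\vartheta)=\mathcal{G}_\Omega^{1/2}$, matching the form of the statement. The whole argument is essentially bookkeeping, since the substantive work (simplicity of $T_{\textup{min}}$, self-adjointness of $A_\vartheta$, and the invertibility of $\vartheta-M(\lambda)$ on $\mathcal{G}_\Omega^{1/2}$) has already been carried out; the only mild point of care is to distinguish the abstract Hilbert space $\mathcal{G}_\Omega$ from the range space $\mathcal{G}_\Omega^{1/2}$ when translating the abstract conclusion into the concrete statement.
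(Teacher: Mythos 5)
Your proposal is correct and follows essentially the same route as the paper: the paper also obtains $\sigma(A_\vartheta)=\sigma_{\textup{disc}}(A_\vartheta)$ from the compact embedding of $\dom A_\vartheta\subset H^1(\Omega;\mathbb{C}^4)$ into $L^2(\Omega;\mathbb{C}^4)$, and derives the eigenvalue characterization as a direct consequence of Proposition~\ref{proposition_Birman_Schwinger_simple} together with the simplicity of $T_{\textup{min}}$ from Proposition~\ref{proposition_T_min_simple}. Your additional bookkeeping (boundedness and self-adjointness of multiplication by $\vartheta$ on $\mathcal{G}_\Omega$, and the identification $\ran(M(\lambda)-\vartheta)=\mathcal{G}_\Omega^{1/2}$ via Lemma~\ref{lemma_M_inv}) is exactly what the paper leaves implicit, and it is carried out correctly.
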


\begin{remark}
  In Theorem~\ref{theorem_basic_spectral_properties_unbounded_domain} we discussed the spectral properties of $A_\vartheta$ for 
  unbounded domains $\Omega$, but we did not address the question of eigenvalues which are embedded in 
  $\sigma_{\textup{ess}}(A_\vartheta) = (-\infty, -m] \cup [m, \infty)$. In fact, if the domain $\Omega$ is connected, 
  then it is not difficult to show that $A_\vartheta$ has no embedded eigenvalues in $\mathbb{R} \setminus [-m, m]$; 
  this can be done in the same way as in \cite[Theorem~3.7]{AMV15}, see also the discussion of this result. 
  If $\Omega$ is not connected, then there exists a bounded set $\Omega_1$ and an unbounded connected domain $\Omega_2$ such that $\Omega = \Omega_1 \cup \Omega_2$. This implies that also $A_\vartheta$ decomposes as 
  $A_\vartheta = A_{\vartheta, 1} \oplus A_{\vartheta, 2}$, where $A_{\vartheta, j}$ is a self-adjoint operator of 
  the form~\eqref{def_A_tau_domain} in $L^2(\Omega_j; \mathbb{C}^4)$, $j \in \{ 1, 2 \}$. By the same reasoning as 
  above $A_{\vartheta, 2}$ has no eigenvalues in $\mathbb{R} \setminus [-m, m]$. Therefore the embedded eigenvalues of 
  $A_\vartheta$ are those of $A_{\vartheta, 1}$, which can be found with the help of 
  Proposition~\ref{proposition_basic_spectral_properties_bounded_domain}.
\end{remark}

Next, we compare the differences of powers of the resolvents of 
$A_\vartheta$ and $T_{\textup{MIT}}$ and show that these operators belong to certain weak Schatten-von Neumann ideals. 
In the proof of this result we will make several times use of 
\begin{equation} \label{equation_product_Schatten_von_Neumann} 
  S T \in \mathfrak{S}_{r, \infty} \quad \text{for} \quad S \in \mathfrak{S}_{p, \infty}, \quad T \in \mathfrak{S}_{q, \infty},
  \quad \text{and} \quad \frac{1}{r} = \frac{1}{p} + \frac{1}{q}.
\end{equation}
Moreover, for holomorphic operator functions $A(\cdot), B(\cdot), C(\cdot)$ the formula
\begin{equation} \label{product_rule}
  \frac{\text{d}^m}{\text{d} \lambda^m} \big( A(\lambda) B(\lambda) C(\lambda) \big) = \sum_{p+q+r = m} \frac{m!}{p! q! r!} \frac{\text{d}^p}{\text{d} \lambda^p} A(\lambda) \frac{\text{d}^q}{\text{d} \lambda^q} B(\lambda) \frac{\text{d}^r}{\text{d} \lambda^r} C(\lambda)
\end{equation}
(see, e.g., \cite[equation~(2.7)]{BLL13_2}) will be employed several times.
Furthermore, if the operator function $A(\cdot)$ is holomorphic and invertible with bounded everywhere defined inverses, 
then also $A(\cdot)^{-1}$ is holomorphic and one has
\begin{equation} \label{chain_rule}
  \frac{\text{d}}{\text{d} \lambda} \big( A(\lambda)^{-1} \big) = -A(\lambda)^{-1} \left(\frac{\text{d}}{\text{d} \lambda} A(\lambda) \right) A(\lambda)^{-1};
\end{equation}
cf. \cite[equation~(2.8)]{BLL13_2}.
The proof of the following theorem is based on Lemma~\ref{lemma_Schatten_von_Neumann} and on the same strategy as in \cite{BLL13_2} or in \cite[Theorem~4.6]{BEHL18}. 
Hence, we have to assume some additional smoothness of $\partial \Omega$.

\begin{thm} \label{theorem_Schatten_von_Neumann_domain}
  Let $\Omega$ be a $C^2$-domain with compact boundary, let $T_{\textup{MIT}}$ be the MIT bag operator in \eqref{def_MIT_op}, 
  let $a \in (\frac{1}{2}, 1]$, let $\vartheta \in \textup{Lip}_a({\partial \Omega})$ be a real-valued function such that
  $|\vartheta(x)| \neq 1$ for all $x \in \partial \Omega$, and let $A_\vartheta$ be defined by~\eqref{def_A_tau_domain}.
  Moreover, let $l \in \mathbb{N}$ and, if $l>2$, assume that $\Omega$ has a $C^l$-smooth boundary.
  Then 
  \begin{equation*}
    (A_\vartheta - \lambda)^{-l} - (T_{\textup{MIT}} - \lambda)^{-l} \in \mathfrak{S}_{2/l, \infty}\bigl(L^2(\Omega; \mathbb{C}^4)\bigr)
  \end{equation*}
  holds for all $\lambda \in \mathbb{C} \setminus \mathbb{R}$.
\end{thm}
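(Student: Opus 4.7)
The plan is to differentiate the Krein-type resolvent formula from Theorem~\ref{theorem_self_adjoint_noncritical} exactly $l-1$ times and then use the Schatten--von Neumann estimates for the derivatives of $\Phi_\lambda$, $\Phi_\lambda^*$ and $\mathcal{C}_\lambda$ collected in Lemma~\ref{lemma_Schatten_von_Neumann}, combined with the product rule~\eqref{equation_product_Schatten_von_Neumann} in weak Schatten ideals. Concretely, since both $A_\vartheta$ and $T_{\textup{MIT}}$ are self-adjoint, one has
\begin{equation*}
  (A_\vartheta - \lambda)^{-l} - (T_{\textup{MIT}} - \lambda)^{-l}
    = \frac{1}{(l-1)!} \frac{\text{d}^{l-1}}{\text{d} \lambda^{l-1}}
    \Bigl[ \overline{\gamma(\lambda)}\bigl(\vartheta - \overline{M(\lambda)}\bigr)^{-1} \gamma(\overline{\lambda})^* \Bigr]
\end{equation*}
for $\lambda \in \mathbb{C}\setminus\mathbb{R}$, and the task reduces to estimating each summand produced by~\eqref{product_rule}.

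First I would record, using the explicit formulas~\eqref{gammaformel} and~\eqref{mformel} from Proposition~\ref{proposition_gamma_Weyl_quasi_domain} together with Proposition~\ref{proposition_C_lambda_inverse}, that the maps $\lambda\mapsto\overline{\gamma(\lambda)}$, $\lambda\mapsto\gamma(\overline{\lambda})^*$, and $\lambda\mapsto\overline{M(\lambda)}$ are holomorphic on $\mathbb{C}\setminus((-\infty,-m]\cup[m,\infty))$, since $\Phi_\lambda$, $\Phi_\lambda^*$, $\mathcal{C}_\lambda$ and $(\frac{1}{2}\beta+\mathcal{C}_\lambda)^{-1}$ are. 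Differentiating the factorizations and applying~\eqref{product_rule} and~\eqref{chain_rule} I would reduce the $k$-th derivatives of $\overline{\gamma(\lambda)}$ and $\gamma(\overline{\lambda})^*$ to finite sums of products involving $\frac{\text{d}^p}{\text{d}\lambda^p}\Phi_\lambda$ (respectively $\Phi_{\overline\lambda}^*$) with $p\le k$ and $\frac{\text{d}^q}{\text{d}\lambda^q}\mathcal{C}_\lambda$ with $q\le k$, multiplied by the bounded operators $P_+$, $\beta$, and $(\frac{1}{2}\beta+\mathcal{C}_\lambda)^{-1}$. By Lemma~\ref{lemma_Schatten_von_Neumann} and~\eqref{equation_product_Schatten_von_Neumann} the dominant term in each such sum shows
\begin{equation*}
  \frac{\text{d}^k}{\text{d} \lambda^k}\overline{\gamma(\lambda)} \in \mathfrak{S}_{4/(2k+1),\infty}, \qquad
  \frac{\text{d}^k}{\text{d} \lambda^k}\gamma(\overline{\lambda})^* \in \mathfrak{S}_{4/(2k+1),\infty}.
\end{equation*}

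Next I would treat the middle factor. Lemma~\ref{lemma_M_inv}~(ii) ensures that $(\vartheta-\overline{M(\lambda)})^{-1}$ is bounded and everywhere defined on $\mathcal{G}_\Omega$, and iteratively applying~\eqref{chain_rule} together with~\eqref{product_rule} expresses $\frac{\text{d}^q}{\text{d}\lambda^q}(\vartheta-\overline{M(\lambda)})^{-1}$ for $q\ge 1$ as a finite sum of products of bounded operators $(\vartheta-\overline{M(\lambda)})^{-1}$ interlaced with derivatives $\frac{\text{d}^{q_j}}{\text{d}\lambda^{q_j}}\overline{M(\lambda)}$, $q_j\ge 1$, $\sum q_j = q$. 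Using once more the explicit form of $\overline{M(\lambda)}$ and Lemma~\ref{lemma_Schatten_von_Neumann}~(ii) one obtains
\begin{equation*}
  \frac{\text{d}^q}{\text{d}\lambda^q}\bigl(\vartheta-\overline{M(\lambda)}\bigr)^{-1} \in \mathfrak{S}_{2/q,\infty}
  \quad (q\ge 1),
\end{equation*}
and of course for $q=0$ the operator is bounded. Substituting these three estimates into~\eqref{product_rule} with $p+q+r = l-1$ and applying~\eqref{equation_product_Schatten_von_Neumann} gives each summand in the Schatten class of order
\begin{equation*}
  \tfrac{2p+1}{4}+\tfrac{q}{2}+\tfrac{2r+1}{4} = \tfrac{(p+q+r)+1}{2} = \tfrac{l}{2},
\end{equation*}
which yields exactly $\mathfrak{S}_{2/l,\infty}$.

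The main obstacle will be the bookkeeping for the middle factor: one must check that the iterated chain rule for $(\vartheta-\overline{M(\lambda)})^{-1}$ produces only admissible combinations of Schatten-class derivatives of $\overline{M(\lambda)}$, and that the additional multiplicative factor $\vartheta\in\textup{Lip}_a(\partial\Omega)$ (acting as a bounded multiplication operator in $\mathcal{G}_\Omega$ by Lemma~\ref{lemma_mult_op}) does not harm the argument. The smoothness hypothesis on $\partial\Omega$ when $l>2$ enters precisely to apply Lemma~\ref{lemma_Schatten_von_Neumann} at the highest required order $k = l-1$.
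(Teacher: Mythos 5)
Your proposal is correct and follows essentially the same route as the paper's proof: differentiate the Krein resolvent formula $l-1$ times via \eqref{product_rule}, reduce the derivatives of $\overline{\gamma(\lambda)}$, $\gamma(\overline{\lambda})^*$, and $\bigl(\vartheta-\overline{M(\lambda)}\bigr)^{-1}$ through the factorizations in terms of $\Phi_\lambda$, $\Phi_{\overline{\lambda}}^*$, and $\bigl(\tfrac{1}{2}\beta+\mathcal{C}_\lambda\bigr)^{-1}$ using \eqref{chain_rule}, apply Lemma~\ref{lemma_Schatten_von_Neumann} together with \eqref{equation_product_Schatten_von_Neumann}, and add the reciprocal exponents to land in $\mathfrak{S}_{2/l,\infty}$. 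The only cosmetic difference is that the paper obtains the estimate for $\frac{\textup{d}^k}{\textup{d}\lambda^k}\gamma(\overline{\lambda})^*$ by taking adjoints of the estimate for $\frac{\textup{d}^k}{\textup{d}\lambda^k}\overline{\gamma(\lambda)}$, and it organizes the estimate for the derivatives of $\bigl(\tfrac{1}{2}\beta+\mathcal{C}_\lambda\bigr)^{-1}$ as an explicit induction, both of which your sketch covers in substance.
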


\begin{proof}
  Let $\lambda \in \mathbb{C} \setminus \mathbb{R}$ be fixed.
  By Proposition~\ref{proposition_gamma_Weyl_quasi_domain} we have
  \begin{equation*} 
    \gamma(\lambda) = 
        \Phi_{\lambda, 1/2} \left( \frac{1}{2} \beta + \mathcal{C}_{\lambda, 1/2} \right)^{-1}.
  \end{equation*}
  Hence, using Proposition~\ref{proposition_Phi_lambda} and Proposition~\ref{proposition_C_lambda_inverse} we find
  \begin{equation}\label{os240}
    \overline{\gamma(\lambda)} =  
        \Phi_\lambda \left( \frac{1}{2} \beta + \mathcal{C}_\lambda \right)^{-1}.
  \end{equation}
  In a similar way one gets
  \begin{equation} \label{closure_M}
    \overline{M(\lambda)} = - P_+ \left( \frac{1}{2} \beta + \mathcal{C}_\lambda \right)^{-1} P_+.
  \end{equation}
  With the resolvent formula from Theorem~\ref{theorem_self_adjoint_noncritical} and \eqref{product_rule} we obtain
  \begin{equation} \label{resolvent_power_difference}
    \begin{split}
      (A_\vartheta - \lambda)^{-l} - (&T_{\textup{MIT}} - \lambda)^{-l} 
          = \frac{1}{(l-1)!} \frac{\text{d}^{l-1}}{\text{d} \lambda^{l-1}} 
          \big( (A_\vartheta - \lambda)^{-1} - (T_{\textup{MIT}} - \lambda)^{-1}  \big) \\
      &= \frac{1}{(l-1)!} \frac{\text{d}^{l-1}}{\text{d} \lambda^{l-1}} 
          \big[ \overline{\gamma(\lambda)} \big( \vartheta - \overline{M(\lambda)} \big)^{-1} \gamma(\overline{\lambda})^*  \big] \\
      &= \sum_{p + q + r = l-1} \frac{1}{p! q! r!} \frac{\text{d}^p }{\text{d} \lambda^p} \overline{\gamma(\lambda)}
          \frac{\text{d}^q}{\text{d} \lambda^q} \big( \vartheta - \overline{M(\lambda)} \big)^{-1} 
          \frac{\text{d}^r}{\text{d} \lambda^r} \gamma(\overline{\lambda})^*.
    \end{split}
  \end{equation}
  
  We are going to study now all the terms on the right hand side of~\eqref{resolvent_power_difference} and show that they belong to certain Schatten-von Neumann ideals. 
  For this purpose we claim that
  \begin{equation} \label{equation_Schatten_inverse} 
    \frac{\text{d}^k}{\text{d} \lambda^k} \left( \frac{1}{2} \beta + \mathcal{C}_\lambda \right)^{-1} 
    \in \mathfrak{S}_{2/k, \infty}\bigl(L^2(\partial\Omega; \mathbb{C}^4)\bigr)
  \end{equation}
  for $k \in \{ 1, \dots, l-1 \}$. This will be shown by induction.
  First, for $k = 1$ we have by~\eqref{chain_rule}
  \begin{equation*} 
    \frac{\text{d}}{\text{d} \lambda} \left( \frac{1}{2} \beta + \mathcal{C}_\lambda \right)^{-1}  
      = -\left( \frac{1}{2} \beta + \mathcal{C}_\lambda \right)^{-1} 
        \frac{\text{d}}{\text{d} \lambda} \mathcal{C}_\lambda \left( \frac{1}{2} \beta + \mathcal{C}_\lambda \right)^{-1}.
  \end{equation*}
  Hence, the statement for $k=1$ holds by Lemma~\ref{lemma_Schatten_von_Neumann}
  and Proposition~\ref{proposition_C_lambda_inverse}. 
  Let us assume now that the statement holds for $k=1, \dots, q$ with $q<l-1$. With the aid of~\eqref{product_rule} we get
  \begin{equation*}
    \begin{split}
      \frac{\text{d}^{q+1}}{\text{d} \lambda^{q+1}} &\left(\frac{1}{2} \beta + \mathcal{C}_\lambda \right)^{-1}
          = \frac{\text{d}^q}{\text{d} \lambda^q} \left[ 
          \frac{\text{d}}{\text{d} \lambda} \left(\frac{1}{2} \beta + \mathcal{C}_\lambda \right)^{-1} \right] \\
      &= -\frac{\text{d}^q}{\text{d} \lambda^q} \left[ 
          \left(\frac{1}{2} \beta + \mathcal{C}_\lambda \right)^{-1} \frac{\text{d}}{\text{d} \lambda} \mathcal{C}_\lambda
          \left(\frac{1}{2} \beta + \mathcal{C}_\lambda \right)^{-1} \right] \\
      &= -\sum_{k+m+n = q} \frac{q!}{k! m! n!} \frac{\text{d}^k }{\text{d} \lambda^k} \left(\frac{1}{2} \beta + \mathcal{C}_\lambda \right)^{-1} \frac{\text{d}^{m+1}}{\text{d} \lambda^{m+1}}  \mathcal{C}_\lambda  
          \frac{\text{d}^n}{\text{d} \lambda^n} \left(\frac{1}{2} \beta + \mathcal{C}_\lambda \right)^{-1}.
    \end{split}
  \end{equation*}
  Now Lemma~\ref{lemma_Schatten_von_Neumann}, the induction hypothesis, and~\eqref{equation_product_Schatten_von_Neumann} show that $\frac{\text{d}^{q+1}}{\text{d} \lambda^{q+1}} \left(\frac{1}{2} \beta + \mathcal{C}_\lambda \right)^{-1}$ belongs to $\mathfrak{S}_{2/(q+1), \infty}$, and \eqref{equation_Schatten_inverse} is proved.
  
  Thanks to \eqref{closure_M} it is now easy to see that \eqref{equation_Schatten_inverse} implies 
  \begin{equation} \label{Schatten_M_domain}
    \frac{\text{d}^k}{\text{d} \lambda^k} \overline{M(\lambda)} \in \mathfrak{S}_{2/k, \infty}(\mathcal{G}_\Omega).
  \end{equation}
  Similarly, using \eqref{os240},~\eqref{product_rule}, Lemma~\ref{lemma_Schatten_von_Neumann}, and \eqref{equation_product_Schatten_von_Neumann} we obtain
  \begin{equation*} 
    \begin{split}
      \frac{\text{d}^k}{\text{d} \lambda^k} \overline{\gamma(\lambda)} 
          &= \sum_{s+t=k} \frac{k!}{s! t!} 
        \frac{\text{d}^s}{\text{d} \lambda^s} \Phi_{\lambda}
        \frac{\text{d}^t}{\text{d} \lambda^t} \left( \frac{1}{2} \beta + \mathcal{C}_\lambda \right)^{-1}
    \end{split}
  \end{equation*}
  and hence
  \begin{equation} \label{Schatten_gamma} 
    \begin{split}
      \frac{\text{d}^k}{\text{d} \lambda^k} \overline{\gamma(\lambda)} 
      \in \mathfrak{S}_{4/(2 k+1), \infty}\bigl(\mathcal{G}_\Omega,L^2(\partial\Omega; \mathbb{C}^4)\bigr).
    \end{split}
  \end{equation}
  By taking adjoints, this implies that also 
  \begin{equation}\label{os240-a}
  \frac{\text{d}^k}{\text{d} \lambda^k} \gamma(\overline{\lambda})^* \in \mathfrak{S}_{4/(2 k+1), \infty}\bigl(L^2(\partial\Omega; \mathbb{C}^4),\mathcal G_\Omega\bigr).
  \end{equation}
  Note that~\eqref{Schatten_M_domain} yields 
  \begin{equation} \label{Schatten_M_inv}
    \frac{\text{d}^k}{\text{d} \lambda^k} \big( \vartheta - \overline{M(\lambda)} \big)^{-1}
    \in \mathfrak{S}_{2/k, \infty}(\mathcal{G}_\Omega);
  \end{equation}  
  this can be shown in the same way as~\eqref{equation_Schatten_inverse}.
  Thus, using~\eqref{resolvent_power_difference}, \eqref{Schatten_gamma}, \eqref{os240-a}, \eqref{Schatten_M_inv}, and ~\eqref{equation_product_Schatten_von_Neumann},
  we finally get that 
  \begin{equation*}
    (A_\vartheta - \lambda)^{-l} - (T_{\textup{MIT}} - \lambda)^{-l}  \in \mathfrak{S}_{2/l, \infty}\bigl(L^2(\partial\Omega; \mathbb{C}^4)\bigr),
  \end{equation*}
  which is the claimed result.
\end{proof}

In the following corollary we discuss the special case $l=3$ in Theorem~\ref{theorem_Schatten_von_Neumann_domain}. Then the difference of the 
third powers of the resolvents of $A_\vartheta$ and $T_\text{MIT}$ belongs to the trace class ideal.
By \cite[Chapter~0, Theorem~8.2]{Y10} or \cite[Problem~25]{RS79} 
this implies that the wave operators for the scattering pair $\{ A_\vartheta, T_{\textup{MIT}} \}$ exist and are complete, and hence 
the absolutely continuous parts of $A_\vartheta$ and $T_{\textup{MIT}}$ are unitarily equivalent. 
Moreover, we state an explicit formula for the trace of $(A_\vartheta - \lambda)^{-3} - (T_{\textup{MIT}} - \lambda)^{-3}$ in terms of the Weyl 
function $M$; this formula can be shown in exactly the same way as in \cite[Theorem~4.6]{BEHL18}.

\begin{cor} \label{corollary_resolvent_power_difference}
  Assume that $\Omega$ has a $C^3$-smooth boundary,
  let $a \in (\frac{1}{2}, 1 ]$, and let $\vartheta \in \textup{Lip}_a({\partial \Omega})$ be a real-valued function such that
  $|\vartheta(x)| \neq 1$ for all $x \in \partial \Omega$.
  Let $A_\vartheta$ be defined by~\eqref{def_A_tau_domain} and let $T_{\textup{MIT}}$ be the MIT bag operator in \eqref{def_MIT_op}. 
  Then the operator $(A_\vartheta - \lambda)^{-3} - (T_{\textup{MIT}} - \lambda)^{-3}$
  belongs to the trace class ideal and 
  \begin{equation*} 
    \begin{split}
      \textup{tr} \big[ (A_\vartheta - \lambda)^{-3} &- (T_{\textup{MIT}} - \lambda)^{-3} \big]
        = \frac{1}{2} \textup{tr} \left[ \frac{\textup{d}^2}{\textup{d} \lambda^2} \left( 
        \big( \vartheta - \overline{M(\lambda)} \big)^{-1} 
          \frac{\textup{d}}{\textup{d} \lambda} \overline{M(\lambda)} \right) \right]
    \end{split}
  \end{equation*}
  holds for all $\lambda \in \mathbb{C} \setminus \mathbb{R}$.
  Moreover, the wave operators for the scattering system 
  $\{ A_\vartheta, T_{\textup{MIT}} \}$ exist and are complete, and the absolutely continuous parts of 
  $A_\vartheta$ and $T_{\textup{MIT}}$ are unitarily equivalent. 
\end{cor}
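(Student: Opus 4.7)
The plan is to handle the three assertions in turn. \textbf{First}, for trace-class membership, I would apply Theorem~\ref{theorem_Schatten_von_Neumann_domain} with $l=3$, which is admissible since $\partial\Omega$ is assumed to be $C^3$-smooth, to obtain
\[
 (A_\vartheta-\lambda)^{-3}-(T_{\textup{MIT}}-\lambda)^{-3} \in \mathfrak{S}_{2/3,\infty}\bigl(L^2(\Omega;\mathbb{C}^4)\bigr),\qquad \lambda \in \mathbb{C}\setminus\mathbb{R}.
\]
Since $\mathfrak{S}_{p,\infty}\subset\mathfrak{S}_q$ whenever $q>p$, and $2/3<1$, the difference belongs to the trace class $\mathfrak{S}_1$.

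\textbf{Second}, to derive the trace formula, I would start from the Krein-type resolvent identity in Theorem~\ref{theorem_self_adjoint_noncritical} and invoke the scalar identity $(S-\lambda)^{-3}=\tfrac{1}{2}\tfrac{\textup{d}^2}{\textup{d}\lambda^2}(S-\lambda)^{-1}$ to write
\[
 (A_\vartheta-\lambda)^{-3}-(T_{\textup{MIT}}-\lambda)^{-3}
    = \tfrac{1}{2}\tfrac{\textup{d}^2}{\textup{d}\lambda^2}\bigl[\overline{\gamma(\lambda)}\bigl(\vartheta-\overline{M(\lambda)}\bigr)^{-1}\gamma(\overline{\lambda})^*\bigr].
\]
Taking traces and applying cyclicity yields $\tfrac{1}{2}\tfrac{\textup{d}^2}{\textup{d}\lambda^2}\textup{tr}\bigl[\gamma(\overline{\lambda})^*\overline{\gamma(\lambda)}\bigl(\vartheta-\overline{M(\lambda)}\bigr)^{-1}\bigr]$. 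Using the general quasi-boundary-triple identity $M(\lambda)-M(\mu)^*=(\lambda-\overline{\mu})\gamma(\mu)^*\gamma(\lambda)$ stated in Section~\ref{section_boundary_triples}, differentiating in $\lambda$ and setting $\mu=\overline{\lambda}$ gives $\tfrac{\textup{d}}{\textup{d}\lambda}\overline{M(\lambda)}=\gamma(\overline{\lambda})^*\overline{\gamma(\lambda)}$, and one further use of cyclicity produces the stated formula.

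\textbf{Third}, the trace-class membership established in Step~1, together with the Birman--Kato--Rosenblum theorem in the form valid for higher powers of the resolvent (e.g.\ \cite[Chapter~0, Theorem~8.2]{Y10} or \cite[Problem~25]{RS79}), immediately implies existence and completeness of the wave operators $W_{\pm}(A_\vartheta,T_{\textup{MIT}})$, and hence the unitary equivalence of the absolutely continuous parts of $A_\vartheta$ and $T_{\textup{MIT}}$.

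The principal obstacle is the combinatorial bookkeeping in Step~2: applying Leibniz~\eqref{product_rule} to $\tfrac{\textup{d}^2}{\textup{d}\lambda^2}$ produces several summands involving derivatives of $\overline{\gamma(\lambda)}$, $\gamma(\overline{\lambda})^*$ and $(\vartheta-\overline{M(\lambda)})^{-1}$ (the last requiring~\eqref{chain_rule}), and one has to verify that each summand individually belongs to $\mathfrak{S}_1$ before interchanging trace with differentiation and performing cyclic permutations term by term. This is done exactly as in the proof of Theorem~\ref{theorem_Schatten_von_Neumann_domain}: by splitting Schatten indices via~\eqref{equation_product_Schatten_von_Neumann} so that the total index stays bounded by $1$, using Proposition~\ref{proposition_gamma_Weyl_quasi_domain} and Lemma~\ref{lemma_Schatten_von_Neumann}; once this accounting is carried out, the argument reduces to a mechanical calculation along the lines of \cite[Theorem~4.6]{BEHL18}.
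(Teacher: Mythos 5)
Your proposal is correct and follows essentially the same route as the paper: trace-class membership via Theorem~\ref{theorem_Schatten_von_Neumann_domain} with $l=3$ together with $\mathfrak{S}_{2/3,\infty}\subset\mathfrak{S}_1$, existence and completeness of the wave operators via \cite[Chapter~0, Theorem~8.2]{Y10} or \cite[Problem~25]{RS79}, and the trace formula by the Leibniz/cyclicity computation based on the Krein formula and $\tfrac{\textup{d}}{\textup{d}\lambda}\overline{M(\lambda)}=\gamma(\overline{\lambda})^*\overline{\gamma(\lambda)}$, which the paper itself delegates to \cite[Theorem~4.6]{BEHL18}. Your closing caveat, that trace and differentiation may only be interchanged and cyclic permutations performed term by term after each Leibniz summand is checked to lie in $\mathfrak{S}_1$ (the first-power resolvent difference itself is not trace class), is precisely the point handled in that reference.
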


\subsection{Self-adjointness and spectral properties of $A_{[\omega]}$}\label{ss bdy MIT}
To complement the class of boundary conditions \eqref{boundary_condition} discussed in the previous section
now boundary conditions of the form
\begin{equation}\label{boundary conditions tau 2}
   P_+ f|_{\partial \Omega} = \omega P_+ \beta f|_{\partial \Omega}
\end{equation}
will be treated; here $\omega: \partial \Omega \rightarrow \mathbb{R}$ is H\"older continuous of order $a > \frac{1}{2}$, as before. 
In particular, \eqref{boundary conditions tau 2} for 
$\omega\equiv0$ leads to the MIT bag operator $T_{\textup{MIT}}$ introduced in \eqref{def_MIT_op}. Of course, if $\omega$ is invertible, then
\eqref{boundary_condition} and \eqref{boundary conditions tau 2} are equivalent by setting $\vartheta=\omega^{-1}$, but if $\omega = 0$ on some parts of 
$\partial \Omega$, then this correspondence is only formal.

More precisely, let  $\{ \mathcal{G}_\Omega, \Gamma_0, \Gamma_1 \}$ be the quasi boundary triple 
from Theorem~\ref{theorem_quasi_triple_domain} and assume that $\omega: \partial \Omega \rightarrow \mathbb{R}$ is H\"older continuous of order $a > \frac{1}{2}$. 
Then the Dirac operator $A_{[\omega]}$ acting in $L^2(\Omega; \mathbb{C}^4)$ with boundary
conditions~\eqref{boundary conditions tau 2} is defined by 
\begin{equation} \label{definition_A_omega}
  \begin{split}
    A_{[\omega]} f &:= (-i \alpha \cdot \nabla + m \beta) f, \\
    \dom A_{[\omega]} &:= \{ f \in H^1(\Omega; \mathbb{C}^4): 
        \Gamma_0 f = \omega \Gamma_1 f \},
  \end{split}
\end{equation}
i.e. \eqref{boundary conditions tau 2} corresponds to the abstract boundary conditions
$\Gamma_0 f - \omega \Gamma_1 f = 0$, see also~\eqref{def_A_B_abstract}. 
Employing Theorem~\ref{theorem_krein_abstract_B} instead of Theorem~\ref{theorem_krein_abstract} one can show in a similar
manner as in Theorem~\ref{theorem_self_adjoint_noncritical} the following result on the self-adjointness of $A_{[\omega]}$:

\begin{thm}\label{theorem_A_omega_self_adjoint}
  Let $a \in (\frac{1}{2}, 1 ]$ and let $\omega \in \textup{Lip}_a({\partial \Omega})$ be a real-valued function such that
  $|\omega(x)| \neq 1$ for all $x \in \partial \Omega$.   
  Moreover, let $\gamma$ and $M$ be as in Proposition~\ref{proposition_gamma_Weyl_quasi_domain}.
  Then the operator $A_{[\omega]}$ in \eqref{definition_A_omega} is self-adjoint in $L^2(\Omega; \mathbb{C}^4)$ and the resolvent formula
  \begin{equation*}
    (A_{[\omega]}-\lambda)^{-1} = \big(T_{\textup{MIT}} - \lambda\big)^{-1} 
        + \gamma(\lambda) \big( I_4 - \omega M(\lambda) \big)^{-1} \omega \gamma(\overline{\lambda})^*
  \end{equation*}
  holds for all $\lambda \in \rho(A_{\vartheta})\cap\rho(T_{\textup{MIT}})$.
\end{thm}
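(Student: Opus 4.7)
The plan is to follow exactly the blueprint of the proof of Theorem~\ref{theorem_self_adjoint_noncritical}, but with Theorem~\ref{theorem_krein_abstract_B} replacing Theorem~\ref{theorem_krein_abstract}. First, since $\omega$ is real-valued, the abstract Green's identity for the quasi boundary triple $\{\mathcal{G}_\Omega,\Gamma_0,\Gamma_1\}$ from Theorem~\ref{theorem_quasi_triple_domain} immediately gives that $A_{[\omega]}=T\upharpoonright\ker(\Gamma_0-\omega\Gamma_1)$ is symmetric in $L^2(\Omega;\mathbb{C}^4)$. Therefore to prove self-adjointness it suffices to verify $\ran(A_{[\omega]}-\lambda)=L^2(\Omega;\mathbb{C}^4)$ for one (hence every) $\lambda\in\mathbb{C}\setminus\mathbb{R}$.

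Fix such a $\lambda$ and take $f\in L^2(\Omega;\mathbb{C}^4)$. By Theorem~\ref{theorem_krein_abstract_B}~(ii) the inclusion $f\in\ran(A_{[\omega]}-\lambda)$ will follow once I check that $\omega\gamma(\overline{\lambda})^*f\in\ran(I-\omega M(\lambda))$. The identity $\gamma(\overline{\lambda})^*=\Gamma_1(T_{\textup{MIT}}-\lambda)^{-1}$ together with $\dom T_{\textup{MIT}}\subset H^1(\Omega;\mathbb{C}^4)$ and the mapping properties of the trace and of $P_+$ (see Lemma~\ref{lemma_mult_op}) yield $\gamma(\overline{\lambda})^*f\in\mathcal{G}_\Omega^{1/2}$, and since multiplication by $\omega\in\textup{Lip}_a(\partial\Omega)$ preserves $\mathcal{G}_\Omega^{1/2}$ (again by Lemma~\ref{lemma_mult_op}, combined with the continuity of $P_+$ on $H^{1/2}$), one also has $\omega\gamma(\overline{\lambda})^*f\in\mathcal{G}_\Omega^{1/2}$.

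Hence the key step is to show that $I-\omega M(\lambda)$ is bijective on $\mathcal{G}_\Omega^{1/2}$; this is the main obstacle, as it must replace the direct invocation of Lemma~\ref{lemma_M_inv}. I plan to obtain it by the factorisation
\begin{equation*}
I-\omega M(\lambda)=\bigl(M(\lambda)^{-1}-\omega\bigr)M(\lambda),
\end{equation*}
which is meaningful on $\mathcal{G}_\Omega^{1/2}$ because Proposition~\ref{proposition_M_inv_domain} provides a bounded inverse $M(\lambda)^{-1}$ on $\mathcal{G}_\Omega^{1/2}$ for every $\lambda\in\mathbb{C}\setminus\mathbb{R}\subset\rho(T_{-\textup{MIT}})\cap\rho(T_{-\textup{MIT}}^{\Omega^c})$. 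To see that $M(\lambda)^{-1}-\omega$ is bijective on $\mathcal{G}_\Omega^{1/2}$, I apply Lemma~\ref{lemma_M_inv}~(i) to the swapped quasi boundary triple $\{\mathcal{G}_\Omega,\widehat\Gamma_0,\widehat\Gamma_1\}$ with $\widehat\Gamma_0=\Gamma_1$ and $\widehat\Gamma_1=-\Gamma_0$ (already employed in the proof of Proposition~\ref{proposition_M_inv_domain}), whose Weyl function equals $\widehat M(\lambda)=-M(\lambda)^{-1}$. Since $|-\omega(x)|=|\omega(x)|\neq 1$ on $\partial\Omega$, Lemma~\ref{lemma_M_inv}~(i) yields that $-\omega-\widehat M(\lambda)=M(\lambda)^{-1}-\omega$ is boundedly invertible on $\mathcal{G}_\Omega^{1/2}$, as required.

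Combining these facts gives $\ran(A_{[\omega]}-\lambda)=L^2(\Omega;\mathbb{C}^4)$ for every $\lambda\in\mathbb{C}\setminus\mathbb{R}$, so $A_{[\omega]}$ is self-adjoint. Finally, for $\lambda\in\rho(A_{[\omega]})\cap\rho(T_{\textup{MIT}})$ one has $\lambda\notin\sigma_\textup{p}(A_{[\omega]})$, and the Krein-type resolvent formula then follows directly from Theorem~\ref{theorem_krein_abstract_B}~(iii) together with the identification $T_{\textup{MIT}}=T\upharpoonright\ker\Gamma_0$ from \eqref{aberjadoch}.
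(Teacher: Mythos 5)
Your proposal is correct and follows precisely the route the paper indicates for this theorem (symmetry of $A_{[\omega]}$ from the abstract Green's identity, the range criterion of Theorem~\ref{theorem_krein_abstract_B}~(ii) applied with $\omega\gamma(\overline{\lambda})^*f\in\mathcal{G}_\Omega^{1/2}$, and the Krein formula from Theorem~\ref{theorem_krein_abstract_B}~(iii) together with $T_{\textup{MIT}}=T\upharpoonright\ker\Gamma_0$), and it supplies the one detail the paper leaves implicit, namely the bijectivity of $I-\omega M(\lambda)$ in $\mathcal{G}_\Omega^{1/2}$ via the factorization $\bigl(M(\lambda)^{-1}-\omega\bigr)M(\lambda)$ and Proposition~\ref{proposition_M_inv_domain}. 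The only point to make explicit is that Lemma~\ref{lemma_M_inv}~(i) is stated for the Weyl function of the original triple $\{\mathcal{G}_\Omega,\Gamma_0,\Gamma_1\}$, so invoking it for the swapped triple $\{\mathcal{G}_\Omega,\Gamma_1,-\Gamma_0\}$ with parameter $-\omega$ requires the (routine) observation that its proof transfers verbatim: the injectivity steps correspond to the symmetry of $T\upharpoonright\ker(\Gamma_0-\omega\Gamma_1)$ and $T\upharpoonright\ker(\Gamma_1+\omega\Gamma_0)$, and the compactness input is exactly the operator $\widetilde{\mathcal{K}}_\lambda$ (with $\vartheta$ replaced by $\omega$) already treated in that proof.
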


Similarly to Section~\ref{section_def_op}, one can prove now several results about 
the spectral properties of $A_{[\omega]}$. The following assertions follow from Theorem~\ref{theorem_krein_abstract_B} and the Krein 
type resolvent formula from Theorem~\ref{theorem_A_omega_self_adjoint} in the same way as in
Theorem~\ref{theorem_basic_spectral_properties_unbounded_domain}.

\begin{thm} \label{thm_basic_spectral_properties_unbounded_domain_A_omega}
  Let $\Omega$ be the complement of a bounded $C^2$-domain,
  let $a \in (\frac{1}{2}, 1]$, let $\omega \in \textup{Lip}_a({\partial \Omega})$ be a real-valued function such that
  $|\omega(x)| \neq 1$ for all $x \in \partial \Omega$, and let $A_{[\omega]}$ be defined by~\eqref{definition_A_omega}.
  Then the following statements hold:
  \begin{itemize}
    \item[$\textup{(i)}$] $\sigma_{\textup{ess}}(A_{[\omega]}) = (-\infty, - m] \cup [m, \infty)$.
    \item[$\textup{(ii)}$] The number of discrete eigenvalues of $A_{[\omega]}$ is finite.
    \item[$\textup{(iii)}$] $\lambda \in \sigma_{\textup{p}}(A_{[\omega]}) \cap (-m, m)$ if and only if $1 \in \sigma_\textup{p}(\omega M(\lambda))$.
  \end{itemize}
\end{thm}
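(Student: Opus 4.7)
The plan is to follow the template of Theorem~\ref{theorem_basic_spectral_properties_unbounded_domain} essentially verbatim, replacing Theorem~\ref{theorem_krein_abstract} and Theorem~\ref{theorem_self_adjoint_noncritical} by Theorem~\ref{theorem_krein_abstract_B} and Theorem~\ref{theorem_A_omega_self_adjoint}, respectively. Assertion~(iii) is in fact immediate: since $(-m,m)\subset\rho(T_{\textup{MIT}})$ by Proposition~\ref{proposition_MIT_bag_operator}(i), the equivalence $\lambda\in\sigma_{\textup p}(A_{[\omega]})\Leftrightarrow 1\in\sigma_{\textup p}(\omega M(\lambda))$ for $\lambda\in(-m,m)$ is precisely Theorem~\ref{theorem_krein_abstract_B}(i) applied to the symmetric parameter $B=\omega$.

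For (i), I would rewrite the resolvent formula from Theorem~\ref{theorem_A_omega_self_adjoint} as
\begin{equation*}
(A_{[\omega]}-\lambda)^{-1}-(T_{\textup{MIT}}-\lambda)^{-1}=\overline{\gamma(\lambda)}\bigl(I-\omega\,\overline{M(\lambda)}\bigr)^{-1}\omega\,\gamma(\overline{\lambda})^{*},\qquad \lambda\in\mathbb{C}\setminus\mathbb{R},
\end{equation*}
and show that the right-hand side is compact on $L^2(\Omega;\mathbb C^4)$. Indeed, $\gamma(\overline\lambda)^{*}$ is compact from $L^2(\Omega;\mathbb C^4)$ into $\mathcal{G}_\Omega$ by Proposition~\ref{proposition_gamma_Weyl_quasi_domain}(ii), multiplication by $\omega$ is bounded on $\mathcal{G}_\Omega$ by Lemma~\ref{lemma_mult_op}, and $\overline{\gamma(\lambda)}$ is bounded from $\mathcal{G}_\Omega$ into $L^2(\Omega;\mathbb C^4)$. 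Combined with Proposition~\ref{proposition_MIT_bag_operator}(iii) and Weyl's theorem, this yields $\sigma_{\textup{ess}}(A_{[\omega]})=(-\infty,-m]\cup[m,\infty)$, provided the middle factor is bounded and everywhere defined.

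Supplying this middle factor is the main obstacle: one needs a counterpart of Lemma~\ref{lemma_M_inv} asserting that, for $\lambda\in\mathbb C\setminus\mathbb R$, both $I-\omega M(\lambda)$ (on $\mathcal{G}_\Omega^{1/2}$) and $I-\omega\,\overline{M(\lambda)}$ (on $\mathcal{G}_\Omega$) have bounded everywhere defined inverses. I would mirror the proof of Lemma~\ref{lemma_M_inv}, using the factorization
\begin{equation*}
\bigl(I-\omega M(\lambda)\bigr)\bigl(I+M(\lambda)^{-1}\omega\bigr)=(I-\omega^{2})+\bigl(M(\lambda)^{-1}\omega-\omega M(\lambda)\bigr),
\end{equation*}
in place of the factorization used there. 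The commutator-type second summand maps $\mathcal{G}_\Omega$ into $\mathcal{G}_\Omega^{1/2}$ by the very same computations as in the proof of Lemma~\ref{lemma_M_inv} — combining Proposition~\ref{proposition_commutator_C_lambda}, Proposition~\ref{proposition_commutator}, and Proposition~\ref{proposition_C_lambda_inverse} — hence is compact on either space. The non-critical hypothesis $|\omega(x)|\neq 1$ together with Lemma~\ref{lemma_mult_op} ensures that $I-\omega^{2}$ is a bounded and boundedly invertible multiplication operator on both $\mathcal{G}_\Omega$ and $\mathcal{G}_\Omega^{1/2}$. Injectivity of the two factors $I-\omega M(\lambda)$ and $I+M(\lambda)^{-1}\omega$ for $\lambda\notin\mathbb R$ follows from Theorem~\ref{theorem_krein_abstract_B}(i) and Theorem~\ref{theorem_krein_abstract}(i) applied to the symmetric restrictions $T\upharpoonright\ker(\Gamma_0-\omega\Gamma_1)$ and $T\upharpoonright\ker(\Gamma_1+\omega\Gamma_0)$, neither of which can have a non-real eigenvalue. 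Fredholm's alternative then gives bijectivity on each space, and duality/interpolation supplies boundedness of the inverse.

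For (ii), I would repeat the quadratic-form partition-of-unity argument from the proof of Theorem~\ref{theorem_basic_spectral_properties_unbounded_domain}(ii) without substantive change. The crucial observation permitting this is that multiplication by a smooth scalar function $g$ preserves $\dom A_{[\omega]}$: for $f\in\dom A_{[\omega]}$ one has $\Gamma_j(gf)=g|_{\partial\Omega}\Gamma_j f$ for $j=0,1$, so the condition $\Gamma_0 f=\omega\Gamma_1 f$ transfers to $gf$. Then, with cutoffs $g_1,g_2$ such that $g_1^{2}+g_2^{2}\equiv 1$, $g_1\equiv 1$ near $\partial\Omega$, and $g_2\equiv 1$ outside a large ball, the identity $(\alpha\cdot\nabla g_j)^{2}=|\nabla g_j|^{2}I_4$ leads to the same decomposition $\|A_{[\omega]}f\|_\Omega^{2}=\mathfrak b_1[g_1f]+\mathfrak b_2[g_2f]$ as in~\eqref{decomposition_form}. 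Here $\mathfrak b_2$ is the form of $-\Delta^{D}+m^{2}-V$ on the exterior of a ball with $V$ compactly supported, which has finitely many eigenvalues below $m^{2}$, while $\mathfrak b_1$ lives on a bounded $C^{2}$-domain and has compact resolvent. The min-max principle then caps the number of eigenvalues of $A_{[\omega]}^{2}$ below $m^{2}$, equivalently the number of discrete eigenvalues of $A_{[\omega]}$ in $(-m,m)$, by this finite quantity.
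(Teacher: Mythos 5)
Your proposal is correct and follows exactly the route the paper intends: the paper proves this theorem by declaring that it follows from Theorem~\ref{theorem_krein_abstract_B} and the resolvent formula of Theorem~\ref{theorem_A_omega_self_adjoint} in the same way as Theorem~\ref{theorem_basic_spectral_properties_unbounded_domain}, and your argument is precisely that transfer. The only detail the paper leaves implicit, the counterpart of Lemma~\ref{lemma_M_inv} for $I-\omega M(\lambda)$ and $I-\omega\overline{M(\lambda)}$, you supply correctly via the factorization $(I-\omega M(\lambda))(I+M(\lambda)^{-1}\omega)=(I-\omega^{2})+(M(\lambda)^{-1}\omega-\omega M(\lambda))$ together with the same compactness and injectivity arguments used there.
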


Furthermore, like in Proposition~\ref{proposition_basic_spectral_properties_bounded_domain}, one can use the Weyl function $M$ also to detect all eigenvalues of $A_{[\omega]}$ in the case that $\Omega$ is a bounded domain. Here one has to use Proposition~\ref{proposition_Birman_Schwinger_simple_B} instead of Proposition~\ref{proposition_Birman_Schwinger_simple} to obtain the following result: 

\begin{prop} \label{proposition_basic_spectral_properties_bounded_domain_A_omega}
  Let $\Omega$ be a bounded $C^2$-smooth domain,
  let $a \in (\frac{1}{2}, 1 ]$, let $\omega \in \textup{Lip}_a({\partial \Omega})$ be real-valued such that
  $|\omega(x)| \neq 1$ for all $x \in \partial \Omega$, and let $A_{[\omega]}$ be defined by~\eqref{definition_A_omega}.
  Then $\sigma(A_{[\omega]}) = \sigma_{\textup{disc}}(A_{[\omega]})$ and $\lambda$ is an eigenvalue of $A_{[\omega]}$
  if and only if there exists $\varphi \in \mathcal{G}_\Omega^{1/2}$ such that
  \begin{equation*}
    \lim_{\varepsilon \searrow 0} i \varepsilon M(\lambda + i \varepsilon) \big( I_4 - \omega M(\lambda + i \varepsilon) \big)^{-1} \varphi \neq 0.
  \end{equation*}
\end{prop}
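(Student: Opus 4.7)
The plan is to follow closely the strategy used for Proposition~\ref{proposition_basic_spectral_properties_bounded_domain}, with Proposition~\ref{proposition_Birman_Schwinger_simple_B} replacing Proposition~\ref{proposition_Birman_Schwinger_simple} throughout. The first step is to verify that the spectrum is purely discrete. Since $\Omega$ is a bounded $C^2$-domain, the embedding $H^1(\Omega;\mathbb{C}^4)\hookrightarrow L^2(\Omega;\mathbb{C}^4)$ is compact by Rellich's theorem. As $\dom A_{[\omega]}\subset H^1(\Omega;\mathbb{C}^4)$ and $A_{[\omega]}$ is self-adjoint by Theorem~\ref{theorem_A_omega_self_adjoint}, any resolvent $(A_{[\omega]}-\lambda_0)^{-1}$ with $\lambda_0\in\rho(A_{[\omega]})$ is compact in $L^2(\Omega;\mathbb{C}^4)$, whence $\sigma(A_{[\omega]})=\sigma_{\textup{disc}}(A_{[\omega]})$.

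The second step is to apply Proposition~\ref{proposition_Birman_Schwinger_simple_B} to the quasi boundary triple $\{\mathcal{G}_\Omega,\Gamma_0,\Gamma_1\}$ from Theorem~\ref{theorem_quasi_triple_domain} with the choice $B:=\omega$. The necessary hypotheses are already at hand: $T_{\textup{min}}$ is densely defined, closed, and symmetric by Lemma~\ref{lemma_minimal_maximal_op} and simple by Proposition~\ref{proposition_T_min_simple}; the self-adjointness of $A_{[\omega]}$ is Theorem~\ref{theorem_A_omega_self_adjoint}; and $\omega$ defines a bounded self-adjoint operator on $\mathcal{G}_\Omega$ since $\omega(x)$ is a real-valued scalar and hence commutes pointwise with the matrix-valued projection $P_+$, so multiplication by $\omega$ leaves $\mathcal{G}_\Omega=P_+(L^2(\partial\Omega;\mathbb{C}^4))$ invariant.

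The third step is to identify the range appearing in Proposition~\ref{proposition_Birman_Schwinger_simple_B}. Inspection of its proof shows that $\ran(I-\omega M(\lambda))$ coincides with $\ran(\Gamma_0-\omega\Gamma_1)$ and is independent of $\lambda\in\mathbb{C}\setminus\mathbb{R}$. The inclusion $\ran(\Gamma_0-\omega\Gamma_1)\subset\mathcal{G}_\Omega^{1/2}$ follows from $\ran\Gamma_j\subset\mathcal{G}_\Omega^{1/2}$ (Theorem~\ref{theorem_quasi_triple_domain}) together with Lemma~\ref{lemma_mult_op}, which gives that $\omega\in\textup{Lip}_a(\partial\Omega)$ with $a>\frac{1}{2}$ preserves $H^{1/2}(\partial\Omega;\mathbb{C}^4)$. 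The reverse inclusion follows from~\eqref{ran_Gamma_domain}: given $\varphi\in\mathcal{G}_\Omega^{1/2}$ there exists $f\in\ker\Gamma_1$ with $\Gamma_0 f=\varphi$, so that $(\Gamma_0-\omega\Gamma_1)f=\varphi$. The characterization of eigenvalues then follows directly from the conclusion of Proposition~\ref{proposition_Birman_Schwinger_simple_B}.

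The main obstacle I anticipate is, essentially, careful bookkeeping: one must interpret $\omega$ as an operator on the correct boundary Hilbert space $\mathcal{G}_\Omega$ (rather than on the ambient $L^2(\partial\Omega;\mathbb{C}^4)$) and correctly identify the effective range as $\mathcal{G}_\Omega^{1/2}$ so that the test vectors $\varphi$ in the limit condition live in the space stated in the proposition. The genuine analytic content — simplicity of $T_{\textup{min}}$, self-adjointness of $A_{[\omega]}$, and the invertibility properties underlying the Weyl function expressions — has already been established in the earlier results of this section, so no new estimates are required.
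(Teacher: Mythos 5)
Your proposal is correct and follows essentially the same route as the paper: discreteness of the spectrum from the compact embedding $H^1(\Omega;\mathbb{C}^4)\hookrightarrow L^2(\Omega;\mathbb{C}^4)$, then Proposition~\ref{proposition_Birman_Schwinger_simple_B} applied with $B=\omega$, using the simplicity of $T_{\textup{min}}$ (Proposition~\ref{proposition_T_min_simple}) and the self-adjointness of $A_{[\omega]}$ (Theorem~\ref{theorem_A_omega_self_adjoint}). Your explicit identification $\ran(I-\omega M(\lambda))=\ran(\Gamma_0-\omega\Gamma_1)=\mathcal{G}_\Omega^{1/2}$ is exactly the bookkeeping the paper leaves implicit, and your argument for it (via~\eqref{ran_Gamma_domain} and Lemma~\ref{lemma_mult_op}) is sound.
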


Finally, also the proof of Theorem~\ref{theorem_Schatten_von_Neumann_domain} can be adapted in a straightforward way to obtain a similar result 
for $A_{[\omega]}$. A summary of the counterpart of Theorem~\ref{theorem_Schatten_von_Neumann_domain} and Corollary~\ref{corollary_resolvent_power_difference} 
reads as follows:

\begin{thm} \label{theorem_Schatten_von_Neumann_domain_A_omega}
  Let $\Omega$ be a $C^2$-domain with compact boundary,
  let $T_{\textup{MIT}}$ be the MIT bag operator in \eqref{def_MIT_op}, let $a \in (\frac{1}{2}, 1 ]$, let $\omega \in \textup{Lip}_a({\partial \Omega})$ be a real-valued function such that
  $|\omega(x)| \neq 1$ for all $x \in \partial \Omega$,
  and let $A_{[\omega]}$ be defined by~\eqref{definition_A_omega}.
  Moreover, let $l \in \mathbb{N}$ and, if $l>2$, assume that $\Omega$ has a $C^l$-smooth boundary.
  Then 
  \begin{equation}\label{os240-b}
    (A_{[\omega]} - \lambda)^{-l} - (T_{\textup{MIT}} - \lambda)^{-l} \in \mathfrak{S}_{2/l, \infty}\bigl(L^2(\Omega; \mathbb{C}^4)\bigr)
  \end{equation}
  holds for all $\lambda \in \mathbb{C} \setminus \mathbb{R}$.
  In particular, for $l=3$ the operator in \eqref{os240-b} 
  belongs to the trace class ideal and 
  \begin{equation*} 
    \begin{split}
      \textup{tr} \big[ (A_{[\omega]} - \lambda)^{-3} &- (T_{\textup{MIT}} - \lambda)^{-3} \big]
        = \frac{1}{2} \textup{tr} \left[ \frac{\textup{d}^2}{\textup{d} \lambda^2} \left( 
        \big( I_4 - \omega M(\lambda) \big)^{-1} \omega
          \frac{\textup{d}}{\textup{d} \lambda} M(\lambda) \right) \right]
    \end{split}
  \end{equation*}
  holds for all $\lambda \in \mathbb{C} \setminus \mathbb{R}$.
  Moreover, the wave operators for the scattering system 
  $\{ A_{[\omega]}, T_{\textup{MIT}} \}$ exist and are complete, and the absolutely continuous parts of 
  $A_{[\omega]}$ and $T_{\textup{MIT}}$ are unitarily equivalent. 
\end{thm}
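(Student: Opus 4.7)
The plan is to adapt the proof of Theorem~\ref{theorem_Schatten_von_Neumann_domain} and Corollary~\ref{corollary_resolvent_power_difference} almost verbatim, with the Krein resolvent formula from Theorem~\ref{theorem_A_omega_self_adjoint} playing the role of the one from Theorem~\ref{theorem_self_adjoint_noncritical}. That is, for $\lambda \in \rho(A_{[\omega]}) \cap \rho(T_{\textup{MIT}})$ we start from
\begin{equation*}
  (A_{[\omega]} - \lambda)^{-1} - (T_{\textup{MIT}} - \lambda)^{-1}
  = \overline{\gamma(\lambda)} \bigl( I_4 - \omega \overline{M(\lambda)} \bigr)^{-1} \omega \, \gamma(\overline{\lambda})^*,
\end{equation*}
apply the Leibniz formula~\eqref{product_rule} after differentiating $(l-1)$ times, and then invoke the multiplicativity rule~\eqref{equation_product_Schatten_von_Neumann} to collect the $\mathfrak{S}_{p,\infty}$ indices.

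The three ingredients feeding into the Leibniz expansion are exactly the same as in the proof of Theorem~\ref{theorem_Schatten_von_Neumann_domain}: the bounds $\frac{\textup{d}^k}{\textup{d}\lambda^k}\overline{\gamma(\lambda)}, \frac{\textup{d}^k}{\textup{d}\lambda^k}\gamma(\overline{\lambda})^* \in \mathfrak{S}_{4/(2k+1),\infty}$ from \eqref{Schatten_gamma} and \eqref{os240-a}, the bound $\frac{\textup{d}^k}{\textup{d}\lambda^k}\overline{M(\lambda)} \in \mathfrak{S}_{2/k,\infty}$ from \eqref{Schatten_M_domain}, together with the fact that $\omega$ induces a bounded multiplication operator on $\mathcal{G}_\Omega$ via Lemma~\ref{lemma_mult_op}. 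The only new piece is the Schatten-class bound on the derivatives of $(I_4 - \omega\overline{M(\lambda)})^{-1}$. This is established by induction on $k$, just as for \eqref{Schatten_M_inv}: using~\eqref{chain_rule} one obtains
\begin{equation*}
  \frac{\textup{d}}{\textup{d}\lambda}\bigl(I_4 - \omega\overline{M(\lambda)}\bigr)^{-1}
  = \bigl(I_4 - \omega\overline{M(\lambda)}\bigr)^{-1}\, \omega \, \frac{\textup{d}}{\textup{d}\lambda}\overline{M(\lambda)}\, \bigl(I_4 - \omega\overline{M(\lambda)}\bigr)^{-1},
\end{equation*}
and higher derivatives follow by repeatedly applying \eqref{product_rule}. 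The outcome is $\frac{\textup{d}^k}{\textup{d}\lambda^k}(I_4 - \omega\overline{M(\lambda)})^{-1} \in \mathfrak{S}_{2/k,\infty}(\mathcal{G}_\Omega)$ for $1 \leq k \leq l-1$. Combining everything through \eqref{equation_product_Schatten_von_Neumann} yields $\mathfrak{S}_{2/l,\infty}$ membership, proving~\eqref{os240-b}.

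The main obstacle in this program is checking the bounded invertibility of $I_4 - \omega \overline{M(\lambda)}$ on $\mathcal{G}_\Omega$ for $\lambda \in \mathbb{C}\setminus\mathbb{R}$, which is the prerequisite for the inductive argument above. The invertibility of the restriction $I - \omega M(\lambda)$ on $\mathcal{G}_\Omega^{1/2}$ is implicit in Theorem~\ref{theorem_A_omega_self_adjoint} (otherwise $A_{[\omega]}$ would have a non-real eigenvalue, contradicting self-adjointness, cf.\ Theorem~\ref{theorem_krein_abstract_B}~(i)). To extend this to $\mathcal{G}_\Omega$ one argues as in Lemma~\ref{lemma_M_inv}~(ii): write $(I - \omega \overline{M(\lambda)})(I + \omega \overline{M(\lambda)^{-1}}) = (1 - \omega^2) + \mathcal{K}_\lambda$ with a compact remainder $\mathcal{K}_\lambda$ mapping $\mathcal{G}_\Omega$ into $\mathcal{G}_\Omega^{1/2}$ (use the commutator estimate of Proposition~\ref{proposition_commutator_C_lambda} and the anti-commutator bound of Proposition~\ref{proposition_commutator}), exploit the assumption $|\omega(x)|\neq 1$ to invert $1-\omega^2$ via Lemma~\ref{lemma_mult_op}, and apply Fredholm's alternative after verifying injectivity by a kernel-regularity bootstrap analogous to the one in the proof of Lemma~\ref{lemma_M_inv}~(ii).

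For the special case $l = 3$, the trace class property yields existence and completeness of the wave operators for $\{A_{[\omega]},T_{\textup{MIT}}\}$ and the unitary equivalence of the absolutely continuous parts by the Birman--Kato theorem (\cite[Chapter~0, Theorem~8.2]{Y10}). The explicit trace formula is obtained by the same manipulation as in \cite[Theorem~4.6]{BEHL18} and in Corollary~\ref{corollary_resolvent_power_difference}: write the second derivative of $\overline{\gamma(\lambda)}(I_4 - \omega\overline{M(\lambda)})^{-1}\omega\gamma(\overline{\lambda})^*$ via~\eqref{product_rule}, invoke cyclicity of the trace, and use the standard $\gamma$-field / Weyl function identity $\gamma(\overline{\lambda})^*\overline{\gamma(\lambda)} = \frac{\textup{d}}{\textup{d}\lambda}\overline{M(\lambda)}$ to recombine the factors into the form displayed in the statement.
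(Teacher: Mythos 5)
Your proposal is correct and follows exactly the route the paper intends: the paper proves this result by adapting the argument of Theorem~\ref{theorem_Schatten_von_Neumann_domain} and Corollary~\ref{corollary_resolvent_power_difference}, replacing the resolvent formula of Theorem~\ref{theorem_self_adjoint_noncritical} by that of Theorem~\ref{theorem_A_omega_self_adjoint} and redoing the Leibniz/weak-Schatten bookkeeping, which is precisely what you do. Your treatment of the one genuinely new ingredient, the bounded invertibility of $I - \omega \overline{M(\lambda)}$ in $\mathcal{G}_\Omega$ via the factorization $(I - \omega\overline{M(\lambda)})(I + \omega\overline{M(\lambda)^{-1}}) = (1-\omega^2) + \mathcal{K}_\lambda$ and Fredholm's alternative, mirrors Lemma~\ref{lemma_M_inv} in the way the paper implicitly relies on, so the argument is complete.
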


\subsection{On the connection of $A_\vartheta$, $A_{[\omega]}$, and Dirac operators with $\delta$-shell interactions}
\label{section_confinement}

In this section we assume that $\Omega_+ \subset \mathbb{R}^3$ is a bounded $C^2$-domain, we set 
$\Omega_- := \mathbb{R}^3 \setminus \overline{\Omega_+}$, and $\Sigma := \partial \Omega_\pm$.
By $\nu_\pm$ we denote the unit normal vector field pointing outwards of $\Omega_\pm$ and 
for functions $f \in L^2(\mathbb{R}^3; \mathbb{C}^4)$ we will use the notation
$f_\pm := f \upharpoonright \Omega_\pm$. 
Assume that $a \in (\frac{1}{2}, 1 ]$, let
$\eta, \tau \in \textup{Lip}_a({\Sigma})$ be real-valued functions on $\Sigma$, and consider
 the formal differential expression 
 $$-i \alpha \cdot \nabla + m \beta + (\eta I_4 + \tau \beta) \delta_\Sigma,$$
 where $\delta_\Sigma$ stands for the $\delta$-distribution supported on the interface $\Sigma$.
 In analogy to the case of constant interaction strengths in \cite[Section 3]{BEHL19_1} we introduce the associated Dirac operator in
$L^2(\mathbb{R}^3; \mathbb{C}^4)$ as
\begin{equation} \label{def_delta_op_main}
  \begin{split}
    B_{\eta, \tau} f &:= (-i \alpha \cdot \nabla + m \beta) f_+ \oplus
        (-i \alpha \cdot \nabla + m \beta) f_-, \\
    \dom B_{\eta, \tau} 
        &:= \bigg\{ f = f_+ \oplus f_- \in H^1(\Omega_+; \mathbb{C}^4) \oplus H^1(\Omega_-; \mathbb{C}^4): \\
    &\qquad  i (\alpha \cdot \nu_+) (f_+|_{\Sigma} - f_-|_{\Sigma}) 
        = -\frac{1}{2} (\eta I_4 + \tau \beta) (f_+|_{\Sigma} + f_-|_{\Sigma}) \bigg\}.
  \end{split}
\end{equation}
Note that so far singularly perturbed Dirac operators of this form with electrostatic and Lorentz scalar $\delta$-shell interactions have been studied only for constant
coefficients $\eta,\tau\in\mathbb R$, see, e.g., \cite{AMV14, AMV15, AMV16, BEHL18, BEHL19_1, BH17, BHOP19, HOP17, M17, MP18a, MP18b, OP19, OV17}.
In particular, it is known that for constant $\eta, \tau \in \mathbb{R}$ such that
$\eta^2 - \tau^2 = -4$ the operator $B_{\eta, \tau}$
decouples into two self-adjoint operators acting in $L^2(\Omega_\pm; \mathbb{C}^4)$ with certain boundary conditions; cf. \cite[Lemma~3.1~(ii)]{BEHL19_1} 
 and 
\cite[Section~V]{DES89}, \cite[Section~5]{AMV15}.
This phenomenon is referred to as {\it confinement}, since a particle which is located in 
$\Omega_\pm$ will stay in $\Omega_\pm$ for all times; in other words, the $\delta$-potential is impenetrable.

Define the projections
\begin{equation*}
  P_\pm^{\Omega_+} := \frac{1}{2} \big( I_4 \pm i \beta (\alpha \cdot \nu_+)\big) \quad \text{and} \quad
  P_\pm^{\Omega_-} := \frac{1}{2} \big( I_4 \pm i \beta (\alpha \cdot \nu_-)\big).
\end{equation*}
Moreover, for $a \in (\frac{1}{2}, 1 ]$ and real-valued $\vartheta\in \textup{Lip}_a({\Sigma})$ denote by 
$A_\vartheta^{\Omega_\pm}$ 
the self-adjoint operators defined in \eqref{def_A_tau_domain} acting in $L^2(\Omega_\pm; \mathbb{C}^4)$.
Our aim is to show that
\begin{equation}\label{schoen}
B_{\eta, \tau}=A_\vartheta^{\Omega_+} \oplus A_\vartheta^{\Omega_-}
\end{equation}
for suitable $\vartheta,\eta,\tau\in \textup{Lip}_a({\Sigma})$. With the help of this identity one can translate (under the appropriate assumptions on the 
interaction strengths $\vartheta,\eta,\tau$) results for Dirac operators with $\delta$-interactions to the operators $A_\vartheta^{\Omega_\pm}$
studied in this paper, and vice versa; cf. Lemma~\ref{lem_domain_confinement} and Theorem~\ref{finalthm} below for an illustration. 
The following preparatory lemma will be useful.

\begin{lem}\label{gutzuhaben}
  Let $\eta$ and $\tau$ be real-valued functions on $\Sigma$, assume that the identity $\eta^2(x) - \tau^2(x) = -4$ holds for all $x\in\Sigma$, 
  and let $f_\pm\in H^1(\Omega_\pm; \mathbb{C}^4)$. 
  Then the jump condition
  \begin{equation} \label{jussi1}
    i (\alpha \cdot \nu_+) (f_+|_\Sigma - f_-|_\Sigma) 
        = -\frac{1}{2} (\eta I_4 + \tau \beta) (f_+|_\Sigma + f_-|_\Sigma)
  \end{equation}
  is equivalent to the boundary conditions
 \begin{equation} \label{jussi2}
    \big( (2 + \tau) \beta - \eta I_4 \big) P_+^{\Omega_\pm} f_\pm|_{\Sigma} 
      = -\big( (2 - \tau) I_4 + \eta \beta \big) P_+^{\Omega_\pm} \beta f_\pm|_{\Sigma}.
  \end{equation}
  \end{lem}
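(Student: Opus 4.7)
The plan is to set $a := f_+|_\Sigma$, $b := f_-|_\Sigma$ and to introduce the matrices
\begin{equation*}
  \Lambda_\pm := \tfrac{1}{2}(\eta I_4 + \tau\beta) \pm i(\alpha\cdot\nu_+),
\end{equation*}
so that the jump condition~\eqref{jussi1}, after multiplication by $2$ and rearrangement, reads compactly as $\Lambda_+ a + \Lambda_- b = 0$. A direct computation based on the anti-commutation relations~\eqref{eq_commutation} and $(\alpha\cdot\nu_+)^2 = I_4$ gives
\begin{equation*}
  \Lambda_\pm^2 = \tfrac{1}{4}\bigl((\eta^2+\tau^2-4)I_4 + 2\eta\tau\beta\bigr) \pm i\eta(\alpha\cdot\nu_+),
\end{equation*}
and the confinement hypothesis $\tau^2-\eta^2 = 4$ collapses this to the key identity $\Lambda_\pm^2 = \eta\Lambda_\pm$. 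The same kind of computation yields $(\Lambda_\mp - \eta I_4)\Lambda_\pm = 2 I_4 \mp i(\alpha\cdot\nu_+)(\eta I_4 + \tau\beta)$ under the same hypothesis.

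To decouple, I would apply $\Lambda_-$ on the left of $\Lambda_+ a = -\Lambda_- b$: using $\Lambda_-^2 = \eta\Lambda_-$ this produces $(\Lambda_- - \eta I_4)\Lambda_+ a = 0$. Substituting the formula above and left-multiplying the resulting equation $2a = i(\alpha\cdot\nu_+)(\eta I_4 + \tau\beta) a$ by $-i(\alpha\cdot\nu_+)$ reproduces exactly $\Lambda_+ a = 0$; applying $\Lambda_+$ on the left of the jump equation analogously yields $\Lambda_- b = 0$. The reverse implication is trivial because $\Lambda_+ + \Lambda_- = \eta I_4 + \tau\beta$ and $\Lambda_+ - \Lambda_- = 2i(\alpha\cdot\nu_+)$ identify the relation $\Lambda_+ a + \Lambda_- b = 0$ with~\eqref{jussi1}.

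The remaining step recasts $\Lambda_+ a = 0$ as the first line of~\eqref{jussi2}. From $i\beta(\alpha\cdot\nu_+) = 2 P_+^{\Omega_+} - I_4$ one obtains $2i(\alpha\cdot\nu_+) = 4\beta P_+^{\Omega_+} - 2\beta$, so
\begin{equation*}
  2\Lambda_+ = \eta I_4 + (\tau-2)\beta + 4\beta P_+^{\Omega_+}.
\end{equation*}
Decomposing $a = P_+^{\Omega_+} a + P_-^{\Omega_+} a$ and invoking $\beta P_\pm^{\Omega_+} = P_\mp^{\Omega_+}\beta$ from~\eqref{beta_P_pm}, the projections of $\Lambda_+ a = 0$ onto $\ran P_+^{\Omega_+}$ and $\ran P_-^{\Omega_+}$ read
\begin{equation*}
  \eta P_+^{\Omega_+} a + (\tau - 2)\beta P_-^{\Omega_+} a = 0 \quad\text{and}\quad \eta P_-^{\Omega_+} a + (\tau+2)\beta P_+^{\Omega_+} a = 0,
\end{equation*}
and these two relations are equivalent to one another modulo $\tau^2 - \eta^2 = 4$. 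Multiplying the first by $-1$ and rewriting $\beta P_-^{\Omega_+} a = P_+^{\Omega_+}\beta a$ together with $\beta P_+^{\Omega_+}\beta = P_-^{\Omega_+}$ brings it into precisely the first line of~\eqref{jussi2}. The analogous statement for $f_-$ is obtained by running the same argument with $\nu_+$ replaced by $\nu_- = -\nu_+$: this identifies $\Lambda_- b = 0$ with the ``$\Lambda_+$''-condition for $\Omega_-$ and replaces $P_-^{\Omega_+}$ by $P_+^{\Omega_-}$.

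The main obstacle is purely algebraic: one has to carry out the matrix calculations in step one carefully, exploiting the anti-commutation of $\beta$ with each $\alpha_j$ together with the confinement relation $\tau^2 = \eta^2 + 4$, and then verify in the last step that the two projected scalar identities really do reassemble into the single matrix equation~\eqref{jussi2}. Once the clean identity $\Lambda_\pm^2 = \eta \Lambda_\pm$ is in hand, both the decoupling and the final identification with~\eqref{jussi2} are almost automatic.
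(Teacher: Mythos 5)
Your algebra up to the decoupling step is correct and is essentially the paper's argument in different clothing: the identities $\Lambda_\pm^2=\eta\Lambda_\pm$ and $(\Lambda_\mp-\eta I_4)\Lambda_\pm=2I_4\mp i(\alpha\cdot\nu_+)(\eta I_4+\tau\beta)$ check out, and your deduction that \eqref{jussi1}, i.e. $\Lambda_+a+\Lambda_-b=0$, is equivalent to the pair $\Lambda_+a=0$, $\Lambda_-b=0$ is exactly the decoupling the paper achieves by multiplying the rearranged jump condition by $\pm i(\alpha\cdot\nu_+)+\tfrac12(\eta I_4-\tau\beta)$ to reach $\bigl(2I_4-(\eta I_4-\tau\beta)i(\alpha\cdot\nu_\pm)\bigr)f_\pm|_\Sigma=0$.

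The genuine gap is in the last step, where you identify $\Lambda_+a=0$ with the first line of \eqref{jussi2}. First, your two projected relations $\eta P_+^{\Omega_+}a+(\tau-2)\beta P_-^{\Omega_+}a=0$ and $\eta P_-^{\Omega_+}a+(\tau+2)\beta P_+^{\Omega_+}a=0$ are \emph{not} equivalent to one another under $\tau^2-\eta^2=4$: at any point where $\eta(x)=0$, equivalently $\tau(x)=\pm2$ (a case the lemma must cover -- it is precisely the pure Lorentz scalar/MIT bag situation $B_{0,2}$ used later in the paper), one relation is trivially satisfied while the other forces $P_+^{\Omega_+}a=0$ resp.\ $P_-^{\Omega_+}a=0$. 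Second, the first relation alone does not "rewrite into" \eqref{jussi2}: rearranging \eqref{jussi2} as $\bigl((2+\tau)\beta-\eta I_4\bigr)P_+^{\Omega_+}a+\bigl((2-\tau)I_4+\eta\beta\bigr)P_+^{\Omega_+}\beta a=0$ and noting from \eqref{beta_P_pm} that $\beta P_+^{\Omega_+}a$ and $\eta\beta P_+^{\Omega_+}\beta a=\eta P_-^{\Omega_+}a$ lie in $\ran P_-^{\Omega_+}$ while $\eta P_+^{\Omega_+}a$ and $P_+^{\Omega_+}\beta a$ lie in $\ran P_+^{\Omega_+}$, one sees that the $P_+^{\Omega_+}$-component of \eqref{jussi2} is your first relation and its $P_-^{\Omega_+}$-component is your second; so \eqref{jussi2} is the \emph{conjunction} of the two, not a reformulation of either one. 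The repair is short and makes the mutual-equivalence claim unnecessary: since $P_+^{\Omega_+}+P_-^{\Omega_+}=I_4$, the single matrix equation \eqref{jussi2} holds if and only if both projected relations hold, which is exactly $\Lambda_+a=0$; the same argument with $\nu_+$ replaced by $\nu_-=-\nu_+$ handles the $\Omega_-$ condition. With that correction your proof is complete and follows the same route as the paper's.
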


 \begin{proof}
 Observe first that separating terms  with $f_\pm$ in \eqref{jussi1} leads to 
  \begin{equation} \label{jump_condition_delta_opjussi3}
    \left( i (\alpha \cdot \nu_+) + \frac{1}{2} (\eta I_4 + \tau \beta) \right) f_+|_\Sigma  
        = \left( i (\alpha \cdot \nu_+) -\frac{1}{2} (\eta I_4 + \tau \beta) \right) f_-|_\Sigma.
  \end{equation}
  We multiply \eqref{jump_condition_delta_opjussi3}
  by $\pm i (\alpha \cdot \nu_+) + \frac{1}{2}(\eta I_4 - \tau \beta)$ and  use $\beta^2 = I_4$, $\eta^2 - \tau^2 = -4$, \eqref{eq_commutation}, which implies
  $$i (\alpha \cdot \nu_+) (\eta I_4 + \tau \beta)= (\eta I_4 - \tau \beta)i (\alpha \cdot \nu_+),$$ 
  and  
  $\nu_- = - \nu_+$, and arrive at the following equivalent form of \eqref{jussi1}:
  \begin{equation} \label{boundary_condition_delta_op}
    \big( 2 I_4 - (\eta I_4 - \tau \beta) i(\alpha \cdot \nu_\pm)  \big) f_\pm|_\Sigma = 0.
  \end{equation}
  From $\beta^2 = I_4$ and~\eqref{beta_P_pm} we see that 
    $I_4 = P_+^{\Omega_\pm} + P_-^{\Omega_\pm} = P_+^{\Omega_\pm} + \beta P_+^{\Omega_\pm} \beta$,
  and thus \eqref{boundary_condition_delta_op}
  is equivalent to
  \begin{equation*}
    \big( 2 I_4 - (\eta I_4 - \tau \beta) i(\alpha \cdot \nu_\pm)  \big) P_+^{\Omega_\pm} f_\pm|_{\Sigma} 
      = -\big( 2 I_4 - (\eta I_4 - \tau \beta) i(\alpha \cdot \nu_\pm)  \big) \beta P_+^{\Omega_\pm} \beta f_\pm|_{\Sigma}.
  \end{equation*}
  Multiplying both sides by $\beta$ and using 
  $i \beta (\alpha \cdot \nu_\pm) P_+^{\Omega_\pm} = P_+^{\Omega_\pm}$  we conclude that the jump condition \eqref{jussi1} is equivalent to the boundary condition \eqref{jussi2}. 
 \end{proof}
 
The next proposition provides conditions on  $\vartheta,\eta,\tau\in \textup{Lip}_a({\Sigma})$ such that \eqref{schoen} holds.

\begin{prop} \label{proposition_domain_confinement}
  Let $a \in (\frac{1}{2}, 1]$ and let $\eta, \tau, \vartheta$ be real-valued functions on $\Sigma$. 
  Then the following statements hold: 
  \begin{itemize}
    \item[$\textup{(i)}$] Assume that $\eta,\tau\in \textup{Lip}_a(\Sigma)$, $\eta(x)^2 - \tau(x)^2 = -4$, and $\tau(x)\neq2$ for all $x \in \Sigma$,
    and define 
    \begin{equation}\label{nimmdas}
    \vartheta = \frac{\eta}{2 - \tau}.
    \end{equation}
    Then $\vartheta\in \textup{Lip}_a(\Sigma)$, $\vert \vartheta(x)\vert \neq 1$ for all $x\in\Sigma$, and \eqref{schoen} holds.
    \item[$\textup{(ii)}$] Assume that $\vartheta\in \textup{Lip}_a(\Sigma)$, $\vert \vartheta(x)\vert \neq 1$
    for all $x \in \Sigma$, and define
    \begin{equation}\label{formulas eta in terms of tau}
      \eta = \frac{4 \vartheta}{1 - \vartheta^2} \quad \text{and} \quad \tau = \frac{2 (1 + \vartheta^2)}{\vartheta^2 - 1}.
    \end{equation}
    Then $\eta,\tau\in \textup{Lip}_a(\Sigma)$, $\eta^2(x) - \tau^2(x) = -4$ for all $x\in\Sigma$, and \eqref{schoen} holds. 
  \end{itemize}
 In particular, in both situations \textup{(i)} and \textup{(ii)} the operator $B_{\eta,\tau}$ in \eqref{def_delta_op_main} is self-adjoint in $L^2(\mathbb{R}^3; \mathbb{C}^4)$. 
\end{prop}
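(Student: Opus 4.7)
The plan is to reduce everything to Lemma~\ref{gutzuhaben}, which rephrases the jump condition in $\dom B_{\eta,\tau}$ as the two boundary conditions
\begin{equation*}
  \bigl((2+\tau)\beta - \eta I_4\bigr) P_+^{\Omega_\pm} f_\pm|_\Sigma
    = -\bigl((2-\tau)I_4 + \eta\beta\bigr) P_+^{\Omega_\pm} \beta f_\pm|_\Sigma,
\end{equation*}
one for each sign. Once these are shown equivalent to the scalar boundary condition
$\vartheta P_+^{\Omega_\pm} f_\pm|_\Sigma = P_+^{\Omega_\pm}\beta f_\pm|_\Sigma$
defining $\dom A_\vartheta^{\Omega_\pm}$, the direct-sum representation \eqref{schoen} follows from comparing domains (the $H^1$-regularity on each side is common to both operators), and the self-adjointness of $B_{\eta,\tau}$ is inherited from Theorem~\ref{theorem_self_adjoint_noncritical} applied to each summand.

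For assertion~\textup{(i)}, I would first check the regularity and non-criticality of $\vartheta = \eta/(2-\tau)$: since $\tau(x) \neq 2$ and $\tau \in \textup{Lip}_a(\Sigma)$, the function $1/(2-\tau)$ lies in $\textup{Lip}_a(\Sigma)$, hence so does $\vartheta$. If $\vartheta(x)^2 = 1$ at some point, then $\eta^2 = (2-\tau)^2$ would combine with $\eta^2 = \tau^2 - 4$ to give $\tau(x) = 2$, contradicting the assumption. Next I would carry out the main algebraic step, namely decomposing the boundary condition from Lemma~\ref{gutzuhaben} into its $P_+^{\Omega_\pm}$ and $P_-^{\Omega_\pm}$ components. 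Setting $g := P_+^{\Omega_\pm} f_\pm|_\Sigma$ and $h := P_+^{\Omega_\pm} \beta f_\pm|_\Sigma$, and using \eqref{beta_P_pm} (so that $\beta g \in \ran P_-^{\Omega_\pm}$ and $\beta h \in \ran P_-^{\Omega_\pm}$), projecting by $P_+^{\Omega_\pm}$ kills the $\beta$-terms and leaves
\begin{equation*}
  -\eta g = -(2-\tau)h, \qquad \text{i.e.,} \qquad h = \vartheta g,
\end{equation*}
while projecting by $P_-^{\Omega_\pm}$ and multiplying by $\beta$ yields $(2+\tau) g = -\eta h$. The consistency of these two scalar relations is precisely $\eta^2 = \tau^2 - 4$, so under the standing hypothesis the second equation is redundant and the boundary condition collapses exactly to $\vartheta g = h$, which is the defining condition of $A_\vartheta^{\Omega_\pm}$.

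For assertion~\textup{(ii)}, I would verify by direct computation that the formulas \eqref{formulas eta in terms of tau} produce functions $\eta,\tau \in \textup{Lip}_a(\Sigma)$ (note $1-\vartheta^2$ is bounded away from zero by $|\vartheta|\neq 1$), that
\begin{equation*}
  \eta^2 - \tau^2 = \frac{16\vartheta^2 - 4(1+\vartheta^2)^2}{(1-\vartheta^2)^2} = \frac{-4(1-\vartheta^2)^2}{(1-\vartheta^2)^2} = -4,
\end{equation*}
that $\tau(x)\neq 2$ (otherwise $1+\vartheta^2 = \vartheta^2-1$), and that $\eta/(2-\tau) = \vartheta$ using $2-\tau = 4/(1-\vartheta^2)$. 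Thus \textup{(ii)} is just \textup{(i)} applied to this specific $(\eta,\tau)$, and the self-adjointness of $B_{\eta,\tau}$ in both cases is immediate from \eqref{schoen} and Theorem~\ref{theorem_self_adjoint_noncritical}. The only genuinely delicate point is the projection argument: one has to be careful that the $P_-^{\Omega_\pm}$-component of the boundary condition is automatically implied by the $P_+^{\Omega_\pm}$-component precisely under the constraint $\eta^2-\tau^2=-4$, which is what singles out the confinement regime.
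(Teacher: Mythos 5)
Your proposal is correct, and it follows the paper's skeleton: the reformulation of the jump condition as \eqref{jussi2} via Lemma~\ref{gutzuhaben}, the reduction of (ii) to (i) through the identity $\vartheta=\eta/(2-\tau)$, and self-adjointness of $B_{\eta,\tau}$ deduced from \eqref{schoen} together with Theorem~\ref{theorem_self_adjoint_noncritical}. The only place where you diverge is the algebraic step turning \eqref{jussi2} into the boundary condition of $A_\vartheta^{\Omega_\pm}$. The paper multiplies \eqref{jussi2} by the matrix $(2-\tau)I_4-\eta\beta$, which is invertible since $(2-\tau)^2-\eta^2=8-4\tau\neq0$, and uses $\eta^2-\tau^2=-4$ to reduce both sides in one stroke to $\eta\,P_+^{\Omega_\pm}f_\pm|_{\Sigma}=(2-\tau)\,P_+^{\Omega_\pm}\beta f_\pm|_{\Sigma}$. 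You instead split \eqref{jussi2} into its $P_+^{\Omega_\pm}$- and $P_-^{\Omega_\pm}$-components: with $g=P_+^{\Omega_\pm}f_\pm|_{\Sigma}$ and $h=P_+^{\Omega_\pm}\beta f_\pm|_{\Sigma}$, relation \eqref{beta_P_pm} gives $\beta g,\beta h\in\ran P_-^{\Omega_\pm}$, so the projections produce the pair $\eta g=(2-\tau)h$ and $(2+\tau)g=-\eta h$; and since $h=\tfrac{\eta}{2-\tau}g$ implies $-\eta h=-\tfrac{\eta^2}{2-\tau}g=(2+\tau)g$ precisely because $\eta^2=\tau^2-4$ and $\tau\neq2$, the second relation is redundant and the condition collapses to $\vartheta g=h$. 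Both computations are equally short; yours has the mild expository advantage of making explicit that $\eta^2-\tau^2=-4$ is exactly the constraint under which the two projected conditions merge into one (the confinement regime), a fact the paper's multiplication trick encodes implicitly in the choice of the auxiliary matrix. Your auxiliary verifications (Hölder regularity of $\vartheta$ and $|\vartheta|\neq1$ in (i), and the identities $\eta^2-\tau^2=-4$, $\tau\neq2$, $\eta/(2-\tau)=\vartheta$ in (ii)) are the ones the paper declares easy to check, and they are carried out correctly.
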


\begin{proof}
The assertion \eqref{schoen} in  (i) and (ii) follows from Lemma~\ref{gutzuhaben}; the remaining assertions on $\eta, \tau, \vartheta$ in  (i) and (ii) 
are easy to check. 
In fact, to verify \eqref{schoen} in item (i) we multiply \eqref{jussi2} by 
the matrix $( (2 - \tau)I_4 - \eta \beta)$, which is invertible by our assumptions on the functions $\eta$ and $\tau$. Using 
$\eta^2 - \tau^2 = -4$ we obtain for the left-hand side of \eqref{jussi2}
  \begin{equation*}
    \big( (2 - \tau)I_4 - \eta \beta \big) \big( (2 + \tau) \beta - \eta I_4 \big)P_+^{\Omega_\pm} f_\pm|_{\Sigma} 
        = -4 \eta P_+^{\Omega_\pm} f_\pm|_{\Sigma} 
  \end{equation*}
  and for the right-hand side of \eqref{jussi2}
  \begin{equation*}
    -\big( (2 - \tau)I_4 - \eta \beta \big) \big( (2 - \tau)I_4 + \eta \beta \big)P_+^{\Omega_\pm} \beta f_\pm|_{\Sigma}
        = -(8-4 \tau) P_+^{\Omega_\pm} \beta f_\pm|_{\Sigma}.
  \end{equation*}
  Therefore, if $\{ \mathcal{G}_{\Omega_\pm}, \Gamma_0^{\Omega_\pm}, \Gamma_1^{\Omega_\pm} \}$ denote the quasi boundary triples 
  from Theorem~\ref{theorem_quasi_triple_domain} then \eqref{jussi2} is equivalent to 
  \begin{equation} \label{boundary_condition_confinement1}
    \frac{\eta}{2 - \tau} \Gamma_0^{\Omega_\pm} f_\pm = \Gamma_1^{\Omega_\pm} f_\pm.
  \end{equation}
With $\vartheta$ in \eqref{nimmdas} we now 
conclude from Lemma~\ref{gutzuhaben} that $f=f_+\oplus f_-\in\dom B_{\eta,\tau}$ if and only if $f_\pm\in\dom A_\vartheta^{\Omega_\pm}$, that is, 
the identity \eqref{schoen} is valid. 

To show \eqref{schoen} in item (ii) note first that \eqref{formulas eta in terms of tau} yields $\vartheta = \frac{\eta}{2 - \tau}$. The above observation that
\eqref{boundary_condition_confinement1} is equivalent to \eqref{jussi2}, and hence equivalent to \eqref{jussi1} by Lemma~\ref{gutzuhaben}, implies that 
$f_\pm\in\dom A_\vartheta^{\Omega_\pm}$ if and only if $f=f_+\oplus f_-\in\dom B_{\eta,\tau}$. Hence \eqref{schoen} holds.

Finally, note that under the assumptions on $\eta, \tau, \vartheta$ in  (i) and (ii) 
the operators $A_\vartheta^{\Omega_\pm}$ are both self-adjoint in $L^2(\Omega_\pm; \mathbb{C}^4)$ by Theorem~\ref{theorem_self_adjoint_noncritical}.
Hence it follows from \eqref{schoen} that the operator $B_{\eta,\tau}$ in \eqref{def_delta_op_main} is self-adjoint in $L^2(\mathbb{R}^3; \mathbb{C}^4)$. 
\end{proof}

For $a \in (\frac{1}{2}, 1 ]$ and real-valued $\omega\in \textup{Lip}_a({\Sigma})$ denote by 
$A_{[\omega]}^{\Omega_\pm}$ 
the self-adjoint operators defined in \eqref{definition_A_omega} acting in $L^2(\Omega_\pm; \mathbb{C}^4)$.
Now we verify 
\begin{equation}\label{schoen2}
B_{\eta, \tau}=A_{[\omega]}^{\Omega_+} \oplus A_{[\omega]}^{\Omega_-}
\end{equation}
for suitable $\omega,\eta,\tau\in \textup{Lip}_a({\Sigma})$, which is the counterpart of the identity \eqref{schoen}. As above one may use \eqref{schoen2}
to translate results for Dirac operators with $\delta$-interactions to the operators $A_{[\omega]}^{\Omega_\pm}$, and vice versa. The next proposition
provides the necessary relations between the functions $\omega,\eta,\tau$. The proof follows the same strategy as the proof of 
Proposition~\ref{proposition_domain_confinement}.

\begin{prop} \label{proposition_domain_confinement_A_omega}
  Let $a \in (\frac{1}{2}, 1]$ and let $\eta, \tau, \omega$ be real-valued functions on $\Sigma$. 
  Then the following statements hold:
  \begin{itemize}
    \item[$\textup{(i)}$] Assume that $\eta,\tau\in \textup{Lip}_a(\Sigma)$, $\eta(x)^2 - \tau(x)^2 = -4$, and $\tau(x)\neq -2$ for all $x \in \Sigma$,
    and define 
    \begin{equation}\label{nimmdas2}
    \omega = - \frac{\eta}{2 + \tau}.
    \end{equation}
    Then $\omega\in \textup{Lip}_a(\Sigma)$, $\vert \omega(x)\vert\neq 1$ for all $x\in\Sigma$, and \eqref{schoen2} holds.
    \item[$\textup{(ii)}$] Assume that $\omega\in \textup{Lip}_a(\Sigma)$, $\vert \omega(x)\vert  \neq 1$
    for all $x \in \Sigma$, and define
    \begin{equation}\label{formulas eta in terms of tau22}
      \eta = \frac{4 \omega}{\omega^2-1} \quad \text{and} \quad \tau = \frac{2 (1 + \omega^2)}{1-\omega^2}.
    \end{equation}
    Then $\eta,\tau\in \textup{Lip}_a(\Sigma)$, $\eta^2(x) - \tau^2(x) = -4$ for all $x\in\Sigma$, and \eqref{schoen2} holds. 
  \end{itemize}
 In particular, in both situations \textup{(i)} and \textup{(ii)} the operator $B_{\eta,\tau}$ in \eqref{def_delta_op_main} is self-adjoint in $L^2(\mathbb{R}^3; \mathbb{C}^4)$. 
\end{prop}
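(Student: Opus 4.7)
The plan is to mirror the proof of Proposition~\ref{proposition_domain_confinement} step by step, altering only the algebraic reduction of the boundary condition \eqref{jussi2}, since now the target is the form $\Gamma_0 f_\pm - \omega\Gamma_1 f_\pm = 0$ appropriate to $A_{[\omega]}^{\Omega_\pm}$ rather than $\Gamma_1 f_\pm - \vartheta\Gamma_0 f_\pm = 0$. Under the common assumption $\eta^2-\tau^2=-4$, Lemma~\ref{gutzuhaben} gives that the jump condition \eqref{jussi1} defining $\dom B_{\eta,\tau}$ is equivalent to \eqref{jussi2}.

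In the earlier proof \eqref{jussi2} was left-multiplied by $(2-\tau)I_4 - \eta\beta$ to clear the matrix prefactor on the right-hand side; here the natural choice is to instead left-multiply by $(2+\tau)\beta + \eta I_4$. Using $\beta^2=I_4$ and $\eta^2-\tau^2=-4$, one obtains the matrix identities
\begin{equation*}
\bigl((2+\tau)\beta + \eta I_4\bigr)\bigl((2+\tau)\beta - \eta I_4\bigr) = (2+\tau)^2 - \eta^2 = 8+4\tau
\end{equation*}
and
\begin{equation*}
\bigl((2+\tau)\beta + \eta I_4\bigr)\bigl((2-\tau)I_4 + \eta\beta\bigr) = (4-\tau^2+\eta^2)\beta + 4\eta = 4\eta,
\end{equation*}
so that \eqref{jussi2} reduces to $(8+4\tau)P_+^{\Omega_\pm}f_\pm|_\Sigma = -4\eta\,P_+^{\Omega_\pm}\beta f_\pm|_\Sigma$. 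Dividing by $8+4\tau=4(2+\tau)\neq 0$ yields precisely $\Gamma_0^{\Omega_\pm}f_\pm = \omega\,\Gamma_1^{\Omega_\pm}f_\pm$ with $\omega = -\eta/(2+\tau)$, i.e.\ with $\omega$ as in \eqref{nimmdas2}. The converse implication is obtained by multiplying back by $(2+\tau)\beta-\eta I_4$ and retracing the same identities, proving \eqref{schoen2}.

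For item~(i), $\omega\in\textup{Lip}_a(\Sigma)$ by the H\"older continuity of $\eta$ and the fact that $1/(2+\tau)$ is H\"older continuous on the compact set $\Sigma$, as $|2+\tau|$ is bounded away from zero. Moreover $|\omega|\neq 1$ is automatic: $\omega^2 = \eta^2/(2+\tau)^2 = (\tau^2-4)/(2+\tau)^2 = (\tau-2)/(\tau+2)$, so $\omega^2=1$ would force $-4=0$. For item~(ii), \eqref{formulas eta in terms of tau22} clearly yields $\eta,\tau\in\textup{Lip}_a(\Sigma)$ (since $|\omega^2-1|$ is bounded away from $0$ on $\Sigma$); the calculation $\eta^2-\tau^2 = (16\omega^2-4(1+\omega^2)^2)/(\omega^2-1)^2 = -4(1-\omega^2)^2/(\omega^2-1)^2 = -4$ verifies $\eta^2-\tau^2=-4$, the relation $\tau=-2$ would force $1=-1$, and the identity $2+\tau=4/(1-\omega^2)$ gives $-\eta/(2+\tau)=\omega$, so item~(i) applies.

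The self-adjointness of $B_{\eta,\tau}$ in $L^2(\mathbb R^3;\mathbb C^4)$ is then immediate from \eqref{schoen2} and the self-adjointness of the summands $A_{[\omega]}^{\Omega_\pm}$ given by Theorem~\ref{theorem_A_omega_self_adjoint}. The only real subtlety is identifying the ``correct'' multiplier $(2+\tau)\beta+\eta I_4$ by analogy with the role played by $(2-\tau)I_4-\eta\beta$ in Proposition~\ref{proposition_domain_confinement}; the rest is a verbatim rerun with cosmetic changes.
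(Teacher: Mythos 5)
Your proof is correct and takes essentially the same route as the paper: the paper left-multiplies \eqref{jussi2} by $\bigl( (2+\tau)\beta - \eta I_4 \bigr)^{-1} = \frac{1}{4(2+\tau)}\bigl( (2+\tau)\beta + \eta I_4 \bigr)$, which is your multiplier up to the nonzero scalar $4(2+\tau)$, and arrives at the same boundary condition $\Gamma_0^{\Omega_\pm} f_\pm = -\frac{\eta}{2+\tau}\Gamma_1^{\Omega_\pm} f_\pm$. The algebraic verifications of the claims on $\eta,\tau,\omega$, the reduction of item (ii) to item (i), and the appeal to Theorem~\ref{theorem_A_omega_self_adjoint} for self-adjointness all match the paper's (partly omitted) argument.
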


\begin{proof}
The assertion \eqref{schoen2} in  (i) and (ii) follows from Lemma~\ref{gutzuhaben}; the remaining assertions on $\eta, \tau, \omega$ in  (i) and (ii) 
are easy to check. 
In fact, to verify \eqref{schoen2} in item (i) we shall multiply the identity \eqref{jussi2} by 
\begin{equation*} 
    \big( (2 + \tau) \beta - \eta I_4 \big)^{-1}
    =\frac{1}{4(2 + \tau)}\big( (2 + \tau) \beta + \eta I_4 \big),
  \end{equation*}
where $\eta^2 - \tau^2 = -4$ was used. It follows that \eqref{jussi2} is equivalent to 
  \begin{equation*} 
  \begin{split}
  P_+^{\Omega_\pm} f_\pm|_{\Sigma} 
      &= -\frac{1}{4(2 + \tau)}\big( (2 + \tau) \beta + \eta I_4 \big)\big( (2 - \tau) I_4 + \eta \beta \big) P_+^{\Omega_\pm} \beta f_\pm|_{\Sigma}\\
&=      -\frac{\eta}{2 + \tau} P_+^{\Omega_\pm} \beta f_\pm|_{\Sigma}.
\end{split}
\end{equation*}
Using the quasi boundary triples $\{ \mathcal{G}_{\Omega_\pm}, \Gamma_0^{\Omega_\pm}, \Gamma_1^{\Omega_\pm} \}$ 
from Theorem~\ref{theorem_quasi_triple_domain} we find that \eqref{jussi2} is equivalent to 
  \begin{equation*} 
     \Gamma_0^{\Omega_\pm} f_\pm =
     -\frac{\eta}{2 + \tau} \Gamma_1^{\Omega_\pm} f_\pm.
  \end{equation*}
With $\omega$ in \eqref{nimmdas2} we now 
conclude from Lemma~\ref{gutzuhaben} that $f=f_+\oplus f_-\in\dom B_{\eta,\tau}$ if and only if $f_\pm\in\dom A_{[\omega]}^{\Omega_\pm}$, that is, 
the identity \eqref{schoen2} is valid. To show \eqref{schoen2} in (ii) one argues in the same way as in the proof of Proposition~\ref{proposition_domain_confinement}.
The details are left to the reader.
\end{proof}

A particularly interesting case in Proposition~\ref{proposition_domain_confinement_A_omega}~(ii) corresponds to 
the choice $\omega = 0$. Since $A_{[0]}^{\Omega_\pm}=T_{\textup{MIT}}^{\Omega_\pm}$ by  \eqref{definition_A_omega}
the identity \eqref{schoen2} reduces to 
\begin{equation*}
B_{0,2}=T_{\textup{MIT}}^{\Omega_+} \oplus T_{\textup{MIT}}^{\Omega_-},
\end{equation*} 
and hence identifies the orthogonal sum
of the
MIT bag operators with a Dirac operator with a purely Lorentz scalar $\delta$-shell potential; cf. \cite[Remark~2.1]{HOP17}.

Now we return to the identities \eqref{schoen} and \eqref{schoen2}, and illustrate how one can 
translate known results for Dirac operators with $\delta$-potentials to self-adjoint Dirac operators on domains studied in this paper.
Some preparation is necessary to formulate Theorem~\ref{finalthm} below. For 
a $C^2$-domain $\Omega \subset \mathbb{R}^3$ with compact boundary
we introduce the orthogonal projection
\begin{equation*}
  P_\Omega: L^2(\mathbb{R}^3; \mathbb{C}^4) \rightarrow L^2(\Omega; \mathbb{C}^4),
  \quad P_\Omega f = f \upharpoonright\Omega,
\end{equation*}
and the corresponding embedding
\begin{equation*}
  \iota_\Omega: L^2(\Omega; \mathbb{C}^4) \rightarrow L^2(\mathbb{R}^3; \mathbb{C}^4),
  \quad \iota_\Omega g = \begin{cases} g & \text{ in } \Omega, \\ 0 & \text{ in } \Omega^c. \end{cases}
\end{equation*}
Moreover, for $\lambda \in \mathbb{C} \setminus ( (-\infty, -m] \cup [m, \infty) )$ we consider the integral operator
$R_\lambda: L^2(\Omega; \mathbb{C}^4) \rightarrow L^2(\Omega; \mathbb{C}^4)$,
\begin{equation} \label{def_R_lambda}
  R_\lambda f(x) = \int_{\Omega} G_\lambda(x-y) f(y) \text{d} y,
  \quad x \in \Omega,\, f \in L^2(\Omega; \mathbb{C}^4),
\end{equation}
with $G_\lambda$ defined by~\eqref{def_G_lambda}. Note that $R_\lambda$ is the compression of the resolvent of the free 
Dirac operator $A$ in~$\mathbb{R}^3$, that is, 
\begin{equation*}
  R_\lambda = P_\Omega (A - \lambda)^{-1} \iota_\Omega;
\end{equation*}
cf.~\cite[Section~1.E]{T92}. The resolvent formulas in the next preparatory lemma are now an immediate consequence of \cite[Theorem~3.4]{BEHL19_1},
Proposition~\ref{proposition_domain_confinement}~(ii), and Proposition~\ref{proposition_domain_confinement_A_omega}~(ii).
We emphasize that in contrast to the previous discussion the coefficients are first assumed to be real constants since 
Dirac operators with electrostatic and Lorentz scalar $\delta$-shell interactions have been studied in this case only. To avoid confusion we
use a subindex here.

\begin{lem} \label{lem_domain_confinement}
  Let $\Omega \subset \mathbb{R}^3$ be a $C^2$-domain with compact boundary and let $R_\lambda$,  $\Phi_\lambda$, and $\mathcal{C}_\lambda$, 
  $\lambda\in\mathbb{C} \setminus \mathbb{R}$, be the operators 
  in~\eqref{def_R_lambda}, \eqref{def_Phi_lambda}, and~\eqref{def_C_lambda}, respectively. Then the following statements hold:
  \begin{itemize}
    \item[$\textup{(i)}$] Assume that $\vartheta_*\in\mathbb R\setminus\{\pm 1\}$ 
    and let    
    \begin{equation*}
      \eta_* = \frac{4 \vartheta_*}{1 - \vartheta_*^2} \quad \text{and} \quad 
      \tau_* = \frac{2 (1 + \vartheta_*^2)}{\vartheta_*^2 - 1}.
    \end{equation*}  
    Then the resolvent formula
    \begin{equation*}
      (A_{\vartheta_*} - \lambda)^{-1} = R_\lambda - \Phi_\lambda 
          \big( I_4 + (\eta_* I_4 + \tau_* \beta) \mathcal{C}_\lambda \big)^{-1} 
              (\eta_* I_4 + \tau_* \beta) \Phi_{\overline{\lambda}}^*
    \end{equation*}
    holds for all $\lambda \in \mathbb{C} \setminus \mathbb{R}$.

    \item[$\textup{(ii)}$] Assume that $\omega_*\in\mathbb R\setminus\{\pm 1\}$ 
    and let  
    \begin{equation*}
      \eta_* = \frac{4 \omega_*}{\omega_*^2 - 1} \quad \text{and} \quad \tau_* = \frac{2 (1 + \omega_*^2)}{1 - \omega_*^2}.
    \end{equation*}  
    Then the resolvent formula
    \begin{equation*}
      (A_{[\omega_*]} - \lambda)^{-1} = R_\lambda - \Phi_\lambda 
          \big( I_4 + (\eta_* I_4 + \tau_* \beta) \mathcal{C}_\lambda \big)^{-1} 
              (\eta_* I_4 + \tau_* \beta) \Phi_{\overline{\lambda}}^*
    \end{equation*}
    holds for all $\lambda \in \mathbb{C} \setminus \mathbb{R}$.
  \end{itemize}
\end{lem}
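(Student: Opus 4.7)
The plan is to derive both formulas by compressing the resolvent formula for the full-space operator $B_{\eta_*,\tau_*}$ in $L^2(\mathbb{R}^3;\mathbb{C}^4)$ given in \cite[Theorem~3.4]{BEHL19_1}, using the confinement decomposition from Propositions~\ref{proposition_domain_confinement} and~\ref{proposition_domain_confinement_A_omega}. The given $C^2$-domain $\Omega$ (bounded or unbounded) will play the role of either $\Omega_+$ or $\Omega_-$ in the notation of Section~\ref{section_confinement}, and since $\vartheta_*$ (respectively $\omega_*$) is a real constant different from $\pm 1$, it trivially belongs to $\textup{Lip}_a(\partial\Omega)$ for any $a\in(\tfrac12,1]$ and avoids the critical values.

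For assertion~(i), I would first observe that the numbers $\eta_*,\tau_*$ in the statement coincide with the expressions in~\eqref{formulas eta in terms of tau}, so that the hypotheses of Proposition~\ref{proposition_domain_confinement}~(ii) are satisfied. Applying that proposition yields the orthogonal decomposition
\[
B_{\eta_*,\tau_*}=A_{\vartheta_*}^{\Omega_+}\oplus A_{\vartheta_*}^{\Omega_-},
\]
and hence
\[
(A_{\vartheta_*}-\lambda)^{-1}=P_\Omega\,(B_{\eta_*,\tau_*}-\lambda)^{-1}\,\iota_\Omega\qquad\text{for }\lambda\in\mathbb{C}\setminus\mathbb{R},
\]
for either choice $\Omega\in\{\Omega_+,\Omega_-\}$.

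Next I would apply $P_\Omega(\,\cdot\,)\iota_\Omega$ to the resolvent formula for $B_{\eta_*,\tau_*}$ in \cite[Theorem~3.4]{BEHL19_1}. That formula involves the free Dirac resolvent on $\mathbb{R}^3$, the potential operator sending densities on $\partial\Omega$ to $L^2(\mathbb{R}^3;\mathbb{C}^4)$ via the same integral kernel as $\Phi_\lambda$ but with $x$ ranging over $\mathbb{R}^3$, and the boundary operator $\mathcal{C}_\lambda$. Compressing the potential operator to $\Omega$ reproduces the operator $\Phi_\lambda$ from~\eqref{def_Phi_lambda}, its $L^2$-adjoint precomposed with $\iota_\Omega$ yields $\Phi_{\overline\lambda}^*$ from~\eqref{def_Phi_lambda_adjoint}, the operator $\mathcal{C}_\lambda$ is intrinsic to $\partial\Omega$ and unchanged, and $P_\Omega(A-\lambda)^{-1}\iota_\Omega=R_\lambda$ by the very definition of $R_\lambda$ in~\eqref{def_R_lambda}. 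Substituting these identifications into the compressed $\mathbb{R}^3$-resolvent formula produces exactly the claim in~(i).

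Assertion~(ii) is proved along identical lines, with Proposition~\ref{proposition_domain_confinement_A_omega}~(ii) replacing Proposition~\ref{proposition_domain_confinement}~(ii): the constants $\eta_*,\tau_*$ in the statement agree with~\eqref{formulas eta in terms of tau22}, so the confinement decomposition now reads $B_{\eta_*,\tau_*}=A_{[\omega_*]}^{\Omega_+}\oplus A_{[\omega_*]}^{\Omega_-}$, and the same compression argument yields the displayed formula. The only real obstacle is purely notational, namely establishing the dictionary between the symbols used in~\cite{BEHL19_1} and the integral operators $\Phi_\lambda$, $\Phi_{\overline\lambda}^*$, $\mathcal{C}_\lambda$, $R_\lambda$ of the present paper; once this dictionary is set up, no further analytic work is required.
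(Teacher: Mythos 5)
Your proposal is correct and coincides with the paper's own argument: the paper also obtains the lemma as an immediate consequence of the resolvent formula for $B_{\eta_*,\tau_*}$ in \cite[Theorem~3.4]{BEHL19_1} combined with the confinement decompositions of Proposition~\ref{proposition_domain_confinement}~(ii) and Proposition~\ref{proposition_domain_confinement_A_omega}~(ii), compressing to $\Omega$ via $P_\Omega(\cdot)\iota_\Omega$ and identifying $R_\lambda$, $\Phi_\lambda$, $\Phi_{\overline\lambda}^*$, $\mathcal{C}_\lambda$. Your write-up merely makes explicit the operator identifications that the paper leaves implicit.
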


For the special choice $\omega_*=0$ the resolvent formula in Lemma~\ref{lem_domain_confinement}~(ii) has the form
\begin{equation}\label{mitformel}
      (T_{\textup{MIT}} - \lambda)^{-1} = R_\lambda - \Phi_\lambda 
          \big( I_4 + 2 \beta \mathcal{C}_\lambda \big)^{-1} 
              2 \beta \Phi_{\overline{\lambda}}^*,\quad\lambda\in \mathbb{C} \setminus \mathbb{R}. 
    \end{equation}
Using \eqref{mitformel} we now obtain a more explicit description of the resolvents of the self-adjoint operators $A_\vartheta$ and $A_{[\omega]}$ 
for general real-valued $\vartheta ,\omega \in \textup{Lip}_a({\partial \Omega})$ in terms of the compressed resolvent of the free Dirac operator and the operators
$\Phi_\lambda$  and $\mathcal{C}_\lambda$ in \eqref{def_Phi_lambda} and~\eqref{def_C_lambda}, respectively. 
The next theorem is an immediate consequence of the resolvent formulas in Theorem~\ref{theorem_self_adjoint_noncritical}
and Theorem~\ref{theorem_A_omega_self_adjoint} and of~\eqref{mitformel}.

\begin{thm}\label{finalthm}
Let $\Omega \subset \mathbb{R}^3$ be a $C^2$-domain with compact boundary, let $R_\lambda$,  $\Phi_\lambda$, and $\mathcal{C}_\lambda$, 
  $\lambda\in\mathbb{C} \setminus \mathbb{R}$, be the operators 
  in~\eqref{def_R_lambda}, \eqref{def_Phi_lambda}, and~\eqref{def_C_lambda}, respectively, and 
  let $\gamma$ and $M$ be as in Proposition~\ref{proposition_gamma_Weyl_quasi_domain}. Then the following assertions hold:
   \begin{itemize}
    \item[$\textup{(i)}$] If $a \in (\frac{1}{2}, 1]$ and
    $\vartheta \in \textup{Lip}_a({\partial \Omega})$ is a real-valued function such that $\vert\vartheta(x)\vert\not=1$ for all $x\in\partial\Omega$, 
    then the resolvent of the self-adjoint operator 
    $A_\vartheta$ in \eqref{def_A_tau_domain} admits the representation 
    \begin{equation*}
    (A_\vartheta-\lambda)^{-1} = R_\lambda - \Phi_\lambda 
          \big( I_4 + 2 \beta \mathcal{C}_\lambda \big)^{-1} 
              2 \beta \Phi_{\overline{\lambda}}^*
        + \gamma(\lambda) \big( \vartheta - M(\lambda) \big)^{-1} \gamma(\overline{\lambda})^*
  \end{equation*}
  for all $\lambda \in \mathbb{C} \setminus \mathbb{R}$.
    \item[$\textup{(ii)}$] If $a \in (\frac{1}{2}, 1]$ and
    $\omega \in \textup{Lip}_a({\partial \Omega})$ is a real-valued function such that $\vert\omega(x)\vert\not=1$ for all $x\in\partial\Omega$, then the resolvent of the self-adjoint operator 
    $A_{[\omega]}$ in \eqref{definition_A_omega} admits the representation
     \begin{equation*}
    (A_{[\omega]}-\lambda)^{-1} =  R_\lambda - \Phi_\lambda 
          \big( I_4 + 2 \beta \mathcal{C}_\lambda \big)^{-1} 
              2 \beta \Phi_{\overline{\lambda}}^*
        + \gamma(\lambda) \big( I_4 - \omega M(\lambda) \big)^{-1} \omega \gamma(\overline{\lambda})^*
  \end{equation*}
    for all $\lambda \in \mathbb{C} \setminus \mathbb{R}$.
      \end{itemize}
\end{thm}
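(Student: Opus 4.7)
The proof will be essentially a substitution argument, with no genuine obstacle beyond bookkeeping. The plan is to combine the abstract Krein-type resolvent formulas already established with the explicit description of $(T_{\textup{MIT}} - \lambda)^{-1}$ given in \eqref{mitformel}.

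For item (i), I would start from Theorem~\ref{theorem_self_adjoint_noncritical}, which for $\lambda \in \mathbb{C} \setminus \mathbb{R} \subset \rho(A_\vartheta) \cap \rho(T_{\textup{MIT}})$ yields
\begin{equation*}
  (A_\vartheta - \lambda)^{-1} = (T_{\textup{MIT}} - \lambda)^{-1} + \gamma(\lambda)\bigl(\vartheta - M(\lambda)\bigr)^{-1}\gamma(\overline{\lambda})^*.
\end{equation*}
I would then insert the identity \eqref{mitformel}, namely
\begin{equation*}
  (T_{\textup{MIT}} - \lambda)^{-1} = R_\lambda - \Phi_\lambda\bigl(I_4 + 2\beta\mathcal{C}_\lambda\bigr)^{-1}\,2\beta\,\Phi_{\overline{\lambda}}^*,
\end{equation*}
into the right hand side above, which immediately yields the formula claimed in (i). For item (ii) the same argument works verbatim, starting from the resolvent formula in Theorem~\ref{theorem_A_omega_self_adjoint} (which involves $(I_4 - \omega M(\lambda))^{-1}\omega$ instead of $(\vartheta - M(\lambda))^{-1}$) and again inserting \eqref{mitformel}.

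The only nontrivial thing to verify is that \eqref{mitformel} itself is a consequence of what has been established earlier. For this I would apply Lemma~\ref{lem_domain_confinement}(ii) with $\omega_* = 0$: the corresponding constants are $\eta_* = 0$ and $\tau_* = 2$, so $\eta_* I_4 + \tau_* \beta = 2\beta$, and the operator $A_{[0]}$ coincides with $T_{\textup{MIT}}$ because the boundary condition $\Gamma_0 f = 0$ reads $P_+ f|_{\partial \Omega} = 0$, which is exactly the MIT bag boundary condition $f|_{\partial \Omega} = -i\beta(\alpha \cdot \nu) f|_{\partial \Omega}$ (see \eqref{def_MIT_op} and \eqref{def_P_pm}). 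Thus Lemma~\ref{lem_domain_confinement}(ii) specializes to \eqref{mitformel}, and the proof of the theorem reduces to the two direct substitutions outlined above. There is no real obstacle; the main point is merely to observe that the abstract reference operator $T_{\textup{MIT}}$ admits the explicit integral-operator representation coming from its interpretation as a confinement case of a Dirac operator with Lorentz scalar $\delta$-shell interaction.
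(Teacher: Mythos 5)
Your proposal is correct and follows exactly the paper's own route: the paper obtains \eqref{mitformel} by specializing Lemma~\ref{lem_domain_confinement}~(ii) to $\omega_*=0$ (so $\eta_*=0$, $\tau_*=2$, and $A_{[0]}=T_{\textup{MIT}}$) and then states the theorem as an immediate consequence of this together with the Krein-type formulas in Theorem~\ref{theorem_self_adjoint_noncritical} and Theorem~\ref{theorem_A_omega_self_adjoint}. Nothing is missing.
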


Finally, we remark that the resolvent formulas above also hold for certain
$\lambda\in\mathbb R$ which belong to the resolvent sets of the involved Dirac operators. This straightforward, but slightly more technical, 
generalization is not pursued further here.

\begin{appendix}
\section{Mapping properties of integral operators between Sobolev spaces} \label{appendix_commutator}

Throughout this appendix let $\Omega$ be a $C^2$-domain in $\mathbb{R}^3$ with compact boundary~$\partial \Omega$.
The aim of this section is to provide results on integral operators acting between Sobolev spaces on the boundary $\partial \Omega$ which are applied in the main part of the paper to prove Proposition~\ref{proposition_commutator_C_lambda}.
Recall that the norm in $H^s(\partial \Omega; \mathbb{C})$ for $s \in (0,1)$ is given by
\begin{equation*}
\| \varphi \|_s^2:=\int_{{\partial \Omega}}|\varphi(x)|^2 \text{d} \sigma(x)
+\int_{{\partial \Omega}}\int_{{\partial \Omega}}
\frac{|\varphi(x)-\varphi(y)|^2}{|x-y|^{2+2s}} \text{d} \sigma(y) \text{d} \sigma(x)
\end{equation*}
for $\varphi \in H^s(\partial \Omega; \mathbb{C})$; cf.~\eqref{def_Sobolev_Slobodeckii}.
To prove the main results of this appendix we need some preliminary considerations.
First we recall a standard result on the growth of the integral of $|x-y|^{b-2}$
with respect to the surface measure $\sigma$.
A proof can be found, e.g., in \cite[Lemma~3.2~(b)]{KH15}.

\begin{lem}\label{integral estimate}
Given $b>0$ there exists $C>0$ such that
\begin{equation*}
\int_{|x-y|\leq \rho}|x-y|^{b-2} \textup{d} \sigma(y)\leq C \rho^{b}
\end{equation*}
for all $x\in{\partial \Omega}$ and $\rho > 0$. In particular, 
$\int_{\partial \Omega} |x-y|^{b-2} \textup{d} \sigma(y)\leq C$ uniformly in $x\in{\partial \Omega}$.
\end{lem}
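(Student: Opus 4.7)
The plan is to reduce the surface integral over $\partial\Omega$ to a planar integral via local coordinate charts, exploiting both the compactness of $\partial\Omega$ and its $C^2$-regularity. The main statement is only nontrivial for small $\rho$; for $\rho$ larger than the diameter of $\partial\Omega$, the integral is independent of $\rho$ and the claim follows from the diameter-bounded case, which is exactly the ``in particular'' assertion.

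For small $\rho$, I would first cover $\partial\Omega$ by finitely many open sets $U_1, \dots, U_N$ such that on each $U_j$ the surface $\partial\Omega$ can be written, after a rigid motion, as the graph of a $C^2$-function $h_j: V_j \to \mathbb R$ for some open $V_j \subset \mathbb R^2$. Then there exist $\delta > 0$ and a constant $c > 0$ (depending only on $\partial \Omega$) such that for every $x \in \partial \Omega$ there is an index $j(x)$ with the property that $B(x, \delta) \cap \partial \Omega$ lies in the chart $U_{j(x)}$ and is parametrized by $\psi: W \to \partial\Omega$, $\psi(u) = (u, h_{j(x)}(u))$, with $W \subset \mathbb R^2$ an open set containing the preimage of $B(x,\delta)\cap\partial\Omega$. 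On this chart one has the standard estimates
\begin{equation*}
c^{-1}|u - u'| \leq |\psi(u) - \psi(u')| \leq c \, |u - u'|, \qquad \textup{d}\sigma(\psi(u)) \leq c\, \textup{d} u,
\end{equation*}
the first because $h_{j(x)}$ is Lipschitz with a uniform constant, the second because the surface measure element $\sqrt{1 + |\nabla h_{j(x)}|^2} \textup{d}u$ is uniformly bounded.

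The core computation is then: for $\rho \leq \delta$ and $x_0 := \psi^{-1}(x)$,
\begin{equation*}
\int_{|x-y|\leq \rho} |x-y|^{b-2} \textup{d}\sigma(y) \leq c^{2-b+1}\!\int_{|u - x_0| \leq c \rho} |u - x_0|^{b-2} \textup{d} u = C' \int_0^{c\rho} r^{b-1} \textup{d}r = \frac{C'}{b}(c\rho)^b,
\end{equation*}
using polar coordinates in $\mathbb R^2$ and the fact that $b > 0$ makes the radial integral convergent. This yields the bound $C\rho^b$ with a constant $C$ independent of $x \in \partial\Omega$ and $\rho \in (0, \delta]$. For $\rho > \delta$, since $\partial \Omega$ is compact we have $|x - y| \leq D$ for some $D > 0$ independent of $x,y \in \partial\Omega$, so
\begin{equation*}
\int_{|x-y|\leq \rho} |x-y|^{b-2} \textup{d}\sigma(y) \leq \int_{\partial\Omega} |x-y|^{b-2} \textup{d}\sigma(y),
\end{equation*}
and the latter is uniformly bounded in $x$ by applying the small-$\rho$ estimate with $\rho = \delta$ and estimating the remaining contribution from $\{y : |x-y| > \delta\}$ by $\delta^{b-2}\sigma(\partial\Omega)$ if $b \leq 2$ or by $D^{b-2}\sigma(\partial\Omega)$ if $b > 2$. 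Enlarging the constant $C$ then gives the claim for all $\rho > 0$, and simultaneously proves the ``in particular'' statement.

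The only mild obstacle is handling the transition between small and large $\rho$ uniformly in $x$; this is resolved by the compactness of $\partial\Omega$, which produces both the finite atlas and the uniform bound on diameters and Lipschitz constants.
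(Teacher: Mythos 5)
Your argument is correct, and it is worth noting that the paper itself gives no proof of this lemma at all: it simply cites \cite[Lemma~3.2~(b)]{KH15}. What you have written is essentially the standard proof that such a reference contains — cover the compact $C^2$ boundary by finitely many graph patches, use the Lebesgue-number/compactness argument to get a uniform $\delta$ and uniform bi-Lipschitz and Jacobian bounds, flatten, and compute the planar integral in polar coordinates, with the large-$\rho$ regime and the ``in particular'' statement handled by compactness and the splitting at $|x-y|=\delta$. So your contribution relative to the paper is simply to make the cited fact self-contained, which is fine. One small bookkeeping point: your constant $c^{2-b+1}$ implicitly uses the lower bi-Lipschitz bound $|x-y|\geq c^{-1}|u-x_0|$ to estimate the kernel, which is only the right direction when $b\leq 2$; for $b>2$ the kernel $|x-y|^{b-2}$ is increasing in $|x-y|$, so you should either use the upper bound $|x-y|\leq c\,|u-x_0|$ (giving the factor $c^{b-2}$ instead), or more simply observe that on the domain of integration $|x-y|^{b-2}\leq \rho^{b-2}$ and $\sigma\bigl(B(x,\rho)\cap\partial\Omega\bigr)\leq C\rho^2$, which gives $C\rho^b$ at once. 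This is a cosmetic adjustment of constants and does not affect the validity of the proof.
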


Next, we show that the multiplication operator with a H\"older continuous
function $\vartheta\in\text{Lip}_a({\partial \Omega})$
is bounded in $H^s({\partial \Omega}; \mathbb{C})$ for any $0 \leq s < a$.

\begin{lem} \label{lemma_mult_op}
Let $0\leq s<a\leq1$ and $\vartheta\in\textup{Lip}_a({\partial \Omega})$. Then the operator given by the multiplication with $\vartheta$ is bounded in $H^s(\partial \Omega; \mathbb{C})$.
\end{lem}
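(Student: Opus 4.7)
The plan is to bound the two pieces of the Sobolev--Slobodeckii norm of $\vartheta\varphi$ separately. For $s=0$ there is nothing to do, since $\vartheta$ is continuous on the compact set ${\partial \Omega}$, hence bounded, and multiplication by a bounded function is clearly continuous on $L^2({\partial \Omega};\mathbb{C})$. So assume $0<s<a$, and let $\|\vartheta\|_\infty$ and $C_\vartheta$ denote the sup norm and the H\"older constant of $\vartheta$.

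For the $L^2$ part of $\|\vartheta \varphi\|_s^2$ one simply uses $|\vartheta(x)\varphi(x)|\leq \|\vartheta\|_\infty|\varphi(x)|$. For the double integral, I would use the standard splitting
\begin{equation*}
\vartheta(x)\varphi(x) - \vartheta(y)\varphi(y) = \vartheta(x)\bigl(\varphi(x)-\varphi(y)\bigr) + \bigl(\vartheta(x)-\vartheta(y)\bigr)\varphi(y)
\end{equation*}
combined with $|a+b|^2\leq 2|a|^2+2|b|^2$. The first resulting term is controlled by $2\|\vartheta\|_\infty^2$ times the Slobodeckii seminorm of $\varphi$. For the second term, H\"older continuity gives the pointwise bound
\begin{equation*}
\frac{|\vartheta(x)-\vartheta(y)|^2 |\varphi(y)|^2}{|x-y|^{2+2s}} \leq C_\vartheta^2\,\frac{|\varphi(y)|^2}{|x-y|^{2-2(a-s)}},
\end{equation*}
so that after Fubini the problem reduces to estimating, uniformly in $y\in{\partial \Omega}$, the integral $\int_{\partial \Omega} |x-y|^{-(2-2(a-s))}\,\textup{d}\sigma(x)$. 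Since $a>s$ the exponent $b:=2(a-s)$ is strictly positive, and Lemma~\ref{integral estimate} provides a uniform bound for this integral.

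Collecting everything, $\|\vartheta\varphi\|_s^2\leq C(\|\vartheta\|_\infty^2+C_\vartheta^2)\|\varphi\|_s^2$, with the $L^2$ part of $\|\varphi\|_s^2$ absorbing the contribution from the H\"older term. The only place where the hypothesis $s<a$ really enters is in forcing $b>0$ in Lemma~\ref{integral estimate}; this is the core of the argument and also the reason the statement is sharp in the form given. No further obstacle is anticipated, as the proof reduces to the elementary decomposition above together with the already-stated kernel estimate.
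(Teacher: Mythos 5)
Your proposal is correct and follows essentially the same route as the paper's own proof: the same decomposition $\vartheta(x)\varphi(x)-\vartheta(y)\varphi(y)=\vartheta(x)\bigl(\varphi(x)-\varphi(y)\bigr)+\bigl(\vartheta(x)-\vartheta(y)\bigr)\varphi(y)$, boundedness of $\vartheta$ for the first term, and the H\"older bound together with Fubini and Lemma~\ref{integral estimate} applied with $b=2(a-s)>0$ for the second. The separate (trivial) treatment of $s=0$ is a harmless addition.
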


\begin{proof}
Throughout the proof let $C$ be a generic constant, which changes its value several times, and let $\varphi\in H^s({\partial \Omega}; \mathbb{C})$ be fixed. In order to get the desired result we estimate in
\begin{equation} \label{Sobolev_norm_mult_op}
\|\vartheta \varphi\|_s^2 = \int_{{\partial \Omega}}|(\vartheta \varphi)(x)|^2 \text{d} \sigma(x)
+\int_{{\partial \Omega}}\int_{{\partial \Omega}}
\frac{|(\vartheta \varphi)(x) - (\vartheta \varphi)(y)|^2}{|x-y|^{2+2s}} \text{d} \sigma(y) \text{d} \sigma(x)
\end{equation}
both terms on the right hand side separately.
First, since $\vartheta\in\text{Lip}_a({\partial \Omega})$ and ${\partial \Omega}$ is bounded, we have 
$\|\vartheta\|_{L^\infty({\partial \Omega})} < \infty$. Therefore
\begin{equation}\label{prod Hs eq2}
\int_{\partial \Omega} |(\vartheta \varphi)(x)|^2 \textup{d} \sigma(x) \leq \|\vartheta\|_{L^\infty({\partial \Omega})}^2\int_{\partial \Omega} |\varphi(x)|^2 \textup{d} \sigma(x) \leq \|\vartheta\|_{L^\infty({\partial \Omega})}^2 \| \varphi \|_s^2.
\end{equation}
To find an upper bound for the second term in \eqref{Sobolev_norm_mult_op}, we note first that by $\vartheta\in\text{Lip}_a({\partial \Omega})$ 
\begin{equation*}
\begin{split}
|\vartheta(x)\varphi(x)-\vartheta(y)\varphi(y)|
&\leq|\vartheta(x)||\varphi(x)-\varphi(y)|+|\vartheta(x)-\vartheta(y)||\varphi(y)|\\
&\leq \|\vartheta\|_{L^\infty({\partial \Omega})}|\varphi(x)-\varphi(y)|+C|x-y|^a|\varphi(y)|.
\end{split}
\end{equation*}
This, Fubini's theorem, and Lemma 
\ref{integral estimate} applied with $b = 2 (a - s) > 0$ imply now
\begin{equation}\label{prod Hs eq3}
\begin{split}
\int_{\partial \Omega} \int_{\partial \Omega} &\frac{|\vartheta(x)\varphi(x)-\vartheta(y)\varphi(y)|^2}{|x-y|^{2+2s}}
 \textup{d} \sigma(y) \textup{d} \sigma(x)\\
&\leq C\int_{\partial \Omega} \int_{\partial \Omega} \frac{|\varphi(x)-\varphi(y)|^2}{|x-y|^{2+2s}}
 \textup{d} \sigma(y) \textup{d} \sigma(x) \\
&\qquad \qquad +C\int_{\partial \Omega}|\varphi(y)|^2\int_{\partial \Omega} |x-y|^{2(a-s)-2}
 \textup{d} \sigma(x) \textup{d} \sigma(y)\\
&\leq C\|\varphi\|^2_s.
\end{split}
\end{equation}
Using \eqref{prod Hs eq2} and \eqref{prod Hs eq3} in~\eqref{Sobolev_norm_mult_op}, we conclude that 
$\|\vartheta\varphi\|_s\leq C\|\varphi\|_s$ for $s<a$.
\end{proof}

Given now $0<a\leq1$ and an integral kernel $k:{\partial \Omega}\times{\partial \Omega}\to\mathbb{C}$ such that
\begin{equation}\label{kernel growth}
\begin{split}
&|k(x,y)|\leq \frac{C}{|x-y|^{2-a}}\quad\text{for all }x\neq y,\\
&|k(x,z)-k(y,z)|\leq C\frac{|x-y|^a}{|x-z|^2}
\quad\text{for all }|x-y|<\frac{1}{4}\,|x-z|,
\end{split}
\end{equation}
define
\begin{equation} \label{def_int_op}
T\varphi(x):=\int_{{\partial \Omega}}k(x,y)\varphi(y) \textup{d} \sigma(y)\quad\text{for }x\in{\partial \Omega}.
\end{equation}
It is well known that if the integral kernel satisfies~\eqref{kernel growth}, then $T$ is bounded in $L^2(\partial \Omega; \mathbb{C})$ and, in particular, the integral on the right hand side of~\eqref{def_int_op} exists a.e. on $\partial \Omega$, see, e.g., \cite[Proposition~3.10]{F95}.
In the following theorem, which is the main result of this appendix, we show that $T$ has even better mapping properties between Sobolev spaces on $\partial \Omega$:

\begin{thm}\label{thm bounded}
Let $0<s<a\leq1$. Then $T$ defined by~\eqref{def_int_op} gives rise to a bounded operator
\begin{equation*}
  T:L^2({\partial \Omega}; \mathbb{C})\to H^s({\partial \Omega}; \mathbb{C}).
\end{equation*}
\end{thm}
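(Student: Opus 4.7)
The plan is to bound $\|T\varphi\|_s^2 = \|T\varphi\|_{L^2}^2 + [T\varphi]_s^2$, where $[T\varphi]_s^2 := \int\int \frac{|T\varphi(x)-T\varphi(y)|^2}{|x-y|^{2+2s}}\,d\sigma(y)\,d\sigma(x)$, by $C\|\varphi\|_{L^2}^2$. The $L^2$-bound is straightforward: the first inequality in \eqref{kernel growth} together with Lemma~\ref{integral estimate} (applied with $b=a$) gives
\begin{equation*}
\int_{\partial\Omega} |k(x,y)|\,d\sigma(y) \leq C\int_{\partial\Omega}|x-y|^{a-2}\,d\sigma(y) \leq C
\end{equation*}
uniformly in $x$, and the same with the roles of $x,y$ swapped, so Schur's test yields $\|T\varphi\|_{L^2} \leq C\|\varphi\|_{L^2}$. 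The heart of the matter is the seminorm estimate.

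For the seminorm, I would split, for each pair $(x,y)$ with $r:=|x-y|$, the difference as $T\varphi(x)-T\varphi(y) = A(x,y)+B(x,y)$, where $A$ integrates over the ``near'' set $\{z:|x-z|<4r\}$ and $B$ over the ``far'' set $\{z:|x-z|\geq 4r\}$. On the far set we have $|x-y|<\frac14|x-z|$, so the second estimate in \eqref{kernel growth} applies and gives $|k(x,z)-k(y,z)|\leq Cr^a|x-z|^{-2}$. Applying Cauchy-Schwarz one obtains
\begin{equation*}
|B(x,y)|^2 \leq C r^{2a}\left(\int_{|x-z|\geq 4r}|x-z|^{-2}\,d\sigma(z)\right)\left(\int_{|x-z|\geq 4r}\frac{|\varphi(z)|^2}{|x-z|^2}\,d\sigma(z)\right),
\end{equation*}
and the first factor is bounded by $C|\log r|$ via polar-like integration on $\partial\Omega$. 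Substituting into $\int\int|B|^2 r^{-2-2s}\,d\sigma(y)\,d\sigma(x)$ and using Fubini to do the $y$-integration first over the region $|x-y|\leq|x-z|/4$ produces the integral $\int_0^{|x-z|/4}\rho^{2a-2s-1}|\log\rho|\,d\rho \leq C|x-z|^{2(a-s)}(1+|\log|x-z||)$; here the condition $s<a$ is exactly what makes the integral near $\rho=0$ converge. A final application of Lemma~\ref{integral estimate} to the remaining $x$-integral yields the bound $C\|\varphi\|_{L^2}^2$.

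For $A$, the triangle inequality and the first bound in \eqref{kernel growth} give $|A(x,y)|$ majorized by two terms of the form $\int_{|x-z|<4r}|\varphi(z)||x-z|^{a-2}\,d\sigma(z)$ (the second, with $|y-z|^{a-2}$, is handled identically using $|y-z|<5r$). Applying Cauchy-Schwarz with the symmetric splitting $|x-z|^{a-2} = |x-z|^{(a-2)/2}\cdot|x-z|^{(a-2)/2}$ and invoking Lemma~\ref{integral estimate} on one factor gives
\begin{equation*}
|A(x,y)|^2 \leq Cr^a\int_{|x-z|<4r}\frac{|\varphi(z)|^2}{|x-z|^{2-a}}\,d\sigma(z)+(\text{symmetric term}).
\end{equation*}
Inserting this into the double integral and swapping the order via Fubini, I integrate $y$ over $\{|x-y|>|x-z|/4\}$ which yields, by polar coordinates, an integral of the shape $\int_{|x-z|/4}^R \rho^{a-1-2s}\,d\rho$ controlled by $C(1+|x-z|^{a-2s}+|\log|x-z||)$. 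The remaining $x$-integral combines this with $|x-z|^{a-2}$ to give exponents $a-2$ and $2a-2s-2$, both of which produce uniformly bounded $x$-integrals thanks to Lemma~\ref{integral estimate} and the hypothesis $s<a$.

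The main obstacle is bookkeeping: in both the near and the far estimate, several candidate exponents appear from the polar-type integrations and one must verify that every resulting borderline integral converges. The assumption $s<a$ is precisely the input that makes each of the exponents $a-s>0$, $2(a-s)>0$ survive in the critical places; without it the $y$-integral near $|x-y|=|x-z|/4$ or the final $x$-integral would fail to converge.
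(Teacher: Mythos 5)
Your proof is correct and follows essentially the same route as the paper: the same splitting of the $H^s$-norm, a Schur/Cauchy--Schwarz argument for the $L^2$ part, and for the seminorm a near/far decomposition of the $z$-integration relative to $|x-y|$, using the difference estimate in \eqref{kernel growth} on the far region and the pointwise kernel bound plus the triangle inequality on the near region (the paper merely organizes this as three sets $A_1,A_2,A_3$, a bookkeeping difference). The only real deviation is that your flat Cauchy--Schwarz splitting produces logarithmic factors and exterior integrals of the form $\int_{|x-y|>\delta}|x-y|^{c-2}\,\mathrm{d}\sigma$ with $c\leq 0$, which are not literally covered by Lemma~\ref{integral estimate} and need a routine extra step (dyadic decomposition, or absorbing $|\log\rho|$ into $\rho^{-\epsilon}$, both using $s<a$), whereas the paper avoids this by inserting weights $|x-z|^{\pm\epsilon}$ with $\epsilon\in(0,a-s)$ inside Cauchy--Schwarz so that Lemma~\ref{integral estimate} applies directly at every stage.
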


\begin{proof}
Throughout the proof let $C$ be a generic constant, which changes its value several times, and let $\varphi\in L^2({\partial \Omega}; \mathbb{C})$ be fixed.
Let us estimate the two terms in 
\begin{equation} \label{Sobolev_norm_int_op}
\| T \varphi\|_s^2 = \int_{{\partial \Omega}}|T \varphi(x)|^2 \text{d} \sigma(x)
+\int_{{\partial \Omega}}\int_{{\partial \Omega}}
\frac{|T \varphi(x) - T \varphi(y)|^2}{|x-y|^{2+2 s}} \text{d} \sigma(y) \text{d} \sigma(x)
\end{equation}
separately.
To control the first one we use \eqref{kernel growth}, the Cauchy-Schwarz inequality, Lemma \ref{integral estimate}, and Fubini's theorem and get
\begin{equation}\label{thm bounded control1}
\begin{split}
\int_{\partial \Omega}|T\varphi(x)|^2 \textup{d} \sigma(x)
&\leq C\int_{\partial \Omega}\bigg(\int_{\partial \Omega}\frac{|\varphi(y)|}{|x-y|^{2-a}}
 \textup{d} \sigma(y)\bigg)^2 \textup{d} \sigma(x)\\
&\leq C\int_{\partial \Omega}\bigg(\int_{\partial \Omega}
\frac{\textup{d} \sigma(y)}{|x-y|^{2-a}}
 \bigg)\bigg(\int_{\partial \Omega}\frac{|\varphi(y)|^2\textup{d} \sigma(y)}{|x-y|^{2-a}}
 \bigg) \textup{d} \sigma(x)\\
&\leq C\int_{\partial \Omega}|\varphi(y)|^2 \textup{d} \sigma(y). 
\end{split}
\end{equation}

Let us now focus on the second term in~\eqref{Sobolev_norm_int_op}.
We set
\begin{equation*}
\begin{split}
A_1&:=\{(x,y,z)\in{\partial \Omega}\times{\partial \Omega}\times{\partial \Omega}:\,4|x-y|<|x-z|\},\\
A_2&:=\{(x,y,z)\in{\partial \Omega}\times{\partial \Omega}\times{\partial \Omega}:\,4|x-y|<|y-z|\},\\
A_3&:=\{(x,y,z)\in{\partial \Omega}\times{\partial \Omega}\times{\partial \Omega}:\,4|x-y|\geq\max(|x-z|,|y-z|)\},
\end{split}
\end{equation*}
and
\begin{equation*}
K_j(x,y,z):=\frac{k(x,z)-k(y,z)}{|x-y|^{1+s}}\,\chi_{A_j}(x,y,z)\quad\text{for }j=1,2,3.
\end{equation*}
Then
\begin{equation}\label{thm bounded eq1}
\begin{split}
\int_{\partial \Omega} \int_{\partial \Omega}&\frac{|T\varphi(x)-T\varphi(y)|^2}{|x-y|^{2+2s}} \textup{d} \sigma(y) \textup{d} \sigma(x) \\
&=\int_{\partial \Omega} \int_{\partial \Omega}\bigg|\int_{\partial \Omega}\frac{k(x,z)-k(y,z)}{|x-y|^{1+s}}\,\varphi(z) \textup{d} \sigma(z)\bigg|^2 \textup{d} \sigma(y) \textup{d} \sigma(x)\\
&\leq C\sum_{j=1}^3\int_{\partial \Omega} \int_{\partial \Omega}\bigg(\int_{\partial \Omega} |K_j(x,y,z)\varphi(z)| \textup{d} \sigma(z)\bigg)^2 \textup{d} \sigma(y) \textup{d} \sigma(x).
\end{split}
\end{equation}
The three terms in the sum in the right hand side of \eqref{thm bounded eq1} are also estimated separately. In order to deal with the first one, let $\epsilon \in (0, a - s)$ be fixed. By \eqref{kernel growth}, the Cauchy-Schwarz inequality, and Lemma \ref{integral estimate} we get first
\begin{equation}\label{thm bounded eq2}
\begin{split}
\int_{\partial \Omega} \int_{\partial \Omega}& \bigg(\int_{\partial \Omega} |K_1(x,y,z)\varphi(z)| \textup{d} \sigma(z)\bigg)^2 \textup{d} \sigma(y) \textup{d} \sigma(x)\\
&\leq C\int_{\partial \Omega} \int_{\partial \Omega}\bigg(\int_{\partial \Omega} \frac{\chi_{A_1}(x,y,z)|\varphi(z)|}{|x-z|^2|x-y|^{1+s-a}}
 \textup{d} \sigma(z)\bigg)^2 \textup{d} \sigma(y) \textup{d} \sigma(x)\\
&\leq C\int_{\partial \Omega} \int_{\partial \Omega}\bigg(\int_{\partial \Omega} \frac{\textup{d}\sigma(z)}{|x-z|^{2(1-\epsilon)}}\bigg) \\
&\qquad \cdot \bigg(\int_{\partial \Omega} \frac{\chi_{A_1}(x,y,z)|\varphi(z)|^2}{|x-z|^{2(1+\epsilon)}|x-y|^{2-2(a-s)}}
 \textup{d} \sigma(z)\bigg) \textup{d} \sigma(y) \textup{d} \sigma(x)\\
&\leq C\int_{\partial \Omega} \int_{\partial \Omega} \int_{\partial \Omega}\frac{\chi_{A_1}(x,y,z)|\varphi(z)|^2}{|x-z|^{2(1+\epsilon)}|x-y|^{2-2(a-s)}}
 \textup{d} \sigma(z) \textup{d} \sigma(y) \textup{d} \sigma(x).
\end{split}
\end{equation}
Using now twice Lemma~\ref{integral estimate}, first for the integral with respect to $y$ with $b = 2 (a - s) > 0$ and $\rho = \frac{1}{4} |x-z|$ and then for the integral with respect to $x$ with $b = 2 (a - s - \epsilon) > 0$, we conclude that
\begin{equation*}\label{thm bounded eq3}
\begin{split}
\int_{\partial \Omega} \int_{\partial \Omega}&\frac{\chi_{A_1}(x,y,z)}{|x-z|^{2(1+\epsilon)}|x-y|^{2-2(a-s)}}
 \textup{d} \sigma(y) \textup{d} \sigma(x) \\
 &=\int_{\partial \Omega} \frac{1}{|x-z|^{2(1+\epsilon)}} \int_{|x-y| <  |x-z|/4} \frac{1}{|x-y|^{2-2(a-s)}} \textup{d} \sigma(y) \textup{d} \sigma(x) \\
& \leq C\int_{\partial \Omega}|x-z|^{2(a-s-\epsilon)-2} \textup{d} \sigma(x)\leq C.
\end{split}
\end{equation*}
Combining this with \eqref{thm bounded eq2} and Fubini's theorem we finally obtain
\begin{equation}\label{thm bounded K1}
\begin{split}
\int_{\partial \Omega} \int_{\partial \Omega}\bigg(\int_{\partial \Omega} |K_1(x,y,z)\varphi(z)| \textup{d} \sigma(z)\bigg)^2 &\textup{d} \sigma(y) \textup{d} \sigma(x) 
 \leq C \int_{\partial \Omega}|\varphi(z)|^2 \textup{d} \sigma(z).
\end{split}
\end{equation}

Regarding the term  in \eqref{thm bounded eq1} containing $K_2$, we simply note the symmetry relation
$K_2(x,y,z)=-K_1(y,x,z)$ and thus, Fubini's theorem and \eqref{thm bounded K1} give
\begin{equation}\label{thm bounded K2}
\int_{\partial \Omega} \int_{\partial \Omega}\bigg(\int_{\partial \Omega} |K_2(x,y,z)\varphi(z)| \textup{d} \sigma(z)\bigg)^2 \textup{d} \sigma(y) \textup{d} \sigma(x)
\leq C \int_{\partial \Omega}|\varphi(z)|^2 \textup{d} \sigma(z).
\end{equation}

Let us finally estimate the term  in \eqref{thm bounded eq1} containing $K_3$. For this purpose, set 
\begin{equation*}
A_0:=\{(x,y,z)\in{\partial \Omega}\times{\partial \Omega}\times{\partial \Omega}:\,4|x-y|\geq|x-z|\}
\end{equation*}
and
\begin{equation*}
K_0(x,y,z):=\frac{k(x,z)}{|x-y|^{1+s}}\,\chi_{A_0}(x,y,z).
\end{equation*}
Then, because of $A_3 \subset A_0$ we easily see that 
\begin{equation}\label{thm bounded split K3}
|K_3(x,y,z)|\leq|K_0(x,y,z)|+|K_0(y,x,z)|.
\end{equation}
Clearly, \eqref{thm bounded split K3}, the triangle inequality, and Fubini's theorem imply that
\begin{equation}\label{thm bounded K3 eq1}
\begin{split}
\int_{\partial \Omega} &\int_{\partial \Omega}\bigg(\int_{\partial \Omega}  |K_3(x,y,z)\varphi(z)|\text{d} \sigma(z)\bigg)^2 \textup{d} \sigma(y) \textup{d} \sigma(x)\\
&\leq C\int_{\partial \Omega} \int_{\partial \Omega}\bigg(\int_{\partial \Omega} |K_0(x,y,z)\varphi(z)| \textup{d} \sigma(z)\bigg)^2 \textup{d} \sigma(y)
 \textup{d} \sigma(x). \\
\end{split}
\end{equation}
To estimate the right hand side of~\eqref{thm bounded K3 eq1}, we perform similar estimates as in~\eqref{thm bounded eq2}. Choose $\epsilon \in (0, a - s)$. Then it follows from \eqref{kernel growth} and the Cauchy-Schwarz inequality that
\begin{equation}\label{thm bounded K0 eq3}
\begin{split}
\int_{\partial \Omega} \int_{\partial \Omega}&\bigg(\int_{\partial \Omega} |K_0(x,y,z)\varphi(z)| \textup{d} \sigma(z)\bigg)^2
 \textup{d} \sigma(y) \textup{d} \sigma(x)\\
&~\leq C\int_{\partial \Omega} \int_{\partial \Omega}\bigg(\int_{\partial \Omega}  \frac{\chi_{A_0}(x,y,z)}{|x-z|^{2-a}} \frac{|\varphi(z)|}{|x-y|^{1+s}} \textup{d} \sigma(z)\bigg)^2
 \textup{d} \sigma(y) \textup{d} \sigma(x)\\
 &~\leq C\int_{\partial \Omega} \int_{\partial \Omega} \bigg(\int_{\partial \Omega} \frac{|\varphi(z)|^2}{|x-y|^{2+2s} |x-z|^{2 (1 - \epsilon)}}\text{d} \sigma(z) \bigg) \\
 &\qquad \qquad \cdot \bigg(\int_{|x-z| \leq 4|x-y|} \frac{1}{|x-z|^{2(1-a + \varepsilon)}} \textup{d} \sigma(z) \bigg)
 \textup{d} \sigma(y) \textup{d} \sigma(x) \\
 &~\leq C\int_{\partial \Omega} \int_{\partial \Omega} \int_{\partial \Omega} \frac{|\varphi(z)|^2}{|x-y|^{2 (1 + s - a + \epsilon)} |x-z|^{2 (1 - \epsilon)}}\text{d} \sigma(z) 
 \textup{d} \sigma(y) \textup{d} \sigma(x),
\end{split}
\end{equation}
where Lemma~\ref{integral estimate} with $b = 2(a-\epsilon) > 0$ and $\rho = 4|x-y|$ was applied in the last step.
Applying now two more times Lemma~\ref{integral estimate}, first for the integral with respect to $y$ with $b = 2 (a - s - \epsilon) > 0$ and then for the integral with respect to $x$ with $b = 2 \epsilon$, we find that
\begin{equation*}
\begin{split}
\int_{\partial \Omega} \int_{\partial \Omega}& \frac{1}{|x-y|^{2 (1 + s - a + \epsilon)} |x-z|^{2 (1 - \epsilon)}}
 \textup{d} \sigma(y) \textup{d} \sigma(x) 
 \leq C \int_{\partial \Omega} \frac{1}{|x-z|^{2 (1 - \epsilon)}}
 \textup{d} \sigma(x) \leq C
\end{split}
\end{equation*}
holds independently of $z$. Using this in~\eqref{thm bounded K0 eq3} we conclude 
\begin{equation}\label{thm bounded K0 eq4}
\begin{split}
\int_{\partial \Omega} \int_{\partial \Omega}&\bigg(\int_{\partial \Omega} |K_0(x,y,z)\varphi(z)| \textup{d} \sigma(z)\bigg)^2
 \textup{d} \sigma(y) \textup{d} \sigma(x) \leq C \int_{\partial \Omega} |\varphi(z)|^2 \text{d} \sigma(z).
\end{split}
\end{equation}
Hence, it follows from \eqref{thm bounded K3 eq1} 
and \eqref{thm bounded K0 eq4} that
\begin{equation}\label{thm bounded K3}
\int_{\partial \Omega} \int_{\partial \Omega}\bigg(\int_{\partial \Omega} |K_3(x,y,z)\varphi(z)| \textup{d} \sigma(z)\bigg)^2 \textup{d} \sigma(y) \textup{d} \sigma(x)
\leq C \int_{\partial \Omega} |\varphi(z)|^2 \text{d} \sigma(z).
\end{equation}

Finally, a combination of \eqref{thm bounded eq1}, \eqref{thm bounded K1}, \eqref{thm bounded K2}, and \eqref{thm bounded K3} shows that
\begin{equation*}
\int_{\partial \Omega} \int_{\partial \Omega}\frac{|T\varphi(x)-T\varphi(y)|^2}{|x-y|^{2+2 s}} \textup{d} \sigma(y) \textup{d} \sigma(x)
\leq C \int_{\partial \Omega} |\varphi(z)|^2 \text{d} \sigma(z)
\end{equation*}
which, together with \eqref{thm bounded control1}, implies 
$\|T\varphi\|_s\leq C\|\varphi\|_\Omega$.
\end{proof}
\end{appendix}

\vskip 0.8cm
\noindent {\bf Acknowledgments.}  
Jussi Behrndt gratefully
acknowledges financial support
by the Austrian Science Fund (FWF): Project P~25162-N26.
Albert Mas was partially supported by
MTM2017-84214 and MTM2017-83499 projects of the MCINN (Spain),
2017-SGR-358 project of the AGAUR (Catalunya) and ERC-2014-ADG project 
HADE Id.\! 669689 (European Research Council).




\end{document}